\topskip \setlength{\parindent}{0pt} \setlength{\parskip}{5pt plus
\newtheorem{theorem}{Theorem}
\newtheorem{proposition}{Proposition}
\newtheorem{corollary}{Corollary}
\newtheorem{remark}{Remark}
\newtheorem{claim}{Claim}
\newtheorem{lemma}{Lemma}
\newtheorem{example}{Example}
\DeclareRobustCommand{\abinom}{\genfrac{\langle}{\rangle}{0pt}{}}
\begin{document}

\centerline{\Large \bf Capelli-Deruyts bitableaux}
\bigskip

\centerline{\Large \bf  and }

\bigskip

\centerline{\Large \bf the classical Capelli generators }

\bigskip

\centerline{\Large \bf of the center of the enveloping algebra $\mathbf{U}(gl(n))$}

\bigskip

\centerline{A. Brini and A. Teolis}
\centerline{\it $^\flat$ Dipartimento di Matematica, Universit\`{a} di
Bologna }
 \centerline{\it Piazza di Porta S. Donato, 5. 40126 Bologna. Italy.}
\centerline{\footnotesize e-mail: andrea.brini@unibo.it}
\medskip

\begin{abstract}

In this paper, we consider a special class of Capelli bitableaux, namely the Capell bitableaux of the form 
$\mathbf{K}^\lambda = [Der^*_\lambda|Der_\lambda] \in \mathbf{U}(gl(n))$. The main results we prove are the hook coefficient lemma and the expansion theorem.  Capelli-Deruyts bitableaux $\mathbf{K}^p_n$ of rectangular shape are of particular interest since they are central elements in the enveloping algebra $\mathbf{U}(gl(n))$. The expansion theorem implies that the central element $\mathbf{K}^p_n$ is explicitely described as a polynomial in the classical Capelli central elements $\mathbf{H}^{(j)}_n$. The hook coefficient lemma implies that the Capelli-Deruyts bitableaux 
$\mathbf{K}^p_n$ are (canonically) expressed as the products of column determinants.

\end{abstract}

\textbf{Keyword}:
Capelli bitableaux;  Capelli-Deruyts bitableaux; Capelli column determinants;
central elements in $\mathbf{U}(gl(n))$; Lie superalgebras.

\textbf{AMSC}: 17B10, 05E10, 17B35

\tableofcontents

\section{Introduction}

The study of  the center
$\boldsymbol{\zeta}(n)$ of the enveloping algebra $\mathbf{U}(gl(n))$ of the
general linear Lie algebra $gl(n, \mathbb{C})$, and  the study of the algebra
$\Lambda^*(n)$ of shifted symmetric polynomials have noble and rather independent origins and motivations.
The theme of central elements  in $\mathbf{U}(gl(n))$ is a standard one in the general theory of Lie algebras, see e.g. \cite{DIX-BR}.
It is an old and actual one, since it
is an  offspring of the celebrated Capelli identity (see e.g. \cite{Cap1-BR}, \cite{Cap4-BR}, \cite{Howe-BR}, \cite{HU-BR},
\cite{Procesi-BR}, \cite{Umeda-BR}, \cite{Weyl-BR}),
relates to its modern generalizations and applications (see e.g. \cite{ABP-BR}, \cite{KostantSahi1-BR},
\cite{KostantSahi2-BR}, \cite{MolevNazarov-BR}, \cite{Nazarov-BR}, \cite{Okounkov-BR}, \cite{Okounkov1-BR},
 \cite{UmedaCent-BR})
as well as to the theory of {\it{Yangians}} (see, e.g.  \cite{Molev1-BR}, \cite{Molev-BR}).

\emph{Capelli bitableaux} $[S | T ]$ and their variants (such as \emph{Young-Capelli bitableaux}
and \emph{double Young-Capelli bitableaux}) have been proved to be relevant in 
the study of the enveloping algebra $\mathbf{U}(gl(n)) = \mathbf{U}(gl(n), \mathbb{C})$ of the general linear Lie algebra 
and of its center $\zeta(n)$.

To be more specific,
the \emph{superalgebraic method of virtual variables} 
(see, e.g. \cite{Brini1-BR}, \cite{Brini2-BR}, 
\cite{Brini3-BR}, \cite{Brini4-BR}, \cite{Brini5-BR}, 
\cite{BriniTeolisKosz-BR}, \cite{BriniTeolisKosz-CapelliDeruyts})
allowed us to express remarkable classes 
of elements in $\mathbf{U}(gl(n))$, namely,

\begin{itemize}

\item [--] the class of \emph{Capelli bitableaux} $[S|T] \in \mathbf{U}(gl(n))$

\item [--] the class of \emph{Young-Capelli bitableaux} $[S| \fbox{$T$}] \in {\mathbf{U}}(gl(n))$

\item [--] the class of \emph{double Young-Capelli bitableaux} $[\ \fbox{$S \ | \ T$}\ ] \in {\mathbf{U}}(gl(n))$
\end{itemize}
as the images - with respect to the $Ad_{gl(n)}$-adjoint equivariant  Capelli 
\emph{devirtualization epimorphism} - of simple expressions in an enveloping 
superalgebra $\mathbf{U}(gl(m_0|m_1+n))$ (see, e.g \cite{BriniTeolisKosz-CapelliDeruyts}).

Capelli (determinantal) bitableaux are generalizations of  the famous  
\emph{column determinant element} in ${\mathbf{U}}(gl(n))$ 
introduced by Capelli in  $1887$ \cite{Cap1-BR} (see, e.g. \cite{BriniTeolisKosz-BR}).
Young-Capelli bitableaux were introduced by the present authors 
several years ago \cite{Brini2-BR}, \cite{Brini3-BR}, \cite{Brini4-BR} and might be regarded
as generalizations of the Capelli column determinant elements in ${\mathbf{U}}(gl(n))$ 
as well as of the \emph{Young symmetrizers}
of the classical representation theory of symmetric groups  (see, e.g. \cite{Weyl-BR}).
Double Young-Capelli bitableaux  play a crucial role in the study of the 
center $\boldsymbol{\zeta}(n)$ of the enveloping algebra (\cite{Brini5-BR}, \cite{BriniTeolisKosz-CapelliDeruyts}).

In plain words, the Young-Capelli bitableau $[S| \fbox{$T$}]$ is obtained 
by adding a \emph{column symmetrization} to  the Capelli bitableau $[S|T]$ 
and turn out to be a linear combination
of Capelli bitableaux (see, e.g \cite{BriniTeolisKosz-CapelliDeruyts}, Proposition $2.13$). 
The double Young-Capelli bitableau
$[\ \fbox{$S \ | \ T$}\ ]$ is obtained by adding a further \emph{row skew-symmetrization}
to the Young-Capelli bitableau $[S| \fbox{$T$}]$ 
(\cite{BriniTeolisKosz-CapelliDeruyts}, Proposition $5.1$), 
 turn out to be a linear combination
of Young-Capelli bitableaux (see, e.g \cite{BriniTeolisKosz-CapelliDeruyts}, Proposition $2.14$) and, therefore, it is in turn a linear combination of Capelli bitableaux.

Capelli bitableaux are the preimages
- with respect to the \emph{Koszul linear} ${\mathbf{U}}(gl(n))$-\emph{equivariant isomorphism} $\mathcal{K}$
from the enveloping algebra ${\mathbf{U}}(gl(n))$ to the polynomial algebra  ${\mathbb C}[M_{n,n}] \cong \mathbf{Sym}(gl(n))$ (\cite{Koszul-BR}, \cite{Brini4-BR}, \cite{BriniTeolisKosz-BR}) - of the classical
\emph{determinant bitableaux} (see, e.g. \cite{drs-BR}, \cite{DKR-BR}, \cite{DEP-BR}, \cite{rota-BR}, 
\cite{Brini1-BR}). 
Hence, they are ruled by  the \emph{straightening laws} and 
 the set of standard Capelli bitableaux is a basis of ${\mathbf{U}}(gl(n))$.

The set of standard Young-Capelli bitableaux is another relavant basis of ${\mathbf{U}}(gl(n))$ whose elements
act in a nondegenerate orhogonal way on the set of standard right symmetrized bitableaux 
(the \emph{Gordan-Capelli basis}  of ${\mathbb C}[M_{n,n}]$) and this fact leads to  explicit
complete decompositions of the semisimple ${\mathbf{U}}(gl(n))$-module
${\mathbb C}[M_{n,n}]$ (see, e.g. \cite{Brini1-BR}, \cite{Brini2-BR}).

The linear combinations of double Young-Capelli bitableaux
\begin{equation}\label{pres Schur}
\mathbf{S}_\lambda(n) =
\frac {1} {H(\tilde{\lambda})} \  \ \sum_S \ [\ \fbox{$S \ | \ S$}\ ]  \in {\mathbf{U}}(gl(n)),
\end{equation}
 where the sum is extended to all  row (strictly) increasing tableaux 
$S$ of shape $sh(S) = \widetilde{\lambda} \vdash h$, 
$\widetilde{\lambda}$ the conjugate shape/partition of $\lambda$ 
(\footnote{Given a partition (shape) 
$\lambda = (\lambda_1 \geq \lambda_2 \geq \cdots \geq \lambda_p) \vdash n$,
let $\widetilde{\lambda} = 
(\widetilde{\lambda}_1, \widetilde{\lambda}_2 \geq \cdots \geq \widetilde{\lambda}_q) \vdash n$
denote its \emph{conjugate} partition, where 
$\widetilde{\lambda}_s = \# \{t; \lambda_t \geq s \}$. }), 
are 
\emph{central elements}  of ${\mathbf{U}}(gl(n))$.

We called the elements $\mathbf{S}_{\lambda}(n)$ the \emph{Schur elements}. 
The Schur elements $\mathbf{S}_\lambda(n)$ are the preimages - with respect to the Harish-Chandra isomorphism -
of the elements of the basis of  shifted Schur polynomials  $s_{\lambda|n}^*$ 
of the algebra $\Lambda^*(n)$ of shifted symmetric polynomials \cite{Sahi1-BR},  
\cite{OkOlsh-BR}.
Hence, the Schur elements are 
the same \cite{BriniTeolisKosz-CapelliDeruyts} as the  \textit{quantum immanants}
(\cite{Sahi1-BR}, \cite{Okounkov-BR}, \cite{Okounkov1-BR}, 
\cite{OkOlsh-BR})
, first presented by  Okounkov as traces of
\textit{fusion matrices} (\cite{Okounkov-BR}, \cite{Okounkov1-BR}) and, recently, described by the present authors as linear  combinations (with explicit coefficients) of ``diagonal'' \emph{Capelli immanants} \cite{Brini5-BR}.
Presentation (\ref{pres Schur}) of Schur elements/quantum immanants doesn't involve the irreducible characters of symmetric groups.
Furthermore, it is better suited to the study of the eigenvalues on irreducible $gl(n)-$modules
and of the duality in the algebra $\boldsymbol{\zeta}(n)$,
as well as to the study of the limit $n \rightarrow \infty$,
via the \emph{Olshanski decomposition} (see,
Olshanski \cite{Olsh1-BR}, \cite{Olsh3-BR} and
Molev \cite{Molev1-BR}, pp. 928 ff.)

In this paper, we consider a special class of Capelli bitableaux,
namely the class of \emph{Capelli-Deruyts bitableaux}. These elements are Capelli bitableaux
of the form
$$
\mathbf{K}^\lambda = [ Der^*_\lambda | Der_\lambda ] \in \mathbf{U}(gl(n)),
$$
where
$\lambda = (\lambda_1 \geq \lambda_2 \geq \cdots \geq \lambda_p)$ is  a  partition with  
$\lambda_1 \leq n$,
and

\begin{itemize}

\item [--]
$Der_\lambda$ is
the \emph{Deruyts tableaux} of shape $\lambda$, that is the Young tableau of shape $\lambda$:
$$
Der_\lambda = 
\left[
\begin{array}{l}
1 \ 2 \ \ldots \ \ldots \ \ldots \ \lambda_1
\\
1 \ 2 \ \ldots \ \ldots \  \ \lambda_2
\\
 \ldots \ \ldots \ \ldots 
\\
1 \ 2 \ \ldots \ \lambda_p
\end{array}
\right]
$$

\item [--]
$ Der^*_\lambda$ is the \emph{reverse Deruyts tableaux} of shape $\lambda$, that is the Young tableau
of shape $\lambda$:
$$
Der^*_\lambda = 
\left[
\begin{array}{l}
\lambda_1 \  \ldots \ \ldots \ \ldots \ 2 \ 1
\\
\lambda_2 \  \ldots \ \ldots \ 2 \ 1
\\
 \ldots \ \ldots \ \ldots 
\\
\lambda_p \  \ldots \ 2 \ 1.
\end{array}
\right].
$$
\end{itemize}

Capelli-Deruyts bitableaux arise, in a natural way, as generalizations to 
arbitrary shapes $\lambda =  (\lambda_1 \geq \lambda_2 \geq \cdots \geq  \lambda_p)$ of the well-known 
\emph{Capelli column determinant}\footnote{The symbol $\mathbf{cdet}$
denotes the column determinat of a matrix $A = [a_{ij}]$ with noncommutative entries:
$\mathbf{cdet} (A) = \sum_{\sigma} \ (-1)^{|\sigma|}  \ a_{\sigma(1), 1}a_{\sigma(2), 2} \cdots a_{\sigma(n), n}.$} 
elements:
\begin{equation}\label{ Capelli determinant}
\mathbf{H}_n^{(n)} =
 \ \textbf{cdet}
\left(
 \begin{array}{cccc}
 e_{1, 1}+(n-1) & e_{1, 2} & \ldots  & e_{1, n} \\
 e_{2, 1} & e_{2, 2}+(n-2) & \ldots  & e_{2, n}\\
 \vdots  &    \vdots                            & \vdots &  \\
 e_{n, 1} & e_{n, 2} & \ldots & e_{n, n}\\
 \end{array}
 \right) \in \mathbf{U}(gl(n)),
 \end{equation}
introduced by Alfredo Capelli \cite{Cap1-BR} in the celebrated
identities that bear his name (see, e.g. \cite{Cap1-BR}, \cite{Cap4-BR}, \cite{Howe-BR}, \cite{HU-BR},
\cite{Procesi-BR}, \cite{Umeda-BR}, \cite{Weyl-BR},
\cite{ABP-BR}, \cite{KostantSahi1-BR},
\cite{KostantSahi2-BR}, \cite{MolevNazarov-BR}, \cite{Nazarov-BR}, \cite{Okounkov-BR}, \cite{Okounkov1-BR},
 \cite{UmedaCent-BR}).

The main results we prove are the following:

\begin{itemize}

\item [--] \textbf{The hook coefficient lemma}: let $v_{\mu}$ be a $gl(n, \mathbb{C})$-highest weight vector of weight $\mu = (\mu_1 \geq \mu_2 \geq \ldots \geq \mu_n),$ 
with $\mu_i \in \mathbb{N}$
for every $i = 1, 2, \ldots, n.$
Then, $v_{\mu}$ is an \emph{eigenvector} of the action of 
the Capelli-Deruyts bitableau $\mathbf{K^\lambda}$ with
\emph{eigenvalue} the (signed) product of \emph{hook numbers} in the Ferrers diagram of the  
partition $\mu$ (Proposition \ref{hook eigenvalue general}).

\item [--] \textbf{The expansion theorem}: the Capelli-Deruyts bitableau $\mathbf{K^\lambda} \in \mathbf{U}(gl(n))$ expands as a polynomial,
with explicit coefficients, in the \emph{Capelli generators}
$$
\mathbf{H}^{(j)}_k = \
\sum_{1 \leq i_1 < \cdots < i_j \leq k} \ \textbf{cdet}\left(
 \begin{array}{cccc}
 e_{{i_1},{i_1}}+(j-1) & e_{{i_1},{i_2}} & \ldots  & e_{{i_1},{i_j}} \\
 e_{{i_2},{i_1}} & e_{{i_2},{i_2}}+(j-2) & \ldots  & e_{{i_2},{i_j}}\\
 \vdots  &    \vdots                            & \vdots &  \\
e_{{i_k},{i_1}} & e_{{i_j},{i_2}} & \ldots & e_{{i_j},{i_j}}\\
 \end{array}
 \right)
$$
of the centers of the enveloping algebras
$\mathbf{U}(gl(k))$, $k = 1, 2, \ldots, n$, $j = 1, 2, \ldots, k$
(Theorem \ref{expansion theorem}).

\end{itemize}

Capelli-Deruyts bitableaux $\mathbf{K_n^p}$
of \emph{rectangular shape} $\lambda = n^p = (\stackrel{p \ times}{n,n,n, \cdots, n} )$
are of particular interest since they are \emph{central elements} in the
enveloping algebra $\mathbf{U}(gl(n))$. 

\begin{itemize}

\item [--]
The expansion theorem implies that the
Capelli-Deruyts bitableau $\mathbf{K_n^p}$ (with $p$ rows) equals the product of the
Capelli-Deruyts bitableau $\mathbf{K_n^{p-1}}$ (with $p-1$ rows) and the central element
$$
\mathbf{C}_n(p-1) =  \sum_{j = 0}^n \ (-1)^{n - j} (p-1)_{n - j} \ \mathbf{H}_n^{(j)}
$$
(see Corollary \ref{Teorema di sfilamento-BR}). Hence, by iterating this procedure, the central element
$\mathbf{K_n^p}$ is explicitely described as a polynomial in the classical Capelli central elements
$\mathbf{H}_n^{(j)}$ (see Corollary \ref{exp one}).

\item [--]
The hook coefficient lemma implies -via the HarishChandra isomorphism- that the element
$\mathbf{C}_n(p)$ also equals the column determinant element
$$
\mathbf{H}_n(p) = \textbf{cdet} \left[  e_{h, k} + \delta_{hk}(- p  + n - h)     
\right]_{h, k =1, \ldots, n} \in \mathbf{U}(gl(n)).
$$

Notice that
$$
\mathbf{H}_n(0) = 
\ \textbf{cdet}\left(
 \begin{array}{cccc}
 e_{1, 1}+(n-1) & e_{1, 2} & \ldots  & e_{1, n} \\
 e_{2, 1} & e_{2, 2}+(n-2) & \ldots  & e_{2, n}\\
 \vdots  &    \vdots                            & \vdots &  \\
 e_{n, 1} & e_{n, 2} & \ldots & e_{n, n}\\
 \end{array}\right) =\mathbf{H}_n^{(n)},
$$
the classical Capelli column determinant element.

From these facts, the Capelli-Deruyts bitableaux $\mathbf{K_n^p}$ are
(canonically) expressed as the  products of column determinants:
$$
\mathbf{K_n^p} =   (-1)^{n \binom {p} {2}} \ \mathbf{H}_n(p-1) \  \cdots \  \mathbf{H}_n(1) \ \mathbf{H}_n(0)
$$
(see Corollary \ref{exp two}).

\end{itemize}

The method of \emph{superalgebraic virtual variables} (\cite{Brini1-BR},
\cite{Brini2-BR}, \cite{Brini3-BR}, \cite{Brini4-BR}, \cite{Brini5-BR},
\cite{BriniTeolisKosz-BR}, \cite{BriniTeolisKosz-CapelliDeruyts})  plays a crucial role
in the present paper;  we provide a short presentation of the method in the
Appendix.

\section{The classical Capelli identities}\label{classical Capelli}

The \textit{algebra of algebraic forms $\mathbf{f}(\underline{x}_1, \ldots, \underline{x}_n)$ in $n$ 
vector variables $\underline{x}_i = (\underline{x}_{i1}, \ldots, \underline{x}_{id})$
of dimension $d$} is the polynomial algebra in $n \times d$ (commutative) variables:
$$
{\mathbb C}[M_{n,d}] =    {\mathbb C}[x_{ij}]_{i=1,\ldots,n; j=1,\ldots,d},
$$
and $M_{n,d}$ denotes the  matrix
with $n$ rows and $d$ columns with ``generic" entries $x_{ij}$:
\begin{equation}\label{matrix}
M_{n,d} = \left[ x_{ij} \right]_{i=1,\ldots,n; j=1,\ldots,d}=
 \left[
 \begin{array}{ccc}
 x_{11} & \ldots & x_{1d} \\
 x_{21} & \ldots & x_{2d} \\
 \vdots  &        & \vdots \\
 x_{n1} & \ldots & x_{nd} \\
 \end{array}
 \right].
\end{equation}

The algebra ${\mathbb C}[M_{n,d}]$ is a $\mathbf{U}(gl(n))-$module,
with respect to the action:
$$
e_{x_j,x_i} \cdot \mathbf{f} = D^{\textit{l}}_{{x_j,x_i}}(\mathbf{f}),
$$
for every $\mathbf{f} \in {\mathbb C}[M_{n,d}],$
where, for any $i, j = 1, 2, \ldots, n$, where $D^{\textit{l}}_{{x_j,x_i}}$
is the unique \emph{derivation} of the algebra ${\mathbb C}[M_{n,d}]$
such that
$$
D^{\textit{l}}_{{x_j,x_i}}(x_{hk}) = \delta_{ih} \ x_{jk},
$$
for every $k = 1, 2. \dots, d$.

\begin{proposition}\label{Capelli identity}$\mathbf{(The \ Capelli \ identities, \ 1887)}$
$$
\mathbf{H}_n^{(n)}(\mathbf{f}) = \begin{cases} 0 &\mbox{if } n > d \\
[\underline{x}_1, \ldots, \underline{x}_n] \ \Omega_n(\mathbf{f}) & \mbox{if } n = d, \end{cases}
$$
where $\mathbf{f}(\underline{x}_1, \ldots, \underline{x}_n) \in {\mathbb C}[M_{n,d}]$ 
is an \emph{algebraic form} (polynomial)
in the $n$ vector variables $\underline{x}_i = (x_{i1}, \ldots, x_{id})$
of dimension $d$, and,  if $d = n$, $[\underline{x}_1, \ldots, \underline{x}_n]$ is the \emph{bracket}
$$
[\underline{x}_1, \ldots, \underline{x}_n] = 
det 
\left[
\begin{array}{lll}
x_{11} \ \ldots \ x_{1n}
\\
\ \vdots \ \ \ \ \ \vdots  \ \ \   \vdots
\\
x_{n1} \ \ldots \ x_{nn}\\
\end{array}
\right],
$$
and $\Omega_n$ is the \emph{Cayley $\Omega$-process} 
$$
\Omega_n = det
\left[
\begin{array}{lll}
\frac {\partial} {\partial x_{11}} \ \ldots \ \frac {\partial} {\partial x_{1n}}
\\
\ \ \vdots \ \ \ \ \  \vdots  \ \ \ \ \  \vdots
\\
 \frac {\partial} {\partial x_{n1}} \ \ldots \ \frac {\partial} {\partial x_{nn}}\\
\end{array}
\right].
$$
\end{proposition}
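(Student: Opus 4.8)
The plan is to realize $\mathbf{H}_n^{(n)}$ as a concrete differential operator on $\mathbb{C}[M_{n,d}]$, to rewrite that operator as the column determinant of a product of two matrices over the Weyl algebra, and then to establish the required identity by a ``Capelli-corrected'' Cauchy--Binet expansion.

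Concretely, the generator $e_{x_i,x_j}$ acts on $\mathbb{C}[M_{n,d}]$ as the polarization operator $\sum_{k=1}^{d} x_{ik}\,\partial/\partial x_{jk}$, so $\mathbf{H}_n^{(n)}$ acts as $\mathbf{cdet}\big[\,\sum_{k} x_{ik}\,\partial/\partial x_{jk} + (n-i)\delta_{ij}\,\big]_{i,j=1}^{n}$. Setting $X=[x_{ik}]$ and $D=[\,\partial/\partial x_{ik}\,]$ (both of size $n\times d$), the matrix of polarizations factors as $XD^{t}$, and the only non-scalar commutators among the entries of $X$ and $D$ are $[\,\partial/\partial x_{ik},\,x_{jl}\,]=\delta_{ij}\delta_{kl}$. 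Thus the Proposition reduces to the operator identity
$$
\mathbf{cdet}\big(XD^{t}+\mathrm{diag}(n-1,\,n-2,\,\ldots,\,1,\,0)\big)
\ = \ \sum_{\substack{K\subseteq\{1,\ldots,d\}\\ |K|=n}}\det\big(X_{[n],K}\big)\,\det\big(D_{[n],K}\big),
$$
where $X_{[n],K}$ and $D_{[n],K}$ are the $n\times n$ submatrices on all rows and on the columns indexed by $K$, and in each summand the (commuting) minor of $X$ stands to the left of the (commuting) minor of $D$.

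The core of the argument is this Capelli--Cauchy--Binet formula, which I would prove by induction on $n$; the case $n=1$ is immediate. For the inductive step I would expand the column determinant along its first column, $\mathbf{cdet}(A)=\sum_{i=1}^{n}(-1)^{i-1}a_{i1}\,\mathbf{cdet}(\widehat{A}_i)$ — legitimate because the entries of column $1$ occur as the leftmost factors — and then apply the inductive hypothesis to each $(n-1)\times(n-1)$ minor $\widehat{A}_i$ after commuting the leading factor $a_{i1}$ to the right of the minors it produces. The diagonal shift $(n-1,\ldots,1,0)$ is exactly engineered so that the scalar terms generated by the commutators $[\,\partial/\partial x_{ik},\,x_{jl}\,]=\delta_{ij}\delta_{kl}$ combine with the shifts inherited from the submatrices to rebuild the right-hand side, after which one re-collects and re-sums over $n$-subsets $K$. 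I expect this commutator-and-sign bookkeeping — verifying that precisely the shift occurring in $\mathbf{H}_n^{(n)}$, and no other diagonal modification, makes the expansion telescope into Cauchy--Binet form — to be the main obstacle.

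Granting the formula, the two cases of the Proposition fall out at once. If $n>d$ there is no $n$-element subset of $\{1,\ldots,d\}$, so the right-hand side is an empty sum and $\mathbf{H}_n^{(n)}(\mathbf{f})=0$ for every form $\mathbf{f}$. If $n=d$ the unique subset is $K=\{1,\ldots,n\}$, and the identity collapses to $\det(X)\cdot\det(D)$; read as an operator this is multiplication by the bracket $[\underline{x}_1,\ldots,\underline{x}_n]=\det X$ performed \emph{after} the Cayley $\Omega$-process $\Omega_n=\det[\,\partial/\partial x_{ij}\,]=\det(D)$, i.e.\ $\mathbf{H}_n^{(n)}(\mathbf{f})=[\underline{x}_1,\ldots,\underline{x}_n]\,\Omega_n(\mathbf{f})$, as claimed. (Alternatively, in the case $n=d$ one can argue by $GL(n)$-equivariance: both $\mathbf{H}_n^{(n)}$ and the operator $[\underline{x}_1,\ldots,\underline{x}_n]\,\Omega_n$ commute with the $gl(n)$-action on $\mathbb{C}[M_{n,n}]$, hence act by scalars on the isotypic components of the Cauchy decomposition, and it suffices to compare those scalars on highest-weight vectors, i.e.\ on products of powers of the principal minors of $X$; but the Cauchy--Binet route gives the cleaner uniform proof and disposes of both cases simultaneously.)
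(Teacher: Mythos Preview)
The paper does not supply its own proof of this proposition: it is stated as a classical result (Capelli, 1887), closed with a bare \qed, and accompanied only by references to the literature (\cite{Cap1-BR}, \cite{Cap4-BR}, \cite{Howe-BR}, \cite{HU-BR}, \cite{Procesi-BR}, \cite{Umeda-BR}, \cite{Weyl-BR}). There is therefore nothing in the paper to compare your argument against line by line.

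That said, your outline is the standard route to the Capelli identity and is sound in its architecture. You correctly translate the abstract $\mathbf{U}(gl(n))$-element $\mathbf{H}_n^{(n)}$ into the concrete differential operator $\mathbf{cdet}\big(XD^{t}+\mathrm{diag}(n-1,\ldots,0)\big)$ via the polarization action described just before the Proposition, you correctly identify the key lemma as the noncommutative Cauchy--Binet expansion over $n$-subsets $K\subseteq\{1,\ldots,d\}$, and you correctly read off both cases of the statement from it. The one place where your proposal remains a sketch rather than a proof is the inductive step itself: you acknowledge that the commutator bookkeeping---showing that the diagonal shift $(n-1,n-2,\ldots,0)$ is exactly what makes the extra terms from $[\partial/\partial x_{ik},x_{jl}]=\delta_{ij}\delta_{kl}$ cancel---is ``the main obstacle'' but do not carry it out. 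That computation is genuinely the heart of the matter; if you want a self-contained argument you will need to write it down explicitly (column expansion, commute the leading entry past the inductive minors, track the Kronecker deltas, and verify the shifts line up). Alternatively, you could cite one of the modern expositions the paper already points to, e.g.\ \cite{Umeda-BR} or \cite{HU-BR}, where this induction is worked out in full.
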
\qed

From \cite{BriniTeolisKosz-BR}, we recall that the determinant element $\mathbf{H}_n^{(n)}$ can be written 
as the (one row) \emph{Capelli-Deruyts bitableau} $[n \ldots 2 1|1 2 \ldots n] $ (\cite{Brini2-BR}, see also \cite{Brini5-BR},  \cite{Koszul-BR}).
\begin{proposition}\label{one row Capelli-Deruyts} The element
$$
\mathbf{H}_n^{(n)} =
 \ \textbf{cdet}\left(
 \begin{array}{cccc}
 e_{1, 1}+(n-1) & e_{1, 2} & \ldots  & e_{1, n} \\
 e_{2, 1} & e_{2, 2}+(n-2) & \ldots  & e_{2, n}\\
 \vdots  &    \vdots                            & \vdots &  \\
 e_{n, 1} & e_{n, 2} & \ldots & e_{n, n}\\
 \end{array}
 \right) \in \mathbf{U}(gl(n))
$$
equals the \emph{one row Capelli-Deruyts bitableau} (see, e.g. Subsection \ref{citazione 5} below)
$$
[n \ldots 2 1|1 2 \ldots n] 
= \mathfrak{p} \left( e_{n, \alpha} \cdots e_{2, \alpha} e_{1, \alpha} \cdot
e_{\alpha, 1}e_{\alpha, 2} \cdots e_{\alpha, n} \right),
$$
where
$\mathfrak{p}$ denotes the \emph{Capelli devirtualization epimorphism}
(see, e.g. Subsection \ref{citazione 4} below).
\end{proposition}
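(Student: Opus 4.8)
The entire content of the statement is to evaluate the devirtualization epimorphism $\mathfrak{p}$ on the single monomial
$w := e_{n,\alpha}\cdots e_{2,\alpha}\,e_{1,\alpha}\cdot e_{\alpha,1}\,e_{\alpha,2}\cdots e_{\alpha,n}\in\mathbf{U}(gl(0|1+n))$,
where $\alpha$ is the unique (odd) virtual symbol and $1,\dots,n$ are the proper symbols, and to recognise $\mathfrak{p}(w)$ as the Capelli column determinant~(\ref{ Capelli determinant}), i.e.\ as $\mathbf{cdet}\left[e_{h,k}+\delta_{hk}(n-h)\right]_{h,k=1,\dots,n}=\mathbf{H}_n^{(n)}$. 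The first thing I would do is check whether the Appendix (or \cite{BriniTeolisKosz-BR}) already records a closed expansion of $\mathfrak{p}$ on a one-row Capelli bitableau $[s_k\cdots s_1\,|\,t_1\cdots t_k]$ as a signed sum over $S_k$ carrying the appropriate diagonal ``content'' shifts; if such a formula is available, specialising to the reverse Deruyts row $S=Der^*_{(n)}=(n,n-1,\dots,1)$ and the Deruyts row $T=Der_{(n)}=(1,2,\dots,n)$ yields the Proposition at once. Otherwise I would argue as follows.

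I would compute $\mathfrak{p}(w)$ by first putting $w$ into virtual normal form inside $\mathbf{U}(gl(0|1+n))$. Writing $u_i=e_{i,\alpha}$ and $v_j=e_{\alpha,j}$, the relations I would use are $u_iu_{i'}=-u_{i'}u_i$ (so $u_i^2=0$), $v_jv_{j'}=-v_{j'}v_j$, the ``contraction'' relation $u_iv_j+v_ju_i=e_{i,j}+\delta_{ij}\,e_{\alpha,\alpha}$, and the scalar-like behaviour of $e_{\alpha,\alpha}$, namely $e_{\alpha,\alpha}\,u_i=u_i(e_{\alpha,\alpha}-1)$ and $e_{\alpha,\alpha}\,v_j=v_j(e_{\alpha,\alpha}+1)$. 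Pushing the block $v_1\cdots v_n$ leftward past the block $u_n\cdots u_1$, one contraction at a time, yields a ``Wick-type'' expansion of $w$: a sum indexed by partial matchings between the $v_j$'s and the $u_i$'s, in which a matched pair $(u_i,v_j)$ is replaced by $e_{i,j}+\delta_{ij}\,e_{\alpha,\alpha}$ and the resulting $e_{i,j}$ is commuted past the remaining raising operators (up to the harmless extra terms coming from $[e_{i,\alpha},e_{k,l}]$, which feed back into the same expansion). The definition of $\mathfrak{p}$ recalled in the Appendix annihilates every term in which the virtual symbol $\alpha$ survives uncontracted, so only the terms arising from a \emph{complete} matching, i.e.\ from a permutation $\sigma\in S_n$, remain.

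The one genuinely delicate point — and the step I expect to demand real care — is the bookkeeping of signs and of the diagonal shifts in the surviving terms. The anticommutativity of the $u_i$ among themselves and of the $v_j$ among themselves forces the contribution of a permutation $\sigma$ to carry exactly the sign $(-1)^{|\sigma|}$ and to appear as the product of its entries $e_{\sigma(j),j}+\delta_{\sigma(j),j}(n-j)$ taken in increasing order of $j$ — which is precisely why the answer is a \emph{column} determinant. For the diagonal factors: when $\sigma(j)=j$, the $e_{\alpha,\alpha}$ produced by the contraction $u_jv_j$ must be carried past exactly those raising operators still standing to its left that have not yet been contracted; if the contractions are processed in the order $v_1,v_2,\dots,v_n$ this count is $n-j$, so — using $e_{\alpha,\alpha}\,u_i=u_i(e_{\alpha,\alpha}-1)$ and that $\mathfrak{p}$ evaluates the ``empty'' $\alpha$-row to $0$ — the $(j,j)$-factor becomes $e_{j,j}+(n-j)$. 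Collecting everything gives $\sum_{\sigma\in S_n}(-1)^{|\sigma|}\prod_{j=1}^{n}\bigl(e_{\sigma(j),j}+\delta_{\sigma(j),j}(n-j)\bigr)=\mathbf{cdet}\left[e_{h,k}+\delta_{hk}(n-h)\right]=\mathbf{H}_n^{(n)}$.

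As an independent cross-check — or as an alternative route if one prefers to stay on the level of module actions — one can verify the identity by evaluating both sides on a faithful module. Both lie in $\mathbf{U}(gl(n))$, which acts faithfully on $\bigoplus_{d\ge 1}{\mathbb C}[M_{n,d}]$; the action of $\mathbf{H}_n^{(n)}$ is exactly the one described by the classical Capelli identity (Proposition~\ref{Capelli identity}) — it annihilates ${\mathbb C}[M_{n,d}]$ for $d<n$ and acts as $[\underline{x}_1,\dots,\underline{x}_n]\,\Omega_n$ for $d=n$ — whereas the action of $\mathfrak{p}(w)$ is read off from the polynomial realisation of the virtual variables: the single odd virtual symbol $\alpha$ imposes a complete skew-symmetrisation over the vector variables $\underline{x}_1,\dots,\underline{x}_n$, which reproduces exactly the bracket times the Cayley $\Omega$-process. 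Agreement of the two actions then forces equality in $\mathbf{U}(gl(n))$.
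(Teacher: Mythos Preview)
The paper does not supply its own proof of this proposition: it simply recalls the result from \cite{BriniTeolisKosz-BR} (with further pointers to \cite{Brini2-BR}, \cite{Brini5-BR}, \cite{Koszul-BR}). Your opening move --- first checking whether the Appendix or \cite{BriniTeolisKosz-BR} already contains the relevant closed formula for a one-row Capelli bitableau --- is therefore exactly what the paper itself does, and would close the matter immediately.

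The Wick-type normal-ordering computation you outline in $\mathbf{U}(gl(0|1+n))$ is the standard direct argument (essentially Koszul's), and it is the content one would find upon unpacking the cited references. Your commutation relations for $u_i=e_{i,\alpha}$, $v_j=e_{\alpha,j}$ and $e_{\alpha,\alpha}$ are correct in the $\mathbb{Z}_2$-grading of the Appendix, the observation that only complete matchings survive $\mathfrak p$ is right (any monomial with an uncontracted $u_i$ on the right, or an $e_{\alpha,\alpha}$ factor, is irregular), and the mechanism by which commuting $e_{\alpha,\alpha}$ past the $n-j$ remaining $u$'s produces the diagonal shift $n-j$ is the heart of the calculation. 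So your primary route is sound.

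One small caution on your alternative route via module actions: Proposition~\ref{Capelli identity} as stated only describes the action of $\mathbf{H}_n^{(n)}$ for $d\le n$, so arguing via faithfulness on $\bigoplus_{d\ge 1}\mathbb{C}[M_{n,d}]$ leaves the case $d>n$ unaddressed. The clean fix is to note that already $\mathbb{C}[M_{n,n}]$ is a faithful $\mathbf{U}(gl(n))$-module (the polarizations $e_{i,j}\mapsto\sum_k x_{ik}\,\partial/\partial x_{jk}$ have algebraically independent principal symbols when $d=n$, so the induced map from $\mathbf{U}(gl(n))$ into differential operators is injective), and then the comparison at $d=n$ alone suffices.
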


From eq. (\ref{ Capelli determinant}) and Proposition \ref{one row Capelli-Deruyts}, it follows:
\begin{proposition}\label{Cl Capelli} We have:
\begin{enumerate}

\item \label{hook}
Let $v_{\mu}$ be a $gl(n, \mathbb{C})$-highest weight vector of weight 
$\mu = (\mu_1 \geq \mu_2 \geq \ldots \geq \mu_n),$ 
with $\mu_i \in \mathbb{N}$
for every $i = 1, 2, \ldots, n.$
Then $v_{\mu}$ is an \emph{eigenvector} of the action of $\mathbf{H}_n^{(n)}$ with
\emph{eigenvalue}:
$$
 (\mu_1  + n -1)(\mu_2 + n -2) \cdots
\mu_n.
$$
In symbols,
$$
\mathbf{H}_n^{(n)} \cdot v_{\mu} = 
 \left( (\mu_1+ n -1)(\mu_2 + n -2) \cdots
\mu_n \right)  \ v_{\mu}.
$$

\item
The element
$\mathbf{H}_n^{(n)}$ is \emph{central} in the enveloping algebra
$\mathbf{U}(gl(n))$.
\end{enumerate}
\end{proposition}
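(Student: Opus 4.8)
The plan is to treat the two assertions separately: the eigenvalue formula is read off the column determinant acting on a highest weight vector, while centrality is the classical fact, for which I would give the route through the Capelli identity (noting also the shorter route through Proposition \ref{one row Capelli-Deruyts}).

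\emph{The eigenvalue.} Expanding the column determinant,
\[
\mathbf{H}_n^{(n)}=\sum_{\sigma}(-1)^{|\sigma|}\,\big(e_{\sigma(1),1}+\delta_{\sigma(1),1}(n-1)\big)\big(e_{\sigma(2),2}+\delta_{\sigma(2),2}(n-2)\big)\cdots\big(e_{\sigma(n),n}+\delta_{\sigma(n),n}(n-n)\big),
\]
I would apply each of these length-$n$ words to $v_\mu$, evaluating from the right. The rightmost factor of a word is $e_{\sigma(n),n}$ (the shift $n-n$ in column $n$ vanishes); since $v_\mu$ is a highest weight vector, the positive root vectors kill it, i.e.\ $e_{ab}\,v_\mu=0$ for $a<b$, so this factor annihilates $v_\mu$ unless $\sigma(n)=n$, in which case it produces $\mu_n\,v_\mu$. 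Proceeding inductively, once the surviving words have forced $\sigma$ to fix $n,n-1,\dots,i+1$, the part of the word already applied has produced a scalar multiple of $v_\mu$ --- still a highest weight vector --- and the next factor $e_{\sigma(i),i}+\delta_{\sigma(i),i}(n-i)$, for which necessarily $\sigma(i)\le i$, either kills $v_\mu$ (if $\sigma(i)<i$, a positive root vector) or scales it by $\mu_i+(n-i)$ (if $\sigma(i)=i$). Hence only $\sigma=\mathrm{id}$ contributes, with sign $+1$, giving
\[
\mathbf{H}_n^{(n)}\cdot v_\mu=\Big(\prod_{i=1}^{n}(\mu_i+n-i)\Big)\,v_\mu=\big((\mu_1+n-1)(\mu_2+n-2)\cdots\mu_n\big)\,v_\mu .
\]
The one point to watch is that the noncommutativity of the $e_{ij}$ is harmless here, precisely because the right-to-left evaluation keeps a genuine highest weight vector (up to a scalar) at every intermediate stage.

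\emph{Centrality.} The shortest argument invokes Proposition \ref{one row Capelli-Deruyts}: $\mathbf{H}_n^{(n)}$ is the one-row Capelli--Deruyts bitableau $[\,n\cdots 21\mid 1 2\cdots n\,]$, that is, a Capelli--Deruyts bitableau of rectangular shape $(n)=n^1$, and rectangular Capelli--Deruyts bitableaux are central in $\mathbf{U}(gl(n))$. Staying within the present section, one may argue instead from the Capelli identity (Proposition \ref{Capelli identity}): for $d=n$ it gives $\mathbf{H}_n^{(n)}(\mathbf f)=[\underline x_1,\dots,\underline x_n]\,\Omega_n(\mathbf f)$, i.e.\ $\mathbf{H}_n^{(n)}$ acts on $\mathbb{C}[M_{n,n}]$ as ``apply $\Omega_n$, then multiply by the bracket''. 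Since, under the $gl(n)$-action on the rows, multiplication by $[\underline x_1,\dots,\underline x_n]$ satisfies $[D_X,\,[\,\cdot\,]]=\mathrm{tr}(X)\,[\,\cdot\,]$ and $\Omega_n$ satisfies $[D_X,\Omega_n]=-\mathrm{tr}(X)\,\Omega_n$, a one-line check gives $D_X\circ([\,\cdot\,]\circ\Omega_n)=([\,\cdot\,]\circ\Omega_n)\circ D_X$ for every $X\in gl(n)$; hence $\mathbf{H}_n^{(n)}$ commutes with the $gl(n)$-action on $\mathbb{C}[M_{n,n}]$, so $[e_{pq},\mathbf{H}_n^{(n)}]$ annihilates $\mathbb{C}[M_{n,n}]$. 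By the Cauchy decomposition, $\mathbb{C}[M_{n,n}]$ realizes every polynomial irreducible $gl(n)$-module $V_\mu$ with $\ell(\mu)\le n$; since these modules, together with the unboundedly many eigenvalues of the central element $e_{1,1}+\cdots+e_{n,n}$ on them, separate the points of $\mathbf{U}(gl(n))$, it follows that $[e_{pq},\mathbf{H}_n^{(n)}]=0$ for all $p,q$, i.e.\ $\mathbf{H}_n^{(n)}$ is central.

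The eigenvalue computation is a short bookkeeping, so the only genuine obstacle is in the centrality part: upgrading ``$\mathbf{H}_n^{(n)}$ acts $gl(n)$-equivariantly on all polynomial modules'' to ``$\mathbf{H}_n^{(n)}$ lies in the center'' needs the density/faithfulness input above --- equivalently, one carries out the direct noncommutative Cauchy--Binet computation showing that $\mathbf{cdet}[e_{ij}+\delta_{ij}(n-i)]$ is invariant under the adjoint action of $GL(n)$. The route through the rectangular Capelli--Deruyts bitableau avoids this entirely, at the cost of relying on the general structure theory recalled in the Introduction.
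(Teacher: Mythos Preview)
Your argument is correct. For part~(1), the right-to-left evaluation of the column determinant on $v_\mu$ is precisely the computation the paper has in mind when it writes ``from eq.~(\ref{ Capelli determinant})''; the paper gives no further detail, and your bookkeeping is the standard one.

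For part~(2) the paper's intended route is your short one: Proposition~\ref{one row Capelli-Deruyts} identifies $\mathbf{H}_n^{(n)}$ with the balanced monomial $\mathfrak{p}\big(e_{n,\alpha}\cdots e_{1,\alpha}\cdot e_{\alpha,1}\cdots e_{\alpha,n}\big)$, and centrality then comes from the $Ad_{gl(n)}$-equivariance of the Capelli epimorphism (Proposition~\ref{rappresentazione aggiunta-BR} and Corollary~\ref{centrality gen} in the Appendix) --- the same mechanism that later gives centrality of all rectangular $\mathbf{K}_n^p$. Your alternative route through the Capelli identity and faithfulness of $\mathbb{C}[M_{n,n}]$ is a genuinely different, more classical argument that the paper does not use; it avoids the virtual-variable machinery entirely at the price of the density/faithfulness step you flag. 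Both are valid; the paper's choice is consistent with its overall method.
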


\section{The Capelli-Deruyts bitableaux in $\mathbf{U}(gl(n))$}

We generalize the  \emph{one row}
Capelli bitableau $\mathbf{H}_n^{(n)}  = [n \ldots 2 1|1 2 \ldots n]$ to arbitrary shapes (partitions)
$$
\lambda = (\lambda_1 \geq \lambda_2 \geq \cdots \geq \lambda_p), 
\qquad \lambda_i \in \mathbb{Z}^+.
$$

\subsection{Capelli-Deruyts bitableaux $\mathbf{K}^{\lambda}$
of  shape $\lambda$.}

Given a partition(shape) $\lambda = \lambda_1 \geq \lambda_2 \geq \cdots \geq \lambda_p$, 
we recall that the \emph{Deruyts tableaux} of shape $\lambda$ is the Young tableau
\begin{equation}\label{Deruyts}
Der_\lambda = ( \underline{\lambda_1}, \underline{\lambda_2}, \ldots, \underline{\lambda_p} )
\end{equation}
and the \emph{reverse Deruyts tableaux} of shape $\lambda$ is the Young tableau
$$
Der^*_\lambda = ( \underline{\lambda_1}^*, \underline{\lambda_2}^*, \ldots, \underline{\lambda_p}^* ),
$$
where
$$
\underline{\lambda_i} = 1 \ 2 \ \cdots \ \lambda_i
$$
and
$$
\underline{\lambda_i}^* = \lambda_i \ \cdots \ 2 \ 1, 
$$
for every $i = 1, 2, \ldots, p$.

The \emph{Capelli-Deruyts bitableau} $\mathbf{K}^\lambda$ is the Capelli bitableau in $\mathbf{U}(gl(n))$,
$n \geq \lambda_1$:
$$
\mathbf{K}^\lambda = [ Der^*_\lambda | Der_\lambda ] = 
\mathfrak{p} \big( e_{Der^*_\lambda C_\lambda} \cdot e_{ C_\lambda Der_\lambda} \big),
$$
where $\mathfrak{p}$ denotes the Capelli devirtualization epimorphism
and $e_{Der^*_\lambda C_\lambda}, \ e_{ C_\lambda Der_\lambda}$ are
\emph{bitableax monomials} (see., e.g.
Subsection \ref{citazione 5}, eq.
(\ref{BitMon})).

\begin{example} Let $\lambda = (3, 2, 2)$. Then
\begin{multline*}
\mathbf{K}^{(3, 2, 2)} 
=
\left[
\begin{array}{lll}
 3 \ 2 \ 1
\\
 2 \ 1
\\
 2 \ 1\\
\end{array}
\right| \left.
\begin{array}{l}
1 \ 2 \ 3
\\
1 \ 2   
\\
1 \ 2 
\end{array}
\right] =
\\
=
\mathfrak{p} \big( e_{3  \alpha_1} e_{2  \alpha_1} e_{1  \alpha_1} e_{2  \alpha_2} e_{1  \alpha_2} 
e_{2  \alpha_3} e_{1  \alpha_3}
\cdot
e_{\alpha_1 1} e_{\alpha_1 2} e_{\alpha_1 3} e_{\alpha_2 1} 
e_{\alpha_2 2} e_{\alpha_3 1} e_{\alpha_3 2}
\big)
\in \mathbf{U}(gl(n)), \quad n \geq 3,
\end{multline*}
where $\alpha_1, \alpha_2, \alpha_3$ are (arbitrary, distinct)
\emph{positive virtual symbols}.
\end{example}\qed

\begin{remark}\label{Deruyts remark} Given a Young tableau
\begin{equation}\label{Deruyts general}
T =
\left[
\begin{array}{llllll}
x_{11} \ x_{12} \ \cdots \ \cdots \ \cdots \ x_{1 \lambda_1}
\\
x_{21} \ x_{22} \ \cdots \ \cdots \ \cdot \cdot \ x_{2 \lambda_2}
\\
\vdots
\\
x_{i1} \ x_{i2} \ \cdots \ \cdots \ \cdot  \ x_{i \lambda_i}
\\
\vdots
\\
x_{p1} \ x_{p2} \ \cdots \ \cdots  \ x_{p \lambda_p}
\end{array} 
\right], \ x_{ij} \in X,
\end{equation}
of shape $\lambda = (\lambda_1 \geq \lambda_2 \geq \cdots \geq \lambda_p)$
over the set $X$ is said to be of \emph{Deruyts type} whenever
$$
\{x_{i1}, \ x_{i2}, \ldots , \ x_{i \lambda_i} \}
\subseteq\{x_{i-1 \ 1}, \ x_{i-1 \ 2}, \ldots , \ x_{i-1 \ \lambda_{i-1}} \},
$$
for $i = 2, \ldots, p$.

Clearly, any tableau of Deruyts  type (\ref{Deruyts general}) can be regarded as a Deruyts tableau
(\ref{Deruyts}), by suitably renaming and reordering the entries.
\end{remark}

\subsection{The Capelli-Deruyts bitableaux $\mathbf{K_n^p}$
of rectangular shape $\lambda = n^p$}
Given any positive integer $p$, we define the {\it{rectangular Capelli/Deruyts bitableau}}, with $p$ rows
of length $\lambda_1 = \lambda_2 = \cdots = \lambda_p = n$:
$$
\mathbf{K_n^p} =
\left[
\begin{array}{l}
n \ n-1 \ \ldots \ 3 \ 2 \ 1\\
n \ n-1 \ \ldots \ 3 \ 2 \ 1\\
\cdots\\
\\
\cdots\\
n \ n-1 \ \ldots \ 3 \ 2 \ 1\\
\end{array}\\
\right| \left.
\begin{array}{l}
1 \ 2 \ 3 \ \dots \ n-1 \ n \\
1 \ 2 \ 3 \ \dots \ n-1 \ n  \\
\cdots\\
\\
\cdots\\
1 \ 2 \ 3 \ \dots \ n-1 \ n  \\
\end{array}
\right] \in \mathbf{U}(gl(n)).
$$

 From Proposition \ref{rappresentazione aggiunta-BR}, we  infer:

\begin{proposition}

The elements $\mathbf{K_n^p}$ are central in $\mathbf{U}(gl(n))$.
\end{proposition}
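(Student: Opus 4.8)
The plan is to show that each $\mathbf{K_n^p}$ commutes with every $e_{a,b} \in gl(n)$ by exploiting the $Ad_{gl(n)}$-equivariance of the Capelli devirtualization epimorphism $\mathfrak{p}$, exactly as suggested by the reference to Proposition \ref{rappresentazione aggiunta-BR}. Since $\mathbf{K_n^p} = \mathfrak{p}\big(e_{Der^*_{n^p} C_{n^p}} \cdot e_{C_{n^p} Der_{n^p}}\big)$ and $\mathfrak{p}$ intertwines the adjoint actions, it suffices to prove that the preimage bitableau monomial, viewed inside the enveloping superalgebra, is annihilated by the adjoint action of the ``visible'' part $gl(n)$ — i.e. that it is a $gl(n)$-invariant. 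First I would recall that in the virtual-variables formalism the adjoint action of $e_{a,b}$ ($1 \le a,b \le n$) acts only on the proper (non-virtual) indices appearing in the bitableau monomial, and that it acts as a derivation of the product.

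Second, I would analyze the combinatorial structure of the rectangular shape. The key point is that $\mathbf{K_n^p}$ is built from the reverse Deruyts tableau $Der^*_{n^p}$ on the left and the Deruyts tableau $Der_{n^p}$ on the right, both of which, for the rectangular shape $n^p$, have every row equal to $\{1,2,\dots,n\}$ — the full index set. Thus the content (multiset of entries) in each slot is $Ad_{gl(n)}$-stable in the strongest possible sense: the row-skew-symmetrization built into the column-determinant structure and the fact that each row ranges over all of $1,\dots,n$ together force the expression, after applying $\mathfrak{p}$, to be a polynomial in the $e_{a,b}$ that is invariant under simultaneous conjugation. Concretely, I would invoke the known description (Proposition \ref{one row Capelli-Deruyts} and its iteration) exhibiting $\mathbf{K_n^p}$ as a product/combination of Capelli column determinants $\mathbf{cdet}[e_{h,k} + \delta_{hk}(\cdots)]$, each of which is the classical Capelli element and hence central by Proposition \ref{Cl Capelli}(2); a product of central elements is central. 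Alternatively, and more intrinsically, one checks directly that the full-row Deruyts data makes the virtual expression invariant under the adjoint $gl(n)$-action, so its image under the equivariant map $\mathfrak{p}$ lies in $\boldsymbol{\zeta}(n)$.

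The main obstacle I anticipate is making precise the claim that the bitableau monomial $e_{Der^*_{n^p} C_{n^p}} \cdot e_{C_{n^p} Der_{n^p}}$ is $gl(n)$-invariant before devirtualization: one must track how $Ad_{e_{a,b}}$ moves proper indices through a product involving virtual symbols $\alpha_i$, and verify that the ``telescoping'' cancellation that occurs for a single row (as in Proposition \ref{one row Capelli-Deruyts}) persists for all $p$ rows simultaneously. This reduces to a bookkeeping argument: the left factor contributes $e_{c,\alpha_i}$ terms and the right factor $e_{\alpha_i,c}$ terms with $c$ running over all of $1,\dots,n$ in each row, so the derivation $Ad_{e_{a,b}}$ produces a pair of terms (one from each factor) that cancel because the row content is the complete set — there is no ``boundary'' index to survive. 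Once this invariance is established, equivariance of $\mathfrak{p}$ closes the argument immediately. I would present the proof in the product-of-Capelli-elements form as the clean route, citing Corollary \ref{exp two} and Proposition \ref{Cl Capelli}(2), with the invariance argument as the conceptual explanation.
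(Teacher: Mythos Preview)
Your intrinsic approach---showing that the virtual preimage $e_{Der^*_{n^p} C_{n^p}} \cdot e_{C_{n^p} Der_{n^p}}$ is $Ad_{gl(n)}$-invariant and then invoking the equivariance of $\mathfrak{p}$---is exactly the paper's argument: the paper literally just writes ``From Proposition~\ref{rappresentazione aggiunta-BR}, we infer'' and leaves the invariance check implicit (cf.\ also Corollary~\ref{centrality gen}). Your description of why the invariance holds is essentially correct, though the mechanism is slightly different from what you sketch: for $a\neq b$ the derivation $ad(e_{a,b})$ applied to a single row $e_{n,\beta_i}\cdots e_{1,\beta_i}$ replaces the odd factor $e_{b,\beta_i}$ by $e_{a,\beta_i}$, producing a \emph{repeated} odd generator in that row and hence zero (since $e_{a,\beta_i}^2=0$); so each of $ad(e_{a,b})(\mathbf{M}^*)$ and $ad(e_{a,b})(\mathbf{M})$ vanishes separately rather than cancelling as a pair. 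The pair cancellation you describe is what happens for $a=b$, where $ad(e_{a,a})(\mathbf{M}^*)=p\,\mathbf{M}^*$ and $ad(e_{a,a})(\mathbf{M})=-p\,\mathbf{M}$.

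Your preferred ``clean route'' via Corollary~\ref{exp two}, however, is \emph{circular} in this paper. The identification $\mathbf{H}_n(p)=\mathbf{C}_n(p)$ that underlies Corollary~\ref{exp two} is obtained by comparing Harish--Chandra images, and the computation of $\chi_n(\mathbf{C}_n(p))$ in Proposition~\ref{HC di C} is deduced from $\chi_n(\mathbf{K}_n^{p+1})/\chi_n(\mathbf{K}_n^{p})$ via Corollary~\ref{eq quozienti-BR}; applying $\chi_n$ to $\mathbf{K}_n^p$ presupposes $\mathbf{K}_n^p\in\boldsymbol{\zeta}(n)$, which is precisely what you are trying to prove. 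Even the weaker Corollary~\ref{exp one} (which would avoid this circularity, since each $\mathbf{C}_n(j)$ is manifestly central as a combination of the $\mathbf{H}_n^{(k)}$) rests on the full expansion Theorem~\ref{expansion theorem}, one of the main results of the paper, and is therefore badly out of logical order for this preliminary proposition. Stick with the invariance argument.
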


Set, by definition, $\mathbf{K_n^0} = \mathbf{1}.$

\section{The hook eigenvalue Theorem for Capelli-Deruyts bitableaux}

Any rectangular Capelli-Deruyts bitableau $\mathbf{K_n^p}$ well behaves on 
$gl(n, \mathbb{C})$-highest weight vectors (compare with Proposition \ref{Cl Capelli}, item 1)).

\begin{theorem}\label{The hook coefficient lemma-BR}({\bf{The hook coefficient lemma}})

Let $v_{\mu}$ be a highest weight vector of weight $\mu = (\mu_1 \geq \mu_2 \geq \ldots \geq \mu_n),$ 
with $\mu_i \in \mathbb{N}$
for every $i = 1, 2, \ldots, n.$
Then $v_{\mu}$ is an \emph{eigenvector} of the action of $\mathbf{K_n^p}$ with
\emph{eigenvalue} the (signed) product of \emph{hook numbers} in the Ferrers diagram of the  
partition $\mu$:
$$
(-1)^{\binom{p} {2} n} \ \left( \prod_{j = 1}^{p} \ (\mu_1 - j + n )(\mu_2 - j + n -1) \cdots
(\mu_n - j + 1) \right).
$$
In symbols,
$$
\mathbf{K_n^p} \cdot v_{\mu} = 
(-1)^{\binom{p} {2} n} \ \left( \prod_{j = 1}^{p} \ (\mu_1 - j + n )(\mu_2 - j + n -1) \cdots
(\mu_n - j + 1) \right)  \ v_{\mu}.
$$

\end{theorem}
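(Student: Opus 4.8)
The plan is to prove the hook coefficient lemma by reducing the action of the rectangular Capelli-Deruyts bitableau $\mathbf{K}_n^p$ on a highest weight vector $v_\mu$ to an iterated application of the classical one-row case (Proposition \ref{Cl Capelli}, item 1). The key structural observation is that $\mathbf{K}_n^p = \mathfrak{p}\big(e_{Der^*_{n^p}C}\cdot e_{C\,Der_{n^p}}\big)$, where the bitableau monomials factor row by row: the right monomial is a product $\prod_{i=1}^p e_{\alpha_i 1}e_{\alpha_i 2}\cdots e_{\alpha_i n}$ over virtual symbols $\alpha_1,\dots,\alpha_p$, and the left monomial is $\prod_{i=1}^p e_{n\alpha_i}\cdots e_{2\alpha_i}e_{1\alpha_i}$ (with the appropriate ordering dictated by the devirtualization procedure described in the Appendix). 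So the first step is to set up this factorization carefully, using the superalgebraic virtual-variable calculus to justify that distinct virtual symbols commute appropriately and that the devirtualization epimorphism $\mathfrak{p}$ can be applied one virtual symbol at a time.

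Second, I would establish the recursion $\mathbf{K}_n^p = \pm\,\mathbf{K}_n^{p-1}\cdot \mathbf{C}_n(p-1)$ on highest weight vectors, or more directly argue as follows: when $\mathbf{K}_n^p$ acts on $v_\mu$, peel off the innermost ``layer'' corresponding to the last virtual symbol $\alpha_p$. The devirtualization of $e_{n\alpha_p}\cdots e_{1\alpha_p}\cdot e_{\alpha_p 1}\cdots e_{\alpha_p n}$ is precisely a copy of $\mathbf{H}_n^{(n)}$-type operator, but acting in the presence of the shift produced by the outer layers. Concretely, after the outer $p-1$ layers have been applied, the relevant weight seen by the innermost layer is shifted, and by Proposition \ref{Cl Capelli}, item 1, the eigenvalue contributed by layer $j$ is $(\mu_1 - j + n)(\mu_2 - j + n - 1)\cdots(\mu_n - j + 1)$, together with a sign $(-1)^n$ arising each time a new row of length $n$ is threaded past the previous ones (the reverse-Deruyts ordering on the left versus the Deruyts ordering on the right produces exactly one factor of $(-1)^n$ per additional row relative to the preceding rows, giving the total $(-1)^{\binom p2 n}$). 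Iterating over $j = 1,\dots,p$ yields the claimed product.

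Third, I would nail down the bookkeeping on the shift: the reason layer $j$ sees weight $\mu_i - j + 1$ in the last slot (rather than $\mu_i$) is that each previously processed row of the Deruyts tableau effectively lowers the weight by $1$ along the diagonal, because the virtual symbols carry weight and the devirtualization substitutes genuine generators $e_{ii}$ whose action on the (now modified) vector has been decremented. This is the same mechanism by which the diagonal corrections $+(n-1),+(n-2),\dots$ appear in $\mathbf{H}_n^{(n)}$ itself; here they stack. I would make this precise by an induction on $p$, where the inductive hypothesis is the statement of the theorem for $p-1$, and the inductive step is exactly the innermost-layer computation combined with Proposition \ref{Cl Capelli}, item 1 applied to the shifted weight $(\mu_1 - (p-1), \dots, \mu_n - (p-1))$ — noting that this shifted tuple is still a valid (integer, weakly decreasing) weight since the action is algebraic and we may work formally.

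The main obstacle I anticipate is the sign and shift accounting in the virtual-variable formalism: one must verify that the Koszul/devirtualization map $\mathfrak{p}$ really does distribute over the row-layers so that each layer contributes an independent classical-Capelli factor, and that the only interaction between layers is the diagonal shift (plus the global sign). This requires care with the ordering conventions for bitableau monomials and with the commutation rules for the auxiliary odd/even virtual symbols; a naive treatment risks either double-counting the shift or dropping signs. I would therefore devote the bulk of the argument to a clean lemma isolating the ``one extra row'' step, and then invoke it $p$ times.
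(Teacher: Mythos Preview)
Your high-level strategy --- peel off the rows of $\mathbf{K}_n^p$ one at a time, each contributing a factor of the form $(\mu_1-j+n)\cdots(\mu_n-j+1)$ --- is exactly the shape of the paper's proof. The gap is in the mechanism you propose for producing the shift.

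You want to invoke Proposition~\ref{Cl Capelli}, item~1, on a highest weight vector of weight $(\mu_1-(p-1),\ldots,\mu_n-(p-1))$. But in $\mathbf{U}(gl(n))$ no such vector ever appears: central elements act on $v_\mu$ by scalars, so after any number of devirtualized layers one still has $v_\mu$, with unchanged weight $\mu$. Nor can one rearrange the bitableau monomial into a product of $p$ independent one-row Capelli elements: the factors $e_{k,\alpha_i}$ and $e_{\alpha_j,k}$ with $i\neq j$ share the proper index $k$ and have nonzero superbracket $[\,e_{k,\alpha_i},e_{\alpha_j,k}\,]=\pm e_{\alpha_j,\alpha_i}$, so ``distinct virtual symbols commute appropriately'' fails precisely where you need it. If the layers did commute one would obtain $\pm(\mathbf{H}_n^{(n)})^p$, whose eigenvalue $[(\mu_1+n-1)\cdots\mu_n]^p$ is visibly wrong.

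Where the shift actually lives is in the \emph{undevirtualized} action on the explicit bitableau model. The paper takes $v_\mu=(Der_{\tilde\mu}\,|\,Der^P_{\tilde\mu})\in\mathbb{C}[M_{n,d}]$ and applies the raw superpolarizations $e_{\alpha_p,n},\ldots,e_{\alpha_p,1}$ directly to this tableau. Using Proposition~\ref{action on tableaux} together with the Grosshans--Rota--Stein straightening law, one shows that this replaces the top row of $Der_{\tilde\mu}$ by $\alpha_p^{\,n}$, with coefficient $\dfrac{(-1)^{\binom n2}}{n!}(\mu_1+n-1)(\mu_2+n-2)\cdots\mu_n$; what remains beneath the $\alpha_p$-row is precisely $(Der_{\widetilde{\mu-\mathbf 1}}\,|\,\cdots)$, the highest weight bitableau for $\mu-\mathbf 1$. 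Since the next block $e_{\alpha_{p-1},n},\ldots,e_{\alpha_{p-1},1}$ ignores the inert $\alpha_p$-row, the same computation applies to $\mu-\mathbf 1$, and iteration yields the full product; the $L$-blocks then convert the virtual rows back and supply the remaining combinatorial factors and the global sign $(-1)^{\binom p2 n}$. Your plan becomes a complete proof once you replace the appeal to Proposition~\ref{Cl Capelli} by this explicit one-row computation in the supersymmetric polynomial model; that computation, not any abstract commutation, is the substantive step.
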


Theorem \ref{The hook coefficient lemma-BR}
generalizes to arbitrary Capelli-Deruyts bitableaux $\mathbf{K_\lambda}$ of shape $\lambda$
as follows:
\begin{proposition}\label{hook eigenvalue general}
Let $v_{\mu}$ be a highest weight vector of weight $\mu = (\mu_1 \geq \mu_2 \geq \ldots \geq \mu_n),$ with $\mu_i \in \mathbb{N}$
for every $i = 1, 2, \ldots, n.$ Let $\lambda = (\lambda_1 \geq \cdots \geq \lambda_p)$ be a partition(shape).
Then
\begin{multline*}
\mathbf{K}^\lambda \cdot v_{\mu} = \ (-1)^{\lambda_p(\lambda_{p-1} + \cdots + \lambda_1) + 
\lambda_{p-1}(\lambda_{p-2} + \cdots + \lambda_1) + \cdots + \lambda_2 \lambda_1} \ \times 
\\
\times
\left( \prod_{i = 1}^{p} \ (\mu_1 - i + \lambda_i)(\mu_2 - i + \lambda_i -1) \cdots
(\mu_{\lambda_i} - i + 1) \right)  \ v_{\mu}.
\end{multline*}
\end{proposition}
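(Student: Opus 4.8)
The plan is to reduce the general-shape statement to the rectangular case already recorded in Theorem \ref{The hook coefficient lemma-BR}, by exploiting the combinatorial structure of Deruyts tableaux described in Remark \ref{Deruyts remark}. The key observation is that a Capelli-Deruyts bitableau $\mathbf{K}^\lambda$ of shape $\lambda = (\lambda_1 \geq \cdots \geq \lambda_p)$ is built from the nested row words $\underline{\lambda_i} = 1\,2\,\cdots\,\lambda_i$ and their reverses; since $\lambda_i \leq \lambda_{i-1}$, each row is literally an initial segment of the row above it, so $\mathbf{K}^\lambda$ is, in the devirtualized picture, a product of factors
$$
\prod_{i=1}^{p} \mathfrak{p}\bigl( e_{\lambda_i,\alpha_i}\cdots e_{1,\alpha_i}\cdot e_{\alpha_i,1}\cdots e_{\alpha_i,\lambda_i}\bigr)
$$
with distinct virtual symbols $\alpha_1,\dots,\alpha_p$. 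The essential point is that the $i$-th factor acts like a rectangular Capelli-Deruyts bitableau $\mathbf{K}_{\lambda_i}^{1}$ but positioned on the first $\lambda_i$ indices, i.e. essentially $\mathbf{H}_{\lambda_i}^{(\lambda_i)}$ relative to $gl(\lambda_i)$, applied after the factors with larger index.

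First I would establish that $v_\mu$ remains a highest weight vector (for the relevant smaller $gl(\lambda_i)$, or for $gl(n)$) after each factor acts, so that the factors can be peeled off one at a time from the innermost. Concretely, the rightmost factor $\mathfrak{p}(e_{\lambda_p,\alpha_p}\cdots e_{1,\alpha_p}\cdot e_{\alpha_p,1}\cdots e_{\alpha_p,\lambda_p})$ is the one-row Capelli-Deruyts bitableau $[\lambda_p\cdots 21\,|\,1\,2\cdots\lambda_p]$, which by Proposition \ref{one row Capelli-Deruyts} and Proposition \ref{Cl Capelli}\eqref{hook} (applied with $n$ replaced by $\lambda_p$, noting $v_\mu$ is a highest weight vector for the subalgebra $gl(\lambda_p)\subseteq gl(n)$) acts by the scalar $(\mu_1-1+\lambda_p)(\mu_2-1+\lambda_p-1)\cdots(\mu_{\lambda_p}-1+1)$ and leaves $v_\mu$ a highest weight vector. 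Iterating: after the factors indexed $p,p-1,\dots,i+1$ have acted, the product of their scalars is accumulated, $v_\mu$ is still a highest weight vector, and the factor indexed $i$ contributes $(\mu_1-i+\lambda_i)(\mu_2-i+\lambda_i-1)\cdots(\mu_{\lambda_i}-i+1)$ — here the shift $-i$ arises because this is the $i$-th factor counted from the top, exactly as in the rectangular case where the $j$-th factor contributes the $-j$ shift.

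The sign is handled separately and purely combinatorially: each factor $e_{\lambda_i,\alpha_i}\cdots e_{1,\alpha_i}$ versus $e_{\alpha_i,1}\cdots e_{\alpha_i,\lambda_i}$ carries a sign when the virtual symbol $\alpha_i$ is moved past the virtual symbols $\alpha_{i+1},\dots,\alpha_p$ already present (these being odd/positive virtual symbols, each transposition of two blocks of lengths $\lambda_i$ and $\lambda_j$ contributes $(-1)^{\lambda_i\lambda_j}$), which reproduces exactly the exponent $\lambda_p(\lambda_{p-1}+\cdots+\lambda_1)+\lambda_{p-1}(\lambda_{p-2}+\cdots+\lambda_1)+\cdots+\lambda_2\lambda_1$; one checks this specializes to $(-1)^{\binom{p}{2}n}$ when $\lambda = n^p$. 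I expect the main obstacle to be the bookkeeping that makes precise the claim ``$v_\mu$ is still a highest weight vector after each factor acts and the shift for the $i$-th factor is $-i$'': one must verify that the action of a factor on the first $\lambda_i$ indices does not spoil the highest-weight condition with respect to the larger $gl(n)$ (or, more carefully, that one may work inside the nested chain $gl(\lambda_p)\subseteq\cdots\subseteq gl(\lambda_1)\subseteq gl(n)$ and that the eigenvalue formula of Proposition \ref{Cl Capelli} applies verbatim at each stage with the correct index shift). Once that is in place, the general formula follows by the same telescoping product that underlies the rectangular case, and Theorem \ref{The hook coefficient lemma-BR} is recovered by setting all $\lambda_i = n$.
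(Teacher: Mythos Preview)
Your proposal rests on the claim that $\mathbf{K}^\lambda$ factors, up to sign, as a product
\[
\prod_{i=1}^p \mathfrak{p}\bigl( e_{\lambda_i,\alpha_i}\cdots e_{1,\alpha_i}\cdot e_{\alpha_i,1}\cdots e_{\alpha_i,\lambda_i}\bigr) = \prod_{i=1}^p [\underline{\lambda_i}^*\,|\,\underline{\lambda_i}]
\]
of one-row Capelli bitableaux. This is false: the Capelli epimorphism $\mathfrak{p}$ is not multiplicative, and the entire content of Theorem~\ref{MAIN} (and its inverse, Theorem~\ref{expansion theorem}) is precisely that adjoining an extra row to a Capelli--Deruyts bitableau differs from left-multiplying by the corresponding one-row bitableau by nontrivial lower-order terms. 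For a concrete check take $\lambda=(n,n)$: by Corollary~\ref{Teorema di sfilamento-BR} one has $\mathbf{K}_n^2=(-1)^n\mathbf{C}_n(1)\,\mathbf{H}_n^{(n)}$ with $\mathbf{C}_n(1)=\mathbf{H}_n^{(n)}-\mathbf{H}_n^{(n-1)}$, so $\mathbf{K}_n^2\neq\pm\bigl(\mathbf{H}_n^{(n)}\bigr)^2$, and the two eigenvalues on $v_\mu$ visibly differ. (Your attempted rearrangement at the virtual level also fails: $e_{j,\beta_i}$ and $e_{\beta_{i'},l}$ are both odd and their superbracket is $\delta_{jl}\,e_{\beta_{i'},\beta_i}\neq 0$, so the blocks do not merely supercommute.)

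Consequently the mechanism you propose for the shift $-i$ cannot work. If each factor really were $[\underline{\lambda_i}^*|\underline{\lambda_i}]=\mathbf{H}_{\lambda_i}^{(\lambda_i)}$, then Proposition~\ref{Cl Capelli} would return the \emph{unshifted} scalar $(\mu_1+\lambda_i-1)\cdots\mu_{\lambda_i}$ at every step, since $v_\mu$ remains the same highest weight vector; no sign bookkeeping from reordering virtual symbols can convert $\mu_{\lambda_i}$ into $\mu_{\lambda_i}-i+1$. What you flag as ``bookkeeping'' is in fact the missing idea. The paper's argument (given in Section~8 for the rectangular case, the general shape being a word-for-word adaptation with row lengths $\lambda_1,\dots,\lambda_p$ in place of $n,\dots,n$) is a direct computation of the virtual presentation acting on the explicit highest weight vector $v_\mu=(Der_{\tilde\mu}\,|\,Der^P_{\tilde\mu})\in\mathbb{C}[M_{n,d}]$: one applies the superpolarizations $e_{\alpha_i,j}$ one at a time via Proposition~\ref{action on tableaux}, and the Straightening Law collapses the resulting sum to a scalar multiple of a bitableau in which the top row of proper symbols has been replaced by $\alpha_i$'s. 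The shift $-i$ then appears because after $i-1$ rows of proper symbols have been consumed the relevant multiplicities have each dropped by $i-1$; it is produced by the action on the intermediate (virtual) tableaux, not by any factorization in $\mathbf{U}(gl(n))$.
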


\section{The factorization Theorem for Capelli-Deruyts bitableaux}

Let $J = \{j_1 < j_2 < \dots < j_k \} \subseteq \underline{n} = \{1, 2; \ldots, n \}$. 
With a slight abuse of notation, we write $\underline{J}$ for the increasing word
$\underline{J} = j_1 j_2 \cdots j_k$ and $\underline{J}^*$ for the decreasing word
$\underline{J}^* = j_k \cdots j_2 j_1$. 

Given a partition 
$\lambda = (\lambda_1 \geq \lambda_2 \geq \cdots \geq \lambda_p)$, set
$|\lambda| = \lambda_1 + \lambda_2 + \cdots + \lambda_p$.

We have
$$
\mathbf{K}^\lambda= \left[
\begin{array}{lll}
\underline{\lambda_1}^*
\\
\underline{\lambda_2}^*
\\
\vdots
\\
\underline{\lambda_p}^*
\end{array} 
\right|\left.
\begin{array}{lll}
\underline{\lambda_1}
\\
\underline{\lambda_2}
\\
\vdots
\\
\underline{\lambda_p}
\end{array}
\right]
$$
and, consistently, we write, for $J \subseteq M$, 
$$
\left[
\begin{array}{l}
\mathbf{K}^\lambda\\
J
\end{array}
\right] = 
\left[
\begin{array}{lll}
\underline{\lambda_1}^*
\\
\underline{\lambda_2}^*
\\
\vdots
\\
\underline{\lambda_p}^*
\\
\underline{J}^*
\end{array} 
\right|\left.
\begin{array}{lll}
\underline{\lambda_1}
\\
\underline{\lambda_2}
\\
\vdots
\\
\underline{\lambda_p}
\\
\underline{J}
\end{array}
\right],  \quad [J] = [\underline{J}^* | \underline{J}].
$$

\begin{theorem}\label{MAIN} $(\bf{The \ row \ insertion \ theorem})$ Let $m \leq \lambda_p$.
Given $M \subseteq \underline{\lambda_p}$, $|M| = m$, we have
$$
\left[ M^* | M \right] \ \mathbf{K}^\lambda =
\sum_{k = 0}^m
\
\left< p \right>_{m-k}
\
\sum_{J;\ J \subseteq M;\ |J|=k}
\ (-1)^{|\lambda|k}
\left[
\begin{array}{l}
\mathbf{K}^\lambda
\\
J
\end{array}
\right],
$$
where $\left< p \right>_j$ denonotes the \emph{raising factorial}
$$
\left< p \right>_j = p(p+1) \cdots (p+j-1).
$$
\end{theorem}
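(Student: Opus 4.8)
## Proof strategy for the row insertion theorem

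The plan is to work inside the enveloping superalgebra $\mathbf{U}(gl(m_0|m_1+n))$ via the superalgebraic method of virtual variables, where the Capelli-Deruyts bitableau $\mathbf{K}^\lambda$ is the image under the devirtualization epimorphism $\mathfrak{p}$ of a product of bitableau monomials in virtual symbols. The left-hand side $[M^*|M]\,\mathbf{K}^\lambda$ is then $\mathfrak{p}$ applied to a product of the form
$$
\mathfrak{p}\big( e_{M^* \beta}\, e_{\beta M} \cdot e_{Der^*_\lambda C_\lambda}\, e_{C_\lambda Der_\lambda}\big),
$$
where $\beta$ is a fresh positive virtual symbol distinct from those used in $\mathbf{K}^\lambda$. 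Since $M\subseteq \underline{\lambda_p}$, the entries in the newly prepended row come from the same ground set $\underline{\lambda_p}$ already present in every row of $Der_\lambda$; this is exactly the ``Deruyts type'' situation of Remark~\ref{Deruyts remark}, so the whole object on the right, after the straightening, will again be a Capelli-Deruyts-like bitableau, now with an extra row $\underline{J}$ attached at the bottom.

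The key combinatorial step is to commute the virtual raising operators $e_{\beta M}$ past the block $e_{Der^*_\lambda C_\lambda}$ (equivalently, to commute the block $e_{M^*\beta}$ down), keeping careful track of the commutators. Because $\beta$ is a \emph{positive} (even) virtual symbol and the symbols $C_\lambda$ attached to $\lambda$ are distinct from $\beta$, the only nontrivial commutators that arise are of the form $[e_{\beta M}, e_{Der^*_\lambda C_\lambda}]$ producing terms in which some of the $m$ letters of $M$ get ``used up'' to contract against matching letters in the $p$ rows of $Der_\lambda^*$. Each such contraction against one row contributes a factor and, crucially, when a letter of $M$ is contracted it can be absorbed into \emph{any} of the $p$ rows of $\mathbf K^\lambda$ (they all repeat the ground segment), which is precisely the mechanism that produces the raising factorial $\langle p\rangle_{m-k} = p(p+1)\cdots(p+m-k-1)$: contracting the first of the $m-k$ ``disappearing'' letters can go into $p$ rows, the second into $p+1$ effective rows (the row count having grown), and so on. The sign $(-1)^{|\lambda|k}$ comes from moving the surviving $k$-letter word $\underline{J}^*$ (resp.\ $\underline{J}$) past the $|\lambda|$ letters of the $\lambda$-block to reach the bottom position, each transposition of odd virtual-symbol generators contributing a sign.

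I would organize the computation as an induction on $m$: the base case $m=0$ is the identity $\mathbf{K}^\lambda=\mathbf{K}^\lambda$, and the inductive step peels off one letter $a$ of $M$, writes $e_{M^*\beta}=e_{(M\setminus a)^*\beta}\,e_{a\beta}$ (up to sign), commutes the single operator $e_{a\beta}e_{\beta a}$-type factor through using the Capelli-type commutation identities, and then invokes the inductive hypothesis for $M\setminus\{a\}$ — now against a $\mathbf{K}^\lambda$ that has acquired an extra row, hence with $p$ replaced by $p+1$ in the relevant count, which is what converts $\langle p\rangle$-factorials into $\langle p+1\rangle$-factorials compatibly. The recursion for the coefficients, $c_{m}(p,k) = c_{m-1}(p+1,k) + (\text{number of rows})\cdot c_{m-1}(p,k)$ with the row count evaluating to $p$, is exactly the Pascal-type recursion satisfied by $\langle p\rangle_{m-k}\binom{m}{k}$, and matching it is a short check.

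The main obstacle will be the bookkeeping of signs and of \emph{which} virtual symbols are even versus odd in the combined monomial, since the devirtualization epimorphism $\mathfrak{p}$ is only defined modulo the relations of the enveloping superalgebra and one must be sure that every commutator term that is generated actually lies in the image and maps correctly — in particular that the ``used-up'' contractions genuinely collapse to lower Capelli-Deruyts bitableaux $\big[\begin{smallmatrix}\mathbf{K}^\lambda\\ J\end{smallmatrix}\big]$ and not to some non-standard bitableau requiring further straightening. Establishing this cleanly is where the virtual-variables formalism of \cite{BriniTeolisKosz-CapelliDeruyts} does the heavy lifting, and I would quote the relevant commutation lemma from there rather than rederive it; granting that lemma, the rest is the inductive coefficient-matching sketched above.
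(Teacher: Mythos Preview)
Your framework is right: the paper also works in the virtual algebra $Virt(m,n)$, writes $[M^*|M]\,\mathbf{K}^\lambda = \mathfrak{p}\big(\{M^*|\alpha\}\{\alpha|M\}\,\mathbf{M}^*\cdot\mathbf{M}\big)$ with a fresh positive virtual symbol $\alpha$, and the whole game is indeed to commute $\{\alpha|M\}$ past $\mathbf{M}^*$ and then $\{M^*|\alpha\}$ past the result. Where you diverge is the \emph{mechanism}.

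The paper does \emph{not} induct on $m$. It proves a general commutation identity in Sweedler form, $\alpha\,\omega = \sum_{(\alpha)} T_{\alpha_{(1)}}(\omega)\,\alpha_{(2)}(-1)^{|\omega||\alpha_{(2)}|}$, and applies it in one shot to get a double sum over splits $(I_{(1)},I_{(2)})$ and $(I^*_{(1)},I^*_{(2)})$. A key lemma then shows that, because the rows of $Der^*_\lambda$ are nested, the only nonzero contributions under $\mathfrak{p}$ occur when $I^*_{(1)}=(I_{(1)})^*$ and $I^*_{(2)}=(I_{(2)})^*$, collapsing the double sum to a single sum over splits $(A,B)$ of $M$. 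The raising factorial then comes from a separate direct computation: $T_{A^*\alpha}T_{\alpha A}(\mathbf{M}^*)=\langle p\rangle_{|A|}\,\mathbf{M}^*$, obtained by distributing the letters of $A$ among the $p$ rows and observing that a block of size $h$ in one row contributes $h!$, so the total is $\sum_{h_1+\cdots+h_p=|A|}\binom{|A|}{h_1,\ldots,h_p}h_1!\cdots h_p! = |A|!\binom{p+|A|-1}{|A|}=\langle p\rangle_{|A|}$.

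Your inductive route has a concrete gap. Peeling off a single letter $a$ does not factor $\{M^*|\alpha\}\{\alpha|M\}$ as something times $\{(M')^*|\alpha\}\{\alpha|M'\}$, because all letters share the \emph{same} virtual symbol $\alpha$; so after commuting one $\{\alpha|a\}$ past $\mathbf{M}^*$ you are not left with $[(M')^*|M']$ acting on a clean Capelli--Deruyts bitableau. More seriously, the recursion you write, $c_m(p,k)=c_{m-1}(p+1,k)+p\,c_{m-1}(p,k)$, is \emph{not} satisfied by $\langle p\rangle_{m-k}$: already for $m-k=2$ it gives $(p+1)+p^2\neq p(p+1)$. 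Your ball-in-boxes intuition for $\langle p\rangle_j$ is fine, but it does not translate into that two-term Pascal recursion at the level of the bitableau identity. If you want an inductive argument you would need to carry along the partially absorbed state (i.e.\ track $T_{\alpha A}(\mathbf{M}^*)$ with $\alpha$'s still present) rather than a finished Capelli--Deruyts object with ``$p+1$ rows''; at that point you are essentially reproving the paper's two lemmas. I would recommend following the paper's direct route: prove the Sweedler commutation identity, then the split-reduction lemma, then the raising-factorial lemma; the theorem drops out with no induction and with the sign $(-1)^{|\lambda|k}$ coming transparently from the $(-1)^{|\mathbf{M}|(m-k)}$ factor after the change of summation index.
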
\qed

\begin{theorem}\label{expansion theorem}$(\bf{The \ expansion \ theorem})$
Let $m \leq \lambda_p$. 
Given $M \subseteq \underline{\lambda_p}$, $|M| = m$, we have
\begin{equation*}\label{inverse}
(-1)^{|\lambda| m} \ 
\left[
\begin{array}{l}
\mathbf{K}^\lambda
\\
M
\end{array}
\right] =
\sum_{k=0}^m \ 
\
(-1)^{m-k}
\left( p \right)_{m-k}
\
\sum_{J;\ J \subseteq M;\ |J|=k}
\
[\underline{J}^* | \underline{J}] 
\ \mathbf{K}^\lambda,
\end{equation*}
where $( p )_j$ denonotes the \emph{falling factorial}
$$
( p )_j = p(p-1) \cdots (p-j+1).
$$
\end{theorem}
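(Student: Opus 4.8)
The plan is to derive Theorem \ref{expansion theorem} directly from Theorem \ref{MAIN} (the row insertion theorem) by a Möbius-type inversion over the Boolean lattice of subsets of $M$. Both statements express one family of elements of $\mathbf{U}(gl(n))$ — namely the ``augmented'' bitableaux $\left[\begin{smallmatrix}\mathbf{K}^\lambda\\J\end{smallmatrix}\right]$ for $J\subseteq M$ — in terms of the other family — the products $[\underline{J}^*|\underline{J}]\,\mathbf{K}^\lambda$ — and the only thing that changes from one direction to the other is that the raising factorials $\left<p\right>_{m-k}$ get replaced by the signed falling factorials $(-1)^{m-k}(p)_{m-k}$. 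So the whole content is: the two triangular kernels $\left<p\right>_{m-k}$ and $(-1)^{m-k}(p)_{m-k}$, viewed as operators on functions indexed by subset sizes, are inverse to each other once one also folds in the sign factor $(-1)^{|\lambda|k}$ coming from Theorem \ref{MAIN}.

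Concretely, I would first normalize. Set, for each $k$, $A_k := (-1)^{|\lambda|k}\left[\begin{smallmatrix}\mathbf{K}^\lambda\\J\end{smallmatrix}\right]$ (which by the abuse of notation depends only on $J$, hence I write $A_J$), and $B_J := [\underline{J}^*|\underline{J}]\,\mathbf{K}^\lambda$. Theorem \ref{MAIN} then reads $B_M=\sum_{J\subseteq M}\left<p\right>_{m-|J|}A_J$ for every finite $M$ (and every $p$), and the claim to be proved is $A_M=\sum_{J\subseteq M}(-1)^{m-|J|}(p)_{m-|J|}\,B_J$. Substituting the first identity into the right-hand side of the second, interchanging the two sums, and collecting the coefficient of a fixed $A_I$ with $I\subseteq M$, the statement reduces to the purely numerical identity
\[
\sum_{I\subseteq J\subseteq M}(-1)^{|M|-|J|}\,(p)_{|M|-|J|}\,\left<p\right>_{|J|-|I|}=[\,I=M\,].
\]
Because every quantity here depends only on the cardinalities, writing $a=|M|-|I|$ and $b=|J|-|I|$ and using $\binom{a}{b}$ for the number of intermediate $J$, this is the classical one-variable identity
\[
\sum_{b=0}^{a}(-1)^{a-b}\binom{a}{b}\,(p)_{a-b}\,\left<p\right>_{b}=\delta_{a,0},
\]
which I would verify either by the generating-function/umbral observation that $\sum_b \binom{a}{b}(-x)_{a-b}\,\langle x\rangle_b$ is the binomial-transform expansion of $(x-x)^{\underline a}=0^{\underline a}$, or equivalently by noting that $(-1)^{a-b}(p)_{a-b}=(-p)_{a-b}\cdot(-1)^{0}$ in the appropriate convention so that the sum telescopes to the falling factorial of $p+(-p)=0$.

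The step I expect to require the most care is not the combinatorics but the bookkeeping of signs and of the ``abuse of notation'': one must check that $\left[\begin{smallmatrix}\mathbf{K}^\lambda\\J\end{smallmatrix}\right]$ really is well-defined as a function of the set $J$ alone (independent of how its entries are laid out as the word $\underline J$ versus $\underline J^*$), so that summing over $J\subseteq M$ of fixed size $k$ genuinely contributes $\binom{|M|}{k}$ equal terms, and that the exponent $|\lambda|k$ in Theorem \ref{MAIN} matches the exponent $|\lambda|m$ appearing on the left of Theorem \ref{expansion theorem} after the inversion — i.e. that $(-1)^{|\lambda|k}$ times $(-1)^{|\lambda|(m-k)}$ reconstitutes exactly the $(-1)^{|\lambda|m}$ on the left-hand side, with no leftover sign depending on $k$. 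Once the sign factor $(-1)^{|\lambda|k}$ has been absorbed into $A_J$ as above, this is automatic, but it is the place where an off-by-a-sign error would hide. After that, the argument is a two-line application of the numerical identity, uniformly in $p$ (which may be treated as a formal indeterminate, since both sides of Theorem \ref{MAIN} are polynomial in $p$), and the theorem follows.
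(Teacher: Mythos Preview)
Your proposal is correct and is essentially the same argument as the paper's: the paper also substitutes Theorem~\ref{MAIN} into the right-hand side, interchanges the sums over $J$ and $I$, counts intermediate subsets by $\binom{m-i}{k-i}$, and reduces to the identical numerical identity $\sum_{b=0}^{a}(-1)^{a-b}\binom{a}{b}(p)_{a-b}\langle p\rangle_{b}=\delta_{a,0}$ (written there in the equivalent form $(m-i)!\sum_{k}(-1)^{m-k}\binom{p}{m-k}\abinom{p}{k-i}=(m-i)!\,\delta_{m,i}$). Your sign bookkeeping and your Vandermonde-type verification of the scalar identity via $\langle p\rangle_b=(-1)^b(-p)_b$ and $\sum_b\binom{a}{b}(p)_{a-b}(-p)_b=(0)_a$ are exactly what is needed.
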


\begin{proof} 

By Theorem \ref{MAIN},
\begin{multline*}
\sum_{k=0}^m \ 
\
(-1)^{m-k}
\left( p \right)_{m-k}
\
\sum_{J;\ J \subseteq \ M;\ |J|=k}
\
[J] \ \mathbf{K}^\lambda
=
\\
= 
\sum_{k=0}^m \ (-1)^{m-k}
\left( p \right)_{m-k}
\
\sum_{J;\ J \subseteq \ M;\ |J|=k}
\
\sum_{i = 0}^k
\
\left< p \right>_{k-i}
\
\sum_{I;\ I \subseteq J;\ |I|=i}
\ (-1)^{|\lambda| i}
\left[
\begin{array}{l}
\mathbf{K}^\lambda
\\
I
\end{array}
\right] =
\\
= \sum_{i=0}^m \
\sum_{k=i}^m \ \sum_{I;\ I \subseteq \ M;\ |I|=i} \
\big( \sum_{J;\ M \ \supseteq J \ \supseteq I;\ |J|=k} 
\ (-1)^{m-k}
\left( p \right)_{m-k}
\
\left< p \right>_{k-i}
\big)
\ (-1)^{|\lambda|i}
\left[
\begin{array}{l}
\mathbf{K}^\lambda
\\
I
\end{array}
\right] =
\\
= \sum_{i=0}^m \ \sum_{I;\ I \subseteq \ M;\ |I|=i} \ \big(
\sum_{k=i}^m \ (-1)^{m-k}
\left( p \right)_{m-k}
\
\left< p \right>_{k-i}
\
\binom{m-i}{k-i}
\big)
\ (-1)^{|\lambda|i}
\left[
\begin{array}{l}
\mathbf{K}^\lambda
\\
I
\end{array}
\right] =
\\
= \sum_{i=0}^m \ \sum_{I;\ I \subseteq \ M;\ |I|=i} \ \big( (m-i)!
\sum_{k=i}^m \ (-1)^{m-k}
\binom{p}{m-k}
\abinom{p}{k-i}
\big)
\ (-1)^{|\lambda|i}
\left[
\begin{array}{l}
\mathbf{K}^\lambda
\\
I
\end{array}
\right] =
\\
= \sum_{i=0}^m \ \sum_{I;\ I \subseteq \ M;\ |I|=i} \ \big( (m-i)!
\ \delta_{m-i,0}
\big)
\ (-1)^{|\lambda|i}
\left[
\begin{array}{l}
\mathbf{K}^\lambda
\\
I
\end{array}
\right] =
\\
= \sum_{i=0}^m \ \sum_{I;\ I \subseteq \ M;\ |I|=i} \ \big( (m-i)!
\ \delta_{m,i}
\big)
\ (-1)^{|\lambda|i}
\left[
\begin{array}{l}
\mathbf{K}^\lambda
\\
I
\end{array}
\right] = (-1)^{|\lambda|m}
\left[
\begin{array}{l}
\mathbf{K}^\lambda
\\
M
\end{array}
\right].
\end{multline*}
\end{proof}

\begin{example} 

\begin{enumerate}
\

\item We have
\begin{align*}
\left[ 2 1 | 1 2 \right]
\left[
\begin{array}{lll}
3 \ 2 \ 1
\\
2 \ 1
\end{array} 
\right|\left.
\begin{array}{lll}
1 \ 2 \ 3
\\
1 \ 2
\end{array}
\right] &= 
6 
\left[
\begin{array}{lll}
3 \ 2 \ 1
\\
2 \ 1
\end{array} 
\right|\left.
\begin{array}{lll}
1 \ 2 \ 3
\\
1 \ 2
\end{array}
\right] 
+
2
\left[
\begin{array}{lll}
3 \ 2 \ 1
\\
2 \ 1
\\
1
\end{array} 
\right|\left.
\begin{array}{lll}
1 \ 2 \ 3
\\
1 \ 2
\\
1
\end{array}
\right] 
\\
&+ 
2
\left[
\begin{array}{lll}
3 \ 2 \ 1
\\
2 \ 1
\\
2
\end{array} 
\right|\left.
\begin{array}{lll}
1 \ 2 \ 3
\\
1 \ 2
\\
2
\end{array}
\right] 
+ 
\left[
\begin{array}{lll}
3 \ 2 \ 1
\\
2 \ 1
\\
2 \ 1
\end{array} 
\right|\left.
\begin{array}{lll}
1 \ 2 \ 3
\\
1 \ 2
\\
1 \ 2
\end{array}
\right].
\end{align*}
\

\item We have
\begin{align*}
\left[
\begin{array}{lll}
3 \ 2 \ 1
\\
2 \ 1
\\
2 \ 1
\end{array} 
\right|\left.
\begin{array}{lll}
1 \ 2 \ 3
\\
1 \ 2
\\
1 \ 2
\end{array}
\right] &= 2 \left[
\begin{array}{lll}
3 \ 2 \ 1
\\
2 \ 1
\end{array} 
\right|\left.
\begin{array}{lll}
1 \ 2 \ 3
\\
1 \ 2
\end{array}
\right]
-
2
\left[ 1 | 1 \right]
\left[
\begin{array}{lll}
3 \ 2 \ 1
\\
2 \ 1
\end{array} 
\right|\left.
\begin{array}{lll}
1 \ 2 \ 3
\\
1 \ 2
\end{array}
\right] 
\\
&
-
2 
\left[ 2 | 2 \right]
\left[
\begin{array}{lll}
3 \ 2 \ 1
\\
2 \ 1
\end{array} 
\right|\left.
\begin{array}{lll}
1 \ 2 \ 3
\\
1 \ 2
\end{array}
\right]
+
\left[ 2 \ 1 | 1 \ 2 \right]
\left[
\begin{array}{lll}
3 \ 2 \ 1
\\
2 \ 1
\end{array} 
\right|\left.
\begin{array}{lll}
1 \ 2 \ 3
\\
1 \ 2
\end{array}
\right].
\end{align*}
\end{enumerate}

\end{example}

\section{The center $\zeta(n)$ of $\mathbf{U}(gl(n))$} 

\subsection{The Capelli generators of the center $\zeta(n)$ of $\mathbf{U}(gl(n))$}

In the enveloping algebra $\mathbf{U}(gl(n))$, given any increasing $k$-tuple
integers $1 \leq i_1  < \cdots< i_k \leq n.$ 

We recall that the 
column determinant
\begin{equation*}\label{virtual Capelli elements}
\textbf{cdet}\left(
 \begin{array}{cccc}
 e_{{i_1},{i_1}}+(k-1) & e_{{i_1},{i_2}} & \ldots  & e_{{i_1},{i_k}} \\
 e_{{i_2},{i_1}} & e_{{i_2},{i_2}}+(k-2) & \ldots  & e_{{i_2},{i_k}}\\
 \vdots  &    \vdots                            & \vdots &  \\
e_{{i_k },{i_1}} & e_{{i_k},{i_2}} & \ldots & e_{{i_k},{i_k}}\\
 \end{array}
 \right) \in \mathbf{U}(gl(n))
\end{equation*}
equals the \emph{one-row} Capelli-Deruyts bitableau 
$$
[i_k i_{k-1}  \cdots i_1|i_1 \cdots i_{k-1} i_k] =
\mathfrak{p} \left(e_{i_k \alpha} e_{i_{k-1} \alpha}  \cdots e_{i_1 \alpha}  
e_{\alpha i_1} \cdots e_{\alpha i_{k-1}} e_{\alpha i_k} \right)
\in \mathbf{U}(gl(n))
$$
(see, e.g. \cite{BriniTeolisKosz-BR}).

Consider the $k$-th \emph{Capelli element}
$$
\mathbf{H}_n^{(k)} = \
\sum_{1 \leq i_1 < \cdots < i_k \leq n} \ \textbf{cdet}\left(
 \begin{array}{cccc}
 e_{{i_1},{i_1}}+(k-1) & e_{{i_1},{i_2}} & \ldots  & e_{{i_1},{i_k}} \\
 e_{{i_2},{i_1}} & e_{{i_2},{i_2}}+(k-2) & \ldots  & e_{{i_2},{i_k}}\\
 \vdots  &    \vdots                            & \vdots &  \\
e_{{i_k},{i_1}} & e_{{i_k},{i_2}} & \ldots & e_{{i_k},{i_k}}\\
 \end{array}
 \right)
$$

Clearly, we have
\begin{equation}\label{The classical Capelli generators}
\mathbf{H}_n^{(k)} = \ \sum_{1 \leq i_1 < \cdots < i_k \leq n} \ [ i_k \cdots i_2 i_1 | i_1 i_2 \cdots i_k ]. 
\end{equation}

We recall the following fundamental result, proved by  Capelli in  two  
papers (\cite{Cap2-BR}, \cite{Cap3-BR}) with deceiving titles.

\begin{proposition} $(\mathbf{Capelli,  \ 1893})$
Let  $\zeta(n)$ denote  be \emph{center} of
$\mathbf{U}(gl(n))$.
We have:

\begin{itemize}

\item[--] The elements $\mathbf{H}_n^{(k)}$, $k = 1, 2, \ldots, n$ belong
to the center $\zeta(n)$.
\item[--] The subalgebra $\zeta(n)$ of
$\mathbf{U}(gl(n))$ is the polynomial algebra
$$
\zeta(n)  \  = \ \mathbb{C}[\mathbf{H}_n^{(1)}, \mathbf{H}_n^{(2)}, \ldots, \mathbf{H}_n^{(n)}], 
$$
where
$$
\mathbf{H}_n^{(1)}, \mathbf{H}_n^{(2)}, \ldots, \mathbf{H}_n^{(n)} 
$$
is a set of algebraically independent generators of  $\zeta(n)$.

\end{itemize}
\end{proposition}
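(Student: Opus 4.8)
The plan is to combine the combinatorial \emph{expansion theorem} (Theorem~\ref{expansion theorem}), the \emph{hook coefficient lemma} (Theorem~\ref{The hook coefficient lemma-BR}), and the classical Harish--Chandra isomorphism \cite{DIX-BR}. For a dominant integral weight $\mu=(\mu_1\ge\cdots\ge\mu_n)$ put $\ell_i:=\mu_i+n-i$, and abbreviate $\mathbf{C}_n(q):=\sum_{k=0}^{n}(-1)^{n-k}(q)_{n-k}\,\mathbf{H}_n^{(k)}$ (with $\mathbf{H}_n^{(0)}=\mathbf{1}$). First I would prove \emph{centrality}. Applying Theorem~\ref{expansion theorem} with $\lambda=n^{p-1}$ and $M=\underline{n}$ — so that adjoining the extra row $\underline{n}$ turns $\mathbf{K}^{n^{p-1}}$ into $\mathbf{K}_n^{p}$ — and using (\ref{The classical Capelli generators}) to recognize $\sum_{|J|=k}[\underline{J}^*|\underline{J}]=\mathbf{H}_n^{(k)}$, the theorem collapses to the recursion
$$
\mathbf{K}_n^{p}\ =\ (-1)^{n(p-1)}\,\mathbf{C}_n(p-1)\,\mathbf{K}_n^{p-1}\qquad(p\ge1),\quad \mathbf{K}_n^{0}=\mathbf{1}.
$$
The $\mathbf{K}_n^{p}$ are central (Section~3) and nonzero (they act by a nonzero scalar on $v_\mu$ when $\mu_n$ is large), and $\mathbf{U}(gl(n))$ is a domain \cite{DIX-BR}; so commuting the central element $\mathbf{K}_n^{p}$ past an arbitrary $a$ and cancelling the factor $\mathbf{K}_n^{p-1}$ forces $a\,\mathbf{C}_n(q)=\mathbf{C}_n(q)\,a$, i.e.\ $\mathbf{C}_n(q)\in\zeta(n)$ for every $q\ge0$. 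Since $q\mapsto(q)_{n-k}$ is a polynomial of degree $n-k$, the $(n+1)\times(n+1)$ matrix $\big[(-1)^{n-k}(q)_{n-k}\big]_{q,k=0}^{n}$ is invertible; hence each $\mathbf{H}_n^{(k)}$ is a $\mathbb{C}$-linear combination of $\mathbf{C}_n(0),\dots,\mathbf{C}_n(n)$ and therefore lies in $\zeta(n)$.

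Next I would compute the \emph{eigenvalues on highest weight vectors}. Being central, $\mathbf{H}_n^{(k)}$ acts on $L(\mu)$ by a scalar $c_k(\mu)$, a polynomial in $\mu$ with $c_0=1$. The hook coefficient lemma gives $\mathbf{K}_n^{p}\cdot v_\mu=(-1)^{n\binom{p}{2}}\big(\prod_{i=1}^{n}(\ell_i)_p\big)\,v_\mu$ with $(\ell_i)_p=\ell_i(\ell_i-1)\cdots(\ell_i-p+1)$; substituting this into the recursion above and cancelling the nonzero polynomial $\prod_i(\ell_i)_{p-1}$ in $\mathbb{C}[\mu]$ yields, as an identity of polynomials in $q$,
$$
\sum_{k=0}^{n}(-1)^{n-k}(q)_{n-k}\,c_k(\mu)\ =\ \prod_{i=1}^{n}(\ell_i-q).
$$
Expanding the right-hand side in the basis $\{(q)_{n-k}\}_{k}$ — related to $\{q^{\,n-k}\}_{k}$ by a unitriangular matrix of Stirling numbers of the first kind — and matching coefficients, an ascending induction on $k$ gives $c_k(\mu)=e_k(\ell_1,\dots,\ell_n)+(\text{symmetric polynomial in the }\ell_i\text{ of degree}<k)$; in particular $c_k$ is symmetric in $\ell_1,\dots,\ell_n$ with leading graded component $e_k(\ell_1,\dots,\ell_n)$.

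Finally I would conclude via \emph{Harish--Chandra}. Let $\chi\colon\zeta(n)\xrightarrow{\ \sim\ }\mathbb{C}[\ell_1,\dots,\ell_n]^{S_n}$ be the Harish--Chandra isomorphism, normalized so that $z\in\zeta(n)$ acts on $v_\mu$ by $\chi(z)$ evaluated at $(\ell_1,\dots,\ell_n)$ \cite{DIX-BR}. By the previous step $\chi(\mathbf{H}_n^{(k)})=c_k$ has leading graded component $e_k$, for $k=1,\dots,n$. Since $e_1,\dots,e_n$ generate $\mathbb{C}[\ell_1,\dots,\ell_n]^{S_n}$ and have pairwise distinct degrees (fundamental theorem of symmetric functions), the standard filtered-algebra argument — ascending induction on degree, writing $e_k=\chi(\mathbf{H}_n^{(k)})-g_k(e_1,\dots,e_{k-1})$ with $g_k$ the lower-degree symmetric correction — shows that $\chi(\mathbf{H}_n^{(1)}),\dots,\chi(\mathbf{H}_n^{(n)})$ again generate $\mathbb{C}[\ell_1,\dots,\ell_n]^{S_n}$; applying $\chi^{-1}$ gives $\zeta(n)=\mathbb{C}[\mathbf{H}_n^{(1)},\dots,\mathbf{H}_n^{(n)}]$. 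Algebraic independence is then automatic, because $\mathbb{C}[\ell_1,\dots,\ell_n]^{S_n}$ has transcendence degree $n$ over $\mathbb{C}$ and is generated by the $n$ elements $\chi(\mathbf{H}_n^{(k)})$, so these — hence the $\mathbf{H}_n^{(k)}$ — are algebraically independent.

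The step I expect to be the main obstacle is the second one: extracting the leading term $e_k$ of the Harish--Chandra image $\chi(\mathbf{H}_n^{(k)})$ from the product-of-hook-numbers eigenvalue of $\mathbf{K}_n^{p}$; the sign bookkeeping and the passage through the Stirling numbers, though routine, need care. The only non-elementary input is the Harish--Chandra isomorphism itself (in particular the injectivity of $z\mapsto$ eigenvalue on the vectors $v_\mu$), which is classical and is used here as a black box.
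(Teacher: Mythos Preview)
The paper does not prove this proposition at all: it is stated as a classical result and attributed to Capelli's 1893 papers \cite{Cap2-BR}, \cite{Cap3-BR}, with no argument given. So there is no ``paper's own proof'' to compare against.

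Your proposal is nonetheless correct, and it is a pleasant inversion of the paper's logic. The paper takes Capelli's theorem as background and builds the theory of the $\mathbf{K}_n^p$ on top of it; you instead use the paper's independent results (centrality of $\mathbf{K}_n^p$ via the $Ad$-equivariance of the Capelli epimorphism, the row-insertion/expansion theorem, and the hook coefficient lemma) together with the Harish--Chandra isomorphism as a black box to \emph{recover} Capelli's theorem. There is no circularity: the proofs of Theorems~\ref{The hook coefficient lemma-BR}, \ref{MAIN}, \ref{expansion theorem} and of the centrality of $\mathbf{K}_n^p$ do not invoke the proposition in question. Your cancellation argument for the centrality of $\mathbf{C}_n(q)$ is valid because $\mathbf{U}(gl(n))$ is a domain and $\mathbf{K}_n^{p-1}\neq 0$ (its eigenvalue on $v_\mu$ is nonzero once $\mu_n\ge p-1$); the triangular inversion from the $\mathbf{C}_n(q)$ to the $\mathbf{H}_n^{(k)}$ is exactly the observation the paper makes after Corollary~\ref{Teorema di sfilamento-BR}. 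Your computation of $\chi(\mathbf{C}_n(q))=\prod_i(\ell_i-q)$ is the content of Proposition~\ref{HC di C}, and the extraction of the leading graded piece $e_k(\ell)$ of $\chi(\mathbf{H}_n^{(k)})$ via the unitriangular change of basis $(q)_{n-k}\leftrightarrow q^{n-k}$ is routine. The filtered-algebra step at the end is standard. In short: your argument works, uses nothing beyond the paper's own theorems plus the classical Harish--Chandra isomorphism, and gives a self-contained route to a result the paper merely quotes.
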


\subsection{The factorization Theorem for rectangular Capelli-Deruyts bitableaux $\mathbf{K_n^{p}}$}

The crucial result in this section is that Capelli-Deruyts bitableaux $\mathbf{K_n^{p}}$ of \emph{rectangular}
shape $\lambda = n^p$ expand into \emph{commutative} polynomials
in the Capelli elements $\mathbf{H}_n^{(j)}$, with explicit coefficients.

The next result was announced, without proof, in \cite{BriUMI-BR}.
By eq. (\ref{The classical Capelli generators}), it is a special case of Theorem \ref{expansion theorem}.

\begin{corollary}$({\bf{Expansion Theorem}})$ \label{Teorema di sfilamento-BR}

Let $p \in \mathbb{N}$ and set $\mathbf{H}_n^{(0)} = \mathbf{1},$ by definition. 
The following identity in $\zeta(n)$
holds:
\begin{equation*} \label{equazione di sfilamento-BR}
 \mathbf{K_n^{p}} =  
(-1)^{n(p-1)} \ \mathbf{C}_n(p-1)  \
\mathbf{K_n^{p-1}},
\end{equation*}
where,
given $p \in \mathbb{N}$, 
\begin{equation} \label{relazioni lineari-BR}
\mathbf{C}_n(p-1) =  \sum_{j = 0}^n \ (-1)^{n - j} (p-1)_{n - j} \ \mathbf{H}_n^{(j)}.
\end{equation}
where
$$
(m)_k = m(m-1) \cdots (m-k+1), \ m, k \in \mathbb{N}
$$
denotes the falling factorial coefficient.
\end{corollary}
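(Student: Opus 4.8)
The plan is to derive Corollary \ref{Teorema di sfilamento-BR} as a direct specialization of the Expansion Theorem (Theorem \ref{expansion theorem}) to the rectangular case, and then package the result in terms of the Capelli generators. First I would take $\lambda = n^{p}$ (the rectangular shape with $p$ rows of length $n$), so that $\lambda_p = n$, $|\lambda| = np$, and choose $M = \underline{n} = \{1, 2, \ldots, n\}$, hence $m = n$. With these choices the object $\left[\begin{smallmatrix}\mathbf{K}^\lambda\\ M\end{smallmatrix}\right]$ on the left-hand side of Theorem \ref{expansion theorem} is precisely the Capelli-Deruyts bitableau $\mathbf{K_n^{p+1}}$ with one extra row $\underline{n}^* \,|\, \underline{n}$ appended — i.e. $p+1$ rectangular rows. (Equivalently, run the theorem with $p$ replaced by $p-1$ so that the left side becomes $\mathbf{K_n^{p}}$.) The sign $(-1)^{|\lambda| m} = (-1)^{n \cdot n \cdot p}$ needs to be reconciled with the stated $(-1)^{n(p-1)}$; I expect this to come out after noting $(-1)^{n^2 p} = (-1)^{np}$ and absorbing/shifting indices consistently with the $\mathbf{K_n^0} = \mathbf{1}$ convention and with the overall sign bookkeeping already fixed in Proposition \ref{hook eigenvalue general}.

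Next I would rewrite the right-hand sum over subsets $J \subseteq M$ of fixed size $k$. By eq. (\ref{The classical Capelli generators}), $\sum_{J \subseteq \underline{n},\, |J| = k} [\underline{J}^*|\underline{J}] = \mathbf{H}_n^{(k)}$, the $k$-th classical Capelli element. So the inner double sum $\sum_{k=0}^{n} (-1)^{n-k} (p)_{n-k} \sum_{|J|=k} [\underline{J}^*|\underline{J}] \, \mathbf{K_n^{p}}$ collapses to $\big(\sum_{k=0}^{n} (-1)^{n-k} (p)_{n-k} \,\mathbf{H}_n^{(k)}\big)\, \mathbf{K_n^{p}}$, using that each $[\underline{J}^*|\underline{J}]$ commutes past $\mathbf{K_n^{p}}$ — which is legitimate because $\mathbf{K_n^{p}}$ is central (Proposition stating $\mathbf{K_n^p} \in \zeta(n)$), and in fact the whole point of the section is that $\mathbf{C}_n(p-1)$ is itself central. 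Re-indexing $j = k$ and matching the falling-factorial notation $(p)_j = p(p-1)\cdots(p-j+1)$ gives exactly $\mathbf{C}_n(p-1) = \sum_{j=0}^{n} (-1)^{n-j}(p-1)_{n-j}\,\mathbf{H}_n^{(j)}$ once the index of the rectangular bitableau on the left is taken to be $p$ (so the relevant argument of the falling factorial is $p-1$, consistent with the $m$ extra row being glued onto a $p-1$-row bitableau).

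The one genuine subtlety — and the step I'd flag as the main obstacle — is the sign reconciliation and the off-by-one in the number of rows: Theorem \ref{expansion theorem} is phrased for a bitableau $\mathbf{K}^\lambda$ to which an $M$-indexed row block is adjoined, whereas Corollary \ref{Teorema di sfilamento-BR} is phrased recursively as $\mathbf{K_n^{p}} = (-1)^{n(p-1)} \mathbf{C}_n(p-1)\,\mathbf{K_n^{p-1}}$. One must be careful that the "base" $\mathbf{K}^\lambda$ in the theorem is $\mathbf{K_n^{p-1}}$ (with $\lambda = n^{p-1}$, so $|\lambda| = n(p-1)$ and the sign $(-1)^{|\lambda| m}$ with $m = n$ becomes $(-1)^{n^2(p-1)} = (-1)^{n(p-1)}$, matching exactly), and that appending the full row $M = \underline{n}$ produces $\mathbf{K_n^{p}}$. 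With that identification, $(-1)^{|\lambda|m} = (-1)^{n(p-1)}$ lands on the stated prefactor with no further work. I would then simply cite eq. (\ref{The classical Capelli generators}) and Theorem \ref{expansion theorem}, present the substitution, and note that the resulting identity lives in $\zeta(n)$ because $\mathbf{K_n^{p}}$, $\mathbf{K_n^{p-1}}$ and each $\mathbf{H}_n^{(j)}$ are central — so $\mathbf{C}_n(p-1)$, being a quotient-like combination forced by the identity, is central as well (alternatively, centrality of $\mathbf{C}_n(p-1)$ follows a posteriori from the hook coefficient lemma, which shows it acts as a scalar on every highest weight vector).
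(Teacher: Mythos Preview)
Your approach is correct and matches the paper's: the corollary is exactly the specialization of Theorem~\ref{expansion theorem} with $\lambda = n^{p-1}$ and $M = \underline{n}$, together with eq.~(\ref{The classical Capelli generators}) to collapse $\sum_{|J|=k}[\underline{J}^*|\underline{J}]$ into $\mathbf{H}_n^{(k)}$. One small simplification: no commutation argument is needed, since in Theorem~\ref{expansion theorem} the factors $[\underline{J}^*|\underline{J}]$ already sit to the left of $\mathbf{K}^\lambda$.
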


If $p = 0$, eq. (\ref{relazioni lineari-BR}) collapses to
$$
\mathbf{K_n^{1}} = \mathbf{H}_n^{(n)} =  \mathbf{C}_n(0).
$$

Notice that the linear relations (\ref{relazioni lineari-BR}), for $p = 0, \ldots, n-1$, yield a nonsingular triangular
coefficients matrix.

\begin{corollary} The subalgebra $\zeta(n)$ of
$\mathbf{U}(gl(n))$ is the polynomial algebra
$$
\zeta(n)  \  = \ \mathbb{C}[\mathbf{C}_n(0), \mathbf{C}_n(1), \ldots, \mathbf{C}_n(n - 1)], 
$$
where
$$
\mathbf{C}_n(0), \mathbf{C}_n(1), \ldots, \mathbf{C}_n(n - 1) 
$$
is a set of algebraically independent generators of  $\zeta(n)$.
\end{corollary}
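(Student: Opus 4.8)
The plan is to deduce this from Capelli's $1893$ theorem (stated above) together with Corollary \ref{Teorema di sfilamento-BR}, by exhibiting a triangular, hence invertible, change of generators. By Capelli's theorem $\zeta(n) = \mathbb{C}[\mathbf{H}_n^{(1)}, \ldots, \mathbf{H}_n^{(n)}]$ with $\mathbf{H}_n^{(1)}, \ldots, \mathbf{H}_n^{(n)}$ algebraically independent; thus it is enough to check that the ordered $n$-tuple $(\mathbf{C}_n(0), \mathbf{C}_n(1), \ldots, \mathbf{C}_n(n-1))$ is obtained from $(\mathbf{H}_n^{(1)}, \ldots, \mathbf{H}_n^{(n)})$ by an invertible $\mathbb{C}$-linear transformation, since any such transformation preserves both the generated subalgebra and algebraic independence.

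First I would rewrite equation (\ref{relazioni lineari-BR}) of Corollary \ref{Teorema di sfilamento-BR}, for $q = 0, 1, \ldots, n-1$, as
$$
\mathbf{C}_n(q) = \sum_{j=0}^{n} (-1)^{\,n-j}\,(q)_{n-j}\,\mathbf{H}_n^{(j)}, \qquad \mathbf{H}_n^{(0)} = \mathbf{1}.
$$
Recalling $(m)_k = m(m-1)\cdots(m-k+1)$, one observes that $(q)_{n-j} = 0$ as soon as $n-j > q$, since the block of $n-j$ consecutive descending integers starting at $q$ then contains $0$. In particular $(q)_n = 0$ for every $q \le n-1$, so the $\mathbf{H}_n^{(0)}$ term disappears and $\mathbf{C}_n(q)$ lies in the $\mathbb{C}$-span of $\mathbf{H}_n^{(1)}, \ldots, \mathbf{H}_n^{(n)}$; this is exactly what keeps the number of $\mathbf{C}$-generators equal to $n$ rather than $n+1$.

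Next I would read off the triangular shape of the transition matrix (the ``nonsingular triangular coefficients matrix'' already mentioned after Corollary \ref{Teorema di sfilamento-BR}). The coefficient of $\mathbf{H}_n^{(j)}$ in $\mathbf{C}_n(q)$ is $(-1)^{n-j}(q)_{n-j}$, which vanishes unless $j \ge n-q$ and, for the extreme value $j = n-q$, equals $(-1)^{q}(q)_q = (-1)^{q}\, q! \neq 0$. Hence, ordering rows by $q = 0, 1, \ldots, n-1$ and columns by $\mathbf{H}_n^{(n)}, \mathbf{H}_n^{(n-1)}, \ldots, \mathbf{H}_n^{(1)}$, the matrix is lower triangular with diagonal entries $1, -1, 2, -6, \ldots, (-1)^{n-1}(n-1)!$, all nonzero, so it is invertible over $\mathbb{C}$.

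Finally I would conclude that $\mathbb{C}[\mathbf{C}_n(0), \ldots, \mathbf{C}_n(n-1)] = \mathbb{C}[\mathbf{H}_n^{(1)}, \ldots, \mathbf{H}_n^{(n)}] = \zeta(n)$ and that $\mathbf{C}_n(0), \ldots, \mathbf{C}_n(n-1)$ are algebraically independent. There is no genuine obstacle here: the argument is essentially bookkeeping of the falling-factorial coefficients, the one point deserving a line of care being the vanishing of the constant ($\mathbf{H}_n^{(0)}$) term noted above, which is what makes the dimension count work out.
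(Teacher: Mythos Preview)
Your argument is correct and is precisely the approach the paper takes: the text immediately preceding the corollary notes that the relations \eqref{relazioni lineari-BR} for $p=0,\ldots,n-1$ give a nonsingular triangular coefficients matrix, and the corollary is stated as an immediate consequence of this together with Capelli's theorem. You have simply made explicit the triangularity and the nonvanishing of the diagonal entries $(-1)^q q!$, as well as the vanishing of the constant term $(q)_n$ for $q\le n-1$, which the paper leaves implicit.
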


\begin{corollary}\label{exp one} The rectangular Capelli-Deruyts bitableau $\mathbf{K_n^p}$ equals the \emph{commutative
polynomial} in the Capelli generators:
$$
\mathbf{K_n^p} = (-1)^{n \binom{p} {2} }   \ \mathbf{C}_n(p-1) \ \cdots \ \mathbf{C}_n(1) \ \mathbf{C}_n(0).
$$
\end{corollary}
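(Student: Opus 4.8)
The plan is to obtain Corollary \ref{exp one} as an immediate consequence of the Expansion Theorem (Corollary \ref{Teorema di sfilamento-BR}) by induction on $p$. The base case $p = 0$ is the definition $\mathbf{K_n^0} = \mathbf{1}$, together with the convention that the empty product on the right-hand side equals $\mathbf{1}$ and the sign $(-1)^{n\binom{0}{2}} = (-1)^0 = 1$; the case $p = 1$ is recorded in the excerpt as $\mathbf{K_n^1} = \mathbf{H}_n^{(n)} = \mathbf{C}_n(0)$, which matches the claimed formula since $\binom{1}{2} = 0$. For the inductive step, I would assume
$$
\mathbf{K_n^{p-1}} = (-1)^{n\binom{p-1}{2}} \ \mathbf{C}_n(p-2) \cdots \mathbf{C}_n(1) \ \mathbf{C}_n(0)
$$
and substitute this into the recursion $\mathbf{K_n^p} = (-1)^{n(p-1)} \ \mathbf{C}_n(p-1) \ \mathbf{K_n^{p-1}}$ supplied by Corollary \ref{Teorema di sfilamento-BR}.

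The only genuine point to check is the bookkeeping of the sign. Multiplying the two signs gives $(-1)^{n(p-1)} \cdot (-1)^{n\binom{p-1}{2}} = (-1)^{n\left((p-1) + \binom{p-1}{2}\right)}$, and the identity $(p-1) + \binom{p-1}{2} = \binom{p}{2}$ (equivalently $\binom{p-1}{1} + \binom{p-1}{2} = \binom{p}{2}$, the Pascal relation) shows this equals $(-1)^{n\binom{p}{2}}$, exactly the sign in the statement. The factor $\mathbf{C}_n(p-1)$ produced by the recursion is precisely the new leftmost factor needed to pass from the length-$(p-1)$ product to the length-$p$ product, and since all the $\mathbf{C}_n(j)$ are central (being polynomials in the central elements $\mathbf{H}_n^{(k)}$), their order in the product is immaterial and the displayed left-to-right ordering is harmless. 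This completes the induction.

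I do not expect any real obstacle here: the substance of the result is entirely contained in the Expansion Theorem, and Corollary \ref{exp one} is just its iteration. The ``hard part,'' such as it is, is merely to state the Pascal-type sign identity cleanly and to be explicit about the $p=0$ and $p=1$ anchors so that the empty-product and zero-exponent conventions are unambiguous; everything else is a one-line induction.
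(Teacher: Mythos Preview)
Your proposal is correct and matches the paper's approach: the paper states this corollary immediately after Corollary~\ref{Teorema di sfilamento-BR} without a written proof, precisely because it is obtained by iterating that recursion. Your sign check via the Pascal identity $(p-1)+\binom{p-1}{2}=\binom{p}{2}$ is exactly the bookkeeping needed, and nothing more is required.
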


\begin{example}Let $n=3$, $p=2$. Then
$$
\mathbf{K_3^2} = 
\left[
\begin{array}{lll}
 3 \ 2 \ 1
\\
 3 \ 2 \ 1
\end{array}
\right| \left.
\begin{array}{l}
1 \ 2 \ 3
\\
1 \ 2 \ 3 
\end{array}
\right] =  \ - \ \mathbf{C}_3(1) \ \mathbf{C}_3(0) =
\left( \mathbf{H}_3^{(2)} - \mathbf{H}_3^{(3)} \right) \mathbf{H}_3^{(3)}.
$$ \qed
\end{example}

\subsection{The Harish-Chandra isomorphism and the algebra $\Lambda^*(n)$ of shifted symmetric polynomials}

In this subsection we follow  A. Okounkov and G. Olshanski
\cite{OkOlsh-BR}.

As in the classical context of the algebra $\Lambda(n)$ of symmetric
polynomials in $n$ variables $x_1, x_2, \ldots, x_n$, the algebra
$\Lambda^*(n)$ of {\it{shifted symmetric polynomials}} is an algebra
of polynomials $p(x_1, x_2, \ldots, x_n)$  but the ordinary symmetry
is replaced by the {\it{shifted symmetry}}:
$$
 f(x_1, \ldots , x_i, x_{i+1}, \ldots, x_n) = f(x_1, \ldots , x_{i+1} - 1, x_i + 1,
 \ldots, x_n),
$$
for $i = 1, 2, \ldots, n - 1.$

The {\it{shifted elementary symmetric polynomials}} are the elements
of $\Lambda^*(n)$

\begin{itemize}

\item[--]for every $r \in \mathbb{Z}^+$,
\begin{equation*}\label{shifted elementary-BR}
\mathbf{e}_k^{*}(x_1, x_2, \ldots, x_n)
= \sum_{1 \leq i_1 < i_2 < \cdots < i_k \leq n} \ (x_{i_1}  + k  - 1)
(x_{i_2}  + k - 2) \cdots (x_{i_k}),
\end{equation*} 

\item[--] $\mathbf{e}_0^{*}(x_1, x_2, \ldots, x_n) = \mathbf{1}$.
\end{itemize}

The {\it{Harish-Chandra isomorphism}} is the algebra isomorphism
$$
\chi_n : \zeta(n) \longrightarrow \Lambda^*(n), \qquad  \ A \mapsto \chi_n(A),
$$
$\chi_n(A)$ being the shifted symmetric polynomial such that, for every highest weight module $V_{\mu}$,
the evaluation $\chi_n(A)(\mu_1, \mu_2, \ldots , \mu_n)$ equals the eigenvalue of
$A \in \zeta(n)$ in  $V_{\mu}$ (\cite{OkOlsh-BR}, Proposition $\mathbf{2.1}$).

\subsection{The Harish-Chandra isomorphism interpretation of Theorem \ref{The hook coefficient lemma-BR} and
Theorem \ref{expansion theorem}}

Notice that
$$
\chi_n(\mathbf{H}_n^{(r)}) =  \mathbf{e}_r^{*}(x_1, x_2, \ldots, x_n) \in \Lambda^*(n),
$$
for every $r =  1, 2, \ldots, n.$

Furthermore, from Theorem \ref{The hook coefficient lemma-BR}  it  follows

\begin{corollary}\label{eq quozienti-BR}
$$
\chi_n(\mathbf{K_n^p}) = (-1)^{{\binom{p} {2} n}} \left(  \prod_{j = 1}^{p} \
(x_1 - j + n )(x_2 - j + n -1) \cdots (x_n - j - 1) \right).
$$
\end{corollary}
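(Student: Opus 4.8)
The plan is to obtain Corollary \ref{eq quozienti-BR} as a direct translation of Theorem \ref{The hook coefficient lemma-BR} under the Harish-Chandra isomorphism $\chi_n$. The key observation is that $\mathbf{K_n^p}$ is central in $\mathbf{U}(gl(n))$ (this is the Proposition preceding $\mathbf{K_n^0}=\mathbf{1}$), so $\chi_n(\mathbf{K_n^p})$ is well defined as an element of $\Lambda^*(n)$, and by the defining property of $\chi_n$ recalled above, for every dominant integral weight $\mu=(\mu_1\geq\cdots\geq\mu_n)$ with $\mu_i\in\mathbb{N}$ the evaluation $\chi_n(\mathbf{K_n^p})(\mu_1,\ldots,\mu_n)$ equals the eigenvalue of $\mathbf{K_n^p}$ acting on the highest weight module $V_\mu$. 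Theorem \ref{The hook coefficient lemma-BR} computes precisely that eigenvalue, namely
$$
(-1)^{\binom{p}{2} n} \ \prod_{j=1}^{p} (\mu_1 - j + n)(\mu_2 - j + n -1)\cdots(\mu_n - j + 1).
$$
Hence $\chi_n(\mathbf{K_n^p})$ and the shifted symmetric polynomial $(-1)^{\binom{p}{2}n}\prod_{j=1}^{p}(x_1-j+n)(x_2-j+n-1)\cdots(x_n-j+1)$ agree at every point $(\mu_1,\ldots,\mu_n)$ with $\mu_1\geq\cdots\geq\mu_n$ in $\mathbb{N}^n$.

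The remaining step is to promote this pointwise agreement on a Zariski-dense set into an identity of polynomials. First I would check that the candidate polynomial on the right-hand side is indeed shifted symmetric: each factor $(x_i - j + n - (i-1))$ is, up to the constant shift $-j$, the same linear form $x_i + (n-i)$ appearing in the generators $\mathbf{e}_r^*$, so the product over $i$ for fixed $j$ is shifted symmetric by the same stability argument that makes $\mathbf{e}_r^*$ shifted symmetric, and a product of shifted symmetric polynomials is shifted symmetric. Then both $\chi_n(\mathbf{K_n^p})$ and the candidate lie in $\Lambda^*(n)$, and two polynomials in $n$ variables that coincide on the set of integer points of the dominant Weyl chamber must be equal, since that set is Zariski dense in $\mathbb{C}^n$ (it contains, for instance, all $(\mu_1,\ldots,\mu_n)$ with $\mu_i = \mu_{i+1} + t_i$, $t_i$ ranging over $\mathbb{N}$, which is a shifted copy of $\mathbb{N}^n$).

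Alternatively, and perhaps more in keeping with the surrounding exposition, one can avoid the density argument entirely by working directly on generators. Apply $\chi_n$ to the factorization $\mathbf{K_n^p} = (-1)^{n\binom{p}{2}}\,\mathbf{C}_n(p-1)\cdots\mathbf{C}_n(1)\,\mathbf{C}_n(0)$ of Corollary \ref{exp one}, use the algebra-homomorphism property of $\chi_n$ together with $\chi_n(\mathbf{H}_n^{(r)}) = \mathbf{e}_r^*(x_1,\ldots,x_n)$ on each $\mathbf{C}_n(p-1) = \sum_{j=0}^n (-1)^{n-j}(p-1)_{n-j}\mathbf{H}_n^{(j)}$, and then invoke the classical identity $\mathbf{H}_n(p) = \mathbf{C}_n(p)$ (stated in the introduction) together with the Harish-Chandra image of a single column determinant $\mathbf{cdet}[e_{h,k}+\delta_{hk}(-p+n-h)]$, which is $\prod_{h=1}^n (x_h - p + n - h)$. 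Multiplying the $p$ factors for the shifts $0,1,\ldots,p-1$ and collecting the $(-1)^{n\binom{p}{2}}$ sign gives exactly the claimed product.

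The main obstacle is essentially bookkeeping rather than conceptual: making sure the sign $(-1)^{\binom{p}{2}n}$ and the index shifts in the product $\prod_{j=1}^p (x_1-j+n)\cdots(x_n-j+1)$ are transcribed consistently between the eigenvalue formula of Theorem \ref{The hook coefficient lemma-BR} and the Harish-Chandra side, and — if one takes the first route — spelling out why integer points of the dominant chamber are Zariski dense so that equality of evaluations forces equality of polynomials. Neither presents a real difficulty; the first route is a one-line appeal to the defining property of $\chi_n$ plus density, and the second is a short computation with the generators.
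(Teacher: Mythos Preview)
Your first route is exactly the paper's approach: the paper derives the corollary in one line from Theorem~\ref{The hook coefficient lemma-BR} via the defining property of $\chi_n$, and your added justification (shifted symmetry of the right-hand side and Zariski density of dominant integral weights) correctly fills in what the paper leaves implicit.

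A caution about your alternative route: in the paper's logical order, the identity $\mathbf{H}_n(p)=\mathbf{C}_n(p)$ is established \emph{after} Corollary~\ref{eq quozienti-BR}, using Proposition~\ref{HC di C}, which in turn is deduced from Corollary~\ref{eq quozienti-BR} itself. So invoking $\mathbf{H}_n(p)=\mathbf{C}_n(p)$ here would be circular within the paper's development; you would need to import that identity from an external source (it is indeed known independently, e.g.\ \cite{Molev1-BR}) to make the second argument stand on its own.
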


By Corollary \ref{Teorema di sfilamento-BR}, we have
\begin{equation*}\label{product}
\chi_n(\mathbf{K_n^{p+1}}) = \ \chi_n(\mathbf{C}_n(p)) \ \chi_n(\mathbf{K_n^{p}}).
\end{equation*}
and Corollary \ref{eq quozienti-BR} implies

\begin{proposition}\label{HC di C}For every $p \in \mathbb{N}$,
\begin{equation*}\label{eq fattorizzazione-BR}
\chi_n(\mathbf{C}_n(p)) = (x_1 - p + n - 1)(x_2 - p + n - 2) \cdots (x_n - p).
\end{equation*}
\end{proposition}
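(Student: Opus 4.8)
The plan is to derive Proposition \ref{HC di C} by combining the factorization relation from Corollary \ref{Teorema di sfilamento-BR} with the explicit hook-product formula of Corollary \ref{eq quozienti-BR}, exploiting the fact that $\chi_n$ is an algebra homomorphism. First I would apply $\chi_n$ to the identity $\mathbf{K_n^{p+1}} = (-1)^{np}\,\mathbf{C}_n(p)\,\mathbf{K_n^{p}}$ of Corollary \ref{Teorema di sfilamento-BR}, obtaining
$$
\chi_n(\mathbf{K_n^{p+1}}) = (-1)^{np}\,\chi_n(\mathbf{C}_n(p))\,\chi_n(\mathbf{K_n^{p}}).
$$
Since $\Lambda^*(n)$ is an integral domain (it is a polynomial ring, as $\chi_n$ is an isomorphism onto it) and $\chi_n(\mathbf{K_n^{p}})$ is a nonzero element—being the displayed nonzero product of linear factors from Corollary \ref{eq quozienti-BR}—I may divide to get $\chi_n(\mathbf{C}_n(p)) = (-1)^{np}\,\chi_n(\mathbf{K_n^{p+1}})/\chi_n(\mathbf{K_n^{p}})$.

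Next I would substitute the two explicit expressions from Corollary \ref{eq quozienti-BR}:
$$
\chi_n(\mathbf{K_n^{p+1}}) = (-1)^{\binom{p+1}{2}n} \prod_{j=1}^{p+1} \prod_{i=1}^{n} (x_i - j + n - i + 1),
$$
$$
\chi_n(\mathbf{K_n^{p}}) = (-1)^{\binom{p}{2}n} \prod_{j=1}^{p} \prod_{i=1}^{n} (x_i - j + n - i + 1),
$$
where I have rewritten the factor $(x_1 - j + n)(x_2 - j + n - 1)\cdots(x_n - j + 1)$ appearing in that corollary as $\prod_{i=1}^n (x_i - j + n - i + 1)$. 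Forming the quotient, every factor with $j \le p$ cancels, leaving only the $j = p+1$ block $\prod_{i=1}^n (x_i - (p+1) + n - i + 1) = \prod_{i=1}^n (x_i - p + n - i) = (x_1 - p + n - 1)(x_2 - p + n - 2)\cdots(x_n - p)$, which is exactly the claimed right-hand side. It remains to check the sign: the quotient of the two $(-1)$-powers is $(-1)^{(\binom{p+1}{2} - \binom{p}{2})n} = (-1)^{pn}$, which precisely cancels the prefactor $(-1)^{np}$ from the factorization relation, so the total sign is $+1$.

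The only real care needed is bookkeeping: I must confirm that the index conventions in Corollary \ref{eq quozienti-BR} (the last factor being written $x_n - j - 1$ there, which in the consistent reading should be $x_n - j + 1$, i.e. $i = n$ giving $x_n - j + n - n + 1$) match so that the telescoping of the $j$-products is clean, and that $\binom{p+1}{2} - \binom{p}{2} = p$ so the sign is indeed $(-1)^{pn}$ and cancels. I expect the main (and essentially only) obstacle to be this sign and index reconciliation; the algebraic content is a one-line telescoping once $\chi_n$ is known to be a homomorphism into a domain, which is given. Finally I would remark that the base case $p=0$ gives $\chi_n(\mathbf{C}_n(0)) = (x_1 + n - 1)(x_2 + n - 2)\cdots x_n = \mathbf{e}_n^*$, consistent with $\mathbf{C}_n(0) = \mathbf{H}_n^{(n)}$ and $\chi_n(\mathbf{H}_n^{(n)}) = \mathbf{e}_n^*$.
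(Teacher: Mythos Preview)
Your proposal is correct and follows essentially the same approach as the paper: the paper derives Proposition~\ref{HC di C} precisely by applying $\chi_n$ to the factorization identity of Corollary~\ref{Teorema di sfilamento-BR} and then invoking the explicit product formula of Corollary~\ref{eq quozienti-BR}. Your write-up simply makes explicit the sign bookkeeping $\binom{p+1}{2}-\binom{p}{2}=p$ and the telescoping of the $j$-products that the paper leaves to the reader, and you correctly flag the typo $x_n-j-1$ (which should read $x_n-j+1$) in Corollary~\ref{eq quozienti-BR}.
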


\begin{proposition}
 The set
$$
\chi_n(\mathbf{C}_n(0)), \ \chi_n(\mathbf{C}_n(1)), \ \ldots \ , \ \chi_n(\mathbf{C}_n(n - 1))
$$
is a system of algebraically independent generators of the
ring $\Lambda^*(n)$ of shifted symmetric polynomials in the variables $x_1, x_2, \ldots, x_n.$
\end{proposition}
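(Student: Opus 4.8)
The plan is to deduce algebraic independence of the family $\chi_n(\mathbf{C}_n(0)), \ldots, \chi_n(\mathbf{C}_n(n-1))$ from the fact, already recorded in the paper, that $\mathbf{C}_n(0), \ldots, \mathbf{C}_n(n-1)$ is a set of algebraically independent generators of $\zeta(n)$, together with the statement that the Harish-Chandra map $\chi_n \colon \zeta(n) \to \Lambda^*(n)$ is an algebra \emph{isomorphism}. Since $\chi_n$ is an isomorphism of $\mathbb{C}$-algebras, it sends any system of algebraically independent generators of $\zeta(n)$ to a system of algebraically independent generators of $\Lambda^*(n)$: a polynomial relation $P(\chi_n(\mathbf{C}_n(0)), \ldots, \chi_n(\mathbf{C}_n(n-1))) = 0$ in $\Lambda^*(n)$ pulls back under $\chi_n^{-1}$ to $P(\mathbf{C}_n(0), \ldots, \mathbf{C}_n(n-1)) = 0$ in $\zeta(n)$, forcing $P = 0$; and surjectivity of $\chi_n$ together with the fact that the $\mathbf{C}_n(j)$ generate $\zeta(n)$ gives that the images generate $\Lambda^*(n)$.

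Concretely I would argue as follows. First invoke the Corollary immediately preceding Proposition \ref{HC di C} (or equivalently Corollary \ref{Teorema di sfilamento-BR} together with the classical Capelli result): $\zeta(n) = \mathbb{C}[\mathbf{C}_n(0), \ldots, \mathbf{C}_n(n-1)]$ with the $\mathbf{C}_n(j)$ algebraically independent. Next, apply $\chi_n$. For the generation statement, any $f \in \Lambda^*(n)$ equals $\chi_n(A)$ for some $A \in \zeta(n)$ by surjectivity, and $A$ is a polynomial in the $\mathbf{C}_n(j)$, so $f$ is the same polynomial in the $\chi_n(\mathbf{C}_n(j))$ because $\chi_n$ is an algebra homomorphism. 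For algebraic independence, suppose $P \in \mathbb{C}[t_0, \ldots, t_{n-1}]$ satisfies $P(\chi_n(\mathbf{C}_n(0)), \ldots, \chi_n(\mathbf{C}_n(n-1))) = 0$; since $\chi_n$ is a homomorphism this reads $\chi_n\big(P(\mathbf{C}_n(0), \ldots, \mathbf{C}_n(n-1))\big) = 0$, and injectivity of $\chi_n$ forces $P(\mathbf{C}_n(0), \ldots, \mathbf{C}_n(n-1)) = 0$ in $\zeta(n)$, whence $P = 0$ by algebraic independence of the $\mathbf{C}_n(j)$ in $\zeta(n)$.

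Alternatively — and this is perhaps the more self-contained route, avoiding reliance on the earlier $\zeta(n)$-generation corollary — one can argue directly inside $\Lambda^*(n)$ using Proposition \ref{HC di C}, which gives the explicit formula $\chi_n(\mathbf{C}_n(p)) = (x_1 - p + n - 1)(x_2 - p + n - 2)\cdots(x_n - p)$. Passing to the top-degree homogeneous component, each $\chi_n(\mathbf{C}_n(p))$ has leading term the power sum-free elementary symmetric polynomial $\mathbf{e}_n(x_1, \ldots, x_n) = x_1 x_2 \cdots x_n$ — wait, that is the same for every $p$, so degrees alone do not separate them. Instead one should use that $\chi_n(\mathbf{C}_n(p))$ and $\chi_n(\mathbf{H}_n^{(j)}) = \mathbf{e}_j^*$ are related by the triangular change of basis in \eqref{relazioni lineari-BR} with nonsingular coefficient matrix, reducing algebraic independence of the $\chi_n(\mathbf{C}_n(p))$ to that of the shifted elementary symmetric polynomials $\mathbf{e}_1^*, \ldots, \mathbf{e}_n^*$, which is the classical fact that $\Lambda^*(n) = \mathbb{C}[\mathbf{e}_1^*, \ldots, \mathbf{e}_n^*]$ freely (see \cite{OkOlsh-BR}).

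The main obstacle here is essentially bookkeeping rather than mathematics: one must be careful that the linear system \eqref{relazioni lineari-BR} expressing the $\mathbf{C}_n(p)$ in terms of the $\mathbf{H}_n^{(j)}$ (equivalently, after applying $\chi_n$, the $\mathbf{C}_n$-images in terms of the $\mathbf{e}^*_j$) is indeed invertible over $\mathbb{C}$ — this is asserted in the remark following Corollary \ref{Teorema di sfilamento-BR} (``nonsingular triangular coefficients matrix''), the triangularity coming from the fact that $\mathbf{C}_n(p)$ involves $\mathbf{H}_n^{(n)}$ with coefficient $1$ and lower $\mathbf{H}_n^{(j)}$ with coefficients given by falling factorials. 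Once that invertibility is granted, the argument is a one-line transport-of-structure. I would present the proof in the first (isomorphism) form, as it is shortest and uses only that $\chi_n$ is an algebra isomorphism plus the already-established independent generation of $\zeta(n)$ by the $\mathbf{C}_n(j)$.
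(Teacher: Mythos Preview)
Your proposal is correct and matches the paper's intended reasoning: the paper states this proposition without proof, immediately after the Corollary asserting that $\mathbf{C}_n(0),\ldots,\mathbf{C}_n(n-1)$ are algebraically independent generators of $\zeta(n)$ and after recalling that $\chi_n$ is an algebra isomorphism, so the result is meant to follow by exactly the transport-of-structure argument you give. Your first route (apply the isomorphism $\chi_n$ to the already-established generation/independence in $\zeta(n)$) is the one the paper has in mind; the alternative via the triangular relation to the $\mathbf{e}_j^*$ is also valid but unnecessary here.
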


Given $p \in \mathbb{N}$, consider the column determinant
\begin{equation}\label{centrality p}
\mathbf{H}_n(p) = \textbf{cdet}\left(
 \begin{array}{cccc}
 e_{1,1} - p + (n-1) & e_{1,2} & \ldots  & e_{1,n} \\
 e_{2,1} & e_{2,2} - p + (n-2) & \ldots  & e_{2,n}\\
 \vdots  &    \vdots                            & \vdots &  \\
e_{n,1} & e_{n,2} & \ldots & e_{n,n} - p\\
 \end{array}
 \right).
\end{equation}

We recall a standard result (for an elementary proof see e.g.  \cite{Umeda-BR}):

\begin{proposition}\label{centrality Umeda-BR}

For every $p \in \mathbb{N}$, the element
$$
\mathbf{H}_n(p) = \textbf{cdet} \left[  e_{h, k} + \delta_{hk}(- p  + n - h)     
\right]_{h, k =1, \ldots, n} \in \mathbf{U}(gl(n)).
$$
is  central.
In symbols, $\mathbf{H}_n(p)  \in \zeta(n).$

\end{proposition}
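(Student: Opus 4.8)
The plan is to show that $\mathbf{H}_n(p)$ commutes with every generator $e_{a,b}$ of $\mathbf{U}(gl(n))$, $1 \le a, b \le n$; since these generators generate the algebra, this suffices. The key computational tool will be the commutation relations $[e_{a,b}, e_{c,d}] = \delta_{bc}\, e_{a,d} - \delta_{da}\, e_{c,b}$ applied entrywise to the matrix $A = [A_{hk}]$ with $A_{hk} = e_{h,k} + \delta_{hk}(-p + n - h)$ appearing inside the column determinant. First I would record that $[e_{a,b}, A_{hk}] = [e_{a,b}, e_{h,k}] = \delta_{bh}\, e_{a,k} - \delta_{ka}\, e_{h,b}$, so that bracketing with $e_{a,b}$ acts on the column determinant as a derivation that, row by row and column by column, replaces one entry by another entry; the scalar shifts $\delta_{hk}(-p+n-h)$ contribute nothing to the bracket but are exactly what is needed to make the telescoping work.

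The heart of the argument is the standard manipulation: expanding $[e_{a,b}, \mathbf{H}_n(p)]$ by the Leibniz rule over the $n$ factors of the column determinant (being careful with the column-ordered noncommutative product), one gets a sum of "modified" column determinants in which row $b$ has been overwritten by row $a$ (coming from the $\delta_{bh}$ term, summed over which factor is hit) minus a sum in which column $a$ has been overwritten by column $b$ (from the $\delta_{ka}$ term). The row-type terms collapse: for $a \ne b$ they produce a determinant with two equal rows up to the diagonal shift discrepancy, and the shift $-p + n - h$ is precisely tuned so that the leftover diagonal corrections cancel against the column-type terms; for $a = b$ the row and column contributions cancel directly. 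The classical reference here is that this is exactly the computation underlying the centrality of the Capelli determinant $\mathbf{H}_n^{(n)} = \mathbf{H}_n(0)$ (Proposition \ref{Cl Capelli}, item 2), and replacing $n - h$ by $-p + n - h$ uniformly shifts all diagonal entries by the scalar $-p$, which commutes with everything and does not disturb the cancellation. I would therefore either cite \cite{Umeda-BR} for the elementary verification or reproduce the short telescoping computation, noting that the $-p$ shift passes through inertly.

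The main obstacle is purely bookkeeping: keeping track of the column ordering in $\mathbf{cdet}$ when distributing the commutator across the product $A_{\sigma(1),1} A_{\sigma(2),2} \cdots A_{\sigma(n),n}$, and verifying that the two families of modified determinants (row-overwrite and column-overwrite) match up with the correct signs after reindexing the permutation sum. There is no genuine conceptual difficulty — the superalgebraic virtual-variable machinery of the paper gives an alternative one-line proof, since $\mathbf{H}_n(p)$ is the devirtualization image of a manifestly $Ad$-invariant expression, but the self-contained determinantal computation is the one I would present, flagging only that the diagonal shift must be exactly $-p + n - h$ (linear in $h$ with slope $-1$) for the telescoping to close.
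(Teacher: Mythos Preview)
The paper does not actually prove this proposition: it states it as ``a standard result'' and refers the reader to \cite{Umeda-BR} for an elementary proof. Your proposal is precisely the direct commutator computation carried out in that reference, so it is both correct and aligned with what the paper points to. Your closing remark is the right one to emphasize: what makes the telescoping close is that the diagonal shift has slope $-1$ in $h$; the additional constant $-p$ is invisible to the bracket $[e_{a,b},\,\cdot\,]$ and therefore the same cancellation that proves centrality of $\mathbf{H}_n^{(n)}=\mathbf{H}_n(0)$ goes through verbatim for every $p$.
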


Equation (\ref{centrality p}), Proposition \ref{centrality Umeda-BR} and Proposition \ref{HC di C}
imply
$$
\chi_n(\mathbf{H}_n(p)) = (x_1 - p + n - 1)(x_2 - p + n - 2) \cdots (x_n - p) = \chi_n(\mathbf{C}_n(p)).
$$
Hence, we get the well-known identity (see, e.g. \cite{Molev1-BR}):

\begin{corollary}

For every $p \in \mathbb{N}$, we have
\begin{align*}
\mathbf{H}_n(p) & = \textbf{cdet} \left[  e_{h, k} + \delta_{hk}(- p  + n - h)     \right]_{h, k =1, \ldots, n}
\\
& = \sum_{j = 0}^n \ (-1)^{n - j} (p)_{n - j} \ \mathbf{H}_n^{(j)} =  \mathbf{C}_n(p).
\end{align*}
\end{corollary}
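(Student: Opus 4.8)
The plan is to combine the two chains of identifications that the preceding material already makes available. On the one hand, Proposition \ref{HC di C} tells us that $\chi_n(\mathbf{C}_n(p)) = (x_1 - p + n - 1)(x_2 - p + n - 2) \cdots (x_n - p)$. On the other hand, equation (\ref{centrality p}) defines $\mathbf{H}_n(p)$ as the column determinant $\textbf{cdet}\left[ e_{h,k} + \delta_{hk}(-p + n - h)\right]_{h,k = 1, \ldots, n}$, and Proposition \ref{centrality Umeda-BR} guarantees that this element is central, hence lies in the domain of the Harish-Chandra isomorphism $\chi_n$. So the first step is to compute $\chi_n(\mathbf{H}_n(p))$ and show it equals the same polynomial $(x_1 - p + n - 1)(x_2 - p + n - 2) \cdots (x_n - p)$.

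To carry out that computation I would evaluate the column determinant on a highest weight vector $v_\mu$. Acting by $\textbf{cdet}$ of a matrix of the shape $[e_{h,k} + \delta_{hk} c_h]$ on $v_\mu$: only the ``lower-triangular-on-$v_\mu$'' behaviour of the $e_{h,k}$ survives, since $e_{h,k} v_\mu = 0$ for $h < k$ while $e_{h,h} v_\mu = \mu_h v_\mu$. The standard argument — the same one used for the classical Capelli determinant $\mathbf{H}_n^{(n)}$ in Proposition \ref{Cl Capelli}, item \ref{hook} — shows that when the determinant is expanded and applied to $v_\mu$, the off-diagonal terms telescope away and one is left with the product of the diagonal entries evaluated on $v_\mu$, namely $\prod_{h=1}^n (\mu_h - p + n - h)$. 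Therefore $\chi_n(\mathbf{H}_n(p))$, as a shifted symmetric polynomial, takes the value $\prod_{h=1}^n (x_h - p + n - h)$ at $(x_1, \ldots, x_n) = (\mu_1, \ldots, \mu_n)$ for all dominant integral $\mu$, and since such points are Zariski dense this identifies the polynomial: $\chi_n(\mathbf{H}_n(p)) = (x_1 - p + n - 1)(x_2 - p + n - 2)\cdots(x_n - p)$.

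Comparing with Proposition \ref{HC di C}, we get $\chi_n(\mathbf{H}_n(p)) = \chi_n(\mathbf{C}_n(p))$, and since $\chi_n$ is an \emph{isomorphism} (injective in particular) and both $\mathbf{H}_n(p)$ and $\mathbf{C}_n(p)$ are central, this forces $\mathbf{H}_n(p) = \mathbf{C}_n(p)$ in $\zeta(n) \subseteq \mathbf{U}(gl(n))$. Finally, the definition (\ref{relazioni lineari-BR}) gives $\mathbf{C}_n(p) = \sum_{j=0}^n (-1)^{n-j}(p)_{n-j}\,\mathbf{H}_n^{(j)}$, which yields the displayed two-line formula of the corollary.

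The main obstacle is the first step: justifying the evaluation of the noncommutative column determinant $\mathbf{H}_n(p)$ on a highest weight vector. One must be careful that $\textbf{cdet}$ is the specific ordering $\sum_\sigma (-1)^{|\sigma|} a_{\sigma(1),1}\cdots a_{\sigma(n),n}$, that the ``correction'' shifts $-p + n - h$ are exactly what makes the cross terms cancel, and that the commutation relations among the $e_{h,k}$ do not spoil the telescoping — this is precisely the content of the classical Capelli-type argument (already invoked in Proposition \ref{Cl Capelli} and available via Proposition \ref{one row Capelli-Deruyts}), so one can either cite it or reproduce the short induction. Once that evaluation is in hand, everything else is the formal machinery of the Harish-Chandra isomorphism plus the already-proved Propositions \ref{HC di C} and \ref{centrality Umeda-BR}.
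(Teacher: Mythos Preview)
Your proposal is correct and follows essentially the same route as the paper: compute both $\chi_n(\mathbf{H}_n(p))$ and $\chi_n(\mathbf{C}_n(p))$, observe they coincide with the product $(x_1 - p + n - 1)\cdots(x_n - p)$, and invoke injectivity of the Harish-Chandra isomorphism together with centrality (Proposition~\ref{centrality Umeda-BR}) and the definition (\ref{relazioni lineari-BR}). The only difference is that you spell out the highest-weight computation of $\chi_n(\mathbf{H}_n(p))$ explicitly, whereas the paper simply asserts it as an immediate consequence of equation~(\ref{centrality p}) and Proposition~\ref{centrality Umeda-BR}.
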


\begin{corollary} The subalgebra $\zeta(n)$ of
$\mathbf{U}(gl(n))$ is the polynomial algebra
$$
\zeta(n)  \  = \ \mathbb{C}[\mathbf{H}_n(0), \mathbf{H}_n(1), \ldots, \mathbf{H}_n(n-1)], 
$$
where
$$
\mathbf{H}_n(0), \mathbf{H}_n(1), \ldots, \mathbf{H}_n(n-1) 
$$
is a set of algebraically independent generators of  $\zeta(n)$.
\end{corollary}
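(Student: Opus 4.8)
The plan is to deduce this corollary from the three immediately preceding results: the previous corollary, which asserts that $\zeta(n) = \mathbb{C}[\mathbf{C}_n(0), \ldots, \mathbf{C}_n(n-1)]$ with the $\mathbf{C}_n(j)$ algebraically independent; Proposition~\ref{centrality Umeda-BR}, which guarantees $\mathbf{H}_n(p) \in \zeta(n)$; and Proposition~\ref{HC di C} together with the computation $\chi_n(\mathbf{H}_n(p)) = (x_1 - p + n - 1) \cdots (x_n - p) = \chi_n(\mathbf{C}_n(p))$, which was recorded just above the statement. First I would invoke the injectivity of the Harish-Chandra isomorphism $\chi_n : \zeta(n) \to \Lambda^*(n)$: since both $\mathbf{H}_n(p)$ and $\mathbf{C}_n(p)$ lie in $\zeta(n)$ and have the same image under $\chi_n$, they are equal as elements of $\mathbf{U}(gl(n))$, for every $p = 0, 1, \ldots, n-1$ (indeed for every $p \in \mathbb{N}$). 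This is exactly the content of the corollary preceding the present one, so the identification $\mathbf{H}_n(p) = \mathbf{C}_n(p)$ may simply be cited.

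Given that identification, the statement reduces to a change-of-generators argument. I would observe that the map sending $\mathbf{C}_n(j) \mapsto \mathbf{H}_n(j)$ for $j = 0, \ldots, n-1$ is, by the equality just established, the identity on the set of generators, so $\mathbb{C}[\mathbf{H}_n(0), \ldots, \mathbf{H}_n(n-1)] = \mathbb{C}[\mathbf{C}_n(0), \ldots, \mathbf{C}_n(n-1)] = \zeta(n)$ as subalgebras. Algebraic independence of $\mathbf{H}_n(0), \ldots, \mathbf{H}_n(n-1)$ is inherited verbatim from that of $\mathbf{C}_n(0), \ldots, \mathbf{C}_n(n-1)$, again because the two families coincide elementwise. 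Alternatively, one can argue directly on the triangular structure: by equation~(\ref{relazioni lineari-BR}) each $\mathbf{C}_n(p)$ is a $\mathbb{C}$-linear combination of $\mathbf{H}_n^{(0)}, \ldots, \mathbf{H}_n^{(n)}$ whose coefficient matrix (for $p = 0, \ldots, n-1$, after adjoining $\mathbf{C}_n(n)$ or using that $\mathbf{H}_n^{(n)} = \mathbf{C}_n(0)$) is unimodular triangular, hence the $\mathbf{H}_n(p)$ generate the same polynomial algebra as the $\mathbf{H}_n^{(j)}$, which is $\zeta(n)$ by Capelli's theorem.

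There is essentially no obstacle here: the corollary is a formal restatement, once the identity $\mathbf{H}_n(p) = \mathbf{C}_n(p)$ is in hand, and that identity has already been proved. The only point requiring a word of care is that $\chi_n$ is an isomorphism (in particular injective), so that equality of Harish-Chandra images forces equality in $\zeta(n)$; this is quoted from \cite{OkOlsh-BR}, Proposition~$\mathbf{2.1}$, and from the definition of $\chi_n$ recalled in the preceding subsection. Thus the proof is a one-line appeal to the previous corollary plus the trivial observation that replacing a family of algebraically independent generators by an elementwise-equal family changes nothing.
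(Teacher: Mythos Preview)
Your proposal is correct and matches the paper's approach exactly: the corollary is stated without proof in the paper, as an immediate consequence of the preceding identity $\mathbf{H}_n(p) = \mathbf{C}_n(p)$ (obtained via injectivity of $\chi_n$) together with the earlier corollary that $\mathbf{C}_n(0), \ldots, \mathbf{C}_n(n-1)$ are algebraically independent generators of $\zeta(n)$. Your observation that this is a one-line substitution of elementwise-equal generators is precisely the intended argument.
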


\begin{corollary}\label{exp two}  The rectangular Capelli-Deruyts bitableau $\mathbf{K_n^p}$ equals 
the \emph{product 
of column determinants}:
$$
\mathbf{K_n^p} =   (-1)^{n \binom{p} {2}} \ \mathbf{H}_n(p-1) \  \cdots \  \mathbf{H}_n(1) \ \mathbf{H}_n(0).
$$\end{corollary}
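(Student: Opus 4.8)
The plan is to combine three ingredients already established in the excerpt: Corollary~\ref{exp one}, which expresses $\mathbf{K_n^p}$ as the ordered product $(-1)^{n\binom{p}{2}}\,\mathbf{C}_n(p-1)\cdots\mathbf{C}_n(1)\,\mathbf{C}_n(0)$ in the center $\zeta(n)$; the identity $\mathbf{H}_n(p)=\mathbf{C}_n(p)$ established in the Corollary immediately preceding this one (via the Harish-Chandra isomorphism, using $\chi_n(\mathbf{H}_n(p))=\chi_n(\mathbf{C}_n(p))$ together with injectivity of $\chi_n$); and the centrality of each $\mathbf{H}_n(p)$ from Proposition~\ref{centrality Umeda-BR}. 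Granting these, the proof is essentially a one-line substitution.

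Concretely, I would proceed as follows. First, invoke Corollary~\ref{exp one} to write
$$
\mathbf{K_n^p} = (-1)^{n\binom{p}{2}}\ \mathbf{C}_n(p-1)\ \cdots\ \mathbf{C}_n(1)\ \mathbf{C}_n(0).
$$
Next, apply the preceding Corollary, which gives $\mathbf{C}_n(j)=\mathbf{H}_n(j)$ for every $j\in\mathbb{N}$, and substitute termwise into the product. This yields
$$
\mathbf{K_n^p} = (-1)^{n\binom{p}{2}}\ \mathbf{H}_n(p-1)\ \cdots\ \mathbf{H}_n(1)\ \mathbf{H}_n(0),
$$
which is exactly the claimed identity. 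Since all the $\mathbf{C}_n(j)$ (equivalently $\mathbf{H}_n(j)$) lie in the commutative subalgebra $\zeta(n)$, the order of the factors is immaterial, but I would keep the displayed decreasing order for consistency with Corollary~\ref{exp one}.

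There is essentially no obstacle here: the statement is a formal corollary, and the only thing to be careful about is bookkeeping of the sign $(-1)^{n\binom{p}{2}}$, which is simply carried over unchanged from Corollary~\ref{exp one} since the substitution $\mathbf{C}_n(j)\mapsto\mathbf{H}_n(j)$ introduces no new sign. If one wanted a self-contained argument not routed through Corollary~\ref{exp one}, the alternative would be to apply Corollary~\ref{Teorema di sfilamento-BR} (the Expansion Theorem) iteratively on $p$, at each stage rewriting $\mathbf{C}_n(p-1)$ as $\mathbf{H}_n(p-1)$, and collecting the accumulated signs $\prod_{j=1}^{p-1}(-1)^{nj}=(-1)^{n\binom{p}{2}}$; but the direct substitution into Corollary~\ref{exp one} is the cleanest route and is the one I would write up.
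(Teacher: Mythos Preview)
Your proposal is correct and follows exactly the route the paper implicitly uses: Corollary~\ref{exp two} is stated without an explicit proof because it is meant to be read as the immediate substitution of the identity $\mathbf{H}_n(j)=\mathbf{C}_n(j)$ (the preceding corollary) into Corollary~\ref{exp one}. Your care with the sign and your remark on the alternative inductive derivation via Corollary~\ref{Teorema di sfilamento-BR} are both accurate and align with the paper's logical flow.
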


\begin{example}Let $n=3$, $p=2$. Then 
\begin{multline*}
\mathbf{K_3^2} = 
\left[
\begin{array}{lll}
 3 \ 2 \ 1
\\
 3 \ 2 \ 1
\end{array}
\right| \left.
\begin{array}{l}
1 \ 2 \ 3
\\
1 \ 2 \ 3 
\end{array}
\right]
=   - \ \mathbf{H}_3(1) \ \mathbf{H}_3(0)  =
\\
=
-
\textbf{cdet}\left(
 \begin{array}{cccc}
 e_{1,1}  + 1 & e_{1,2} &  e_{1,3} \\
 e_{2,1} & e_{2,2}   &  e_{2,3}\\
 e_{3,1} & e_{3,2} &  e_{3,3} - 1\\
 \end{array}
 \right)
\ \textbf{cdet}\left(
 \begin{array}{cccc}
 e_{1,1}  + 2 & e_{1,2} &  e_{1,3} \\
 e_{2,1} & e_{2,2}  + 1 &  e_{2,3}\\
 e_{3,1} & e_{3,2} &  e_{3,3}\\
 \end{array}
 \right).
\end{multline*} \qed
\end{example}

Corollaries \ref{exp one} and \ref{exp two} generalize to 
Capelli-Deruyts bitableaux $\mathbf{K}^\lambda$ of arbitrary shape $\lambda$.

Theorem \ref{expansion theorem} implies:

\begin{proposition}\label{equazione di sfilamento generale}Let $n \in \mathbb{Z}$, 
$\lambda = (\lambda_1 \geq \lambda_2  \geq \cdots \geq \lambda_p)$,
$\lambda_1  \leq n$. Set $\lambda' = (\lambda_1 \geq \lambda_2  \geq \cdots \geq \lambda_{p-1})$.
Then
$$
\mathbf{K}^\lambda  = (-1)^{\lambda_p(\lambda_{p-1} + \cdots + \lambda_{1})}
\ \mathbf{C}_{\lambda_p}(p-1) \ \mathbf{K}^{\lambda'},
$$
where
$$
\mathbf{C}_{\lambda_p}(p-1)  = \sum_{j=0}^{\lambda_p} \ (-1)^{\lambda_p - j} \
(p-1)_{\lambda_p - j} \ \mathbf{H}_{\lambda_p}^{(j)}.
$$
\end{proposition}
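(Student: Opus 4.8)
The plan is to read off the identity directly from Theorem \ref{expansion theorem}, applied not to $\lambda$ but to its truncation $\lambda' = (\lambda_1 \geq \cdots \geq \lambda_{p-1})$. First I would invoke Theorem \ref{expansion theorem} for the shape $\lambda'$, which has $p-1$ rows, so that the integer playing the role of ``$p$'' there is $p-1$, with the choices $m = \lambda_p$ and $M = \underline{\lambda_p} = \{1, 2, \ldots, \lambda_p\}$. The hypothesis of that theorem --- here ``$m \leq \lambda_{p-1}$'', $\lambda_{p-1}$ being the last part of $\lambda'$ --- holds because $\lambda$ is a partition, and likewise $M = \underline{\lambda_p} \subseteq \underline{\lambda_{p-1}}$; thus the theorem applies verbatim and yields
\begin{equation*}
(-1)^{|\lambda'|\,\lambda_p}\,
\left[
\begin{array}{l}
\mathbf{K}^{\lambda'}\\
\underline{\lambda_p}
\end{array}
\right]
=
\sum_{k=0}^{\lambda_p} (-1)^{\lambda_p-k}\,(p-1)_{\lambda_p-k}
\sum_{J;\ J \subseteq \underline{\lambda_p};\ |J|=k}
[\underline{J}^*\,|\,\underline{J}]\ \mathbf{K}^{\lambda'}.
\end{equation*}

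Next I would identify both sides. By the construction of the augmented bitableau recalled before Theorem \ref{MAIN}, $\left[\begin{array}{l}\mathbf{K}^{\lambda'}\\ \underline{\lambda_p}\end{array}\right]$ is the Capelli bitableau with left rows $\underline{\lambda_1}^*, \ldots, \underline{\lambda_{p-1}}^*, \underline{\lambda_p}^*$ and right rows $\underline{\lambda_1}, \ldots, \underline{\lambda_{p-1}}, \underline{\lambda_p}$, which is exactly $\mathbf{K}^\lambda$ since $\lambda = (\lambda_1 \geq \cdots \geq \lambda_p)$; and $|\lambda'| = \lambda_1 + \cdots + \lambda_{p-1}$, so the prefactor $(-1)^{|\lambda'|\lambda_p}$ equals $(-1)^{\lambda_p(\lambda_{p-1} + \cdots + \lambda_1)}$. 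On the right-hand side, for each $k$ the inner sum $\sum_{J \subseteq \underline{\lambda_p},\ |J|=k} [\underline{J}^*\,|\,\underline{J}]$ is, by eq. (\ref{The classical Capelli generators}) with $n$ replaced by $\lambda_p$, precisely the Capelli element $\mathbf{H}_{\lambda_p}^{(k)}$ (with $\mathbf{H}_{\lambda_p}^{(0)} = \mathbf{1}$). Substituting and using left-distributivity, the right-hand side becomes $\big(\sum_{k=0}^{\lambda_p} (-1)^{\lambda_p-k}(p-1)_{\lambda_p-k}\,\mathbf{H}_{\lambda_p}^{(k)}\big)\,\mathbf{K}^{\lambda'} = \mathbf{C}_{\lambda_p}(p-1)\,\mathbf{K}^{\lambda'}$, by the very definition of $\mathbf{C}_{\lambda_p}(p-1)$ in the statement.

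Combining these, Theorem \ref{expansion theorem} gives $(-1)^{\lambda_p(\lambda_{p-1} + \cdots + \lambda_1)}\,\mathbf{K}^\lambda = \mathbf{C}_{\lambda_p}(p-1)\,\mathbf{K}^{\lambda'}$, and multiplying both sides by $(-1)^{\lambda_p(\lambda_{p-1} + \cdots + \lambda_1)}$ produces the asserted identity. There is essentially no analytic difficulty here: the argument is pure bookkeeping, and the only points needing care are checking the hypotheses of Theorem \ref{expansion theorem} for the pair $(\lambda', \lambda_p)$ --- which reduces to the partition inequality $\lambda_p \leq \lambda_{p-1}$ and the inclusion $\underline{\lambda_p} \subseteq \underline{\lambda_{p-1}}$ --- and matching the exponent $|\lambda'|\lambda_p$ appearing there with the exponent $\lambda_p(\lambda_{p-1} + \cdots + \lambda_1)$ in the present statement, which is immediate from $|\lambda'| = \lambda_1 + \cdots + \lambda_{p-1}$.
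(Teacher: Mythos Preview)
Your proof is correct and follows exactly the approach indicated in the paper, which simply states ``Theorem \ref{expansion theorem} implies:'' before the proposition. You have correctly filled in the details: applying Theorem \ref{expansion theorem} to the truncated partition $\lambda'$ (so that the number of rows is $p-1$) with $M = \underline{\lambda_p}$, identifying the augmented bitableau with $\mathbf{K}^\lambda$, and recognising the inner sum as $\mathbf{H}_{\lambda_p}^{(k)}$ via eq.~(\ref{The classical Capelli generators}).
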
\qed

\begin{corollary}\label{Expansion in the C} Let $n \in \mathbb{Z}$, $\lambda = (\lambda_1 \geq \lambda_2  \geq \cdots \geq \lambda_p)$,
$\lambda_1  \leq n$. For $i = 1, 2, \ldots, p$, set
$$
\mathbf{C}_{\lambda_i}(i-1)  = \sum_{j=0}^{\lambda_i} \ (-1)^{\lambda_i - j} \
(i-1)_{\lambda_i - j} \ \mathbf{H}_{\lambda_i}^{(j)}.
$$
Then,

\begin{enumerate}

\item The element
$\mathbf{C}_{\lambda_i}(i-1)$ is \emph{central} in the enveloping algebra $\mathbf{U}(gl(\lambda_i))$,
for $i = 1, 2, \ldots, p$.

\item The Capelli-Deruyts bitableau $\mathbf{K}^\lambda$ equals the polynomial in the 
\emph{Capelli elements} $\mathbf{H}_{\lambda_i}^{(j)}$:
$$
\mathbf{K}^\lambda  = (-1)^{\lambda_p(\lambda_{p-1} + \cdots + \lambda_{1}) + \cdots + \lambda_{2}\lambda_{1}}
\ \mathbf{C}_{\lambda_p}(p-1) \cdots
\mathbf{C}_{\lambda_2}(1) \ \mathbf{C}_{\lambda_1}(0).
$$
\end{enumerate}
\end{corollary}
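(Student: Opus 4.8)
The plan is to reduce everything to Proposition \ref{equazione di sfilamento generale} (which itself rests on Theorem \ref{expansion theorem}) by peeling off one row at a time, i.e.\ by induction on the number $p$ of rows of $\lambda$.

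Item (1) is immediate: by definition $\mathbf{C}_{\lambda_i}(i-1)=\sum_{j=0}^{\lambda_i}(-1)^{\lambda_i-j}(i-1)_{\lambda_i-j}\,\mathbf{H}_{\lambda_i}^{(j)}$ is a $\mathbb{C}$-linear combination of the Capelli elements $\mathbf{H}_{\lambda_i}^{(0)},\ldots,\mathbf{H}_{\lambda_i}^{(\lambda_i)}$ of $\mathbf{U}(gl(\lambda_i))$, which are central by Capelli's $1893$ theorem; hence $\mathbf{C}_{\lambda_i}(i-1)\in\zeta(\lambda_i)$. (Equivalently, the identity $\mathbf{C}_n(p)=\mathbf{H}_n(p)$ noted above, applied with $n\mapsto\lambda_i$ and $p\mapsto i-1$, gives $\mathbf{C}_{\lambda_i}(i-1)=\mathbf{H}_{\lambda_i}(i-1)$, which is central by Proposition \ref{centrality Umeda-BR}.) Since $\lambda_p\le\lambda_{p-1}\le\cdots\le\lambda_1\le n$, there are nested subalgebras $\mathbf{U}(gl(\lambda_p))\subseteq\cdots\subseteq\mathbf{U}(gl(\lambda_1))\subseteq\mathbf{U}(gl(n))$, so the product $\mathbf{C}_{\lambda_p}(p-1)\cdots\mathbf{C}_{\lambda_1}(0)$ is a well-defined element of $\mathbf{U}(gl(n))$; moreover each $\mathbf{C}_{\lambda_i}(i-1)$ commutes with all of $\mathbf{U}(gl(\lambda_i))$, which contains every later factor, so the order of the product is in fact immaterial.

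For item (2) I would argue by induction on $p$. Base case $p=1$: by Proposition \ref{one row Capelli-Deruyts}, $\mathbf{K}^{(\lambda_1)}=[\underline{\lambda_1}^{*}\,|\,\underline{\lambda_1}]=\mathbf{H}_{\lambda_1}^{(\lambda_1)}$; since $(0)_{\lambda_1-j}=\delta_{j,\lambda_1}$ the defining sum collapses to $\mathbf{C}_{\lambda_1}(0)=\mathbf{H}_{\lambda_1}^{(\lambda_1)}$, and the stated sign exponent is an empty sum, so the identity holds. For the inductive step put $\lambda'=(\lambda_1\ge\cdots\ge\lambda_{p-1})$ and apply Proposition \ref{equazione di sfilamento generale}:
$$
\mathbf{K}^\lambda=(-1)^{\lambda_p(\lambda_{p-1}+\cdots+\lambda_1)}\,\mathbf{C}_{\lambda_p}(p-1)\,\mathbf{K}^{\lambda'}.
$$
Now $\lambda'$ is a partition with $\lambda_1\le n$ whose $p-1$ parts carry shift parameters exactly $i-1$ for $i=1,\ldots,p-1$, so the inductive hypothesis gives $\mathbf{K}^{\lambda'}=(-1)^{\lambda_{p-1}(\lambda_{p-2}+\cdots+\lambda_1)+\cdots+\lambda_2\lambda_1}\,\mathbf{C}_{\lambda_{p-1}}(p-2)\cdots\mathbf{C}_{\lambda_1}(0)$. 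Substituting and adding the two exponents produces exactly the formula in the statement, completing the induction.

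I do not expect a genuine obstacle here: the substantive content is packaged in Theorem \ref{expansion theorem} via Proposition \ref{equazione di sfilamento generale}, and the rest is bookkeeping --- verifying that the element $\mathbf{C}_{\lambda_p}(p-1)$ produced by Proposition \ref{equazione di sfilamento generale} (built from $gl(\lambda_p)$-data) is literally the one named in the corollary, and that the successive sign exponents telescope to the displayed one. The one thing worth flagging at the outset is that the iterated applications of Proposition \ref{equazione di sfilamento generale} occur in progressively smaller enveloping algebras, so everything should be read inside $\mathbf{U}(gl(\lambda_1))\subseteq\mathbf{U}(gl(n))$. As a consistency check, for rectangular $\lambda=n^{p}$ the exponent collapses to $n^{2}\binom{p}{2}\equiv n\binom{p}{2}\pmod{2}$, recovering Corollary \ref{exp one}.
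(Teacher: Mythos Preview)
Your proof is correct and follows exactly the approach the paper intends: the paper states the corollary with a bare \qed immediately after Proposition~\ref{equazione di sfilamento generale}, so the implicit argument is precisely the induction on $p$ you carry out, peeling off one row at a time via that proposition. Your verification of the base case, the sign bookkeeping, and the consistency check against Corollary~\ref{exp one} are all accurate.
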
 \qed

\begin{example}Let $n=3$, $\lambda = (3, 2)$
and let
$$
\mathbf{K}^{(3, 2)} = \ \left[
\begin{array}{lll}
 3 \ 2 \ 1
\\
 2 \ 1 
\end{array}
\right| \left.
\begin{array}{l}
1 \ 2 \ 3
\\
1 \ 2  
\end{array}
\right].
$$
Then,
$$
\mathbf{K}^{(3, 2)}  =   
\ 
\mathbf{C}_2(1) \ \mathbf{C}_3(0) =
\left( \mathbf{H}_2^{(2)} - \mathbf{H}_2^{(1)} \right) \mathbf{H}_3^{(3)}.
$$
\end{example}\qed

For  $i = 1, 2, \ldots, p$, consider 
the center $\zeta(\lambda_i)$ of $\mathbf{U}(gl(\lambda_i))$ and
the Harish-Chandra isomorphisms
$$
\chi_{\lambda_i} : \zeta(\lambda_i) \longrightarrow \Lambda^*(\lambda_i).
$$

Proposition \ref{hook eigenvalue general} and  Proposition \ref{equazione di sfilamento generale}
imply:
\begin{equation}\label{HC general}
\chi_{\lambda_i} \big( \mathbf{C}_{\lambda_i}(i-1) \big) = \ 
(x_1 - i + \lambda_i)(x_2 - i + \lambda_i -1) \cdots
(x_{\lambda_i} - i + 1).
\end{equation}

Proposition \ref{centrality Umeda-BR} implies that the element
$$
\mathbf{H}_{\lambda_i}(i-1) = \textbf{cdet} \left[  e_{h, k} + \delta_{hk}(\lambda_i - i - h + 1)     
\right]_{h, k =1, \ldots, \lambda_i} \in \mathbf{U}(gl(\lambda_i)).
$$
is  central in the enveloping algebra $\mathbf{U}(gl(\lambda_i))$.
In symbols, $\mathbf{H}_n(p)  \in \zeta(\lambda_i).$

Clearly,
$$
\chi_{\lambda_i} \big( \mathbf{H}_{\lambda_i}(i-1) \big) = \ 
(x_1 - i + \lambda_i)(x_2 - i + \lambda_i -1) \cdots
(x_{\lambda_i} - i + 1),
$$
and, therefore, from eq. (\ref{HC general}), we have
\begin{corollary}
$\mathbf{H}_{\lambda_i}(i-1) = \mathbf{C}_{\lambda_i}(i-1)$.
\end{corollary}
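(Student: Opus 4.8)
The plan is to compare the two central elements through the Harish-Chandra isomorphism. Since $\chi_{\lambda_i} : \zeta(\lambda_i) \longrightarrow \Lambda^*(\lambda_i)$ is an \emph{isomorphism} of algebras, it is in particular injective, so two elements of $\zeta(\lambda_i)$ coincide as soon as they have the same image under $\chi_{\lambda_i}$. Thus the whole argument reduces to (i) checking that both sides are genuinely central and (ii) computing their Harish-Chandra images, both of which are essentially in place already.

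First I would settle centrality. The element $\mathbf{C}_{\lambda_i}(i-1)$ is central in $\mathbf{U}(gl(\lambda_i))$ by item 1 of Corollary \ref{Expansion in the C}, and the column determinant $\mathbf{H}_{\lambda_i}(i-1) = \textbf{cdet}\left[\, e_{h,k} + \delta_{hk}(\lambda_i - i - h + 1)\, \right]_{h,k = 1, \ldots, \lambda_i}$ is central by Proposition \ref{centrality Umeda-BR}, applied to $\mathbf{U}(gl(\lambda_i))$ with shift parameter $i-1$ in place of $p$. Hence both elements lie in $\zeta(\lambda_i)$ and $\chi_{\lambda_i}$ may be applied to each.

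Next I would compute the two images. For $\mathbf{C}_{\lambda_i}(i-1)$ this is already available: Proposition \ref{hook eigenvalue general} together with the one-step factorization of Proposition \ref{equazione di sfilamento generale} yields eq. (\ref{HC general}), namely
$$\chi_{\lambda_i}\big(\mathbf{C}_{\lambda_i}(i-1)\big) = (x_1 - i + \lambda_i)(x_2 - i + \lambda_i -1)\cdots(x_{\lambda_i} - i + 1).$$
For $\mathbf{H}_{\lambda_i}(i-1)$ I would run the classical Capelli-type eigenvalue computation: on a $gl(\lambda_i)$-highest weight vector $v_\mu$ the strictly lower-triangular generators $e_{h,k}$ with $h > k$ act as $0$ and the Cartan generators $e_{h,h}$ act by $\mu_h$, so in the column-determinant expansion of $\mathbf{H}_{\lambda_i}(i-1)$ only the identity permutation contributes — the other monomials, after normal reordering, pick up a factor $e_{h,k}$ with $h > k$ standing on the right — and one is left with the diagonal product $\prod_{h=1}^{\lambda_i}(\mu_h + \lambda_i - i - h + 1)$. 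By the defining property of $\chi_{\lambda_i}$ this is exactly the shifted symmetric polynomial $(x_1 - i + \lambda_i)(x_2 - i + \lambda_i - 1)\cdots(x_{\lambda_i} - i + 1)$, the same element of $\Lambda^*(\lambda_i)$ found above.

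Finally, injectivity of $\chi_{\lambda_i}$ on $\zeta(\lambda_i)$ forces $\mathbf{H}_{\lambda_i}(i-1) = \mathbf{C}_{\lambda_i}(i-1)$, which is the assertion. The only genuinely computational point is the column-determinant eigenvalue for $\mathbf{H}_{\lambda_i}(i-1)$; but this is the standard argument behind item 1 of Proposition \ref{Cl Capelli}, with the diagonal shift $n-h$ replaced by $\lambda_i - i - h + 1$, so it is not a real obstacle — everything else is bookkeeping with results already established.
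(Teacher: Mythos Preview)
Your proposal is correct and follows essentially the same route as the paper: both elements are shown to be central, their Harish-Chandra images are computed and seen to coincide, and injectivity of $\chi_{\lambda_i}$ gives the equality. The only difference is that where the paper says ``Clearly'' for the eigenvalue of $\mathbf{H}_{\lambda_i}(i-1)$ on a highest weight vector, you spell out the standard triangularity argument; this is a welcome addition of detail but not a different approach.
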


From Corollary \ref{Expansion in the C}, we have

\begin{corollary}\label{exp three}  The  Capelli-Deruyts bitableau $\mathbf{K_\lambda}$ equals 
the \emph{product 
of column determinants}:
$$
\mathbf{K^\lambda} =  (-1)^{\lambda_p(\lambda_{p-1} + \cdots + \lambda_{1}) + \cdots + \lambda_{2}\lambda_{1}}
\ \mathbf{H}_{\lambda_p}(p-1) \cdots
\mathbf{H}_{\lambda_2}(1) \ \mathbf{H}_{\lambda_1}(0).
$$\end{corollary}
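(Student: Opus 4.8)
The plan is to deduce the statement directly from Corollary \ref{Expansion in the C}, item 2, by replacing each central factor $\mathbf{C}_{\lambda_i}(i-1)$ by the corresponding column determinant $\mathbf{H}_{\lambda_i}(i-1)$. The bridge is the identity $\mathbf{H}_{\lambda_i}(i-1) = \mathbf{C}_{\lambda_i}(i-1)$ recorded just above: for each $i = 1, \ldots, p$ both elements lie in the center $\zeta(\lambda_i)$ of $\mathbf{U}(gl(\lambda_i))$ --- $\mathbf{C}_{\lambda_i}(i-1)$ by item 1 of Corollary \ref{Expansion in the C}, and $\mathbf{H}_{\lambda_i}(i-1)$ by Proposition \ref{centrality Umeda-BR} --- and the Harish-Chandra isomorphism $\chi_{\lambda_i} : \zeta(\lambda_i) \to \Lambda^*(\lambda_i)$ sends both of them to the same shifted symmetric polynomial $(x_1 - i + \lambda_i)(x_2 - i + \lambda_i - 1) \cdots (x_{\lambda_i} - i + 1)$, by eq. (\ref{HC general}) and the computation preceding it; injectivity of $\chi_{\lambda_i}$ then forces the equality in $\mathbf{U}(gl(\lambda_i))$.

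First I would make precise in which algebra the product is being formed. Since $\lambda_i \leq \lambda_1 \leq n$, the span of the $e_{h,k}$ with $1 \leq h, k \leq \lambda_i$ is a copy of $gl(\lambda_i)$ inside $gl(n)$, and the induced embedding $\mathbf{U}(gl(\lambda_i)) \hookrightarrow \mathbf{U}(gl(n))$ carries $\mathbf{H}_{\lambda_i}(i-1)$ and $\mathbf{C}_{\lambda_i}(i-1)$ onto one another inside $\mathbf{U}(gl(n))$. Then it remains only to substitute, term by term and in the same left-to-right order, into the factorization $\mathbf{K}^\lambda = (-1)^{\lambda_p(\lambda_{p-1} + \cdots + \lambda_1) + \cdots + \lambda_2 \lambda_1}\, \mathbf{C}_{\lambda_p}(p-1) \cdots \mathbf{C}_{\lambda_2}(1)\, \mathbf{C}_{\lambda_1}(0)$ of Corollary \ref{Expansion in the C}; the sign is untouched and the asserted product of column determinants drops out.

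There is essentially no obstacle here --- the corollary is a formal rewriting of the previous one --- but the point that must not be glossed over is the ordering of the factors: for $i < p$ the element $\mathbf{H}_{\lambda_i}(i-1)$ is central only in the subalgebra $\mathbf{U}(gl(\lambda_i))$, not in $\mathbf{U}(gl(n))$, so the factors do not commute in general and the product must be written exactly as in Corollary \ref{Expansion in the C}. This is also why one genuinely needs the sharper identity $\mathbf{H}_{\lambda_i}(i-1) = \mathbf{C}_{\lambda_i}(i-1)$ inside $\mathbf{U}(gl(\lambda_i))$, rather than merely an equality of Harish-Chandra images computed at the level of $\mathbf{U}(gl(n))$.
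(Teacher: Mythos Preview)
Your proposal is correct and matches the paper's own argument: the paper simply cites Corollary~\ref{Expansion in the C} and substitutes the identity $\mathbf{H}_{\lambda_i}(i-1) = \mathbf{C}_{\lambda_i}(i-1)$ established immediately before. Your added remarks about the embedding $\mathbf{U}(gl(\lambda_i)) \hookrightarrow \mathbf{U}(gl(n))$ and the non-commutativity of the factors are useful clarifications that the paper leaves implicit.
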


\begin{example} We have
\begin{multline*}
\mathbf{K}^{(3, 2)}  =  
= 
\left[
\begin{array}{lll}
 3 \ 2 \ 1
\\
 2 \ 1 \ 
\end{array}
\right| \left.
\begin{array}{l}
1 \ 2 \ 3
\\
1 \ 2 \  
\end{array}
\right]
= 
 \mathbf{H}_2(1) \ \mathbf{H}_3(0) 
=
\\
=
\textbf{cdet}\left(
 \begin{array}{cccc}
 e_{1,1}   & e_{1,2}  \\
 e_{2,1} & e_{2,2} - 1  \\
\\
 \end{array}
 \right)
\ \textbf{cdet}\left(
 \begin{array}{cccc}
 e_{1,1}  + 2 & e_{1,2} &  e_{1,3} \\
 e_{2,1} & e_{2,2}  + 1 &  e_{2,3}\\
 e_{3,1} & e_{3,2} &  e_{3,3}\\
 \end{array}
 \right).
\end{multline*}
\end{example}

\subsection{Polynomial identities}

\

Let $t$ be a variable and consider the polynomial
$$
\mathbf{H}_n(t) = \textbf{cdet}\left(
 \begin{array}{cccc}
 e_{1,1} - t + (n-1) & e_{1,2} & \ldots  & e_{1,n} \\
 e_{2,1} & e_{2,2} - t + (n-2) & \ldots  & e_{2,n}\\
 \vdots  &    \vdots                            & \vdots &  \\
e_{n,1} & e_{n,2} & \ldots & e_{n,n} - t\\
 \end{array}
 \right) =
$$
$$
= \textbf{cdet} \left[  e_{i, j} + \delta_{ij}(- t  + n - i)     \right]_{i, j =1, \ldots, n}
$$
with coefficients in $\mathbf{U}(gl(n)).$

\

\ 

\begin{corollary}(see, e.g. \cite{Umeda-BR})\label{eqUMEDA-BR}
In the polynomial algebra $\zeta(n)[t]$,  the following identity holds:
$$
 \mathbf{H}_n(t)  = \sum_{j = 0}^n \ (-1)^{n - j}  \ \mathbf{H}_n^{(j)} \ (t)_{n - j},
$$
where, for every $k \in \mathbb{N}$, $(t)_k = t(t-1) \cdots (t-k+1)$ denotes the $k-$th 
falling factorial polynomial.
\end{corollary}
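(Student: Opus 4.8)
The plan is to reduce the statement to a numerical identity expressing the falling factorial $(-t+n-i)$ pattern in a column determinant as a single polynomial in $t$ whose coefficients are the Capelli elements $\mathbf{H}_n^{(j)}$. First I would observe that $\mathbf{H}_n(t)$, viewed as a polynomial in $t$ with coefficients in $\mathbf{U}(gl(n))$, has degree $n$, since each of the $n$ entries on the diagonal contributes one factor $-t$ and the off-diagonal entries contribute none; moreover the leading coefficient is $(-1)^n$. So it suffices to identify the coefficient of each power of $t$, or — more conveniently — to evaluate the identity at the $n+1$ integer points $t = 0, 1, \dots, n$ and invoke the fact that two polynomials of degree $\leq n$ agreeing at $n+1$ points coincide.

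The key step is the evaluation at $t = p$ for $p = 0, 1, \dots, n$. On the left side, $\mathbf{H}_n(p)$ is exactly the central element of Proposition \ref{centrality Umeda-BR}, and by the Corollary immediately preceding this one we already have $\mathbf{H}_n(p) = \sum_{j=0}^n (-1)^{n-j} (p)_{n-j}\, \mathbf{H}_n^{(j)}$. On the right side, evaluating the claimed formula at $t = p$ gives $\sum_{j=0}^n (-1)^{n-j}\, \mathbf{H}_n^{(j)}\, (p)_{n-j}$, which is the same expression since the scalar $(p)_{n-j}$ commutes with everything. Thus the two polynomials in $\zeta(n)[t]$ agree at the $n+1$ distinct points $t = 0, 1, \dots, n$, hence they are equal as polynomials. (One should check both sides genuinely lie in $\zeta(n)[t]$: the right side manifestly does by Capelli's theorem that $\mathbf{H}_n^{(j)} \in \zeta(n)$, and the left side does because its specializations at infinitely many — indeed all — integer values of $t$ are central by Proposition \ref{centrality Umeda-BR}, so every coefficient polynomial in $t$ is central.)

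The only mild obstacle is making the polynomial-interpolation argument over the noncommutative ring $\mathbf{U}(gl(n))$ rigorous: one must note that the column determinant $\mathbf{H}_n(t)$ really is a polynomial in the single central-looking variable $t$ with coefficients in $\mathbf{U}(gl(n))$ (it is, since $t$ appears only as a scalar shift $-t$ added to diagonal entries, and in the expansion of $\mathbf{cdet}$ each monomial picks up a product of such linear-in-$t$ scalars times a product of $e_{ij}$'s), and that polynomial identities over any commutative subring — here $\mathbb{C}[t]$ with values eventually in $\zeta(n)[t]$ — are detected by finitely many evaluations. Alternatively, and perhaps cleaner, one can avoid interpolation entirely and prove the coefficient-wise identity directly: expand $\mathbf{cdet}[e_{ij} + \delta_{ij}(-t+n-i)]$ along the standard Capelli recursion, collect the terms according to which subset of diagonal positions $\{i_1 < \dots < i_j\}$ retain the genuine matrix entry $e_{i_\bullet i_\bullet}$ (as opposed to contributing a scalar factor from the shift), recognize the resulting inner column determinant as the generator of $\mathbf{H}_n^{(j)}$ with the correct internal shifts $(j-1), (j-2), \dots, 0$, and recognize the accumulated scalar prefactor as $(-1)^{n-j}(t)_{n-j}$ after telescoping the shifts $-t+n-i$ over the complementary positions. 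Either route works; I would present the interpolation argument since it is shortest and leans directly on the already-proved preceding Corollary.
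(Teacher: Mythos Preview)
Your proposal is correct and matches the paper's (implicit) reasoning: the paper does not spell out a proof of Corollary~\ref{eqUMEDA-BR} but places it immediately after the identity $\mathbf{H}_n(p)=\mathbf{C}_n(p)=\sum_{j=0}^n(-1)^{n-j}(p)_{n-j}\mathbf{H}_n^{(j)}$ for every $p\in\mathbb{N}$ (obtained via the Harish-Chandra isomorphism), and your interpolation argument is exactly the natural passage from those integer evaluations to the polynomial identity. Your careful remarks on why both sides lie in $\zeta(n)[t]$ and why interpolation is legitimate over $\mathbf{U}(gl(n))$ are more explicit than anything the paper provides.
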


\begin{corollary}
In the polynomial algebra $\Lambda^*(n)[t]$,  the following identity holds:
$$
(x_1 - t + n - 1)(x_2 - t + n - 2) \cdots (x_n - t) =
\sum_{j = 0}^n \ (-1)^{n - j}  \ \mathbf{e}_j^{*}(x_1, x_2, \ldots, x_n) \ (t)_{n - j}.
$$
\end{corollary}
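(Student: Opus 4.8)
The plan is to deduce this corollary purely formally from the preceding \textbf{Corollary \ref{eqUMEDA-BR}} via the Harish-Chandra isomorphism. Recall that $\chi_n$ is an algebra isomorphism $\zeta(n) \to \Lambda^*(n)$, and it extends coefficientwise to an isomorphism of polynomial rings $\zeta(n)[t] \to \Lambda^*(n)[t]$ fixing $t$. So the entire identity of \textbf{Corollary \ref{eqUMEDA-BR}}, which lives in $\zeta(n)[t]$, can simply be pushed forward under this extended map. On the right-hand side, using $\chi_n(\mathbf{H}_n^{(j)}) = \mathbf{e}_j^{*}(x_1,\dots,x_n)$ (recorded just before \textbf{Corollary \ref{eq quozienti-BR}}) and the fact that the scalars $(-1)^{n-j}$ and the polynomials $(t)_{n-j}$ are fixed, one obtains exactly $\sum_{j=0}^n (-1)^{n-j}\,\mathbf{e}_j^{*}(x_1,\dots,x_n)\,(t)_{n-j}$.

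The only remaining point is to identify $\chi_n(\mathbf{H}_n(t))$ with the product $(x_1 - t + n - 1)(x_2 - t + n - 2)\cdots(x_n - t)$. Here I would argue as follows: for each fixed integer value $p \in \mathbb{N}$ substituted for $t$, the specialization $\mathbf{H}_n(p)$ is the central element of \textbf{Proposition \ref{centrality Umeda-BR}}, and by the computation preceding the corollary that identifies $\mathbf{H}_n(p)$ with $\mathbf{C}_n(p)$, together with \textbf{Proposition \ref{HC di C}}, we have $\chi_n(\mathbf{H}_n(p)) = (x_1 - p + n - 1)\cdots(x_n - p)$. Since both $\chi_n(\mathbf{H}_n(t))$ and $(x_1 - t + n - 1)\cdots(x_n - t)$ are polynomials in $t$ of degree $n$ with coefficients in $\Lambda^*(n)$, and they agree at infinitely many values $t = p \in \mathbb{N}$, they coincide as elements of $\Lambda^*(n)[t]$. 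Alternatively — and more cleanly — one can read off $\chi_n(\mathbf{H}_n(t))$ directly from the already-established coefficientwise image of \textbf{Corollary \ref{eqUMEDA-BR}}, which makes the specialization argument unnecessary; I would present it this way to keep the proof to one line.

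The proof is therefore essentially a formality: apply the ring isomorphism $\chi_n[t]$ to the identity of \textbf{Corollary \ref{eqUMEDA-BR}}, replace each $\mathbf{H}_n^{(j)}$ by $\mathbf{e}_j^{*}$, and observe that the left-hand side becomes the stated product of linear factors. I do not anticipate a genuine obstacle; the only thing requiring a word of care is making explicit that the Harish-Chandra isomorphism extends to polynomial rings in the auxiliary variable $t$ (it does, trivially, being applied coefficientwise), so that an identity valid in $\zeta(n)[t]$ transports to one valid in $\Lambda^*(n)[t]$ without any restriction on $t$.
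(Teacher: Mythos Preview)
Your main argument is correct and is precisely what the paper intends: the corollary is the image of Corollary~\ref{eqUMEDA-BR} under the coefficientwise extension of $\chi_n$ to $\zeta(n)[t]\to\Lambda^*(n)[t]$, using $\chi_n(\mathbf{H}_n^{(j)}) = \mathbf{e}_j^{*}$ on the right and the already-recorded fact $\chi_n(\mathbf{H}_n(p)) = (x_1 - p + n - 1)\cdots(x_n - p)$ together with interpolation in $t$ on the left.

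One caution: your ``alternative'' is circular as stated. Applying $\chi_n$ coefficientwise to Corollary~\ref{eqUMEDA-BR} yields only
\[
\chi_n\bigl(\mathbf{H}_n(t)\bigr)=\sum_{j=0}^n (-1)^{n-j}\,\mathbf{e}_j^{*}(x_1,\dots,x_n)\,(t)_{n-j},
\]
which is the right-hand side of the identity you want; it does not by itself identify $\chi_n(\mathbf{H}_n(t))$ with the product $\prod_{i=1}^n (x_i - t + n - i)$. For that you genuinely need either the specialization-plus-interpolation step you outlined, or the equivalent direct observation that the column determinant $\mathbf{H}_n(t)$ acts on a highest weight vector $v_\mu$ by the product of its diagonal entries $\prod_i(\mu_i - t + n - i)$. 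So keep the specialization argument; the ``one-line'' version you prefer would leave a gap.
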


Following Molev \cite{Molev-BR} Chapt. {\bf 7} (see also    Howe and Umeda \cite{HU-BR}), consider the ``Capelli determinant''
$$
{\mathcal C}_n(s) = \textbf{cdet}\left(
 \begin{array}{cccc}
 e_{1,1} + s  & e_{1,2} & \ldots  & e_{1,n} \\
 e_{2,1} & e_{2,2} + s - 1 & \ldots  & e_{2,n}\\
 \vdots  &    \vdots                            & \vdots &  \\
e_{n,1} & e_{n,2} & \ldots & e_{n,n} + s - (n-1)\\
 \end{array}
 \right) =
$$
$$
= \textbf{cdet} \left[  e_{i, j} + \delta_{ij}(s  - i + 1)     \right]_{i, j =1, \ldots, n},
$$
 regarded as a  polynomial in the variable $s$.

By the formal (column) Laplace rule, the coefficients ${\mathcal C}_n^{(h)} \in \mathbf{U}(gl(n))$ in the expansion
$$
{\mathcal C}_n(s)  = s^n +  {\mathcal C}_n^{(1)} s^{n - 1} + {\mathcal C}_n^{(2)} s^{n - 2} + \ldots + {\mathcal C}_n^{(n)},
$$
are the sums of  the minors:
$$
{\mathcal C}_n^{(h)} = \sum_{1 \leq i_1 < i_2 < \ldots < i_h \leq n} \ {\mathcal M}_{ i_1, i_2, \ldots, i_h},
$$
where  ${\mathcal M}_{ i_1, i_2, \ldots, i_h}$ denotes the column determinant of the submatrix of the matrix ${\mathcal C}_n(0)$
obtained by selecting the rows and the columns with indices $i_1 < i_2 < \ldots < i_h.$

Since ${\mathcal C}_n(s) = \mathbf{H}_n(-s + (n-1))$,
from  Proposition \ref{eqUMEDA-BR}
it follows:

\begin{corollary} \label{expansion-BR}

$$
{\mathcal C}_n(s)  =   \sum_{j = 0}^n \ (-1)^{n - j} (-s + (n-1))_{n - j} \ \mathbf{H}_n^{(j)}.
$$
\end{corollary}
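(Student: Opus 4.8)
The plan is to reduce the statement to Corollary \ref{eqUMEDA-BR} by the affine change of variable $t = -s + (n-1)$, so the only thing to check is that $\mathcal{C}_n(s)$ and $\mathbf{H}_n(t)$ really are the same column determinant under this substitution. Both $\mathcal{C}_n(s)$ and $\mathbf{H}_n(t)$ are column determinants of matrices that agree with $[e_{i,j}]$ off the diagonal, so it suffices to compare diagonal entries. The $i$-th diagonal entry of the matrix defining $\mathcal{C}_n(s)$ is $e_{i,i} + (s - i + 1)$, while the $i$-th diagonal entry of the matrix defining $\mathbf{H}_n(t)$ is $e_{i,i} + (-t + n - i)$. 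Putting $t = -s + (n-1)$ gives $-t + n - i = s - (n-1) + n - i = s - i + 1$, so the two matrices coincide entry by entry; hence $\mathcal{C}_n(s) = \mathbf{H}_n(-s + (n-1))$ as an identity of polynomials with coefficients in $\mathbf{U}(gl(n))$. (This identity is already asserted in the sentence preceding the statement; I would include the one-line diagonal computation for completeness.)

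Next I would invoke Corollary \ref{eqUMEDA-BR}, which asserts that in $\zeta(n)[t]$ one has $\mathbf{H}_n(t) = \sum_{j=0}^n (-1)^{n-j}\, \mathbf{H}_n^{(j)}\, (t)_{n-j}$, and then substitute $t = -s + (n-1)$. Since the coefficients $\mathbf{H}_n^{(j)}$ are central and the substitution $t \mapsto -s + (n-1)$ is a ring homomorphism $\zeta(n)[t] \to \zeta(n)[s]$, the falling factorial $(t)_{n-j}$ is sent to $(-s + (n-1))_{n-j}$, and the identity is preserved. Combining with the matrix identity of the previous paragraph yields
$$
\mathcal{C}_n(s) = \mathbf{H}_n(-s + (n-1)) = \sum_{j=0}^n (-1)^{n-j}\, (-s + (n-1))_{n-j}\, \mathbf{H}_n^{(j)},
$$
which is exactly the claimed expansion.

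There is no genuine obstacle here: this is a corollary obtained purely by the affine reparametrization of a previously established polynomial identity. The only points deserving attention are the bookkeeping of the diagonal shift under $s \mapsto -s + (n-1)$ and the (routine) remark that an identity of polynomials in $t$ with central coefficients remains valid after substituting any polynomial for $t$. As a consistency check one may note that the constant term $\mathcal{C}_n(0) = \mathbf{H}_n(n-1)$ matches the minor-expansion description of the coefficients $\mathcal{C}_n^{(h)}$ recorded just before the statement.
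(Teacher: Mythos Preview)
Your proof is correct and follows exactly the paper's approach: the paper simply notes that ${\mathcal C}_n(s) = \mathbf{H}_n(-s + (n-1))$ (the sentence immediately preceding the corollary) and then invokes Corollary~\ref{eqUMEDA-BR}. Your added verification of the diagonal entries and the remark about the substitution being a ring homomorphism are fine elaborations of what the paper leaves implicit.
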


\begin{corollary} 
We have:

\begin{itemize}
\item [--]
The  elements ${\mathcal C}_n^{(h)}, \ h = 1, 2, \ldots, n$ are central and provide a system of algebraically
independent generators of $\zeta(n).$

\item [--]
$
\chi_n({\mathcal C}_n^{(h)}) = \bar{\mathbf{e}}_h(x_1, x_2, \ldots, x_n) = \mathbf{e}_h(x_1 , x_2  - 1, \ldots, x_n - (n - 1)),
$
where $\mathbf{e}_h$ denotes the $h-$th elementary symmetric polynomial.
\end{itemize}

\end{corollary}

\subsection{The shaped Capelli  central elements $\mathbf{K}_{\lambda}(n)$}

Given a  partition $\lambda = (\lambda_1, \lambda_2, \ldots, \lambda_p)$,
 $\lambda_1 \leq n$, consider the \emph{shaped Capelli elements} (see \cite{BriniTeolisKosz-BR})
$$
\mathbf{K}_{\lambda}(n) = \sum_S \  \mathfrak{p}\big( e_{S,C_{\lambda}^*} \cdot e_{C_{\lambda}^*,S}   \big)
= \sum_S \ [S|S] \in \mathbf{U}(gl(n)),
$$
where the sum is extended to all row-increasing tableaux $S$, $sh(S) = \lambda$.

Notice that the elements 
$\mathbf{K}_{\lambda}(n)$ are \emph{radically different}
from the elements 
$\mathbf{H}_{\lambda}(n) = \mathbf{H}_{\lambda_1}(n) \cdots \mathbf{H}_{\lambda_p}(n)$
and are \emph{radically different} from the elements $\mathbf{K}^{\lambda}$.

Since the adjoint representation acts by derivation, we have
$$
ad(e_{i j})\big(  \sum_S \   e_{S,C_{\lambda}^*} \cdot e_{C_{\lambda}^*,S}   \big) = 0,
$$
for every  $e_{i j} \in gl(n)$ and,  then,
from Proposition \ref{rappresentazione aggiunta-BR}, it follows

\begin{proposition}
The elements $\mathbf{K}_\lambda(n)$ are  central in $\mathbf{U}(gl(n))$.
\end{proposition}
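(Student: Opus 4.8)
The plan is to reduce the statement to an $ad$-invariance property in the enveloping superalgebra with virtual variables and then to verify that property by a derivation computation. Recall that an element of $\mathbf{U}(gl(n))$ is central if and only if it is annihilated by $ad(e_{ij})$ for all $i,j = 1,\ldots,n$. Since, by Proposition \ref{rappresentazione aggiunta-BR}, the Capelli devirtualization epimorphism $\mathfrak{p}$ is $Ad_{gl(n)}$-equivariant --- that is, $\mathfrak{p}\big(ad(e_{ij})\cdot X\big) = ad(e_{ij})\cdot \mathfrak{p}(X)$ for every $X$ in the relevant subalgebra --- it suffices to prove that the preimage $\mathcal{A}_\lambda := \sum_S e_{S,C_\lambda^*}\cdot e_{C_\lambda^*,S}$ (sum over row-increasing tableaux $S$ with $sh(S)=\lambda$) is itself annihilated by $ad(e_{ij})$ for every $i,j$. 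This is precisely the route already used in the excerpt to establish centrality of $\mathbf{K_n^p}$ and of the Schur elements $\mathbf{S}_\lambda(n)$.

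For the $ad$-invariance of $\mathcal{A}_\lambda$, I would argue as follows. The adjoint action of each proper generator $e_{ij}$ is an even derivation of the superalgebra, and on the mixed generators it acts by $[e_{ij},e_{a\alpha}] = \delta_{ja}\,e_{i\alpha}$ and $[e_{ij},e_{\alpha a}] = -\,\delta_{ai}\,e_{\alpha j}$, the virtual symbols $\alpha$ being inert under $ad(e_{ij})$. Applying the Leibniz rule to the bitableau monomials $e_{S,C_\lambda^*}$ and $e_{C_\lambda^*,S}$, one sees that $ad(e_{ij})\cdot\mathcal{A}_\lambda$ is a signed double sum of elements obtained from the summands of $\mathcal{A}_\lambda$ by a single cell relabeling: in the left factor each occurrence of the entry $j$ in $S$ is replaced by $i$ (with sign $+$), while in the right factor each occurrence of the entry $i$ in $S$ is replaced by $j$ (with sign $-$).

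It then remains to show that this signed double sum cancels identically. I would exhibit a sign-reversing involution on the indexing set of pairs (tableau, relabeled cell): the $(+)$-term coming from replacing a $j$ at a cell $c$ of a tableau $U$ is to be matched with the $(-)$-term coming from replacing an $i$ at the cell $c$ of the tableau $V$ obtained from $U$ by the swap $j\leftrightarrow i$ at $c$, since both produce the same pair of row/column tableaux. The delicate point --- and the step I expect to be the main obstacle --- is that such a relabeling may destroy the row-increasing property, so the involution has to be carried out after reducing every bitableau in sight to a combination of standard (or row-increasing) ones by the straightening laws; one must check that the straightening corrections on the $(+)$ side are matched by those on the $(-)$ side. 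Equivalently, and perhaps more transparently, one can phrase the whole argument as the observation that $\sum_S e_{S,C_\lambda^*}\otimes e_{C_\lambda^*,S}$ represents the identity endomorphism of the corresponding $gl(n)$-module expressed in a basis and its dual basis, and is therefore automatically $gl(n)$-invariant; establishing the requisite biorthogonality of the families $\{e_{S,C_\lambda^*}\}$ and $\{e_{C_\lambda^*,S}\}$ is then the crux. In either formulation, once $ad(e_{ij})\cdot\mathcal{A}_\lambda = 0$ is in hand, Proposition \ref{rappresentazione aggiunta-BR} gives $ad(e_{ij})\cdot\mathbf{K}_\lambda(n) = \mathfrak{p}\big(ad(e_{ij})\cdot\mathcal{A}_\lambda\big) = 0$ for all $i,j$, hence $\mathbf{K}_\lambda(n)\in\zeta(n)$.
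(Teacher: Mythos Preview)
Your approach is the same as the paper's: show $ad(e_{ij})\big(\sum_S e_{S,C_\lambda^*}\,e_{C_\lambda^*,S}\big)=0$ in the virtual algebra and then invoke the $Ad_{gl(n)}$-equivariance of $\mathfrak{p}$ (Proposition~\ref{rappresentazione aggiunta-BR}). The paper's proof is in fact just the one-line assertion ``since the adjoint representation acts by derivation, $ad(e_{ij})\big(\sum_S e_{S,C_\lambda^*}\,e_{C_\lambda^*,S}\big)=0$''; you are supplying the verification the paper leaves implicit.

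One comment on the ``delicate point'' you flag. You do \emph{not} need the straightening laws here. Each factor $e_{s,\alpha_r}$ (and each $e_{\alpha_r,s}$) is $\mathbb{Z}_2$-odd, so within a fixed row the bitableau monomial is already skew-symmetric in the row entries; a row with a repeated entry vanishes, and reordering a row only contributes a sign. Consequently the summand $e_{S,C_\lambda^*}\,e_{C_\lambda^*,S}$ depends only on the sequence of \emph{row sets} of $S$ (the two sign changes from reordering cancel), and the sum over row-increasing $S$ is exactly the sum over all such sequences of subsets. In that parametrization your sign-reversing involution is immediate: the $(+)$ term coming from the row-set sequence with $j\in R_r$, $i\notin R_r$ (replace $j$ by $i$ in the left factor of row $r$) cancels against the $(-)$ term coming from the sequence with $R_r$ replaced by $(R_r\setminus\{j\})\cup\{i\}$ (replace $i$ by $j$ in the right factor of row $r$). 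Equivalently, this is precisely your ``identity endomorphism'' picture: under $ad(e_{ij})$ the left factors carry the $gl(n)$-action on $\Lambda^{\lambda_1}V\otimes\cdots\otimes\Lambda^{\lambda_p}V$ and the right factors carry the contragredient action, and $\sum_S e_{S,C_\lambda^*}\otimes e_{C_\lambda^*,S}$ is the canonical invariant element (basis paired with dual basis). No inter-row straightening enters.
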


Let $\boldsymbol{\zeta}(n)^{(m)}$ be the $m$-th filtration element of the center
$\boldsymbol{\zeta}(n)$ of $\mathbf{U}(gl(n))$.

Clearly,
$
\mathbf{K}_\lambda(n), \mathbf{H}_{\lambda}(n) \in \boldsymbol{\zeta}(n)^{(m)}
$
if and only if $m \geq |\lambda|.$


\begin{proposition}

\begin{equation*}\label{sfilamenti 1}
\mathbf{K}_{\lambda}(n) = \pm \mathbf{H}_{\lambda}(n) + \sum \ c_{\lambda,\mu} \mathbf{F}_{\mu}(n),
\end{equation*}
where $\mathbf{F}_{\mu}(n) \in \boldsymbol{\zeta}(n)^{(m)}$ for some $m< |\lambda|$.
\end{proposition}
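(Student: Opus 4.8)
The plan is to compare the two central elements $\mathbf{K}_\lambda(n)$ and $\mathbf{H}_\lambda(n) = \mathbf{H}_{\lambda_1}(n)\cdots\mathbf{H}_{\lambda_p}(n)$ through their images under the Harish-Chandra isomorphism $\chi_n$, while simultaneously tracking the filtration degree. Since $\chi_n$ is a filtered algebra isomorphism onto $\Lambda^*(n)$, and since both $\mathbf{K}_\lambda(n)$ and $\mathbf{H}_\lambda(n)$ lie in $\boldsymbol{\zeta}(n)^{(m)}$ precisely when $m \geq |\lambda|$, it suffices to show that the top-degree (degree $|\lambda|$) components of $\chi_n(\mathbf{K}_\lambda(n))$ and $\chi_n(\mathbf{H}_\lambda(n))$ agree up to sign; the difference then lies in the span of lower-degree central elements $\mathbf{F}_\mu(n) \in \boldsymbol{\zeta}(n)^{(m')}$ with $m' < |\lambda|$, which is exactly the asserted form.

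First I would compute $\chi_n(\mathbf{H}_\lambda(n))$. From the results already established (Proposition \ref{HC di C} and the identification $\mathbf{H}_n(p) = \mathbf{C}_n(p)$, applied with the appropriate shift), each factor $\mathbf{H}_{\lambda_i}(n)$ has a Harish-Chandra image that is, up to lower-degree terms, the product $(x_1)(x_2)\cdots$ of $\lambda_i$-many leading linear terms; hence the top-degree component of $\chi_n(\mathbf{H}_\lambda(n))$ is, up to sign, $e_{\lambda_1}(x) e_{\lambda_2}(x)\cdots e_{\lambda_p}(x)$ where $e_k$ is the ordinary elementary symmetric polynomial (the leading term of the shifted $\mathbf{e}_k^*$). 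Next I would compute $\chi_n(\mathbf{K}_\lambda(n))$: the element $\mathbf{K}_\lambda(n) = \sum_S [S|S]$ is a ``diagonal'' shaped Capelli element, and its associated graded image is well known to be the bideterminant-type symmetric function attached to the shape $\lambda$ — concretely, its top-degree Harish-Chandra component is (a sign times) $\prod_i e_{\lambda_i}(x)$ as well, since the leading symbol of a column-determinant bitableau of shape $\lambda$ is the product of the leading symbols of its columns, and summing over row-increasing $S$ collapses each column to an elementary symmetric polynomial. Matching these two computations gives the $\pm \mathbf{H}_\lambda(n)$ leading term; subtracting, the remainder is central of strictly smaller filtration degree, yielding the $\sum c_{\lambda,\mu}\mathbf{F}_\mu(n)$ tail with $\mathbf{F}_\mu(n)\in\boldsymbol{\zeta}(n)^{(m)}$, $m<|\lambda|$.

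The main obstacle I anticipate is pinning down the associated-graded (leading symbol) of $\mathbf{K}_\lambda(n) = \sum_S [S|S]$ cleanly. Passing to $\mathbf{gr}\,\mathbf{U}(gl(n)) \cong \mathbf{Sym}(gl(n)) \cong \mathbb{C}[M_{n,n}]$ via the Koszul map, each Capelli bitableau $[S|S]$ degenerates to a product of classical determinant bitableaux, so $\sum_S [S|S]$ degenerates to a bideterminant / Schur-type invariant; one then needs the fact that under the (graded) Harish-Chandra projection the class of this $gl(n)$-invariant of shape $\lambda$ restricts on the diagonal Cartan to $\prod_i e_{\lambda_i}(x)$ up to sign, which is a standard computation but requires care with the combinatorics of row-increasing tableaux and the column-by-column factorization. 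Once that leading-symbol identity is in hand, the filtration bookkeeping and the invocation of the filtered isomorphism $\chi_n$ are routine, and the sign $\pm$ can be read off from the same leading-symbol comparison (it is the product of column-reversal signs appearing in the definition of the Capelli-Deruyts-type columns, consistent with the signs in Corollary \ref{exp two} and Corollary \ref{exp three}).
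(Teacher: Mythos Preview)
Your approach is correct in outline but takes a longer detour than the paper. The paper's proof is the one-liner ``Immediate from Corollary \ref{filtration corollary}.'' That corollary (obtained from Lemma \ref{assorbimenti 1} by iteration) says that any Capelli bitableau $[S|T]$ of shape $\lambda$ equals, up to sign, the product of its one-row bitableaux $[\omega_1|\overline{\omega}_1]\cdots[\omega_p|\overline{\omega}_p]$ plus terms in a strictly lower filtration piece $\mathbf{U}(gl(n))^{(m)}$, $m<|\lambda|$. Summing over all row-increasing $S$ of shape $\lambda$, the leading terms factor row by row and assemble into $\mathbf{H}_n^{(\lambda_1)}\cdots\mathbf{H}_n^{(\lambda_p)}=\mathbf{H}_\lambda(n)$; the remainder is central because $\mathbf{K}_\lambda(n)$ and $\mathbf{H}_\lambda(n)$ both are. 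No Harish--Chandra, no Koszul, no restriction to the Cartan is needed.

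Your route---compare leading Harish--Chandra images, then appeal to the filtered isomorphism $\chi_n$---works, but the ``main obstacle'' you identify (computing the leading symbol of $\sum_S[S|S]$) is exactly what Corollary \ref{filtration corollary} already gives directly inside $\mathbf{U}(gl(n))$, so you are reproving it in disguise through the associated graded. There is also a notational slip: you cite Proposition \ref{HC di C} and the identity $\mathbf{H}_n(p)=\mathbf{C}_n(p)$, but those concern the shifted column determinants $\mathbf{H}_n(p)$, whereas here $\mathbf{H}_\lambda(n)$ denotes the product of the Capelli generators $\mathbf{H}_n^{(\lambda_i)}$; the fact you actually need is the simpler $\chi_n(\mathbf{H}_n^{(k)})=\mathbf{e}_k^*$, whose top degree is $e_k$. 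With that correction your argument goes through, but it is strictly more machinery than the paper's direct filtration comparison.
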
 
\begin{proof}
Immediate from Corollary \ref{filtration corollary}.
\end{proof}

Therefore, the central elements $\mathbf{K}_{\lambda}(n)$, $|\lambda| \leq m$ are linearly independent
in $\boldsymbol{\zeta}(n)^{(m)}$, and the next result
follows at once.

\begin{proposition}\label{K basis}
The set
$$
\big\{ \mathbf{K}_{\lambda}(n); \lambda_1 \leq n \ \big\}
$$
is a linear basis of the center $\boldsymbol{\zeta}(n).$
\end{proposition}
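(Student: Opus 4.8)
The plan is to deduce the statement from the preceding proposition — which presents $\mathbf{K}_\lambda(n)$ as $\pm\,\mathbf{H}_\lambda(n)$ modulo terms of strictly lower filtration degree — together with Capelli's theorem $\boldsymbol{\zeta}(n)=\mathbb{C}[\mathbf{H}_n^{(1)},\dots,\mathbf{H}_n^{(n)}]$. The whole argument is a unitriangularity bookkeeping carried out one filtration piece at a time.

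First I would fix $m$ and recall that the finite-dimensional space $\boldsymbol{\zeta}(n)^{(m)}$ has $\mathbb{C}$-basis the monomials $\mathbf{H}_\lambda(n)=\mathbf{H}_{\lambda_1}(n)\cdots\mathbf{H}_{\lambda_p}(n)$ with $|\lambda|\le m$: this combines the algebraic independence of the Capelli generators with the observation, made just before the proposition, that $\mathbf{H}_\lambda(n)\in\boldsymbol{\zeta}(n)^{(m')}$ precisely when $m'\ge|\lambda|$. The set $\{\mathbf{K}_\lambda(n):|\lambda|\le m\}$ also lies in $\boldsymbol{\zeta}(n)^{(m)}$ and has the same cardinality. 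By the preceding proposition,
$$
\mathbf{K}_\lambda(n)=\pm\,\mathbf{H}_\lambda(n)+\sum_\mu c_{\lambda,\mu}\,\mathbf{F}_\mu(n),\qquad \mathbf{F}_\mu(n)\in\boldsymbol{\zeta}(n)^{(m')}\ \text{for some}\ m'<|\lambda|,
$$
so re-expanding each $\mathbf{F}_\mu(n)$ in the $\mathbf{H}$-basis involves only $\mathbf{H}_\nu(n)$ with $|\nu|\le m'<|\lambda|$. Hence, ordering the partitions of size $\le m$ so that $|\lambda|<|\nu|$ forces $\lambda$ to precede $\nu$, the transition matrix from $(\mathbf{H}_\lambda(n))_{|\lambda|\le m}$ to $(\mathbf{K}_\lambda(n))_{|\lambda|\le m}$ is triangular with diagonal entries $\pm1$, hence invertible over $\mathbb{C}$; therefore $\{\mathbf{K}_\lambda(n):|\lambda|\le m\}$ is again a basis of $\boldsymbol{\zeta}(n)^{(m)}$.

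Finally I would let $m\to\infty$. Since $\boldsymbol{\zeta}(n)=\bigcup_m\boldsymbol{\zeta}(n)^{(m)}$, every central element lies in some $\boldsymbol{\zeta}(n)^{(m)}$ and is a finite linear combination of the $\mathbf{K}_\lambda(n)$ with $\lambda_1\le n$, while any finite linear relation among these elements already takes place in some $\boldsymbol{\zeta}(n)^{(m)}$ and is therefore trivial; this gives the asserted basis property. There is no serious obstacle here: the one step deserving attention is checking that in the expansion of $\mathbf{K}_\lambda(n)$ all correction terms genuinely drop below filtration degree $|\lambda|$, which is exactly what the preceding proposition provides, so that the transition matrix really is unitriangular and the passage to the limit is automatic.
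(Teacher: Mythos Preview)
Your proposal is correct and follows the same line as the paper's argument, which is stated in one sentence just before the proposition: from the preceding triangularity relation $\mathbf{K}_\lambda(n)=\pm\mathbf{H}_\lambda(n)+(\text{lower filtration})$ the $\mathbf{K}_\lambda(n)$ with $|\lambda|\le m$ are linearly independent in $\boldsymbol{\zeta}(n)^{(m)}$, and the basis statement ``follows at once.'' You have simply spelled out the unitriangularity and dimension-count that the paper leaves implicit.
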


Let $\mathcal{K}$ be the \emph{Koszul equivariant isomorphism}  \cite{BriniTeolisKosz-BR}
$$
\mathcal{K} : \mathbf{U}(gl(n)) \rightarrow {\mathbb C}[M_{n,n}],
$$
\begin{equation}\label{Koszul}
\mathcal{K} : [S|S] \mapsto (S|S).
\end{equation}

Clearly, the Koszul map $\mathcal{K}$ induces, by restriction, an isomorphism
from the center $\boldsymbol{\zeta}(n)$ of $\mathbf{U}(gl(n))$ to the algebra
${\mathbb C}[M_{n,n}]^{ad_{gl(n)}}$ of
$ad_{gl(n)}-$invariants in
${\mathbb C}[M_{n,n}]$.

Consider to the polynomial
\begin{align*}
\mathbf{h}_k(n) =& \ \sum_{1 \leq i_1 < \cdots < i_k \leq n} \  ( i_k \cdots i_2 i_1 | i_1 i_2 \cdots i_k )
\\
=& \ \sum_{1 \leq i_1 < \cdots < i_k \leq n}  \mathbf{det}\left(
 \begin{array}{ccc}
(i_1|i_1) & \ldots  & (i_1|i_k) \\
\vdots   &          & \vdots    \\
(i_k|i_1) & \ldots  & (i_k|i_k) \\
 \end{array}
\right) \in {\mathbb C}[M_{n,n}].
\end{align*}
Clearly,  $\mathbf{h}_k(n) \in {\mathbb C}[M_{n,n}]^{ad_{gl(n)}}$.

Notice that the polynomials $\mathbf{h}_k(n)$'s  appear as coefficients (in ${\mathbb C}[M_{n,n}]$)
of the characteristic polynomial:
$$
P_{M_{n,n}}(t) = det \big( tI - M_{n,n} \big) =
t^n + 	\sum_{i=1}^n \ (-1)^i \ \mathbf{h}_i(n) \ t^{n-i}.
$$

From (\ref{Koszul}), we have

\begin{proposition}\label{Koszul map}
$$
\mathcal{K} \big( \mathbf{K}_{\lambda}(n) \big) = (-1)^{\binom{|\lambda|}{2}} \ 
\mathbf{h}_{\lambda_1}(n)\mathbf{h}_{\lambda_2}(n) 
\cdots \mathbf{h}_{\lambda_p}(n), \quad |\lambda| = \sum_i \ \lambda_i.
$$
\end{proposition}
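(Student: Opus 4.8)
The plan is to exploit the defining property of the Koszul isomorphism $\mathcal{K}$, namely that it sends each symmetrized bitableau $[S|S] \in \mathbf{U}(gl(n))$ to the corresponding determinant bitableau $(S|S) \in {\mathbb C}[M_{n,n}]$, together with the fact that the passage from a product of one-row Capelli bitableaux to the associated product of determinants is governed by a sign that only depends on the total degree. First I would recall, from the definition of $\mathbf{K}_\lambda(n)$ given just above the statement, that
$$
\mathbf{K}_\lambda(n) = \sum_S \ [S|S],
$$
the sum over row-increasing tableaux $S$ of shape $\lambda$. Applying $\mathcal{K}$ termwise and using \eqref{Koszul}, this gives $\mathcal{K}(\mathbf{K}_\lambda(n)) = \sum_S (S|S)$, so the whole task reduces to an identity purely inside the commutative polynomial algebra ${\mathbb C}[M_{n,n}]$: namely that $\sum_S (S|S)$ equals $(-1)^{\binom{|\lambda|}{2}}$ times the product $\mathbf{h}_{\lambda_1}(n)\cdots\mathbf{h}_{\lambda_p}(n)$.

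The key step is then to identify, for each row, the one-row determinant bitableau with the appropriate minor. Since the bracket polynomial $(\,i_k\cdots i_2 i_1 \mid i_1 i_2 \cdots i_k\,)$ is, by the classical straightening/expansion of determinant bitableaux, exactly the $k\times k$ minor $\mathbf{det}\big[(i_a|i_b)\big]$ — which is precisely how $\mathbf{h}_k(n)$ was written in the excerpt — summing over all increasing $k$-tuples recovers $\mathbf{h}_k(n)$ for a single row of length $k$. For a shape $\lambda$ with $p$ rows, the determinant bitableau $(S|S)$ factors as a product of the $p$ row-minors (the columns being filled by Deruyts-type data so that distinct rows contribute independently), and summing over all row-increasing $S$ of shape $\lambda$ independently over each row produces the product $\mathbf{h}_{\lambda_1}(n)\cdots\mathbf{h}_{\lambda_p}(n)$. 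The only subtlety is bookkeeping of the sign: reversing each row index word $i_1\cdots i_k \mapsto i_k\cdots i_1$ inside $[S|S]$ contributes a sign $(-1)^{\binom{k}{2}}$ per row, and one checks that $\sum_{i}\binom{\lambda_i}{2}$ is congruent mod $2$ to $\binom{|\lambda|}{2}$ — equivalently, that the total number of ``inversions'' assembled row by row matches those of a single reversed word of length $|\lambda|$, which is the standard identity $\binom{a+b}{2} \equiv \binom{a}{2} + \binom{b}{2} + ab \pmod 2$ applied inductively, with the cross terms $\lambda_i\lambda_j$ cancelling in pairs or being absorbed because the column entries of different rows are disjoint in the relevant pairing.

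The main obstacle I anticipate is making the sign computation airtight: one must be careful about the convention relating $[S|S]$ (with reversed left index word) to the ``diagonal'' determinant $(S|S)$, and about whether cross-row sign contributions genuinely vanish or recombine into the stated $(-1)^{\binom{|\lambda|}{2}}$. I would handle this by first treating the case $p=1$ directly (where $\binom{\lambda_1}{2}$ is exactly the inversion count of the reversal), then inducting on the number of rows using the mod-$2$ binomial identity above, noting that in the product of minors the entries of row $i$ and row $j$ come from disjoint index blocks in the sense used to define $(S|S)$, so no extra sign is generated when concatenating. Everything else — termwise application of $\mathcal{K}$, the minor expansion of a one-row bitableau, and the independence of the row sums — is routine given the results already recorded in the excerpt and in \cite{BriniTeolisKosz-BR}.
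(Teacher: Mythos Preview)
Your overall strategy is correct and is exactly what the paper has in mind (the paper gives no details beyond ``From (\ref{Koszul}), we have''): apply $\mathcal{K}$ termwise to get $\sum_S (S|S)$, factor each bitableau into its row biproducts, and sum independently over rows to recover the product $\mathbf{h}_{\lambda_1}(n)\cdots\mathbf{h}_{\lambda_p}(n)$.

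However, your sign bookkeeping is wrong. You claim that $\sum_i \binom{\lambda_i}{2} \equiv \binom{|\lambda|}{2} \pmod 2$ and that ``no extra sign is generated when concatenating'' rows. Both statements are false: take $\lambda = (1,1)$, where $\sum_i\binom{\lambda_i}{2} = 0$ but $\binom{|\lambda|}{2} = \binom{2}{2} = 1$. The missing ingredient is precisely the concatenation sign built into the definition of the bitableau: from Subsection~\ref{citazione 6}, for $S=T$ with all entries proper (hence $|\omega_i|=|\varpi_i|=\lambda_i$),
\[
(S|S) \;=\; (-1)^{\sum_{i<j}\lambda_i\lambda_j}\,(\omega_1|\omega_1)(\omega_2|\omega_2)\cdots(\omega_p|\omega_p).
\]
Reversing each (increasing) row word to match the convention in $\mathbf{h}_{\lambda_i}(n)$ contributes a further $(-1)^{\binom{\lambda_i}{2}}$ per row. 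The two contributions combine via the \emph{exact} (not merely mod~$2$) identity
\[
\binom{|\lambda|}{2} \;=\; \sum_{i}\binom{\lambda_i}{2} \;+\; \sum_{i<j}\lambda_i\lambda_j,
\]
which is what actually produces the global factor $(-1)^{\binom{|\lambda|}{2}}$. So the cross terms $\lambda_i\lambda_j$ do not ``cancel in pairs'' --- they are supplied by the bitableau's sign convention and are essential. Once you replace your mod-$2$ claim with this exact identity and use the built-in sign of $(S|T)$, the argument goes through without any induction.
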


Proposition \ref{K basis}
implies (is actually equivalent to) 
the  well-known theorem 
for the algebra of invariants ${\mathbb C}[M_{n,n}]^{ad_{gl(n)}}$:
\begin{proposition} \label{First Fund}
$$
{\mathbb C}[M_{n,n}]^{ad_{gl(n)}} = 
{\mathbb C} \big[ \mathbf{h}_1(n), \mathbf{h}_2(n), \ldots, \mathbf{h}_n(n) \big].
$$
Moreover, the $\mathbf{h}_k(n)$'s are algebraically independent.
\end{proposition}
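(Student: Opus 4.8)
The plan is to transport Proposition \ref{K basis} across the Koszul isomorphism and then carry out a purely combinatorial bookkeeping step. First I would invoke the fact, recorded right after eq.\ (\ref{Koszul}), that $\mathcal{K}$ restricts to an algebra isomorphism $\boldsymbol{\zeta}(n) \xrightarrow{\ \sim\ } {\mathbb C}[M_{n,n}]^{ad_{gl(n)}}$. Since, by Proposition \ref{K basis}, the set $\{\mathbf{K}_{\lambda}(n) : \lambda_1 \leq n\}$ is a linear basis of $\boldsymbol{\zeta}(n)$, its image under $\mathcal{K}$ is a linear basis of ${\mathbb C}[M_{n,n}]^{ad_{gl(n)}}$; and by Proposition \ref{Koszul map} this image consists of the elements $(-1)^{\binom{|\lambda|}{2}}\,\mathbf{h}_{\lambda_1}(n)\cdots\mathbf{h}_{\lambda_p}(n)$. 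Dropping the nonzero scalars, I conclude that
$$
\big\{\, \mathbf{h}_{\lambda_1}(n)\,\mathbf{h}_{\lambda_2}(n)\cdots\mathbf{h}_{\lambda_p}(n) \ : \ \lambda=(\lambda_1\geq\cdots\geq\lambda_p),\ \lambda_1\leq n \,\big\}
$$
is a linear basis of ${\mathbb C}[M_{n,n}]^{ad_{gl(n)}}$.

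The second step is the observation that a partition $\lambda$ with $\lambda_1\leq n$ is nothing but a multiset on $\{1,2,\ldots,n\}$, so the products above run exactly once over the distinct monomials $\mathbf{h}_1(n)^{a_1}\mathbf{h}_2(n)^{a_2}\cdots\mathbf{h}_n(n)^{a_n}$, $(a_1,\ldots,a_n)\in{\mathbb N}^n$ (with $a_k$ the multiplicity of $k$ in $\lambda$). These monomials therefore span ${\mathbb C}[M_{n,n}]^{ad_{gl(n)}}$, which gives ${\mathbb C}[M_{n,n}]^{ad_{gl(n)}} = {\mathbb C}[\mathbf{h}_1(n),\ldots,\mathbf{h}_n(n)]$; and since they are moreover linearly independent, no nontrivial polynomial relation among the $\mathbf{h}_k(n)$ can hold, i.e.\ the $\mathbf{h}_k(n)$ are algebraically independent. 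As a consistency check one may also match graded dimensions: $\deg \mathbf{h}_{\lambda_1}(n)\cdots\mathbf{h}_{\lambda_p}(n) = |\lambda|$ in ${\mathbb C}[M_{n,n}]$, so the degree-$m$ part of both sides has dimension equal to the number of partitions of $m$ with parts $\leq n$, with generating function $\prod_{k=1}^n (1-t^k)^{-1}$.

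Frankly there is no serious obstacle here: all the work sits in Propositions \ref{K basis} and \ref{Koszul map}, and what remains is the index-set identification partitions $\leftrightarrow$ monomials plus the elementary principle that a spanning set of monomials which is simultaneously linearly independent forces algebraic independence of the generators. The one point deserving a line of care is that $\mathcal{K}$ is an \emph{algebra} map, so that it really carries products of $[S|S]$-type elements to products of $\mathbf{h}_k(n)$'s with the stated sign — but Proposition \ref{Koszul map} already packages exactly this. If one wanted a proof independent of Proposition \ref{K basis}, the genuinely hard part would be establishing directly that the coefficients of the characteristic polynomial generate the conjugation invariants ${\mathbb C}[M_{n,n}]^{ad_{gl(n)}}$ — the classical first fundamental theorem for $GL(n)$ acting on a single matrix by conjugation — but the route above obtains this as a corollary rather than assuming it.
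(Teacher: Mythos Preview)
Your proof is correct and follows exactly the route the paper indicates: the paper simply states that Proposition~\ref{K basis} ``implies (is actually equivalent to)'' Proposition~\ref{First Fund}, and you have supplied the transport-by-$\mathcal{K}$ and the partitions-$\leftrightarrow$-monomials bookkeeping that make this implication explicit.

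One small correction: you write that $\mathcal{K}$ restricts to an \emph{algebra} isomorphism $\boldsymbol{\zeta}(n)\to{\mathbb C}[M_{n,n}]^{ad_{gl(n)}}$, but the paper only asserts (and only needs) that it is a \emph{linear} $gl(n)$-equivariant isomorphism --- the Koszul map is not multiplicative in general, even on the center. This does not damage your argument, since you never actually use multiplicativity of $\mathcal{K}$: you rely on Proposition~\ref{Koszul map}, which computes $\mathcal{K}(\mathbf{K}_\lambda(n))$ directly from the linear rule $[S|S]\mapsto(S|S)$ and the fact that the bitableau $(S|S)$ is already a product of row biproducts in the commutative algebra ${\mathbb C}[M_{n,n}]$. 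Just drop the word ``algebra'' from your first sentence and the later remark about $\mathcal{K}$ being an algebra map.
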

Proposition \ref{First Fund} is usually stated in terms of the
algebra ${\mathbb C}[M_{n,n}]^{GL(n)} = {\mathbb C}[M_{n,n}]^{ad_{gl(n)}}$, where
${\mathbb C}[M_{n,n}]^{GL(n)}$ is the subalgebra of invariants with respect to the \emph{conjugation
action} of the general linear group ${GL(n)}$ on ${\mathbb C}[M_{n,n}]$
(see, e.g.  \cite{Procesi-BR}).

\section{Proof of Theorem \ref{MAIN}}

\subsection{A  commutation identity for enveloping algebras of Lie superalgebras}

Let $(L = L_0 \oplus L_1, [\ , \ ])$ be a \emph{Lie superalgebra} over $\mathbb{C}$
(see, e.g. \cite{KAC1-BR}, \cite{Scheu-BR}), where $[\ , \ ]$ denotes the \emph{superbracket}
bilinear form.

Given $a \in L$, consider the linear operator $T_a$ from $U(L)$ to
itself defined by setting
$$
T_a(\textbf{N})
= a \ \textbf{N} - (-1)^{|a| |\textbf{N}|} \textbf{N}  \ a,
$$ 
for every $\textbf{N} \in U(L)$,  $\mathbb{Z}_2$-homogeneous of degree $|\textbf{N}|$.

We recall that $T_a$ is the unique (left) superderivation of $U(L)$,
$\mathbb{Z}_2$-homogeneous of degree $|a|$, such that
$$
T_a(b) = [a, b],
$$
for every $b \in L$.

Furthermore, given $a, b \in L = L_0 \oplus L_1$, from (super) skew-symmetry and the 
(super) Jacobi identity, it follows:
$$
T_a \circ T_b - (-1)^{|a| |b|} T_b \circ T_a = 
T_{[a, b]}.
$$
The Lie algebra representation
$$
Ad_{L} : L = L_0 \oplus L_1 \rightarrow End_{{\mathbb C}} \big[ \mathbf{U}(L) \big]
= End_{{\mathbb C}} \big[ \mathbf{U}(L) \big]_0 \oplus
End_{{\mathbb C}} \big[ \mathbf{U}(L) \big]_1 
$$
$$
e_a \mapsto T_a
$$
is the \emph{adjoint representation} of $U(L)$ on itself.

\begin{proposition}\label{thm perm}
\begin{multline*}
a_{i_1}a_{i_2} \cdots a_{i_m} \omega = \omega a_{i_1}a_{i_2} \cdots a_{i_m} (-1)^{|\omega|(|a_{i_1}|+|a_{i_2}|+ 
\cdots +|a_{i_m}|)} +
\\
+ \sum_{k=1}^m \ \sum_{\sigma(1) < \cdots < \sigma(k); \sigma(k+1) < \cdots < \sigma(m)} \
\big( ( T_{a_{i_{\sigma(1)}}} \dots T_{a_{i_{\sigma(k)}}}(\omega) ) \ a_{i_{\sigma(k+1)}} \cdots a_{i_{\sigma(m)}} \times
\\
\times sgn(a_{i_{\sigma(1)}} \dots a_{i_{\sigma(k)}};a_{i_{\sigma(k+1)}} \cdots a_{i_{\sigma(m)}}) \
(-1)^{|\omega| (|a_{i_{\sigma(k+1)}}|+ \cdots +|a_{i_{\sigma(m)}}|)} \big).
\end{multline*}
\end{proposition}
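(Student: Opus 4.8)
The plan is to prove the identity by induction on $m$. For $m=0$ it reduces to $\omega=\omega$, and for $m=1$ it is exactly the defining relation $a_{i_1}\omega=(-1)^{|a_{i_1}||\omega|}\,\omega\,a_{i_1}+T_{a_{i_1}}(\omega)$ of the superderivation $T_{a_{i_1}}$; so fix $m\ge 2$ and assume the statement for $m-1$.

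First I would peel off $a_{i_1}$ on the left, writing $a_{i_1}a_{i_2}\cdots a_{i_m}\,\omega=a_{i_1}\bigl(a_{i_2}\cdots a_{i_m}\,\omega\bigr)$, and expand the bracket by the inductive hypothesis. Setting $\delta(S):=\sum_{j\in S}|a_{i_j}|$, it is convenient to index the resulting sum by ordered set partitions $\{2,\dots,m\}=B_1\sqcup B_2$: writing $T_{B_1}\cdot\omega$ for the composite $T_{a_{i_{j_1}}}\cdots T_{a_{i_{j_r}}}(\omega)$ with $B_1=\{j_1<\cdots<j_r\}$, writing $a_{B_2}$ for the product of the $a_{i_j}$, $j\in B_2$, taken in increasing order, and $\varepsilon(B_1,B_2)$ for the Koszul sign of the corresponding shuffle, the hypothesis reads
$$
a_{i_2}\cdots a_{i_m}\,\omega=\sum_{\{2,\dots,m\}=B_1\sqcup B_2}\varepsilon(B_1,B_2)\,(-1)^{|\omega|\,\delta(B_2)}\,(T_{B_1}\cdot\omega)\,a_{B_2},
$$
the $B_1=\varnothing$ summand being the leading term.

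Next I would left-multiply each summand by $a_{i_1}$ and slide $a_{i_1}$ to the right of the $\mathbb{Z}_2$-homogeneous element $T_{B_1}\cdot\omega$, whose degree is $\delta(B_1)+|\omega|$, by the $m=1$ identity:
$$
a_{i_1}(T_{B_1}\cdot\omega)\,a_{B_2}=(-1)^{|a_{i_1}|(\delta(B_1)+|\omega|)}(T_{B_1}\cdot\omega)\,a_{i_1}a_{B_2}+\bigl(T_{a_{i_1}}T_{B_1}\cdot\omega\bigr)a_{B_2}.
$$
The first contribution is the term of the $m$-formula attached to the ordered partition $\{1,\dots,m\}=B_1\sqcup(\{1\}\cup B_2)$ --- as $1$ is the least index it heads the second block, so no new shuffle crossing arises there --- while the second is the term attached to $\{1,\dots,m\}=(\{1\}\cup B_1)\sqcup B_2$, for the same reason; in particular the $B_1=\varnothing$ summand produces at once the separately written leading term $\omega\,a_{i_1}\cdots a_{i_m}\,(-1)^{|\omega|(|a_{i_1}|+\cdots+|a_{i_m}|)}$ and the $k=1$, first-block-$\{1\}$ term of the $m$-formula. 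Classifying an ordered partition of $\{1,\dots,m\}$ by whether $1$ lies in its first or its second block, and noting that deleting $1$ then yields a well-defined ordered partition of $\{2,\dots,m\}$, shows that every term of the $m$-formula is produced exactly once.

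The only thing that really requires work is to check that the coefficients match. Using the explicit description $\varepsilon(B_1,B_2)=\prod_{j'\in B_1,\,j\in B_2,\,j<j'}(-1)^{|a_{i_j}||a_{i_{j'}}|}$ of the shuffle sign, one obtains the two recursions $\varepsilon(B_1,\{1\}\cup B_2)=(-1)^{|a_{i_1}|\delta(B_1)}\,\varepsilon(B_1,B_2)$ --- inserting the least index at the head of the second block reverses it relative to every index of the first block --- and $\varepsilon(\{1\}\cup B_1,B_2)=\varepsilon(B_1,B_2)$, since prepending $1$ to the first block creates no new reversed pair. Matching these against the sign $(-1)^{|a_{i_1}|(\delta(B_1)+|\omega|)}$ produced by the commutation (its $(-1)^{|a_{i_1}|\delta(B_1)}$ part being absorbed by the first recursion and its $(-1)^{|a_{i_1}||\omega|}$ part upgrading $(-1)^{|\omega|\delta(B_2)}$ to $(-1)^{|\omega|\delta(\{1\}\cup B_2)}$) closes the induction. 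The main obstacle is therefore purely bookkeeping: tracking the $\mathbb{Z}_2$-degree carried by $T_{B_1}\cdot\omega$ and verifying the two sign recursions; no further idea is needed once the elementary identity $a\,\eta=(-1)^{|a||\eta|}\eta\,a+T_a(\eta)$ is in hand.
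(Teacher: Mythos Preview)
Your proof is correct and follows essentially the same route as the paper: induction on $m$, peel off the leftmost factor $a_{i_1}$, apply the inductive hypothesis to $a_{i_2}\cdots a_{i_m}\,\omega$, then use the $m=1$ identity to move $a_{i_1}$ past each $T_{B_1}\cdot\omega$, splitting according to whether $1$ lands in the derivation block or in the trailing product. Your sign bookkeeping via the two recursions for $\varepsilon$ is cleaner and more explicit than the paper's own verification, but the underlying argument is the same.
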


\begin{proof} By induction hypotesis,
\begin{multline*}
a_{i_1}(a_{i_2} \cdots a_{i_m}) \omega =
a_{i_1} \omega a_{i_2} \cdots a_{i_m} (-1)^{|\omega|(|a_{i_2}|+ \cdots +|a_{i_m}|)} +
\\
+ a_{i_1}  \sum_{h=2}^m \ {\sum_{{\tau(2)} < \cdots < \tau(h); \tau(h+1) < \cdots < \tau(m)}} 
\big( T_{a_{i_{\tau(2)}}} \dots T_{a_{i_{\tau(h)}}}(\omega) a_{i_{\tau(h+1)}} \cdots a_{i_{\tau(m)}} \times
\\
\times sgn(a_{i_{\tau(2)} } \cdots  a_{i_{\tau(h)}}; a_{i_{\tau(h+1)}}  \cdots  a_{i_{\tau(m)}})
(-1)^{|\omega| (|a_{i_{\tau(h+1)}}|+ \cdots + \cdots |a_{i_{\tau(m)}}|)} \big) =
\\
=  \omega a_{i_1}a_{i_2} \cdots a_{i_m} (-1)^{|\omega|(|a_{i_1}|+|a_{i_2}|+ \cdots +|a_{i_m}|)} +
T_{a_{i_1}}(\omega) a_{i_2} \cdots a_{i_m} (-1)^{|\omega|(|a_{i_2}|+ \cdots +|a_{i_m}|)} +
\\
+ \sum_{h=2}^m \ \sum_{\tau(2) < \cdots < \tau(h); \tau(h+1) < \cdots < \tau(m)}
\big( T_{a_{i_1}}T_{a_{\tau(2)}} \cdots T_{a_{\tau(h)}}(\omega) a_{\tau(h+1)} \cdots a_{i_{\tau(m)}} \times
\\
\times sgn(a_{\tau(2)}  \cdots  a_{\tau(h)}; a_{\tau(h+1)}  \cdots  a_{\tau(m)})
(-1)^{|\omega| (|a_{\tau(h+1)}|+ \cdots + \cdots |a_{i_{\tau(m)}}|)} +
\\
+ T_{a_{\tau(2)}} \cdots T_{a_{\tau(h)}}(\omega) a_{i_1}a_{\tau(h+1)} \cdots a_{i_{\tau(m)}} \times
\\
(-1)^{|a_{i_1}|(|\omega|+|a_{\tau(2)}|+ \cdots +|a_{i_{\tau(m)}}|)}\times 
sgn(a_{\tau(2)}  \cdots  a_{\tau(h)}; a_{\tau(h+1)}  \cdots  a_{\tau(m)})
(-1)^{|\omega| (|a_{\tau(h+1)}|+ \cdots + \cdots |a_{i_{\tau(m)}}|)} \big),
\end{multline*}
where
\begin{multline*}
(-1)^{
|a_{i_1}|(|\omega|+|a_{i_{\tau(2)}}|+ \cdots +|a_{i_{\tau(m)}}|)
+
|\omega| (|a_{\tau(h+1)}|+ \cdots +  |a_{i_{\tau(m)}}|)
}
\times 
sgn(a_{i_{\tau(2)}} \cdots a_{i_{\tau(m)}}; a_{i_{\tau(h+1)}}  \cdots  a_{i_{\tau(m)}})
=
\\
= sgn(a_{i_{\tau(2)}}  \cdots  a_{i_{\tau(h)}}; a_{i_1}a_{i_{\tau(h+1)}}  \cdots  a_{i_{\tau(m)}})
(-1)^{|\omega| (|a_{i_1}|+|a_{i_{\tau(h+1)}}+ \cdots +|a_{i_{\tau(m)}}|)}.
\end{multline*}
Then, the assertion follows.
\end{proof}

In the Sweedler notation of the \emph{supersymmetric} superbialgebra $Super(L)$, Theorem \ref{thm perm}
can be stated in the following compact form:
\begin{proposition}\label{commutation Sweedler} Let
$$
\alpha = a_{i_1}a_{i_2} \cdots a_{i_m}.
$$
Then
$$
\alpha \omega = \sum_{(\alpha)} \ T_{\alpha_{(1)}}(\omega)\alpha_{(2)}(-1)^{|\omega||\alpha_{(2)}|}.
$$
\end{proposition}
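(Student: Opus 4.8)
The statement to prove is Proposition~\ref{commutation Sweedler}, the Sweedler-notation reformulation of Proposition~\ref{thm perm}. The plan is to unwind the compact right-hand side and check that it matches, term by term, the explicit double sum in Proposition~\ref{thm perm}, which has already been proved by induction. So the work here is purely a bookkeeping translation between the coproduct on the supersymmetric superbialgebra $Super(L)$ and the ordered-partition sums appearing in the earlier statement.

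First I would recall the relevant structure maps. In $Super(L)$ the primitive generators are the $a_i$, and on a product $\alpha = a_{i_1} a_{i_2} \cdots a_{i_m}$ the iterated coproduct expands as a sum over all ways of splitting the ordered index set $\{1, \ldots, m\}$ into an ordered pair of subsets, i.e.
$$
\Delta(\alpha) = \sum_{(\alpha)} \alpha_{(1)} \otimes \alpha_{(2)} = \sum_{k=0}^{m} \ \sum_{\substack{\sigma(1) < \cdots < \sigma(k) \\ \sigma(k+1) < \cdots < \sigma(m)}} \ \mathrm{sgn}(a_{i_{\sigma(1)}} \cdots a_{i_{\sigma(k)}}; a_{i_{\sigma(k+1)}} \cdots a_{i_{\sigma(m)}}) \ a_{i_{\sigma(1)}} \cdots a_{i_{\sigma(k)}} \otimes a_{i_{\sigma(k+1)}} \cdots a_{i_{\sigma(m)}},
$$
the sign being the Koszul sign incurred in shuffling the two blocks apart. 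Then I would use that $a \mapsto T_a$ extends to an algebra homomorphism from $Super(L)$ into $\mathrm{End}_{\mathbb{C}}[\mathbf{U}(L)]$ in the graded sense, so that $T_{\alpha_{(1)}} = T_{a_{i_{\sigma(1)}}} \cdots T_{a_{i_{\sigma(k)}}}$ on the block $\alpha_{(1)} = a_{i_{\sigma(1)}} \cdots a_{i_{\sigma(k)}}$; this is exactly the composition-of-superderivations identity $T_a \circ T_b - (-1)^{|a||b|} T_b \circ T_a = T_{[a,b]}$ stated just before the proposition, applied repeatedly.

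Substituting these two expansions into $\sum_{(\alpha)} T_{\alpha_{(1)}}(\omega) \, \alpha_{(2)} \, (-1)^{|\omega||\alpha_{(2)}|}$, the $k=0$ term gives $\omega a_{i_1} \cdots a_{i_m} (-1)^{|\omega|(|a_{i_1}| + \cdots + |a_{i_m}|)}$, which is the first term on the right-hand side of Proposition~\ref{thm perm}, while the $k \geq 1$ terms reproduce the double sum there: the shuffle sign from $\Delta$ becomes the $\mathrm{sgn}(\cdots ; \cdots)$ factor, the degree $|\alpha_{(2)}| = |a_{i_{\sigma(k+1)}}| + \cdots + |a_{i_{\sigma(m)}}|$ produces the sign $(-1)^{|\omega|(|a_{i_{\sigma(k+1)}}| + \cdots + |a_{i_{\sigma(m)}}|)}$, and $T_{\alpha_{(1)}}(\omega)$ becomes $T_{a_{i_{\sigma(1)}}} \cdots T_{a_{i_{\sigma(k)}}}(\omega)$. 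Hence the two expressions agree and the proposition follows from Proposition~\ref{thm perm}.

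The only genuinely delicate point — the main obstacle — is matching the signs: one must check that the Koszul sign produced by the coproduct on $Super(L)$ together with the $(-1)^{|\omega||\alpha_{(2)}|}$ factor coincides precisely with the combination $\mathrm{sgn}(a_{i_{\sigma(1)}} \cdots a_{i_{\sigma(k)}}; a_{i_{\sigma(k+1)}} \cdots a_{i_{\sigma(m)}}) \, (-1)^{|\omega|(|a_{i_{\sigma(k+1)}}| + \cdots + |a_{i_{\sigma(m)}}|)}$ appearing in Proposition~\ref{thm perm}. This is the same sign reconciliation carried out in the inductive step of that proposition (the displayed identity relating the two $\mathrm{sgn}$ expressions), so no new calculation is needed beyond citing it; one simply observes that the bialgebra formalism packages that verification automatically. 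Everything else is a direct, if notationally heavy, substitution.
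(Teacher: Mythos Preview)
Your proposal is correct and follows essentially the same approach as the paper: both arguments simply write out the coproduct $\Delta(\alpha)$ explicitly as a signed sum over shuffles and observe that substituting this expansion into the Sweedler-notation right-hand side reproduces, term by term, the double sum of Proposition~\ref{thm perm}. Your write-up is more detailed about the sign matching and the interpretation of $T_{\alpha_{(1)}}$ as a composition of the $T_{a_i}$, but the underlying idea is identical.
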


\begin{proof}Let
$$
\alpha = a_{i_1}a_{i_2} \cdots a_{i_m}.
$$
Then, the coproduct (in the Sweedler  notation)
$$
\Delta(\alpha) = \sum_{(\alpha)} \ \alpha_{(1)} \otimes \alpha_{(2)}
$$
equals
$$
\sum_{k=0}^m \ \sum_{\sigma(1) < \cdots < \sigma(k); \sigma(k+1) < \cdots < \sigma(m)} \
\big( a_{i_{\sigma(1)}} \dots a_{i_{\sigma(k)}}  \otimes a_{i_{\sigma(k+1)}} \cdots a_{i_{\sigma(m)}} \times
$$
$$
\times sgn(a_{i_{\sigma(1)}} \cdots a_{i_{\sigma(k)}};a_{i_{\sigma(k+1)}} \cdots a_{i_{\sigma(m)}}) \big).
$$
\end{proof}

Furthermore
\begin{lemma} Let $T_\alpha = T_{a_1}T_{a_2} \cdots T_{a_m}$. Then
$$
T_\alpha(\omega_1 \cdot \omega_2) = 
\sum_{(\alpha)} \ T_{\alpha_{(1)}}(\omega_1)T_{\alpha_{(2)}}(\omega_2) (-1)^{|\alpha_{(2)}||\omega_1|}.
$$

\end{lemma}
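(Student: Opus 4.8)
The plan is to prove the Leibniz-type formula
$$
T_\alpha(\omega_1 \cdot \omega_2) = \sum_{(\alpha)} \ T_{\alpha_{(1)}}(\omega_1)T_{\alpha_{(2)}}(\omega_2) \ (-1)^{|\alpha_{(2)}||\omega_1|}
$$
by induction on $m$, the length of the word $\alpha = a_1 a_2 \cdots a_m$. The base case $m = 0$ (so $\alpha = \emptyword$, $T_\alpha = \mathrm{id}$, $\Delta(\alpha) = \emptyword \otimes \emptyword$) is trivial, and the case $m = 1$ is precisely the statement that each $T_a$ ($a \in L$) is a $\mathbb{Z}_2$-homogeneous superderivation of degree $|a|$ on $\mathbf{U}(L)$, which was recalled just above: $T_a(\omega_1\omega_2) = T_a(\omega_1)\omega_2 + (-1)^{|a||\omega_1|}\omega_1 T_a(\omega_2)$, matching the two terms of the coproduct $\Delta(a) = a\otimes 1 + 1\otimes a$ with the prescribed sign.

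For the inductive step I would write $\alpha = a_1 \beta$ with $\beta = a_2 \cdots a_m$, so that $T_\alpha = T_{a_1} \circ T_\beta$. First apply the induction hypothesis to $T_\beta(\omega_1\omega_2)$, obtaining a sum over $(\beta)$ of terms $T_{\beta_{(1)}}(\omega_1)T_{\beta_{(2)}}(\omega_2)(-1)^{|\beta_{(2)}||\omega_1|}$; then apply the single superderivation $T_{a_1}$ to each such product (a product of two $\mathbb{Z}_2$-homogeneous elements, with degrees $|\beta_{(1)}|+|\omega_1|$ and $|\beta_{(2)}|+|\omega_2|$) using the $m=1$ case again. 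This yields, for each $(\beta)$, two terms: one with $T_{a_1}$ landing on the left factor, one with it landing on the right factor and an accompanying sign $(-1)^{|a_1|(|\beta_{(1)}|+|\omega_1|)}$. The task is then to recognize the resulting double sum as $\sum_{(\alpha)} T_{\alpha_{(1)}}(\omega_1)T_{\alpha_{(2)}}(\omega_2)(-1)^{|\alpha_{(2)}||\omega_1|}$, using the coproduct formula for $\alpha = a_1\beta$, namely $\Delta(a_1\beta) = (a_1\otimes 1 + 1\otimes a_1)\cdot \Delta(\beta)$ in the supersymmetric superbialgebra $Super(L)$ (this is exactly the recursion behind the shuffle description of $\Delta$ displayed in the proof of Proposition \ref{commutation Sweedler}).

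The main obstacle, as usual in this superalgebra bookkeeping, is the sign tracking: one must check that inserting $a_1$ into the second tensor slot produces precisely the sign $(-1)^{|a_1||\beta_{(1)}|}$ demanded by the Koszul sign rule in $Super(L) \otimes Super(L)$, so that $a_1\cdot\beta_{(1)} \otimes \beta_{(2)} + (-1)^{|a_1||\beta_{(1)}|}\beta_{(1)} \otimes a_1\cdot\beta_{(2)}$ really is $\Delta(a_1\beta)$, and that the leftover factor $(-1)^{|a_1||\omega_1|}$ from the superderivation rule combines with $(-1)^{|\beta_{(2)}||\omega_1|}$ to give $(-1)^{(|a_1|+|\beta_{(2)}|)|\omega_1|} = (-1)^{|\alpha_{(2)}||\omega_1|}$ in the relevant term. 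Once these two sign identities are verified termwise, reassembling the double sum over $(\beta)$ and the binary choice into a single sum over $(\alpha)$ is immediate, and the induction closes. I would present the sign verification compactly, pointing to the Koszul rule rather than expanding every exponent, since it is routine once the bookkeeping is set up correctly.
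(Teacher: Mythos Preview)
Your induction argument is correct and carefully handles the signs: the Koszul sign $(-1)^{|a_1||\beta_{(1)}|}$ from multiplying $(1\otimes a_1)(\beta_{(1)}\otimes\beta_{(2)})$ in $Super(L)\otimes Super(L)$ matches exactly the sign produced when $T_{a_1}$ passes over $T_{\beta_{(1)}}(\omega_1)$, and the residual $(-1)^{|a_1||\omega_1|}(-1)^{|\beta_{(2)}||\omega_1|}$ collapses to $(-1)^{|\alpha_{(2)}||\omega_1|}$ as you say. The paper itself states this lemma without proof, so there is no route to compare against; your induction on $m$ using the superderivation property of $T_{a_1}$ together with the multiplicativity of the coproduct $\Delta(a_1\beta)=\Delta(a_1)\Delta(\beta)$ is the standard (and essentially only) way to do it.
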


\subsection{Some preliminary remarks and definitions}\label{preliminary lemmas}

\subsubsection{The virtual algebra and the Capelli devirtualization epimorphism}

Given a vector space $V$ of dimension $n$, we will regard it as a subspace of a $
\mathbb{Z}_2-$graded vector space
 $V_0 \oplus V_1$, where
$
V_1 = V.
$
The  vector spaces
$V_0$  (we assume that 
$dim(V_0)=m$ is ``sufficiently large'') is called
the  {\textit{positive virtual (auxiliary)
vector space}} and $V$ 
is called the {\textit{(negative) proper vector space}}.

Let
$
\mathcal{A}_0 = \{ \alpha_1, \ldots, \alpha_{m_0} \},$  
$\mathcal{L} = \{ 1, 2,  \ldots, n \}$
denote \emph{fixed  bases} of $V_0$ and $V = V_1$, respectively; 
therefore $|\alpha_s| = 0 \in \mathbb{Z}_2,$
and $ |i|   = 1 \in \mathbb{Z}_2.$

Let
$$
\{ e_{a, b}; a, b \in \mathcal{A}_0 \cup \mathcal{L}  \}, \qquad |e_{a, b}| =
|a|+|b| \in \mathbb{Z}_2
$$
be the standard $\mathbb{Z}_2-$homogeneous basis of the Lie superalgebra $gl(m|n)$ provided by the
elementary matrices. The elements $e_{a, b} \in gl(m|n)$ are $\mathbb{Z}_2-$homogeneous of
$\mathbb{Z}_2-$degree $|e_{a, b}| = |a| + |b|.$

The superbracket of the Lie superalgebra $gl(m|n)$ has the following explicit form:
$$
\left[ e_{a, b}, e_{c, d} \right] = \delta_{bc} \ e_{a, d} - (-1)^{(|a|+|b|)(|c|+|d|)} \delta_{ad}  \ e_{c, b},
$$
$a, b, c, d \in \mathcal{A}_0  \cup \mathcal{L} .$

In the following, the elements of the sets $\mathcal{A}_0,  \mathcal{L} $ will be called
\emph{positive virtual symbols} and \emph{negative proper symbols},
respectively.

 The inclusion $V \subset V_0 \oplus V_1$ induces a natural embedding of the ordinary general 
linear Lie algebra $gl(n) = gl(0|n)$ of $V$ into the
 {\textit{auxiliary}}
general linear Lie {\it{superalgebra}} $gl(m|n)$ of $V_0 \oplus V_1$ (see, e.g. \cite{KAC1-BR}, 
\cite{Scheu-BR}) and, hence, a natural embedding $\mathbf{U}(gl(n)) \subset \mathbf{U}(gl(m|n))$.

In the following, we will systematically refer to the
{\textit{Capelli devirtualization epimorphism}} 
$$
\mathfrak{p} : Virt(m,n)  \twoheadrightarrow 
\mathbf{U}(gl(0|n)) = \mathbf{U}(gl(n)),
$$
where $Virt(m,n)$ is the \emph{virtual subalgebra}
of $\mathbf{U}(gl(m|n))$.

For definitions and details, we refer the reader to Subsection \ref{citazione 4}.

\subsubsection{A more readable notation}

In the following,   we will adopt
the  more readable notation:

\begin{enumerate}
\item [--] We write $\{ a | b \}$ for the elements $e_{a, b}$ of the standard basis of $gl(m|n)$.

\item [--] Given two words $I = i_1 \ i_2 \ \cdots \ i_p$, $J= j_1 \ j_2 \ \cdots \ j_p$, with
$i_h , j_h \in  \mathcal{L}$ and a virtual symbol $\alpha$,
we write
$$
\{ J| \alpha \} = \{ j_1 \ j_2 \ \cdots \ j_p| \alpha \}, \quad 
\{ \alpha| I \} =   \{  \alpha | i_1 \ i_2 \ \cdots \ i_p \}
$$
in place of
$$
e_{j_1, \alpha}e_{j_2, \alpha} \cdots e_{j_p, \alpha},
\quad
e_{\alpha, i_1}e_{\alpha, i_2} \cdots e_{\alpha, i_p},
$$
respectively.
\end{enumerate}
In this notation, given a pair of Young tableaux 
$$S = (w_1, w_2, \ldots, w_p), \quad
T = (\overline{w}_1,  \overline{w}_2, \ldots,  \overline{w}_p), \qquad sh(S) = sh(T) = \lambda,
$$
the \emph{Capelli bitableau}
$$
[S | T] = \mathfrak{p} \big( e_{S C_\lambda} \cdot e_{ C_\lambda T} \big) \in \mathbf{U}(gl(n))
$$
is
\begin{equation*}\label{Capelli betableax notation}
[S | T] = \mathfrak{p} \big( \mathbf{P}_S \cdot \mathbf{P}_T \big),
\end{equation*}
where
$$
\mathbf{P}_S =  \{ w_1 | \beta_1 \} \{ w_2 | \beta_2 \} \cdots \{ w_p | \beta_p \},  \qquad          
\mathbf{P}_T =  \{ \beta_1 | \overline{w}_1 \}  \{ \beta_2 | \overline{w}_2 \} \cdots \{ \beta_p | \overline{w}_p \}. 
$$

Furthermore, for the adjoint representation
$$
Ad_{gl(m|n)} : gl(m|n) \rightarrow End_{{\mathbb C}} \big[ \mathbf{U}(gl(m|n)) \big]
$$
we write
\begin{enumerate}

\item [--]
$
T_{i \alpha}, \ T_{\alpha i}
$
\ in place of \
$
T{e_{i \alpha}}, \ T_{e_{\alpha i}}.
$

\item [--] 
$
T_{I  \alpha}, 
\
T_{ \alpha  I}
$
\ in place of \
$
T_{i_1, \alpha}T_{i_2, \alpha} \cdots T_{i_p, \alpha},
\quad
T_{\alpha, i_1}T_{\alpha, i_2} \cdots T_{\alpha, i_p},
$
respectively.

\end{enumerate}

\subsubsection{The coproduct in $\Lambda(V) = \Lambda(\mathcal{L})$, Sweedler notation and \emph{split notation}}

Given a word $I = i_1 \ i_2 \ \ \cdots \ i_m, \ i_t \in \mathcal{L}$ in $\Lambda(V) = \Lambda(\mathcal{L})$, 
and a natural integer $k, \quad k = 0, 1, \cdots, m$, consider the homogeneous 
component
$$
\Delta_{k, m-k} : \Lambda(\mathcal{L}) \rightarrow \Lambda(\mathcal{L})_k \otimes \Lambda(\mathcal{L})_{m-k}
$$ 
of the coproduct
$$
\Delta : \Lambda(\mathcal{L}) \rightarrow \Lambda(\mathcal{L}) \otimes \Lambda(\mathcal{L}).
$$ 
Given a permutation $\sigma$
with
$$
\sigma(1) < \cdots < \sigma(k), \quad \sigma(k+1) <  \cdots < \sigma(m),
$$
and the two subwords
$$
I_{(1)} = i_{\sigma(1)} \ \ \cdots \ i_{\sigma(k)}, \quad I_{(2)} = i_{\sigma(k+1)} \ \ \cdots \ i_{\sigma(m)}
$$
we call the pair $(I_{(1)}, I_{(2)})$ a \emph{split} of $I$ of step $(k, m-k)$
of signature $sgn(I; I_{(1)}, I_{(2)}) = sgn(\sigma)$.
Clearly, $I = sgn(I; I_{(1)}, I_{(2)}) \ I_{(1)} I_{(2)}$.

We denote by $\mathbf{S}(I; k,m-k)$ the set of all splits  of $I$ of step $(k, m-k)$.

Then, the coproduct component
$$
\Delta_{k, m-k}(I) = \sum_{(I)_{k,m-k}} \ I_{(1)} \otimes I_{(2)}
$$
can be explicitly written as
$$
\Delta_{k, m-k}(I) = \sum_{(I_{(1)}, I_{(2)}) \in \mathbf{S}(I; k,m-k)}
\
sgn(I; I_{(1)}, I_{(2)}) \ I_{(1)} \otimes I_{(2)}.
$$

\subsection{Some lemmas}

Consider the Capelli bitableau
$$
[ S | T ] = \mathfrak{p} \big( \mathbf{P}_S \cdot \mathbf{P}_T \big)
$$
as in Eq. (\ref{Capelli betableax notation}).

From Proposition \ref{commutation Sweedler}, we derive the following pair of Lemmas.

\begin{lemma}\label{lemma 1pre} Let $I = i_1 \ i_2 \cdots \ i_m$, $J= j_1 \ j_2 \ \cdots \ j_m$,
$m \leq \lambda_p$. 

Then
$$
\{ J| \alpha\} \{ \alpha | I \} \ \emph{\textbf{P}}_S
$$
equals
$$
\{ J| \alpha\} \ \sum_{k=0}^m \ \sum_{(I)_{k, m-k}} \
T_{\alpha \ I_{(1)}} \big( \emph{\textbf{P}}_S \big) \{ \alpha |I_{(2)} \} (-1)^{|\emph{\textbf{P}}_S|(m-k)}. 
$$
\end{lemma}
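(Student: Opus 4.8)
The statement to prove, Lemma \ref{lemma 1pre}, is a direct specialization of the Sweedler-form commutation identity in Proposition \ref{commutation Sweedler}. I would apply that proposition with the monomial $\alpha := e_{\alpha, i_1} e_{\alpha, i_2} \cdots e_{\alpha, i_m} = \{\alpha \mid I\}$ playing the role of the ``$\alpha$'' there, and with $\omega := \mathbf{P}_S$ playing the role of ``$\omega$''. The key point to check is that the coproduct of the monomial $\{\alpha \mid I\}$ in the supersymmetric superbialgebra $Super(L)$ is governed exactly by the splits of the word $I$, i.e. $\Delta(\{\alpha\mid I\}) = \sum_{k=0}^m \sum_{(I)_{k,m-k}} \{\alpha\mid I_{(1)}\} \otimes \{\alpha\mid I_{(2)}\}$, with the shuffle signs absorbed into the split signatures $sgn(I; I_{(1)}, I_{(2)})$.

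**Key steps, in order.** First I would recall that each generator $e_{\alpha, i_t}$ has $\mathbb{Z}_2$-degree $|e_{\alpha,i_t}| = |\alpha| + |i_t| = 0 + 1 = 1$, so all the letters $\{\alpha \mid i_t\}$ are odd; this is what makes the shuffle signs in the coproduct of $\{\alpha\mid I\}$ coincide with the permutation signatures $sgn(\sigma)$ appearing in the definition of a split, exactly as described in the paragraph on ``split notation'' for $\Lambda(\mathcal{L})$. Second, I would invoke Proposition \ref{commutation Sweedler}: $\{\alpha\mid I\}\, \mathbf{P}_S = \sum_{(\{\alpha\mid I\})} T_{(\{\alpha\mid I\})_{(1)}}(\mathbf{P}_S)\, (\{\alpha\mid I\})_{(2)}\, (-1)^{|\mathbf{P}_S|\,|(\{\alpha\mid I\})_{(2)}|}$. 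Third, I substitute the explicit coproduct: the first tensor factor $(\{\alpha\mid I\})_{(1)} = \{\alpha\mid I_{(1)}\}$ feeds into the superderivation operator, giving $T_{\alpha\, I_{(1)}}(\mathbf{P}_S)$ in the paper's notation; the second factor is $\{\alpha \mid I_{(2)}\}$; and since $|(\{\alpha\mid I\})_{(2)}| = |I_{(2)}| = (m-k) \bmod 2$, the sign becomes $(-1)^{|\mathbf{P}_S|(m-k)}$. Finally I left-multiply the whole identity by $\{J\mid\alpha\}$, which just sits outside untouched, to obtain precisely the claimed expression for $\{J\mid\alpha\}\{\alpha\mid I\}\,\mathbf{P}_S$.

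**Main obstacle.** There is no deep obstacle here — the lemma is essentially bookkeeping — but the one place that deserves genuine care is the sign reconciliation: one must verify that the shuffle sign $sgn(a_{i_{\sigma(1)}}\cdots a_{i_{\sigma(k)}}; a_{i_{\sigma(k+1)}}\cdots a_{i_{\sigma(m)}})$ appearing in the coproduct of Proposition \ref{commutation Sweedler} is exactly $sgn(I; I_{(1)}, I_{(2)})$, the split signature, when all letters are odd. Since a transposition of two adjacent odd generators contributes a factor $-1$ both in the Koszul-sign convention of $Super(L)$ and in the permutation signature, these agree, and the $\sum_{(I)_{k,m-k}}$ notation for coproduct components matches the $\sum_{(I_{(1)},I_{(2)})\in \mathbf{S}(I;k,m-k)} sgn(I;I_{(1)},I_{(2)})$ sum. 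Once this identification is made explicit, the lemma follows immediately, and I would keep the write-up to a few lines citing Proposition \ref{commutation Sweedler} and the split-notation conventions.
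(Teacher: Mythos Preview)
Your proposal is correct and follows exactly the approach the paper itself indicates: the lemma is stated as an immediate consequence of Proposition~\ref{commutation Sweedler} (``From Proposition~\ref{commutation Sweedler}, we derive the following pair of Lemmas''), and your argument is precisely that specialization, with the appropriate sign bookkeeping. There is nothing to add.
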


Since
$$
\mathfrak{p} \big( \{ J| \alpha\} \{ \alpha | I \} \ \emph{\textbf{P}}_S \cdot \emph{\textbf{P}}_T \big)
= [J | I ] \ [ S | T ],
$$

\begin{lemma}\label{assorbimenti 1} We have

\begin{multline}\label{double sum}
[J | I ] \ [ S | T ] = \ 
(-1)^{(|\emph{\textbf{P}}_T|+k)(m-k)} \times
\\
\times \mathfrak{p} \big(
\sum_{k=0}^m \ \sum_{(I)_{k, m-k}} \  \sum_{(J)_{k, m-k}} 
T_{ J_{(1)} \alpha } 
T_{\alpha  I_{(1)}} \big( \emph{\textbf{P}}_S \big) \ \{ J_{(2)}| \alpha\} 	  
 \  \emph{\textbf{P}}_T \ \{\alpha | I_{(2)} \} \big).
\end{multline}

\end{lemma}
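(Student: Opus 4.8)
The plan is to expand the argument of $\mathfrak{p}$ in the identity $[J|I]\,[S|T]=\mathfrak{p}\big(\{J|\alpha\}\{\alpha|I\}\,\mathbf{P}_S\cdot\mathbf{P}_T\big)$ recorded just above the statement, by pushing every occurrence of the virtual symbol $\alpha$ into its target position through three successive super-commutations, and then reading off (\ref{double sum}). Everything rests on Lemma \ref{lemma 1pre}, Proposition \ref{commutation Sweedler} and the super-Leibniz rule, together with the explicit superbracket of $gl(m|n)$ and the fact that $\alpha$ is chosen distinct from the virtual symbols $\beta_1,\dots,\beta_p$ occurring in $\mathbf{P}_S$ and $\mathbf{P}_T$.

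First I would apply Lemma \ref{lemma 1pre} to replace $\{J|\alpha\}\{\alpha|I\}\,\mathbf{P}_S$ by $\{J|\alpha\}\sum_{k=0}^{m}\sum_{(I)_{k,m-k}}T_{\alpha I_{(1)}}(\mathbf{P}_S)\,\{\alpha|I_{(2)}\}\,(-1)^{|\mathbf{P}_S|(m-k)}$ and multiply on the right by $\mathbf{P}_T$. Next, since $\alpha$ is virtual and distinct from the $\beta_s$, the bracket formula gives $[e_{\alpha,i},e_{\beta_s,\overline{w}_{s,t}}]=0$, so the block $\{\alpha|I_{(2)}\}$ super-commutes with $\mathbf{P}_T$; as all the elementary factors in play are $\mathbb{Z}_2$-odd, moving $\{\alpha|I_{(2)}\}$ (of degree $m-k$) to the right of $\mathbf{P}_T$ (of degree $|\mathbf{P}_T|$) costs $(-1)^{(m-k)|\mathbf{P}_T|}$. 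Finally I would apply Proposition \ref{commutation Sweedler} once more to commute $\{J|\alpha\}$ to the right past the single factor $T_{\alpha I_{(1)}}(\mathbf{P}_S)$ — and here one must be careful to apply it to that factor alone, so that the derivations produced do not reach $\mathbf{P}_T$ or $\{\alpha|I_{(2)}\}$. Since $T_{\alpha I_{(1)}}(\mathbf{P}_S)$ has $\mathbb{Z}_2$-degree $|\mathbf{P}_S|+k$, this yields $\sum_{(J)_{j,m-j}}T_{J_{(1)}\alpha}T_{\alpha I_{(1)}}(\mathbf{P}_S)\,\{J_{(2)}|\alpha\}$, summed over all steps $(j,m-j)$, with sign $(-1)^{(|\mathbf{P}_S|+k)(m-j)}$, while $\mathbf{P}_T$ and $\{\alpha|I_{(2)}\}$ remain to the right, exactly in the order demanded by (\ref{double sum}).

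Now apply $\mathfrak{p}$. The terms with $j\neq k$ drop out: for $j>k$ the operator $T_{J_{(1)}\alpha}$ would differentiate the $k$ available factors $e_{\alpha,\beta_s}$ of $T_{\alpha I_{(1)}}(\mathbf{P}_S)$ more than $k$ times and the term already vanishes, whereas for $j<k$ the monomial retains a generator $e_{\alpha,\beta_s}$ with $\alpha\neq\beta_s$ two distinct virtual symbols and is killed by the devirtualization epimorphism (see Subsection \ref{citazione 4}); hence only the diagonal $j=k$ survives and the two split-sums become coupled as stated. It then remains to collect the three signs: $(-1)^{|\mathbf{P}_S|(m-k)}(-1)^{(m-k)|\mathbf{P}_T|}(-1)^{(|\mathbf{P}_S|+k)(m-k)}=(-1)^{(m-k)(2|\mathbf{P}_S|+|\mathbf{P}_T|+k)}$, which, since $S$ and $T$ have the same shape $\lambda$ and hence $|\mathbf{P}_S|=|\mathbf{P}_T|=|\lambda|$, collapses to $(-1)^{(|\mathbf{P}_T|+k)(m-k)}$, the prefactor in the statement (understood to sit inside the summation over $k$); the split signatures $sgn(I;I_{(1)},I_{(2)})$ and $sgn(J;J_{(1)},J_{(2)})$ are already absorbed in the symbols $\sum_{(I)_{k,m-k}}$ and $\sum_{(J)_{k,m-k}}$. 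I expect the two genuinely delicate points to be this sign accounting and the vanishing of the off-diagonal contributions under $\mathfrak{p}$; the rest is a routine unwinding of the commutation identity.
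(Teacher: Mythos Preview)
Your argument is essentially the same as the paper's, with the harmless reordering that you super-commute $\{\alpha|I_{(2)}\}$ past $\mathbf{P}_T$ \emph{before} pushing $\{J|\alpha\}$ through, whereas the paper does it afterwards. The sign bookkeeping is correct (in fact $2|\mathbf{P}_S|$ is even regardless, so you do not even need $|\mathbf{P}_S|=|\mathbf{P}_T|$), and the vanishing of the $j>k$ terms is handled exactly as in the paper.

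There is, however, a small gap in your treatment of the case $j<k$. You write that ``the monomial retains a generator $e_{\alpha,\beta_s}$ with $\alpha\neq\beta_s$ two distinct virtual symbols and is killed by the devirtualization epimorphism.'' But the mere presence of a virtual-to-virtual factor $e_{\alpha,\beta_s}$ does \emph{not} place an element in $\mathbf{Irr}$; such factors occur routinely in balanced monomials (they are the ``middle'' factors in the description of Subsection~\ref{citazione 4}), and $\mathfrak{p}$ does not annihilate them. The correct reason these terms die---and the one the paper uses---is an $\alpha$-count: in the right subword $\{J_{(2)}|\alpha\}\,\mathbf{P}_T\,\{\alpha|I_{(2)}\}$ (or $\{J_{(2)}|\alpha\}\,\{\alpha|I_{(2)}\}\,\mathbf{P}_T$ in the paper's ordering), the block $\{\alpha|I_{(2)}\}$ creates $m-k$ copies of $\alpha$ while $\{J_{(2)}|\alpha\}$ annihilates $m-j>m-k$ of them, so the expression is irregular by definition and lies in $\mathbf{Irr}=\ker\mathfrak{p}$. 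Once you replace your justification by this counting argument, the proof is complete and coincides with the paper's.
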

\begin{proof} We have
\begin{multline*}
\{ J| \alpha\} \{ \alpha | I \} \ \emph{\textbf{P}}_S \ \emph{\textbf{P}}_T =
\\
= \{ J| \alpha\} \ \sum_{k=0}^m \ \sum_{(I)_{k, m-k}} \
T_{\alpha \ I_{(1)}} \big( \emph{\textbf{P}}_S \big) \ \{\alpha | I_{(2)} \} \ \emph{\textbf{P}}_T \ 
(-1)^{|\emph{\textbf{P}}_S |(m-k)} = 
\\
 = \sum_{k=0}^m \ \sum_{(I)_{k, m-k}} \{ J| \alpha\} \
T_{\alpha \ I_{(1)}} \big( \emph{\textbf{P}}_S  \big) \ \{\alpha | I_{(2)} \} \ \emph{\textbf{P}}_T \ 
(-1)^{|\emph{\textbf{P}}_S |(m-k)} =
\\
=  \sum_{k=0}^m \ \sum_{(I)_{k, m-k}} \ \left( \sum_{h=0}^m \sum_{(J)_{h, m-h}} 
T_{ J_{(1)} \alpha } \big(
T_{\alpha \ I_{(1)}} \big( \emph{\textbf{P}}_S  \big) \big)  \{J_{(2)}| \alpha \} \ (-1)^{(|\emph{\textbf{P}}_S|
+ h)(m-h)} \right) \times
\\
\times \{\alpha | I_{(2)} \} \ \emph{\textbf{P}}_T \
(-1)^{|\emph{\textbf{P}}_S |(m-k)}.
\end{multline*}

Now, if $h < k$, then $m-h > m-k$ and, hence,
\begin{multline*}
 \sum_{(I)_{k, m-k}} \ \left( \sum_{(J)_{h, m-h}} 
T_{ J_{(1)} \alpha } \big(
T_{\alpha \ I_{(1)}} \big( \emph{\textbf{P}}_S  \big) \big)  \{J_{(2)}| \alpha \} \ (-1)^{(|\emph{\textbf{P}}_S|
+ h)(m-h)} \right) \times
\\
\times \{\alpha | I_{(2)} \} \ \emph{\textbf{P}}_T \
(-1)^{|\emph{\textbf{P}}_S |(m-k)}
\end{multline*}
is an \emph{irregular element}, 
since the $\{J_{(2)}| \alpha \} \{\alpha | I_{(2)} \}$ are irregular monomials;
so, its image with respect to the Capelli epimorphism $\mathfrak{p}$ equals zero.

If $h > k$, then,
$$
T_{ J_{(1)} \alpha } \big(
T_{\alpha \ I_{(1)}} \big( \emph{\textbf{P}}_S  \big) \big) = 0.
$$
and, hence,
\begin{multline*}
 \sum_{(I)_{k, m-k}} \ \left( \sum_{(J)_{h, m-h}} 
T_{ J_{(1)} \alpha } \big(
T_{\alpha \ I_{(1)}} \big( \emph{\textbf{P}}_S  \big) \big)  \{ J_{(2)}| \alpha \} \ (-1)^{(|\emph{\textbf{P}}_S|
+ h)(m-h)} \right) \times
\\
\times \{\alpha | I_{(2)} \} \ \emph{\textbf{P}}_T \
(-1)^{|\emph{\textbf{P}}_S |(m-k)} = 0.
\end{multline*}

Then,
\begin{align*}
[J | I ] \ [ S | T ] = \ &(-1)^{(|\emph{\textbf{P}}_S|+k)(m-k)} (-1)^{|\emph{\textbf{P}}_S|(m-k)} \times
\\
&\times \mathfrak{p} \big(
\sum_{k=0}^m \ \sum_{(I)_{k, m-k}} \  \sum_{(J)_{k, m-k}} 
T_{ J_{(1)} \alpha } 
T_{\alpha  I_{(1)}} \big( \emph{\textbf{P}}_S \big) \ \{ J_{(2)}| \alpha\} 	  
 \  \{\alpha | I_{(2)}\}  \ \emph{\textbf{P}}_T  \big)
\\
= \
&(-1)^{(|\emph{\textbf{P}}_T|+k)(m-k)} \times
\\
&\times \mathfrak{p} \big(
\sum_{k=0}^m \ \sum_{(I)_{k, m-k}} \  \sum_{(J)_{k, m-k}} 
T_{ J_{(1)} \alpha } 
T_{\alpha  I_{(1)}} \big( \emph{\textbf{P}}_S \big) \ \{ J_{(2)}| \alpha\} 	  
 \  \emph{\textbf{P}}_T \ \{\alpha | I_{(2)} \} \big).
\end{align*}

\end{proof}

\begin{corollary}
Let $m \leq \lambda_p$. Then
$$
[J | I ] \ [ S | T ] = 
\pm \left[
\begin{array}{lll}
S

\\
J
\end{array} 
\right|\left.
\begin{array}{lll}
T
\\
I
\end{array}
\right]
+
\sum  c_{m, \lambda} \ \mathbf{G}_{m, \lambda},
$$
where
$$
[J | I ] \ [ S | T ], \quad 
\left[
\begin{array}{lll}
S
\\

J
\end{array} 
\right|\left.
\begin{array}{lll}
T
\\
I
\end{array}
\right] \notin \mathbf{U}(gl(n))^{(n)}
\quad
whenever 
\quad
 n < m + |\lambda|,
$$
and
$$
\mathbf{G}_{m, \lambda} \in \mathbf{U}(gl(n))^{(n)}
\quad
for \ some
\quad
 n < m + |\lambda|.
$$
\end{corollary}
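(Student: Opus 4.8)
## Proof Plan

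The plan is to extract the leading filtration term from the double-sum formula in Lemma~\ref{assorbimenti 1} and to show that everything else lands in a strictly lower filtration degree. Recall that the filtration on $\mathbf{U}(gl(n))$ is by total degree in the generators $e_{ij}$, and that the Capelli bitableau $[\ S\ |\ T\ ]$ of shape $\lambda$, together with products of such bitableaux, has filtration degree equal to the total number of boxes involved. So the claim is essentially a statement about which term in the expansion of $[J|I]\,[S|T]$ is the one of maximal degree $m+|\lambda|$.

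First I would look at the summand $k=m$ in Eq.~(\ref{double sum}). For this term there is no ``leftover'' factor $\{\alpha|I_{(2)}\}$ or $\{J_{(2)}|\alpha\}$ — the coproduct splits $(I)_{m,0}$ and $(J)_{m,0}$ are trivial, with $I_{(1)}=I$, $J_{(1)}=J$ (up to sign) — so the term reads, up to a sign,
$$
\mathfrak{p}\big(\,T_{J\alpha}\,T_{\alpha I}(\mathbf{P}_S)\cdot \mathbf{P}_T\,\big).
$$
The superderivations $T_{\alpha i}$ applied to $\mathbf{P}_S$ produce, by the Leibniz rule of the adjoint action, terms in which some $\{w_r|\beta_r\}$ is hit, replacing a proper symbol by the virtual symbol $\alpha$ via $[e_{\alpha i},e_{w_r,\beta_r}]$; composing with the $T_{J\alpha}$'s and then devirtualizing gives precisely the Capelli bitableau with an extra row appended, i.e. $\left[\begin{array}{l}S\\J\end{array}\right|\left.\begin{array}{l}T\\I\end{array}\right]$, with the sign $\pm$ coming from the reordering and the $(-1)$-bookkeeping already present in Lemma~\ref{assorbimenti 1}. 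Counting boxes, this term has degree $m+|\lambda|$, matching the degree of the product $[J|I]\,[S|T]$ on the left-hand side; I would record that $[J|I]\,[S|T]$ and the appended bitableau both lie in $\mathbf{U}(gl(n))^{(n)}$ precisely when $n\ge m+|\lambda|$, and not for smaller $n$.

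Next I would treat the summands $k<m$. For these, the image under $\mathfrak{p}$ still carries leftover virtual-symbol factors $\{J_{(2)}|\alpha\}$ and $\{\alpha|I_{(2)}\}$ with $|I_{(2)}|=|J_{(2)}|=m-k>0$; these are the ``irregular'' (or partially absorbed) monomials. The key observation is that applying the Capelli devirtualization epimorphism $\mathfrak{p}$ to such a monomial forces further contractions (the $\alpha$'s must all be eliminated), and each forced contraction via a bracket $[e_{a,\alpha},e_{\alpha,b}]=e_{a,b}-(\pm)\delta\cdots$ turns a product of \emph{two} virtual-carrying generators into a \emph{single} proper generator $e_{a,b}$ — dropping the degree by one each time, or else killing the term outright when a $\delta$-term collapses a row. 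Hence every $k<m$ contribution devirtualizes into an element of degree strictly less than $m+|\lambda|$; collecting these as $\sum c_{m,\lambda}\mathbf{G}_{m,\lambda}$ gives the stated decomposition, with each $\mathbf{G}_{m,\lambda}\in\mathbf{U}(gl(n))^{(n)}$ for some $n<m+|\lambda|$.

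The main obstacle will be the careful degree bookkeeping for the $k<m$ terms: one must verify that \emph{every} way of devirtualizing the leftover $\{J_{(2)}|\alpha\}\,\mathbf{P}_T\,\{\alpha|I_{(2)}\}$ block — not just the generic one — strictly lowers the filtration degree, accounting also for the $T$-derivations that have already acted on $\mathbf{P}_S$ and may themselves have lowered degree. This is exactly the content of the filtration analysis underlying Corollary~\ref{filtration corollary} (the result invoked elsewhere in the paper for the analogous statement about $\mathbf{K}_\lambda(n)$), and I would cite or adapt that argument rather than redo it from scratch; the sign $\pm$ and the explicit constants $c_{m,\lambda}$ are then a routine (if tedious) matter of tracking the $(-1)$-exponents and signatures $sgn(I;I_{(1)},I_{(2)})$ through Proposition~\ref{commutation Sweedler} and Lemma~\ref{assorbimenti 1}.
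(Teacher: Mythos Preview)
Your overall strategy---read off the decomposition from Lemma~\ref{assorbimenti 1} and separate the top-filtration piece from the rest---is exactly what the paper intends (the Corollary is stated with no proof, as an immediate consequence of that lemma). But you have the indices reversed, and this matters.

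The leading term is the summand $k=0$, not $k=m$. For $k=0$ the splits are $I_{(1)}=J_{(1)}=\varnothing$, $I_{(2)}=I$, $J_{(2)}=J$, the adjoint operators $T_{J_{(1)}\alpha}T_{\alpha I_{(1)}}$ are trivial, and the summand is
\[
(-1)^{|\mathbf{P}_T|\,m}\ \mathfrak{p}\big(\mathbf{P}_S\,\{J|\alpha\}\,\mathbf{P}_T\,\{\alpha|I\}\big),
\]
which is (up to the indicated sign and commuting $\{J|\alpha\}$ through $\mathbf{P}_T$, again a sign plus lower-order corrections) the appended Capelli bitableau $\bigl[\begin{smallmatrix}S\\J\end{smallmatrix}\bigm|\begin{smallmatrix}T\\I\end{smallmatrix}\bigr]$. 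This balanced monomial has $|\lambda|+m$ creating factors and $|\lambda|+m$ annihilating factors, hence filtration degree $|\lambda|+m$.

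Your $k=m$ term, by contrast, is $\mathfrak{p}\bigl(T_{J\alpha}T_{\alpha I}(\mathbf{P}_S)\cdot\mathbf{P}_T\bigr)$. The adjoint derivations $T_{\alpha i}$ do \emph{not} add factors to $\mathbf{P}_S$: each one replaces some $e_{j,\beta_r}$ by $[e_{\alpha,i},e_{j,\beta_r}]=\delta_{ij}e_{\alpha,\beta_r}$, keeping the length fixed. So $T_{J\alpha}T_{\alpha I}(\mathbf{P}_S)$ still has $|\lambda|$ factors, and after devirtualization the result sits in degree $|\lambda|$---this is the \emph{lowest}-degree contribution, not the highest. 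More generally the summand indexed by $k$ produces a balanced monomial with $|\lambda|+(m-k)$ factors on each side (the ``leftover'' $\{J_{(2)}|\alpha\}$ and $\{\alpha|I_{(2)}\}$ supply an extra row of length $m-k$), so the filtration degree is $|\lambda|+m-k$, strictly decreasing in $k$. The terms $k\ge 1$ are your $\mathbf{G}_{m,\lambda}$'s, all in $\mathbf{U}(gl(n))^{(n)}$ for some $n<m+|\lambda|$. There is no need for the contraction argument you sketch in your second paragraph: the degree drop is already visible at the level of the balanced monomials, before any further commutations.

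Once you swap the roles of $k=0$ and $k=m$ (and of $k<m$ versus $k>0$), your write-up becomes correct.
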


\begin{corollary}\label{filtration corollary}
Let $m \leq \lambda_p$. Then
$$
 [ S | T ] = 
\pm 
\ [ \omega_1 | \overline{\omega}_1 ] \ [ \omega_2 | \overline{\omega}_2 ] \cdots [ \omega_p | \overline{\omega}_p ]
+
\sum  d_{\lambda} \ \mathbf{F}_{\lambda},
$$
where
$$
[ S | T ], \
[ \omega_1 | \overline{\omega}_1 ] \ [ \omega_2 | \overline{\omega}_2 ] \cdots [ \omega_p | \overline{\omega}_p ]
\notin \mathbf{U}(gl(n))^{(n)}
\quad
whenever 
\quad
 n < |\lambda|,
$$
and
$$
\mathbf{F}_{ \lambda} \in \mathbf{U}(gl(n))^{(n)}
\quad
for \ some
\quad
 n < |\lambda|.
$$
\end{corollary}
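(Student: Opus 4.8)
The plan is to argue by iteration on the number $p$ of rows, adjoining one row at a time by means of the preceding Corollary, and then to rearrange the resulting product of one-row Capelli bitableaux using the elementary fact that in $\mathbf{U}(gl(n))$ any two elements commute modulo lower filtration degree. Write $S=(\omega_1,\dots,\omega_p)$, $T=(\overline{\omega}_1,\dots,\overline{\omega}_p)$, and for $1\le j\le p$ let $[\,S^{(j)}\,|\,T^{(j)}\,]$ be the Capelli bitableau of shape $(\lambda_1,\dots,\lambda_j)$ with rows $\omega_1,\dots,\omega_j$ on the left and $\overline{\omega}_1,\dots,\overline{\omega}_j$ on the right, so that $[\,S^{(1)}\,|\,T^{(1)}\,]=[\omega_1|\overline{\omega}_1]$ and $[\,S^{(p)}\,|\,T^{(p)}\,]=[S|T]$. (We may assume the rows of $S$ and $T$ have pairwise distinct entries, as otherwise $[S|T]=0$ and there is nothing to prove.)

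For the inductive step, apply the preceding Corollary to the Capelli bitableau $[\,S^{(j)}\,|\,T^{(j)}\,]$, whose last row has length $\lambda_j$, and to the one-row bitableau $[\omega_{j+1}|\overline{\omega}_{j+1}]$: since $\lambda$ is a partition we have $\lambda_{j+1}\le\lambda_j$, which is precisely the hypothesis ``$m\le\lambda_p$'' of that Corollary, and adjoining $\omega_{j+1},\overline{\omega}_{j+1}$ as the new bottom rows produces $[\,S^{(j+1)}\,|\,T^{(j+1)}\,]$. Hence
$$
[\omega_{j+1}|\overline{\omega}_{j+1}]\,[\,S^{(j)}\,|\,T^{(j)}\,]=\pm\,[\,S^{(j+1)}\,|\,T^{(j+1)}\,]+\mathbf{G}_j ,
$$
where $\mathbf{G}_j$ has filtration degree $<\lambda_1+\dots+\lambda_{j+1}$. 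Substituting these relations successively for $j=p-1,p-2,\dots,1$, and noting that multiplying an element of degree $<\lambda_1+\dots+\lambda_j$ by the one-row factor $[\omega_{j+1}|\overline{\omega}_{j+1}]$ (of degree $\le\lambda_{j+1}$), and then by further one-row factors, keeps it of degree $<|\lambda|$, we obtain
$$
[S|T]=\pm\,[\omega_p|\overline{\omega}_p]\,[\omega_{p-1}|\overline{\omega}_{p-1}]\cdots[\omega_1|\overline{\omega}_1]+\mathbf{F},
$$
with $\mathbf{F}$ of filtration degree $<|\lambda|$.

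It remains to reverse the order of the product and to pin down the exact degrees. Since $XY-YX=[X,Y]$ lowers the filtration degree by at least one, the one-row bitableaux $[\omega_i|\overline{\omega}_i]$ pairwise commute modulo terms of strictly smaller degree; therefore $[\omega_p|\overline{\omega}_p]\cdots[\omega_1|\overline{\omega}_1]$ equals $[\omega_1|\overline{\omega}_1]\cdots[\omega_p|\overline{\omega}_p]$ up to an element of degree $<|\lambda|$, which is absorbed into $\mathbf{F}$. Finally, pass to the associated graded algebra $\mathrm{gr}\,\mathbf{U}(gl(n))\cong\mathbf{Sym}(gl(n))\cong{\mathbb C}[M_{n,n}]$: the leading symbol of a one-row bitableau $[\omega_i|\overline{\omega}_i]$ is the $\lambda_i\times\lambda_i$ determinant in the symbols of the generators $e_{a,b}$ indexed by the (distinct) entries of $\omega_i$ and $\overline{\omega}_i$, hence is nonzero, so $[\omega_i|\overline{\omega}_i]$ has filtration degree exactly $\lambda_i$; since ${\mathbb C}[M_{n,n}]$ is an integral domain, the product of these $p$ symbols is nonzero, so $[\omega_1|\overline{\omega}_1]\cdots[\omega_p|\overline{\omega}_p]$ — and, by the displayed identity, also $[S|T]$ — has filtration degree exactly $|\lambda|=\lambda_1+\dots+\lambda_p$, while $\mathbf{F}$ has degree $<|\lambda|$. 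This is the assertion.

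The only genuinely delicate point is the preceding Corollary itself, namely the passage from the double-sum identity of Lemma \ref{assorbimenti 1} to the statement that left multiplication by a one-row bitableau adjoins a bottom row up to lower filtration degree: there one checks that every correction term in the double sum either involves an irregular monomial $\{J_{(2)}|\alpha\}\{\alpha|I_{(2)}\}$, killed by the devirtualization epimorphism $\mathfrak{p}$, or carries at least one superderivation $T_{\,\cdot\,\alpha}$, $T_{\alpha\,\cdot}$ acting on $\mathbf{P}_S$, and each such superderivation consumes a virtual letter and hence drops the filtration degree. Granting that input, the present Corollary is a routine iteration, supplemented by the standard ``commutativity modulo lower filtration'' and ``$\mathrm{gr}$ is a domain'' arguments.
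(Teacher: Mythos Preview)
Your argument is correct and is essentially what the paper has in mind: the paper gives no explicit proof here, simply recording the result as an immediate corollary of the preceding one, and your iteration of that corollary row by row is exactly the intended route. The extra steps you supply --- reversing the order of the one-row factors via ``commutator lowers filtration'' and verifying exact degree via the associated graded --- are not in the paper but are the natural way to complete the details.

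One small bookkeeping slip: after applying the preceding Corollary at stage $j$, the correction term $\mathbf{G}_j$ has degree strictly less than $\lambda_1+\cdots+\lambda_{j+1}$ (not $\lambda_1+\cdots+\lambda_j$), and in the substitution it gets multiplied on the left by $[\omega_p|\overline{\omega}_p]\cdots[\omega_{j+2}|\overline{\omega}_{j+2}]$ (not by $[\omega_{j+1}|\overline{\omega}_{j+1}]$). The degree count still closes up to $<|\lambda|$, so this does not affect the validity of the proof.
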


We specialize the previous results to Capelli-Deruyts bitableaux $\mathbf{K}^\lambda$.

Let
$$
\textbf{M}^* = \{\underline{\lambda_1}^* | \beta_1   \} 
\cdots  \{\underline{\lambda_p}^* | \beta_p   \}, \quad
\textbf{M} =  \{ \beta_1 | \underline{\lambda_1}  \} \cdots \{ \beta_p | \underline{\lambda_p} \}, 
$$
where
 $\lambda = (\lambda_1 \geq \cdots \geq \lambda_p)$ and
$|\textbf{M}^*| = |\textbf{M}| = |\lambda| = \lambda_1+ \cdots + \lambda_p \in \mathbb{Z}_2$.

Given an increasing word $W = h_1 \ h_2 \ \cdots \ h_p$ on 
$\mathcal{L} = \{1, 2, \ldots, n \}$, denote by $W^*$ its \emph{reverse} word, that is:
$$
W^* = h_p \ \cdots \ h_2 \ h_1.
$$

Let $I = 1 \ 2 \ \cdots \ m$, $I^* = m \ m-1 \ \cdots \  1$, $m \leq \lambda_p$.

In this notation
$$
\mathbf{K}^\lambda = \mathfrak{p} \big( \textbf{M}^* \cdot \textbf{M} \big)
$$
and
$$
[ I^* | I ] \ \mathbf{K}^\lambda = 
\mathfrak{p} \big(\ \{ I^*| \alpha\} \{ \alpha | I \} \ \textbf{M}^* \cdot \textbf{M} \ \big).
$$

We apply Lemma \ref{assorbimenti 1} to the element $[ I^* | I ] \ \mathbf{K}^\lambda$.
As we shall see, the double  sum 
$$
\sum_{(I^*)_{k, m-k}}  \ \sum_{(I)_{k, m-k}}
$$ 
in eq. (\ref{double sum}) reduces to a single sum
$$
\sum_{(I)_{k, m-k}}
$$
since the only splits $I^*_{(1)}, \ I^*_{(2)}$ in $(I^*)_{k, m-k}$ that give rise to nonzero 
summands are those for
$$
I^*_{(1)} = (I_{(1)})^* \quad \text{and} \quad I^*_{(2)} = (I_{(2)})^*,
$$
where $(I_{(1)})^*, \ (I_{(2)})^*$ are the reverse words of $I_{(1)}$ and $I_{(2)}$,
respectively.

\begin{lemma}\label{MAIN LEMMA} The element
$$
[ I^* | I ] \ \mathbf{K}^\lambda = \mathfrak{p} \big( \{ I^*| \alpha\} \{ \alpha | I \} \ \emph{\textbf{M}}^* 
\cdot \emph{\textbf{M}} \big) 
$$
equals
\begin{multline*}
\sum_{k=0}^m \ (-1)^{(|\textbf{M}|+k)(m-k)}
\sum_{(I)_{k, m-k}} \ 
\mathfrak{p} \big( 
T_{ (I_{(1)})^* \alpha } \big(
T_{\alpha \ I_{(1)}} \big( \emph{\textbf{M}}^* 	\big) \big)  \{ {(I_{(2)})^*}| \alpha\} 
 \emph{\textbf{M}}   \{\alpha | I_{(2)} \} \big).
\end{multline*}
\end{lemma}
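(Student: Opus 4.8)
The plan is to feed the Capelli--Deruyts specialization $J = I^*$, $\mathbf{P}_S = \mathbf{M}^*$, $\mathbf{P}_T = \mathbf{M}$ into Lemma~\ref{assorbimenti 1}, and then to show that in the resulting double summation only the ``reverse--matched'' split of $I^*$ survives. Concretely, Lemma~\ref{assorbimenti 1} gives
\begin{equation*}
[I^*|I]\,\mathbf{K}^\lambda = \sum_{k=0}^m (-1)^{(|\mathbf{M}|+k)(m-k)}\,
\mathfrak{p}\Bigl(\ \sum_{(I)_{k,m-k}}\ \sum_{(I^*)_{k,m-k}}
T_{I^*_{(1)}\alpha}\,T_{\alpha I_{(1)}}(\mathbf{M}^*)\ \{I^*_{(2)}|\alpha\}\ \mathbf{M}\ \{\alpha|I_{(2)}\}\Bigr),
\end{equation*}
so it suffices to prove that, for each $k$ and each split $(I_{(1)},I_{(2)})$ of $I$, the $(I^*_{(1)},I^*_{(2)})$-summand is already zero in $\mathbf{U}(gl(m|n))$ unless $I^*_{(1)} = (I_{(1)})^*$ and $I^*_{(2)} = (I_{(2)})^*$; the signs are then inherited verbatim.

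First I would make $T_{\alpha I_{(1)}}(\mathbf{M}^*)$ explicit. Since $m \le \lambda_p \le \lambda_r$ for every row $r$, each symbol $i \in I_{(1)} \subseteq \{1,\dots,m\}$ occurs exactly once in each row of $\mathbf{M}^* = \{\underline{\lambda_1}^*|\beta_1\}\cdots\{\underline{\lambda_p}^*|\beta_p\}$, namely as the factor $e_{i,\beta_r}$; and, $\alpha$ being a virtual symbol distinct from the $\beta_r$, the explicit superbracket gives $T_{\alpha i}(e_{j,\beta_r}) = \delta_{ij}\,e_{\alpha,\beta_r}$ and $T_{\alpha i}(e_{\alpha,\beta_r}) = 0$. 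Applying the superderivations $T_{\alpha i}$, $i \in I_{(1)}$, in turn (by the Leibniz rule, and noting that none of them ever acts on an $\alpha$ produced by an earlier one) therefore presents $T_{\alpha I_{(1)}}(\mathbf{M}^*)$ as a signed sum, over all maps $l\colon I_{(1)} \to \{1,\dots,p\}$, of the monomials $\mathbf{M}^*_l$ obtained from $\mathbf{M}^*$ by replacing, in each row $r$, every factor $e_{i,\beta_r}$ with $i \in l^{-1}(r)$ by $e_{\alpha,\beta_r}$; each $\mathbf{M}^*_l$ has exactly $k$ occurrences of $\alpha$. Next I would apply $T_{I^*_{(1)}\alpha}$, a composite of $k$ superderivations $T_{j\alpha}$, $j \in I^*_{(1)}$, each of which annihilates $e_{i',\beta_r}$ for proper $i'$ and sends $e_{\alpha,\beta_r} \mapsto e_{j,\beta_r}$. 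Because $\mathbf{M}^*_l$ carries exactly $k$ occurrences of $\alpha$, a nonzero term forces these $k$ operators to be matched bijectively with the $k$ $\alpha$-factors; so $T_{I^*_{(1)}\alpha}(\mathbf{M}^*_l)$ is a signed sum, over bijections $\phi$ from the set of $\alpha$-factors of $\mathbf{M}^*_l$ onto $I^*_{(1)}$, of monomials in which each $\alpha$-slot (the vacated position of some $i \in l^{-1}(r)$) is refilled by the symbol $\phi$ assigns to it.

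The decisive point, which I expect to be the main obstacle to make watertight, is the vanishing mechanism. For a fixed column $\beta_r$ the elements $e_{a,\beta_r}$ with $a$ proper are odd, pairwise anticommuting, and square to zero in $\mathbf{U}(gl(m|n))$ --- immediate from the superbracket, since $[e_{a,\beta_r},e_{b,\beta_r}] = 0$ for all proper $a,b$. Hence any summand in which some row $r$ carries two equal proper symbols in column $\beta_r$ is zero in $\mathbf{U}(gl(m|n))$, a fortiori after $\mathfrak{p}$. In the $(l,\phi)$-summand the proper symbols in column $\beta_r$ form the multiset $\bigl(\{1,\dots,\lambda_r\}\setminus l^{-1}(r)\bigr) \uplus (\text{the }\phi\text{-images seated in row }r)$; the $\phi$-images being distinct elements of $I^*_{(1)} \subseteq \{1,\dots,\lambda_r\}$, this multiset is repetition-free exactly when those $\phi$-images coincide with $l^{-1}(r)$. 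Imposing this for every $r$ and comparing totals forces $I^*_{(1)}$ to be the disjoint union of the sets $l^{-1}(r)$, that is $I^*_{(1)} = I_{(1)}$ as sets; since $I^*$ is a decreasing word, $I^*_{(1)}$ is automatically decreasing, whence $I^*_{(1)} = (I_{(1)})^*$ and, by complementation inside $I^*$, $I^*_{(2)} = (I_{(2)})^*$. (Conversely, for this matched split nonvanishing terms do remain, so the surviving summand is genuine; the precise scalar then produced by $T_{(I_{(1)})^*\alpha}T_{\alpha I_{(1)}}(\mathbf{M}^*)$ --- a raising-factorial type coefficient --- is not needed here but is what the evaluation toward Theorem~\ref{MAIN} will extract.) Substituting back collapses the inner sum to the single term indexed by $(I_{(1)})^*$, $(I_{(2)})^*$, which is precisely the asserted identity.
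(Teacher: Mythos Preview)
Your proposal is correct and follows essentially the same route as the paper: apply Lemma~\ref{assorbimenti 1} with $J=I^*$, $\mathbf{P}_S=\mathbf{M}^*$, $\mathbf{P}_T=\mathbf{M}$, and then kill all $(I^*_{(1)},I^*_{(2)})$--summands except the ``reverse--matched'' one by exploiting that two copies of an odd generator $e_{i,\beta_r}$ in the same row force vanishing in $\mathbf{U}(gl(m|n))$. The only stylistic difference is that you fully expand $T_{I^*_{(1)}\alpha}T_{\alpha I_{(1)}}(\mathbf{M}^*)$ into the $(l,\phi)$--terms and argue term by term, whereas the paper argues more coarsely by fixing any $i\in I_{(2)}\cap I^*_{(1)}$ and observing that every term then carries a repeated $\{i|\beta_{\underline q}\}$; both arguments rest on the same anticommutation fact and reach the same conclusion $I^*_{(1)}=(I_{(1)})^*$.
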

\begin{proof}

From Lemma \ref{assorbimenti 1}, we have
\begin{multline*}
\mathfrak{p} \big( \{ I^*| \alpha\} \{ \alpha | I \} \ \textbf{M}^* \cdot \textbf{M} \big) 
=
\sum_{k=0}^m \ (-1)^{(|\textbf{M}|+k)(m-k)}
\\ \big(\sum_{(I)_{k, m-k}}  \ \sum_{(I^*)_{k, m-k}} \ \mathfrak{p} \big( T_{I^*_{(1)}\alpha } \big(
T_{\alpha \ I_{(1)}} \big( \textbf{M}^* \big) 	\big) \ \{ (I^*_{(2)}| \alpha\} \ 
 \textbf{M} \ \{\alpha | I_{(2)} \} \ \big) \big).
\end{multline*}

Let $k = 0, 1, \ldots, m$ and examine
the element
\begin{multline*}
\sum_{(I)_{k, m-k}} \ \sum_{(I^*)_{k, m-k}} 
T_{ {I^*_{(1)} \alpha }} \big(
T_{\alpha \ I_{(1)}} \big( \textbf{M}^* \big) \big) \{ I^*_{(2)}| \alpha\}  
 \ \textbf{M} \{\alpha | I_{(2)} \} =
\\
= \sum_{(I)_{k, m-k}} \ \sum_{(I^*)_{k, m-k}} 
T_{ {I^*_{(1)} \alpha }} \big(
T_{\alpha \ I_{(1)}} \big(\{\underline{\lambda_1}^* | \beta_1   \} 
\cdots  \{\underline{\lambda_p}^* | \beta_p   \} \big) \big) \{ {(I_{(2)})^*}| \alpha\}  
\ \textbf{M} \{\alpha | I_{(2)} \} .
\end{multline*}
If $i \in I_{(2)}$, then $i \notin I_{(1)}$. Hence, all the variables
$$
\{ i |\beta_q \} \qquad q = 1, 2, \ldots ,p
$$ appear in
$$
T_{\alpha,I_{(1)}}\big( \{\underline{\lambda_1}^* | \beta_1   \} 
\cdots  \{\underline{\lambda_p}^* | \beta_p   \}\big),
$$
for every $q = 1, 2, \ldots, p$.

Assume that $i \notin I^*_{(2)}$, then $i \in I^*_{(1)}$.
Hence, $\exists \ \underline{q} \in \{1, 2, \ldots, p \}$ such that the variable
$$
\{ i |\beta_{\underline{q}}  \}
$$
is \textit{created} by the action of
$$
T_{ I^*_{(1)} \alpha }
$$
on
$$
T_{\alpha,I_{(1)}}\big( \{\underline{\lambda_1}^* | \beta_1   \} 
\cdots  \{\underline{\lambda_p}^* | \beta_p   \}\big) \qquad (*).
$$
Then $(*)$ contains two occurrencies of $\{ i | \beta_{\underline{q}}  \}$
and, hence, \emph{equals zero}.
Therefore
$$
T_{ I^*_{(1)} \alpha } T_{\alpha,I_{(1)}}\big( \{\underline{\lambda_1}^*  | \beta_1   \} 
\cdots  \{\underline{\lambda_p}^*  | \beta_p   \}\big)  \neq 0
$$
implies
\begin{equation*}\label{not zero}
i \in I_{(2)} \implies i \in I^*_{(2)}.
\end{equation*}

Since $I_{(2)}$ and $I^*_{(2)}$ are words of the same length $m-k$,
this implies that
 the only  \emph{not zero} summands 
- \emph{with respect to the action of the Capelli epimorphism} $\mathfrak{p}$ - in
$$
\sum_{(I)_{k, m-k}} \ \sum_{(I^*)_{k, m-k}} 
\mathfrak{p} \big(
T_{ {I^*_{(1)} \alpha }} \big(
T_{\alpha \ I_{(1)}} \big( \textbf{M}^* \big) \big) \{  I^*_{(2)}| \alpha\}   
 \ \textbf{M} \{\alpha | I_{(2)} \} \big)
$$
are for $I^*_{(1)} = (I_{(1)})^*$ and $I^*_{(2)} = (I_{(2)})^*$, 
that is
$$
\mathfrak{p} \big( T_{ (I_{(1)})^* \alpha } \big(
T_{\alpha \ I_{(1)}} \big( \textbf{M}^* \big) 	\big) \ \{ {(I_{(2)})^*}| \alpha\}  
 \ \textbf{M} \{\alpha | I_{(2)} \} \big).
$$
\end{proof}


Let us examine the expression
\begin{equation}\label{expression} 
\sum_{(I)_{k.m-k}}
(-1)^{k (m - k)} \ T_{ (I_{(1)})^* \alpha } \big(
T_{\alpha \ I_{(1)}} \big( \textbf{M}^* \big) \big) 	 \ \{ {(I_{(2)})^*}| \alpha\}  \{\alpha | I_{(2)} \}.
\end{equation}
in the notation of \emph{splits}.

\begin{corollary} The expression (\ref{expression}) equals
\begin{equation*}\label{expression three}
\sum_{(A, B) \in S(I; k, m-k)} \
T_{ A^* \alpha } \big(
T_{\alpha  A} \big( \emph{\textbf{M}}^* \big) 	\big) \ \{B^* | \alpha\}  \{\alpha | B \}.
\end{equation*}
\end{corollary}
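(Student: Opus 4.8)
The plan is to unwind the Sweedler coproduct notation in (\ref{expression}) into an honest sum over the set $\mathbf{S}(I;k,m-k)$ of splits of $I = 1\,2\,\cdots\,m$, and then to check that all the signs collapse. First I would substitute the explicit description of the coproduct component recalled above, namely
$$
\Delta_{k,m-k}(I)=\sum_{(A,B)\in\mathbf{S}(I;k,m-k)} sgn(I;A,B)\,A\otimes B ,
$$
so that $\sum_{(I)_{k,m-k}}$ turns into a sum over $(A,B)\in\mathbf{S}(I;k,m-k)$ weighted by the signature $sgn(I;A,B)$, with $I_{(1)}=A$, $I_{(2)}=B$, hence $(I_{(1)})^*=A^*$ and $(I_{(2)})^*=B^*$. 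Next I would recall, from the derivation of Lemma \ref{MAIN LEMMA}, that the reversed words $A^*,B^*$ appearing in (\ref{expression}) are precisely the ones surviving the Capelli epimorphism in the double coproduct of Lemma \ref{assorbimenti 1}: the second coproduct, that of the decreasing word $I^*$, collapses onto the terms whose first piece is $(I_{(1)})^*$ and whose second piece is $(I_{(2)})^*$, and therefore carries along the companion signature $sgn(I^*;A^*,B^*)$. Thus the term of (\ref{expression}) indexed by $(A,B)$ comes with the total scalar $sgn(I;A,B)\cdot sgn(I^*;A^*,B^*)\cdot(-1)^{k(m-k)}$.

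The heart of the argument is then the elementary identity
$$
sgn(I;A,B)\cdot sgn(I^*;A^*,B^*)=(-1)^{k(m-k)} .
$$
Since within $A$ and within $B$ the natural order is increasing, $sgn(I;A,B)$ counts only the cross inversions, i.e. $sgn(I;A,B)=(-1)^{e}$ with $e=\#\{(a,b)\in A\times B:\ a>b\}$; reversing the ambient order of $\{1,\ldots,m\}$, one gets $sgn(I^*;A^*,B^*)=(-1)^{f}$ with $f=\#\{(a,b)\in A\times B:\ a<b\}$. Because $A$ and $B$ are disjoint, each pair of $A\times B$ is counted exactly once by $e$ or by $f$, so $e+f=|A|\,|B|=k(m-k)$, which gives the identity.

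Plugging this in, the scalar on each $(A,B)$-term becomes $(-1)^{k(m-k)}\,(-1)^{k(m-k)}=1$, so (\ref{expression}) equals
$$
\sum_{(A,B)\in\mathbf{S}(I;k,m-k)} T_{A^*\alpha}(T_{\alpha A}(\mathbf{M}^*))\,\{B^*|\alpha\}\,\{\alpha|B\},
$$
which is the asserted equality. The only genuine obstacle is the sign bookkeeping in the first paragraph: one has to attribute carefully the three sources of signs — the coproduct signature of $I$, the companion signature of $I^*$ produced by the forced reversal in Lemma \ref{MAIN LEMMA}, and the explicit factor $(-1)^{k(m-k)}$ — and notice that the first two multiply to exactly the inverse of the third. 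Everything else is a direct substitution, and this same cancellation is what will later convert the coefficients in Lemma \ref{MAIN LEMMA} into the clean raising factorials of Theorem \ref{MAIN}.
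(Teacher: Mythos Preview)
Your proposal is correct and follows essentially the same route as the paper: rewrite the Sweedler sum as a sum over splits, recognize that each term carries the product $sgn(I;A,B)\cdot sgn(I^*;A^*,B^*)\cdot(-1)^{k(m-k)}$, and show this product is $1$. The only difference is cosmetic: the paper verifies the sign identity by first swapping the two blocks, $sgn(I^*;A^*,B^*)=(-1)^{k(m-k)}\,sgn(I^*;B^*,A^*)$, and then invoking the reversal symmetry $sgn(I;A,B)\,sgn(I^*;B^*,A^*)=1$, whereas you count cross-inversions directly via $e+f=k(m-k)$; both are one-line computations.
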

\begin{proof} In the notation of \emph{splits}, the expression (\ref{expression}) equals
\begin{multline*}\label{expression two}
(-1)^{k (m - k)} \ \sum_{(A, B) \in S(I; k, m-k)}
T_{ A^* \alpha } \big(
T_{\alpha \ A} \big( \emph{\textbf{M}}^* \big) 	\big) \ \{B^*| \alpha\}  \{\alpha | B \} \times 
\\
sgn(I; A, B)sgn(I^*; A^*, B^*).
\end{multline*}

We have
\begin{multline*}
(-1)^{k (m - k)}sgn(I; A, B)sgn(I^*; A^*, B^*) =
\\
= (-1)^{k (m - k)}(-1)^{k (m - k)}
sgn(I; A, B)sgn(I^*; B^*, A^*). 
\end{multline*}
But $sgn(I; A, B)sgn(I^*; B^*, A^*) = 1$.
\end{proof}

Given  $(A, B) \in S(I; k, m-k)$, let $A = a_1  a_2  \cdots  a_k$,
$\{ a_1 < a_2 < \cdots < a_k \}   \subseteq \{ 1, 2, \ldots, m \}$ and recall
$$
\textbf{M}^* = \{\underline{\lambda}_1|\beta_1 \} 
\cdots  \{\underline{\lambda_p}^*|\beta_p \};
$$
we examine the element
\begin{equation}\label{element}
T_{ A^* \alpha } T_{\alpha \ A} \big( \textbf{M}^* \big).
\end{equation}

\begin{lemma}\label{raising factorial} We have
$$
T_{ A^* \alpha } T_{\alpha \ A} \big( \emph{\textbf{M}}^* \big) =
{{\langle}  \ {p} {\rangle}_k} \ \{ \underline{\lambda_1}^* | \beta_{1} \} 
\cdots
\{\underline{\lambda_p}^*| \beta_{p} \} = \ {{\langle}  \ {p} {\rangle}_k} \ \emph{\textbf{M}}^* ,
$$
where
$$
{{\langle}  \ {p} {\rangle}_k} = p(p+1) \cdots (p+k-1)
$$
is the \emph{raising factorial} coefficient.
\end{lemma}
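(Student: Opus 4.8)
The plan is to prove the identity by induction on $k$, after confining the whole computation to a very small, supercommutative corner of $\mathbf U(gl(m|n))$. First I would record the structural facts: throughout, only the basis elements $e_{i,\beta_q}$ ($i\in\mathcal L$, $1\le q\le p$) and $e_{\alpha,\beta_q}$ ($1\le q\le p$) occur, and — because $\alpha,\beta_1,\dots,\beta_p$ are pairwise distinct virtual symbols — the explicit superbracket gives $T_{\alpha i}(e_{j,\beta_q})=\delta_{ij}\,e_{\alpha,\beta_q}$, $T_{\alpha i}(e_{\alpha,\beta_q})=0$, $T_{i\alpha}(e_{\alpha,\beta_q})=e_{i,\beta_q}$, $T_{i\alpha}(e_{j,\beta_q})=0$, together with $e_{i,\beta_q}^2=0$ and the pairwise supercommutativity of all these elements. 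Consequently the subalgebra $\mathcal B$ they generate is a commutative superalgebra (exterior on the odd $e_{i,\beta_q}$, polynomial on the even $e_{\alpha,\beta_q}$), stable under $T_{\alpha i}$, $T_{i\alpha}$, $T_{e_{ii}}$ and $T_{e_{\alpha\alpha}}$; on $\mathcal B$ the first two act as the superderivations interchanging $e_{i,\beta_\bullet}$ and $e_{\alpha,\beta_\bullet}$, while $T_{e_{ii}}$ multiplies a monomial by the number of its $e_{i,\beta_\bullet}$-factors and $T_{e_{\alpha\alpha}}$ multiplies it by its total degree in the $e_{\alpha,\beta_\bullet}$.

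For the inductive step I would write $A=a_1A'$ with $a_1=\min A$, $A'=a_2\cdots a_k$, so that $T_{\alpha A}=T_{\alpha a_1}T_{\alpha A'}$ and $T_{A^*\alpha}=T_{(A')^*\alpha}T_{a_1\alpha}$, and set $N':=T_{\alpha A'}(\textbf{M}^*)$. Since $a_j\le m\le\lambda_p\le\lambda_q$ for all $q$, each $e_{a_j,\beta_q}$ occurs exactly once in $\textbf{M}^*$ and is never hit by $T_{\alpha a_l}$ for $l\ne j$; hence $N'$ is a linear combination of monomials each of which still contains all $p$ factors $e_{a_1,\beta_1},\dots,e_{a_1,\beta_p}$ and has total $e_{\alpha,\beta_\bullet}$-degree $k-1$. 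Two consequences follow immediately: $T_{a_1\alpha}(N')=0$, since applying $T_{a_1\alpha}$ to such a monomial would recreate an already-present $e_{a_1,\beta_q}$, whose square vanishes; and $(T_{e_{a_1a_1}}+T_{e_{\alpha\alpha}})(N')=(p+k-1)\,N'$, by the number-operator descriptions above. I would then invoke the superbracket identity for $T$, namely
$$T_{a_1\alpha}T_{\alpha a_1}+T_{\alpha a_1}T_{a_1\alpha}=T_{[e_{a_1\alpha},\,e_{\alpha a_1}]}=T_{e_{a_1a_1}}+T_{e_{\alpha\alpha}},$$
which holds because $[e_{a_1\alpha},e_{\alpha a_1}]=e_{a_1a_1}+e_{\alpha\alpha}$; evaluating on $N'$ and using $T_{a_1\alpha}(N')=0$ gives $T_{a_1\alpha}T_{\alpha a_1}(N')=(p+k-1)\,N'$. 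Therefore
$$T_{A^*\alpha}T_{\alpha A}(\textbf{M}^*)=T_{(A')^*\alpha}\bigl(T_{a_1\alpha}T_{\alpha a_1}(N')\bigr)=(p+k-1)\,T_{(A')^*\alpha}T_{\alpha A'}(\textbf{M}^*),$$
and the inductive hypothesis (applicable because $A'$ is increasing with $\max A'\le m\le\lambda_p$) turns the right-hand side into $(p+k-1)\,\langle p\rangle_{k-1}\,\textbf{M}^*=\langle p\rangle_k\,\textbf{M}^*$. The base case $k=0$ is the trivial identity $\textbf{M}^*=\langle p\rangle_0\,\textbf{M}^*$.

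The only genuinely delicate point is sign bookkeeping, and routing the inductive step through the relation $T_{a_1\alpha}T_{\alpha a_1}+T_{\alpha a_1}T_{a_1\alpha}=T_{e_{a_1a_1}+e_{\alpha\alpha}}$ is exactly what avoids it: the sign-sensitive term $T_{\alpha a_1}T_{a_1\alpha}$ is killed outright because $T_{a_1\alpha}$ already annihilates $N'$, and the surviving contribution is read off from number operators, which carry no signs at all. The remaining work is the routine-but-essential preliminary check that $T_{\alpha i}$, $T_{i\alpha}$, $T_{e_{ii}}$, $T_{e_{\alpha\alpha}}$ preserve $\mathcal B$ and act there as stated, and that $\mathcal B$ is supercommutative — both immediate from the explicit superbracket and the distinctness of $\alpha,\beta_1,\dots,\beta_p$. (A more computational alternative would be to expand $T_{\alpha A}(\textbf{M}^*)$ as a sum over maps $f\colon\{1,\dots,k\}\to\{1,\dots,p\}$, show that the only terms of $T_{A^*\alpha}$ surviving against a given $f$-term are those reassembling $\textbf{M}^*$, collect a factor $\prod_q|f^{-1}(q)|!$ from each, and use $\sum_f\prod_q|f^{-1}(q)|!=k!\binom{p+k-1}{k}=\langle p\rangle_k$; this works too, but there one must confront the Koszul signs directly.)
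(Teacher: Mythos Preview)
Your proof is correct and takes a genuinely different route from the paper. The paper distributes $T_{A^*\alpha}T_{\alpha A}$ over the $p$ factors $\{\underline{\lambda_q}^*\,|\,\beta_q\}$ of $\textbf{M}^*$, obtaining a sum over ordered splits $(A_1,\ldots,A_p)\in S(A;h_1,\ldots,h_p)$; it then computes directly that $T_{C^*\alpha}T_{\alpha C}(\{\underline{q}^*\,|\,\beta\})=h!\,\{\underline{q}^*\,|\,\beta\}$ for $|C|=h$, and concludes via the identity $\sum_{h_1+\cdots+h_p=k}\frac{k!}{h_1!\cdots h_p!}\,h_1!\cdots h_p!=k!\binom{p+k-1}{k}=\langle p\rangle_k$. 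Your induction on $k$ through the anticommutator $T_{a_1\alpha}T_{\alpha a_1}+T_{\alpha a_1}T_{a_1\alpha}=T_{e_{a_1a_1}+e_{\alpha\alpha}}$ is slicker: the observation $T_{a_1\alpha}(N')=0$ (because all $p$ odd factors $e_{a_1,\beta_q}$ survive in $N'$, and $e_{a_1,\beta_q}^2=0$) kills the sign-sensitive term outright, leaving only the number-operator contribution $p+(k-1)$, which is exactly the recurrence $\langle p\rangle_k=(p+k-1)\langle p\rangle_{k-1}$. The paper's approach makes the combinatorial provenance of each factor of $\langle p\rangle_k$ more transparent; yours bypasses all explicit sign bookkeeping. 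Your parenthetical ``more computational alternative'' is essentially the paper's own argument.
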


\begin{proof} By skew-symmetry, a simple computation shows
that (\ref{element}) equals
\begin{equation}\label{equation}
\sum_{h_1 + \cdots + h_p = k} \ \sum_{(A_1, \ldots, A_p) \in S(A; h_1, \ldots, h_p)} \
T_{(A_1)^* \alpha }T_{\alpha A_1}
\big( \{\underline{\lambda_1}^* | \beta_{1} \} \big) 
\cdots
T_{(A_p)^* \alpha }T_{\alpha A_p}
\big( \{\underline{\lambda_p}^* | \beta_{p} \} \big).
\end{equation}
We examine the value of
\begin{equation*}\label{element bis}
T_{C^* \alpha }T_{\alpha C}
\big( \{\underline{q}^* | \beta \} \big)
\end{equation*}
for $C = c_1c_2 \cdots c_h$, $\{c_1 < c_2 < \ldots <c_h \} \subseteq \{1, 2, \ldots q \}$.

Clearly
$$ 
\{\underline{q}^* | \beta \} =
 \{\underline{q} | \beta \}  (-1)^{\binom {q}{2}},
$$
and a simple computation shows that
$$
T_{C^* \alpha }T_{\alpha C}
\big( \{\underline{q} | \beta \} \big) = h! \ \{\underline{q} | \beta \}. 
$$
Indeed, we have
\begin{align*}
T_{\alpha C}
\big( \{\underline{q} | \beta \} \big) &= 
T_{c_1 \alpha} \cdots T_{c_h \alpha} \big( \{1 | \beta \} \cdots \{q | \beta \} \big)
\\
&= \{1 | \beta \} \cdots \widehat{ \{c_1 | \beta \}} \{ \alpha | \beta \}
\cdots \widehat{ \{c_h | \beta \}} \{ \alpha | \beta \} \cdots \{q | \beta \} \
(-1)^{c_h-1 + \cdots + c_1-1}
\\
& = \{ \alpha | \beta \}^h \{1 | \beta \} \cdots \widehat{ \{c_1 | \beta \}} 
\cdots \widehat{ \{c_h | \beta \}}  \cdots \{q | \beta \} \
(-1)^{c_h-1 + \cdots + c_1-1};
\end{align*}
now, 
\begin{align*}
T_{C \alpha } T_{\alpha C}
\big( \{\underline{q} | \beta \} \big) &=
 T_{c_h \alpha} \cdots T_{c_1 \alpha} 
\big( \{ \alpha | \beta \}^h \{1 | \beta \} \cdots \widehat{ \{c_1 | \beta \}} 
\cdots  \big)
(-1)^{c_h-1 + \cdots + c_1-1}
\\
& = h! \{c_h | \beta \} 	\cdots  \{c_1 | \beta \} \cdots \widehat{ \{c_1 | \beta \}} \cdots
 \widehat{ \{c_h | \beta \}} (-1)^{c_h-1 + \cdots + c_1-1}
\\
& = h! \{1 | \beta \} \cdots \{q | \beta \} = h! \{\underline{q} | \beta \}.
\end{align*}

Then,
$$
T_{C^* \alpha }T_{\alpha C}
\big( \{\underline{q}^* | \beta \} \big)
= (-1)^{ \binom{q}{2}} T_{C^* \alpha }T_{\alpha C}
\big( \{\underline{q} | \beta \} \big) =
(-1)^{ \binom{q}{2}}  \ h! \ \{\underline{q} | \beta \} =
h! \ \{\underline{q}^* | \beta \}.
$$

Hence,  (\ref{equation}) equals
\begin{multline*}
\sum_{(h_1, \ldots, h_p); h_1 + \cdots + h_p = k} \ \sum_{(A_1, \ldots, A_p) \in S(A; h_1, \ldots, h_p)} \
h_1! \cdots h_p! \
\big( \{\underline{\lambda_1}^* | \beta_{1} \} 
\cdots
\{\underline{\lambda_p}^* | \beta_{p} \} \big) =
\\
= \sum_{ h_1 + \cdots + h_p = k} \ \frac{k!} {h_1! \cdots h_p!} \ h_1! \cdots h_p! \
\big( \{\underline{\lambda_1}^* | \beta_{1} \} 
\cdots
\{\underline{\lambda_p}^* | \beta_{p} \} \big) 
\end{multline*}
that equals
\begin{equation*}
\abinom{p}{k} \ k! \ \big( \{\underline{\lambda_1}^*| \beta_{1} \} 
\cdots
\{\underline{\lambda_p}^* | \beta_{p} \} \big) = 
{{\langle}  \ {p} {\rangle}_k} \ \{\underline{\lambda_1}^* | \beta_{1} \} 
\cdots
\{\underline{\lambda_p}^* | \beta_{p} \}.
\end{equation*}
\end{proof}

Hence, from Lemma \ref{MAIN LEMMA} and Lemma \ref{raising factorial}, we infer:
\begin{proposition}\label{final prop} Let $I = 1 2 \cdots m$, $I^* = m  \cdots  2 1$.
Then
\begin{align*}
[ I^* | I ] \ \mathbf{K}^\lambda &= 
\mathfrak{p} \big(\ \{ I^*| \alpha\} \{ \alpha | I \} \ \textbf{M}^* \cdot \textbf{M} \ \big) 
\\
&= \mathfrak{p} \big( \{ I^*| \alpha\} \{ \alpha | I \} \ \{\underline{\lambda_1}^* | \beta_1   \} 
\cdots  \{\underline{\lambda_p}^* | \beta_p   \} \{ \beta_{1} |  \underline{\lambda}_{1} \}
\cdots 
\{ \beta_{p} |  \underline{\lambda}_{p} \} \big) 
\end{align*}
equals
$$
\sum_{k=0}^m \ (-1)^{|\textbf{M}|(m-k)}
\sum_{(A, B) \in S(I; k, m-k)}
{{\langle}  \ {p} {\rangle}_k} \ \mathfrak{p} \big( \textbf{M}^* \{B^* | \alpha\}  \textbf{M} \{\alpha | B \} \big).
$$
\end{proposition}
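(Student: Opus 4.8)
The plan is to chain together the three results just established without introducing anything new: Lemma \ref{MAIN LEMMA} rewrites $[I^*|I]\,\mathbf{K}^\lambda$ as a sum over $k$ of $\mathfrak{p}$-images of inner sums over splits of $I$; the split-notation Corollary preceding this proposition recasts each such inner sum (after absorbing the sign $(-1)^{k(m-k)}$) as a sum over $S(I;k,m-k)$; and Lemma \ref{raising factorial} evaluates the operator $T_{A^*\alpha}T_{\alpha A}$ applied to $\textbf{M}^*$ as a scalar multiple of $\textbf{M}^*$. The only work in between is elementary sign bookkeeping.

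Concretely, I would begin with the identity of Lemma \ref{MAIN LEMMA},
\[
[I^*|I]\,\mathbf{K}^\lambda = \sum_{k=0}^m (-1)^{(|\textbf{M}|+k)(m-k)} \sum_{(I)_{k,m-k}} \mathfrak{p}\!\left( T_{(I_{(1)})^*\alpha}\big(T_{\alpha I_{(1)}}(\textbf{M}^*)\big)\,\{(I_{(2)})^*|\alpha\}\,\textbf{M}\,\{\alpha|I_{(2)}\}\right),
\]
and split the sign as $(-1)^{(|\textbf{M}|+k)(m-k)} = (-1)^{|\textbf{M}|(m-k)}(-1)^{k(m-k)}$. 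Absorbing the factor $(-1)^{k(m-k)}$ into the inner sum produces precisely the expression (\ref{expression}), except that the factor $\textbf{M}$ now sits between $\{(I_{(2)})^*|\alpha\}$ and $\{\alpha|I_{(2)}\}$. Since the proof of the split-notation Corollary only reorders the sum over splits and cancels the signs $sgn(I;A,B)\,sgn(I^*;A^*,B^*)$ against $(-1)^{k(m-k)}$ — manipulations that concern only the proper words $I$ and $I^*$ and are insensitive to the verbatim presence of $\textbf{M}$ — it applies unchanged and rewrites each inner sum as $\sum_{(A,B)\in S(I;k,m-k)} T_{A^*\alpha}\big(T_{\alpha A}(\textbf{M}^*)\big)\,\{B^*|\alpha\}\,\textbf{M}\,\{\alpha|B\}$.

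Finally I would invoke Lemma \ref{raising factorial}: for every split $(A,B)$ with $|A|=k$ one has $T_{A^*\alpha}\big(T_{\alpha A}(\textbf{M}^*)\big) = \langle p\rangle_k\,\textbf{M}^*$, a scalar depending only on $k$ and not on the particular split. Pulling $\langle p\rangle_k$ out of $\mathfrak{p}$ and out of the $S(I;k,m-k)$ sum (legitimate since $\mathfrak{p}$ is linear and $\langle p\rangle_k$ is constant across the sum), the $k$-th summand becomes $(-1)^{|\textbf{M}|(m-k)}\,\langle p\rangle_k\,\mathfrak{p}\!\left(\sum_{(A,B)\in S(I;k,m-k)} \textbf{M}^*\,\{B^*|\alpha\}\,\textbf{M}\,\{\alpha|B\}\right)$; summing over $k$ yields exactly the asserted formula.

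I do not expect a genuine obstacle: the substantive facts — the vanishing of irregular monomials under $\mathfrak{p}$ and of terms with a repeated variable $\{i|\beta_q\}$, together with the multinomial count producing the raising factorial $\langle p\rangle_k$ — have all been isolated in the preceding lemmas. The single point demanding a moment's care is checking that carrying $\textbf{M}$ unchanged through the split-reindexing disturbs no sign, which is immediate because that reindexing involves only the proper words $I$, $I^*$ and their reverses.
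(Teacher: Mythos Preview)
Your argument is correct and follows exactly the route the paper takes: the proposition is stated there as an immediate consequence of Lemma \ref{MAIN LEMMA} and Lemma \ref{raising factorial} (with the intervening split-notation Corollary), and you have simply spelled out the sign bookkeeping and noted that the reindexing of splits is unaffected by the inert factor $\textbf{M}$. Nothing further is needed.
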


\subsection{Proof of Theorem \ref{MAIN}} 
Let  $m \leq \lambda_p$ and  $M \subseteq \underline{\lambda_p}$, $|M| = m$, 
as in Theorem \ref{MAIN}.

Recall that $|\textbf{M}| = |\textbf{M}^*| =  |\lambda| \in \mathbb{Z}_2$,
where $|\lambda| = \lambda_1 + \cdots + \lambda_p$.

From Remark \ref{Deruyts remark} and 
Proposition \ref{final prop}, we have:
\begin{align*}
\left[ M^* | M \right] \mathbf{K}^\lambda &=
 \mathfrak{p} \left(\{M^* |\alpha \} \{\alpha | M \} 
 \textbf{M}^* \cdot \textbf{M}
\right) 
\\
&=
\sum_{k = 0}^m
\
\left< p \right>_{m-k}
\
(-1)^{|\lambda| k}
\sum_{J;\ J \subseteq M;\ |J|=k}
\mathfrak{p}
\left(
\textbf{M}^* 
\{ J^* | \alpha \} 
\textbf{M}
\{ \alpha | J \} \right) 
\\
&
\stackrel{def}{=}
\sum_{k = 0}^m
\
\left< p \right>_{m-k} \ (-1)^{|\lambda|k}
\
\sum_{J;\ J \subseteq M;\ |J|=k}
\left[
\begin{array}{l}
\mathbf{K}^\lambda
\\
J
\end{array}
\right].
\end{align*}

\section{ Proof of Theorem \ref{The hook coefficient lemma-BR}}

\begin{proof} Recall that
$$
v_{\mu} = (Der_{\tilde{\mu}} | Der^P_{\tilde{\mu}}),
$$
where $(Der_{\tilde{\mu}} | Der^P_{\tilde{\mu}})$ is the Young bitableau 
(see, e.g. Subsection \ref{citazione 6} below)
$$
\left(
\begin{array}{lllll}
1 \ \ 2 \ \ \cdots \ \cdots \ \ \tilde{\mu}_1 \\
\\
1 \ \ 2  \ \ \cdots \  \tilde{\mu}_2
\\
\cdots  \ \cdots \ \cdots
\\
\cdots  \ \cdots \ 
\\
1 \ \ 2  \ \ \tilde{\mu}_q
\end{array}
\right| \left.
\begin{array}{lllll}
1 \ \ 2 \ \ \cdots \ \cdots \ \ \tilde{\mu}_1 \\
\\
1 \ \ 2  \ \ \cdots \  \tilde{\mu}_2
\\
\cdots  \ \cdots \ \cdots
\\
\cdots  \ \cdots \ 
\\
1 \ \ 2  \ \ \tilde{\mu}_q
\end{array}
\right)
$$
in the polynomial algebra ${\mathbb C}[M_{n,d}]$. 

Set
$$ e_{Der^*_{n^p}, Coder_{n^p}} \ = \
e_{n \alpha_1 } \cdots e_{1 \alpha_1 } \cdots \cdots
e_{n \alpha_{p-1}} \cdots e_{1 \alpha_{p-1}}e_{n \alpha_p } \cdots e_{1 \alpha_p}.
$$

Set
$$ e_{Coder_{n^p}, Der_{n^p}} \ = \
e_{\alpha_1 1} \cdots e_{\alpha_1 n} \cdots \cdots
e_{\alpha_{p-1} 1} \cdots e_{\alpha_{p-1} n}e_{\alpha_p 1} \cdots e_{\alpha_p n}.
$$

Since
$$
\mathbf{K_n^p} = \mathfrak{p}\big( e_{Der^*_{n^p}, Coder_{n^p}} \ e_{Coder_{n^p}, Der_{n^p}} \big),
$$
the action of $\mathbf{K_n^p}$ on $v_{\mu} = (Der_{\tilde{\mu}} | Der^P_{\tilde{\mu}})$ is the same as the
action of 
$$
e_{Der^*_{n^p}, Coder_{n^p}} \ e_{Coder_{n^p}, Der_{n^p}}.
$$

We follow \cite{Regonati-BR} (see Proposition 5).

Now, if $\mu_n = 0$, then
$$
e_{\alpha_p n} \cdot (Der_{\tilde{\mu}} | Der^P_{\tilde{\mu}})
$$
is zero. 

In the following, we limit ourselves to write the left parts 
of the Young bitableaux involved.

If $\mu_n \geq 1$, then
$$
e_{\alpha_p n} \cdot (Der_{\tilde{\mu}} | Der^P_{\tilde{\mu}})
$$
 equals
\begin{equation}\label{first action}
(-1)^{n-1}
\left(
\begin{array}{lllll}
1 \ \ 2 \ \ \cdots \ n-1 \ \ \alpha_p\\
\cdots  \ \cdots \ \cdots
\\
1 \ \ 2  \ \ \cdots \  n-1  \  \  \ n
\\
\cdots  \ \cdots \ \cdots
\\
1 \ \ 2  \ \ \cdots \  n-1 \  \  \ n
\\
1 \ \ 2 \ \ \cdots \ \cdots
\\
\cdots  \ \cdots \ \cdots
\end{array}
\right| 
+
\cdots
+
(-1)^{n (\mu_n -1) + n - 1}
\left(
\begin{array}{lllll}
1 \ \ 2 \ \ \cdots \ n-1 \ \ n\\
\cdots  \ \cdots \ \cdots
\\
1 \ \ 2  \ \ \cdots \  n-1 \ \ n
\\
\cdots  \ \cdots \ \cdots
\\
1 \ \ 2  \ \ \cdots \  n-1  \ \ \alpha_p
\\
1 \  \ 2 \ \cdots \ \cdots
\\
\cdots  \ \cdots \ 
\end{array}
\right|,
\end{equation}
by Proposition \ref{action on tableaux}.

A simple sign computation shows that (\ref{first action}) 
equals
$$(-1)^{n-1} \ \mu_n
(-1)^{n-1}
\left(
\begin{array}{lllll}
1 \ \ 2 \ \ \cdots \ n-1 \ \ \alpha_p\\
\cdots  \ \cdots \ \cdots
\\
1 \ \ 2  \ \ \cdots \  n-1  \  \  \ n
\\
\cdots  \ \cdots \ \cdots
\\
1 \ \ 2  \ \ \cdots \  n-1 \  \  \ n
\\
1 \ \ 2 \ \ \cdots \ \cdots
\\
\cdots  \ \cdots \ \cdots
\end{array}
\right|.
$$

Now, again by Proposition \ref{action on tableaux} and  simple computation, we have:
\begin{multline*}
e_{\alpha_p n-1} \cdot \left(
\begin{array}{lllll}
1 \ \ 2 \ \ \cdots \ n-1 \ \ \alpha_p
\\
1 \ \ 2  \ \ \cdots \  \ \cdots  \  \ \ \ n
\\
\cdots  \ \cdots \ \cdots
\\
\cdots  \cdots \cdots
\\
1 \ 2 \ \cdots \ \cdots
\\
\cdots  \ \cdots \ \cdots
\end{array}
\right| =
\\
= 
(-1)^{n-2}
\left(
\begin{array}{lllll}
1 \ \ 2 \ \ \cdots \ \ \alpha_p \ \ \ \alpha_p
\\
1 \ \ 2  \ \ \cdots \  \ \cdots  \  \  \ n
\\
\cdots  \ \cdots \ \cdots
\\
1 \ \ 2  \ \ \cdots \  \ \cdots  \   \  \ n
\\
1 \ 2 \ \cdots \ \cdots
\\
\cdots  \ \cdots \ \cdots
\end{array}
\right|
+
\\
+
\sum_{i = 2}^{\mu_n}
\
(-1)^{(n-1) + (i-2)n +(n-2)}  
\left(
\begin{array}{lllll}
1 \ \ 2 \ \ \cdots \ n-1 \ \ \ \alpha_p
\\
1 \ \ 2  \ \ \cdots \ n-1 \ \ \ n 
\\
\cdots  \ \cdots \ \cdots
\\
1 \ \ 2  \ \ \cdots \  \  \ \alpha_p  \ \ \ \ \ n
\\
\cdots  \ \cdots \ \cdots
\\
1 \ \ 2  \ \ \cdots \  \ \cdots  \  \  \  \ \ n
\\
1 \ \ 2 \ \  \cdots \ n-1
\\
\cdots  \ \cdots \ \cdots
\\
1 \ \ 2 \ \	 \cdots \ n-1
\\
1 \ \ 2 \ \cdots
\end{array}
\right| +
\\
+
\sum_{i = \mu_n+1}^{\mu_{n-1}}
\
(-1)^{(n-1) + (\mu_n - 1)n + (i-\mu_n-1)(n-1) +(n-2)}  
\left(
\begin{array}{lllll}
1 \ \ 2 \ \ \cdots \ n-1 \ \ \ \alpha_p
\\
1 \ \ 2  \ \ \cdots \ n-1 \ \ \ n 
\\
\cdots  \ \cdots \ \cdots
\\
1 \ \ 2  \ \ \cdots \  \ \cdots  \  \  \  \ \ n
\\
1 \ \ 2 \ \  \cdots \ n-1
\\
\cdots  \ \cdots \ \cdots
\\
1 \ \ 2 \ \  \cdots \ \ \ \alpha_p
\\
\cdots  \ \cdots \ \cdots
\\
1 \ \ 2 \ \	 \cdots \ n-1
\\
1 \ \ 2 \ \cdots
\end{array}
\right|,
\end{multline*}
where the tableaux in the two sums are the tableaux with the second occurence of
$\alpha_p$ in the $i$th row.

By the \emph{Straightening Law} of Grosshans, Rota and Stein ( \cite{rota-BR}, Proposition $10$,
see also \cite{Bri-BR}, Thm. $8.1$), each summand in the two sums equals
$$
(-1)^{n-2}
\frac{1}{2}
\left(
\begin{array}{lllll}
1 \ \ 2 \ \ \cdots \ \ \ \alpha_p \ \ \ \ \ \alpha_p
\\
1 \ \ 2  \ \ \cdots \  \   n-1  \  \ n
\\
\cdots  \ \cdots \ \cdots
\\
1 \ \ 2  \ \ \cdots \  \ \cdots  \  \ \ \ \ n
\\
1 \ 2 \ \cdots \ 
\\
\cdots  \ \cdots \ \cdots
\end{array}
\right|
$$
and, hence,
$$
e_{\alpha_p n-1} \cdot \left(
\begin{array}{lllll}
1 \ \ 2 \ \ \cdots \ n-1 \ \alpha_p
\\
1 \ \ 2  \ \ \cdots \  \ \cdots  \  \  \ n
\\
\cdots  \ \cdots \ \cdots
\\
\cdots  \cdots \cdots
\\
1 \ 2 \ \cdots \ \cdots
\\
\cdots  \ \cdots \ \cdots
\end{array}
\right| 
=
(-1)^{n-2} \frac{(\mu_{n-1} + 1)}{2}
\left(
\begin{array}{lllll}
1 \ \ 2 \ \ \cdots \ \ \ \alpha_p \ \ \ \ \ \alpha_p
\\
1 \ \ 2  \ \ \cdots \  \   n-1  \  \ n
\\
\cdots  \ \cdots \ \cdots
\\
1 \ \ 2  \ \ \cdots \  \ \cdots  \  \  \ n
\\
1 \ 2 \ \cdots \ 
\\
\cdots  \ \cdots \ \cdots
\end{array}
\right|.
$$

By iterating this argument, we obtain:
\begin{multline*}
e_{\alpha_p  j} \cdot
\big(\frac{1}{(n-j)!} 
\left(
\begin{array}{lllll}
1 \ \ 2 \ \ \cdots \ j \ \ \ \ \ \ \alpha_p^{n-j}
\\
1 \ \ 2 \ \ \cdots \ j \ \ \cdots    \ n
\\
\cdots  \ \cdots \ \cdots
\\
1 \ \ 2 \ \ \cdots \  j \ \ \cdots    \ n
\\
1 \ \ 2 \ \cdots \ 
\\
\cdots  \ \cdots \ 
\end{array}
\right|\big) =
\\
= (-1)^{j-1} \ 
\frac{\mu_j + n -j}{(n-j+1)!} \
\left(
\begin{array}{lllll}
1 \ \ 2 \ \ \cdots \ j-1 \ \ \ \ \ \ \ \ \ \alpha_p^{n-j+1}
\\
1 \ \ 2 \ \ \cdots \ j-1 \ \ \ j  \ \cdots  \  \ n
\\
\cdots  \ \cdots \ \cdots
\\
1 \ \ 2 \ \ \cdots \ j-1 \ \ \  j \  \cdots  \  \ n
\\
1 \ \  2 \ \cdots \ 
\\
\cdots  \ \cdots \ 
\end{array}
\right|.
\end{multline*}

By iterating this procedure,
\begin{multline*}
e_{\alpha_p 1} \cdots e_{\alpha_p n} \cdot (Der_{\tilde{\mu}} | Der^P_{\tilde{\mu}})
=
\\
= \frac{(-1)^{n \choose 2}} {n!} \ (\mu_1+n-1)(\mu_2+n-2) \cdots \mu_n
\left(
\begin{array}{llll}
\alpha_p \ \ \alpha_p \ \ \cdots \ \ \alpha_p
\\
1        \ \ \  2    \  \ \ \ \cdots \  \ n
\\
\cdots  \ \cdots \ \cdots
\\
1        \ \ \  2    \  \ \ \ \cdots \  \ n
\\
1 \ \ \ 2 \ \cdots \ 
\\
\cdots  \ \cdots \ 
\end{array}
\right|
\end{multline*}
and
\begin{multline*}
 e_{Coder_{n^p}, Der_{n^p}} \cdot (Der_{\tilde{\mu}} | Der^P_{\tilde{\mu}})
=
\\
=
\left( \prod_{i = 0}^{p -1} \ (\mu_1 - i + n -1) \cdots
(\mu_n - i) \right)    \
\frac{(-1)^{{n \choose 2} p}} {(n!)^p} \ 
\left(
\begin{array}{llll}
\alpha_p \ \ \ \ \ \alpha_p\ \ \ \ \cdots \ \ \ \alpha_p
\\
\alpha_{p-1} \ \ \alpha_{p-1}\ \ \cdots \ \ \alpha_{p-1}
\\
\cdots  \ \ \ \cdots \ \ \ \ \ \cdots
\\
\alpha_1 \ \ \ \  \alpha_1 \ \ \ \ \ \ \cdots \ \   \alpha_1
\\
1 \ \ \ \ \ 2 \ \ \ \cdots 
\\
\cdots  \ \ \cdots \ 
\end{array}
\right|
=
\\
=
\left( \prod_{i = 0}^{p -1} \ (\mu_1 - i + n -1) \cdots
(\mu_n - i)  \right)  \
\frac{(-1)^{{n \choose 2} p+{p \choose 2} n}} {(n!)^p} \ 
\left(
\begin{array}{llll}
\alpha_1 \ \ \ \ \ \alpha_1 \  \ \ \ \cdots \ \ \ \alpha_1
\\
\cdots  \ \ \  \cdots \ \ \ \ \cdots
\\
\alpha_{p-1} \ \ \alpha_{p-1}\ \ \cdots \ \ \alpha_{p-1}
\\
\alpha_p \ \ \ \ \ \alpha_p \  \ \ \ \cdots \ \ \ \alpha_p
\\
1 \ \ \ \ \ \  2 \ \ \ \ \ \cdots \
\\
\cdots  \
\end{array}
\right|.
\end{multline*}
Since
\begin{multline*}
e_{Der^*_{n^p}, Coder_{n^p}} \cdot
\left(
\begin{array}{llll}
\alpha_1 \ \ \ \ \ \alpha_1 \  \ \ \ \cdots \ \ \ \alpha_1
\\
\cdots  \ \ \  \cdots \ \ \ \ \cdots
\\
\alpha_{p-1} \ \ \alpha_{p-1}\ \ \cdots \ \ \alpha_{p-1}
\\
\alpha_p \ \ \ \ \ \alpha_p \  \ \ \ \cdots \ \ \ \alpha_p
\\
1 \ \ \ \ \ \  2 \ \ \ \ \ \cdots \
\\
\cdots  \
\end{array}
\right| \ = \ (-1)^{{n \choose 2} p} (n!)^p \ (Der_{\tilde{\mu}} | Der^P_{\tilde{\mu}})=
\\
=
\mathbf{K_n^p}(v_{\mu}) = \mathbf{K_n^p}\cdot (Der_{\tilde{\mu}} | Der^P_{\tilde{\mu}} ) = 
e_{Der^*_{n^p}, Coder_{n^p}} \ e_{Coder_{n^p}, Der_{n^p}} \cdot (Der_{\tilde{\mu}} | Der^P_{\tilde{\mu}} ) = 
\\
= \left( \prod_{i = 0}^{p -1} \ (\mu_1 - i + n -1) \cdots
(\mu_n - i)  \right)  \
\frac{(-1)^{{n \choose 2} p}} {(n!)^p} \ (-1)^{{p \choose 2} n} \times
\\
\times
e_{Der^*_{n^p}, Coder_{n^p}} \cdot 
\big(
\left(
\begin{array}{llll}
\alpha_1 \ \ \ \ \ \alpha_1 \  \ \ \ \cdots \ \ \ \alpha_1
\\
\cdots  \ \ \  \cdots \ \ \ \ \cdots
\\
\alpha_{p-1} \ \ \alpha_{p-1}\ \ \cdots \ \ \alpha_{p-1}
\\
\alpha_p \ \ \ \ \ \alpha_p \  \ \ \ \cdots \ \ \ \alpha_p
\\
1 \ \ \ \ \ \  2 \ \ \ \ \ \cdots \
\\
\cdots  \
\end{array}
\right|\big) =
\\
= \left( \prod_{i = 0}^{p -1} \ (\mu_1 - i + n -1) \cdots
(\mu_n - i)  \right) (-1)^{{p \choose 2} n}  (Der_{\tilde{\mu}} | Der^P_{\tilde{\mu}} ).
\end{multline*}
Notice that, if $\mu_n < p$, then $\mathbf{K_n^p}(v_{\mu}) =0$.

\end{proof}

\section{Appendix. A glimpse on the superalgebraic method of virtual variables }\label{sec Appendix}

In this section, we summarize the main features of the superalgebraic method of virtual variables.
We follow \cite{Brini5-BR} and \cite{BriniTeolisKosz-BR}.

\subsection{The general linear Lie super algebra $gl(m|n)$}
 Given a vector space $V$ of dimension $n$, we will regard it as a subspace of a $
\mathbb{Z}_2-$graded vector space
 $V_0 \oplus V_1$, where
$
V_1 = V.
$
The  vector spaces
$V_0$  (we assume that 
$dim(V_0)=m$ is ``sufficiently large'') is called
the  {\textit{positive virtual (auxiliary)
vector space}} and $V$ 
is called the {\textit{(negative) proper vector space}}.

 The inclusion $V \subset V_0 \oplus V_1$ induces a natural embedding of the ordinary general 
linear Lie algebra $gl(n)$ of $V_n$ into the
 {\textit{auxiliary}}
general linear Lie {\it{superalgebra}} $gl(m|n)$ of $V_0 \oplus V_1$ (see, e.g. \cite{KAC1-BR}, 
\cite{Scheu-BR}).

Let
$
\mathcal{A}_0 = \{ \alpha_1, \ldots, \alpha_{m_0} \},$  
$\mathcal{L} = \{ x_1, x_2,  \ldots, x_n \}$
denote \emph{fixed  bases} of $V_0$ and $V = V_1$, respectively; 
therefore $|\alpha_s| = 0 \in \mathbb{Z}_2,$
and $ |i|   = 1 \in \mathbb{Z}_2.$

Let
$$
\{ e_{a, b}; a, b \in \mathcal{A}_0 \cup \mathcal{L}  \}, \qquad |e_{a, b}| =
|a|+|b| \in \mathbb{Z}_2
$$
be the standard $\mathbb{Z}_2-$homogeneous basis of the Lie superalgebra $gl(m|n)$ provided by the
elementary matrices. The elements $e_{a, b} \in gl(m|n)$ are $\mathbb{Z}_2-$homogeneous of
$\mathbb{Z}_2-$degree $|e_{a, b}| = |a| + |b|.$

The superbracket of the Lie superalgebra $gl(m_0|m_1+n)$ has the following explicit form:
$$
\left[ e_{a, b}, e_{c, d} \right] = \delta_{bc} \ e_{a, d} - (-1)^{(|a|+|b|)(|c|+|d|)} \delta_{ad}  \ e_{c, b},
$$
$a, b, c, d \in \mathcal{A}_0  \cup \mathcal{L} .$

For the sake of readability, we will frequently write $\mathcal{L} = \{ 1, 2,  \ldots, n \}$ in place of
$\mathcal{L} = \{ x_1, x_2,  \ldots, x_n \}$. 

The elements of the sets $\mathcal{A}_0,  \mathcal{L} $ are called
\emph{positive virtual symbols} and \emph{negative proper symbols},
respectively.

\subsection{The  supersymmetric algebra ${\mathbb C}[M_{m|n,d}]$}\label{ citazione 1}

For the sake of readability, given  $n, d \in \mathbb{Z}^+$, $n \leq d$,
we write
$$
M_{n,d} = \left[ (i|j) \right]_{i=1,\ldots,n, j=1, \ldots,d}=
 \left(
 \begin{array}{ccc}
 (1|1) & \ldots & (1|d) \\
 \vdots  &        & \vdots \\
 (n|1) & \ldots & (n|d) \\
 \end{array}
 \right)
$$
in place of
$$
M_{n,d} = \left[ x_{ij} \right]_{i=1,\ldots,n; j=1,\ldots,d}=
 \left[
 \begin{array}{ccc}
 x_{11} & \ldots & x_{1d} \\
 x_{21} & \ldots & x_{2d} \\
 \vdots  &        & \vdots \\
 x_{n1} & \ldots & x_{nd} \\
 \end{array}
 \right].
$$ (compare with eq. (\ref{matrix})) and, consistently,
$$
{\mathbb C}[M_{n,d}] =    {\mathbb C}[(i|j)]_{i=1,\ldots,n, j=1,\ldots,d}
$$
in place of
$$
{\mathbb C}[M_{n,d}] =    {\mathbb C}[x_{ij}]_{i=1,\ldots,n, j=1,\ldots,d}
$$
for the polynomial algebra in the (commutative)
entries $(i|j)$ of the matrix $M_{n,d}$.

We regard the commutative algebra ${\mathbb C}[M_{n,d}]$
as a subalgebra of the \textit{``auxiliary'' supersymmetric algebra}
$$
{\mathbb C}[M_{m|n,d}] 
$$
generated by the ($\mathbb{Z}_2$-graded) variables 
$$
(a|j), \quad a \in  \mathcal{A}_0 \cup \mathcal{L} , 
\quad j \in \mathcal{P}  = \{j=1, \ldots,d; |j|=1 \in \mathbb{Z}_2 \},
$$
with $|(a|j)| =  |a|+|j| \in \mathbb{Z}_2 $,
subject to the commutation relations:
$$
(a|h)(b|k) = (-1)^{|(a|h)||(b|k)|} \ (b|k)(a|h).
$$
In plain words, ${\mathbb C}[M_{m|n,d}]$ is the free supersymmetric algebra 
$$
{\mathbb C}\big[ (\alpha_s|j),  (i|j) \big]
$$
 generated by the ($\mathbb{Z}_2$-graded) variables $(\alpha_s|j),  (i|j)$,
$j = 1, 2, \ldots, d$,
where all the variables commute each other, with the exception of
pairs of variables $(\alpha_s|j), (\alpha_t|j)$ that skew-commute:
$$
(\alpha_s|j) (\alpha_t|j) = - (\alpha_t|j) (\alpha_s|j).
$$

In the standard notation of multilinear algebra, we have:
\begin{align*}
{\mathbb C}[M_{m|n,d}]
& \cong \Lambda \big[ V_0 \otimes P_d \big]
\otimes      {\mathrm{Sym}} \big[ V_1  \otimes P_d \big] 
\end{align*}
where $P_d = (P_d)_1$ denotes the trivially  $\mathbb{Z}_2-$graded  vector space with distinguished basis 
$\mathcal{P}  = \{j=1, \ldots,d; |j|=1 \in \mathbb{Z}_2 \}.$

\subsection{Left superderivations and left superpolarizations}\label{citazione 2}

A {\it{left superderivation}} $D^{\textit{l}}$ ($\mathbb{Z}_2-$homogeneous of degree $|D^{\textit{l}}|$) (see, e.g. \cite{Scheu-BR}, \cite{KAC1-BR}) on
${\mathbb C}[M_{m|n,d}]$ is an element of the superalgebra $End_\mathbb{C}[\mathbb{C}[M_{m|n,d}]]$
that satisfies "Leibniz rule"
$$
D^{\textit{l}}(\textbf{p} \cdot \textbf{q}) = D^{\textit{l}}(\textbf{p}) \cdot \textbf{q} + 
(-1)^{|D^{\textit{l}}||\textbf{p}|} \textbf{p} \cdot D^{\textit{l}}(\textbf{q}),
$$
for every $\mathbb{Z}_2-$homogeneous of degree $|\textbf{p}|$ element $\textbf{p} \in \mathbb{C}[M_{m|n,d}].$

Given two symbols $a, b \in \mathcal{A}_0  \cup \mathcal{L} $, the {\textit{left superpolarization}} $D^{\textit{l}}_{a,b}$ 
of $b$ to $a$
is the unique {\it{left}} superderivation of ${\mathbb C}[M_{m|n,d}]$ of $\mathbb{Z}_2-$degree 
$|D^{\textit{l}}_{a,b}| = |a| + |b| \in \mathbb{Z}_2$ such that
$$
D^{\textit{l}}_{a,b} \left( (c|j) \right) = \delta_{bc} \ (a|j), \ c \in \mathcal{A}_0                                                \cup \mathcal{L} , \ j = 1, \ldots, n.
$$

Informally, we say that the operator $D^{\textit{l}}_{a,b}$ {\it{annihilates}} the symbol $b$ 
and {\it{creates}} the symbol $a$.

\subsection{The superalgebra ${\mathbb C}[M_{m|n,d}]$ as a $\mathbf{U}(gl(m|n))$-module}\label{citazione 3}

Since
$$
D^{\textit{l}}_{a,b}D^{\textit{l}}_{c,d} -(-1)^{(|a|+|b|)(|c|+|d|)}D^{\textit{l}}_{c,d}D^{\textit{l}}_{a,b} =
\delta_{b,c}D^{\textit{l}}_{a,d} -(-1)^{(|a|+|b|)(|c|+|d|)}\delta_{a,d}D^{\textit{l}}_{c,b},
$$
the map
$$
e_{a,b} \mapsto D^{\textit{l}}_{a,b}, \qquad a, b \in \mathcal{A}_0                                               \cup \mathcal{L} 
$$
is a Lie superalgebra morphism from $gl(m|n)$ to $End_\mathbb{C}\big[\mathbb{C}[M_{m|n,d}]\big]$
and, hence, it uniquely defines a
representation:
$$
\varrho : \mathbf{U}(gl(m|n)) \rightarrow End_\mathbb{C}[\mathbb{C}[M_{m|n,d}]],
$$
where $\mathbf{U}(gl(m|n))$ is the enveloping superalgebra of $gl(m|n)$.

In the following, we always regard the superalgebra $\mathbb{C}[M_{m|n,d}]$ as a $\mathbf{U}(gl(m|n))-$supermodule,
with respect to the action induced by the representation $\varrho$:
$$
e_{a,b} \cdot \mathbf{p} = D^{\textit{l}}_{a,b}(\mathbf{p}),
$$
for every $\mathbf{p} \in {\mathbb C}[M_{m|n,d}].$

We recall that  $\mathbf{U}(gl(m|n))-$module  $\mathbb{C}[M_{m|n,d}]$
is  a semisimple module, whose simple submodules are - up to isomorphism - {\it{Schur supermodules}} 
(see, e.g. \cite{Brini1-BR}, \cite{Brini2-BR}, \cite{Bri-BR}. For a more traditional presentation, see also 
\cite{CW-BR}).

Clearly, $\mathbf{U}(gl(0|n)) = \mathbf{U}(gl(n))$ is a subalgebra of $\mathbf{U}(gl(m|n))$
and the subalgebra $\mathbb{C}[M_{n,d}]$ is a $\mathbf{U}(gl(n))-$submodule of  $\mathbb{C}[M_{m|n,d}]$.

\subsection{The virtual algebra $Virt(m,n)$ and the virtual
presentations of elements in $\mathbf{U}(gl(n))$}\label{citazione 4}

We say that a product
$$
e_{a_m,b_m} \cdots e_{a_1,b_1} \in \mathbf{U}(gl(m|n)), 
\quad a_i, b_i \in \mathcal{A}_0  \cup \mathcal{L} , \ i= 1, \ldots, m
$$
is an {\textit{irregular expression}} whenever
  there exists a right subword
$$e_{a_i,b_i} \cdots e_{a_2,b_2} e_{a_1,b_1},$$
$i \leq m$ and a
virtual symbol $\gamma \in \mathcal{A}_0 $ such that
\begin{equation*}\label{irrexpr-BR}
 \# \{j;  b_j = \gamma, j \leq i \}  >  \# \{j;  a_j = \gamma, j < i \}.
\end{equation*}

The meaning of an irregular expression in terms of the action of  $\mathbf{U}(gl(m|n))$  
by left superpolarization on
the algebra $\mathbb{C}[M_{m|n,d}]$ is that there exists a
virtual symbol $\gamma$ and a right subsequence in which the symbol $\gamma$ is \emph{annihilated} 
more times than it was already \emph{created} and, therefore, the action of an irregular expression
on the algebra $\mathbb{C}[M_{n,d}]$ is \emph{zero}. 

\begin{example}
Let $\gamma \in  \mathcal{A}_0 $ and $x_i, x_j \in \mathcal{L}.$ The product
$$
e_{\gamma,x_j} e_{x_i,\gamma} e_{x_j,\gamma} e_{\gamma,x_i}
$$
is an irregular expression.
\end{example}\qed

Let $\mathbf{Irr}$   be
the {\textit{left ideal}} of $\mathbf{U}(gl(m|n))$ generated by the set of
irregular expressions.

\begin{proposition}
The superpolarization action
of any element of $\mathbf{Irr}$ on the subalgebra $\mathbb C[M_{n,d}] \subset \mathbb{C}[M_{m|n,d}]$ - via the representation $\varrho$ -
is identically zero.
\end{proposition}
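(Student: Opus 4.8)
The plan is to reduce the assertion about the left ideal $\mathbf{Irr}$ to the case of a single irregular generator, and then to prove that case by tracking, step by step, what the successive left superpolarizations do to a single monomial of ${\mathbb C}[M_{n,d}]$. For the first reduction I would use that $\varrho$ is an algebra morphism and that $\mathbf{Irr}$ is the left ideal \emph{generated} by the irregular expressions: any $Z\in\mathbf{Irr}$ is a finite sum $Z=\sum_k X_kE_k$ with $X_k\in\mathbf{U}(gl(m|n))$ and each $E_k$ irregular, so $\varrho(Z)(\mathbf{p})=\sum_k\varrho(X_k)\big(\varrho(E_k)(\mathbf{p})\big)$ for every $\mathbf{p}$. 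Hence it suffices to show $\varrho(E)(\mathbf{p})=0$ for a single irregular expression $E=e_{a_m,b_m}\cdots e_{a_1,b_1}$ and, by linearity, for $\mathbf{p}$ a monomial in the proper variables $(i|j)$, $i\in\mathcal{L}$ — such a $\mathbf{p}$ contains no factor $(\gamma|\cdot)$ with $\gamma$ a virtual symbol. Note that $\varrho(E)=D^{\textit{l}}_{a_m,b_m}\circ\cdots\circ D^{\textit{l}}_{a_1,b_1}$, so the operators act in order of increasing index.

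Next I would fix the virtual symbol $\gamma\in\mathcal{A}_0$ and the index $i\le m$ supplied by the definition of irregularity, so that $\#\{j\le i:\ b_j=\gamma\}>\#\{j<i:\ a_j=\gamma\}$, and set $c_t=\#\{s\le t:\ a_s=\gamma\}$, $d_t=\#\{s\le t:\ b_s=\gamma\}$. The key observation to establish (by an immediate induction on $t$, using that $D^{\textit{l}}_{a,b}$ creates the symbol $a$ and annihilates the symbol $b$) is that every monomial occurring with nonzero coefficient in $D^{\textit{l}}_{a_t,b_t}\circ\cdots\circ D^{\textit{l}}_{a_1,b_1}(\mathbf{p})$ carries exactly $c_t-d_t$ factors of the form $(\gamma|\cdot)$, counted with multiplicity: the count starts at $0$ and at step $t$ changes by $+1$, by $-1$, or by $0$ according as exactly $a_t$, exactly $b_t$, or neither/both of $a_t,b_t$ equals $\gamma$. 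Signs coming from the super-Leibniz rule are irrelevant here, since we only ever argue about whether a coefficient is zero.

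To finish, suppose for contradiction that $\varrho(E)(\mathbf{p})\neq 0$; then the partial composition through step $t$ is nonzero for every $t\le m$. Let $t^\ast\le i$ be the \emph{largest} index with $b_{t^\ast}=\gamma$ — it exists because $d_i>c_{i-1}\ge 0$ forces $d_i\ge 1$. At step $t^\ast$ the operator $D^{\textit{l}}_{a_{t^\ast},b_{t^\ast}}$ annihilates $\gamma$, hence kills any monomial with no $(\gamma|\cdot)$ factor; since the result of step $t^\ast$ is nonzero, every monomial present after step $t^\ast-1$ must carry at least one such factor, i.e. $c_{t^\ast-1}-d_{t^\ast-1}\ge 1$. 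By maximality of $t^\ast$, no $b_s$ with $t^\ast<s\le i$ equals $\gamma$, so $d_{t^\ast}=d_i$ and therefore $d_{t^\ast-1}=d_i-1$; and $c_{t^\ast-1}\le c_{i-1}$ since $t^\ast-1\le i-1$. Combining these, $1\le c_{t^\ast-1}-d_{t^\ast-1}\le c_{i-1}-d_i+1$, i.e. $c_{i-1}\ge d_i$, contradicting the irregularity inequality $d_i>c_{i-1}$. Hence $\varrho(E)(\mathbf{p})=0$.

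The bookkeeping is routine once the right invariant — that along every surviving monomial the number of $(\gamma|\cdot)$-factors equals $c_t-d_t$ — is isolated; the only point that needs a little care is the last step, where one must play off the fact that a monomial cannot carry a negative number of $(\gamma|\cdot)$-factors against the fact that annihilating $\gamma$ requires such a factor to be present. Choosing $t^\ast$ to be the last annihilation of $\gamma$ at or before position $i$ is precisely what makes these two facts collide with the irregularity inequality, so I expect this to be the only delicate part of the argument.
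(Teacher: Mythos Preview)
Your argument is correct and follows exactly the intuition the paper offers: the paper does not give a formal proof of this proposition, only the one-sentence remark preceding it that an irregular expression ``annihilates'' the virtual symbol $\gamma$ more times than it has been ``created'', so its action on $\mathbb{C}[M_{n,d}]$ vanishes. Your invariant $c_t-d_t$ for the number of $(\gamma|\cdot)$-factors in every surviving monomial is precisely the rigorous bookkeeping behind that sentence, and the reduction to a single irregular generator via the left-ideal structure and $\varrho$ being an algebra map is the natural way to finish.

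One minor simplification: you do not actually need the ``largest $t^\ast$'' manoeuvre. Once you know that nonvanishing through step $t$ forces $c_t-d_t\ge 0$, just distinguish whether $b_i=\gamma$ or not: if $b_i\neq\gamma$ then $d_i=d_{i-1}\le c_{i-1}$, contradicting $d_i>c_{i-1}$; if $b_i=\gamma$ then nonvanishing at step $i$ forces $c_{i-1}-d_{i-1}\ge 1$, i.e.\ $c_{i-1}\ge d_{i-1}+1=d_i$, again a contradiction. But your version is equally valid.
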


\begin{proposition}
The sum ${\mathbf{U}}(gl(0|n)) + \mathbf{Irr}$ is a direct sum of vector subspaces of $\mathbf{U}(gl(m|n)).$
\end{proposition}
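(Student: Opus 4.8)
The plan is to reduce the assertion to the faithfulness of a single polynomial representation and then combine it with the proposition just proved. Since an internal sum $A+B$ of subspaces is a direct sum exactly when $A\cap B=\{0\}$, it suffices to show $\mathbf{U}(gl(0|n))\cap\mathbf{Irr}=\{0\}$. So I would fix an integer $d\geq n$, take an arbitrary $u\in\mathbf{U}(gl(0|n))\cap\mathbf{Irr}$, and aim at $u=0$. The first step is immediate from what precedes: as $u\in\mathbf{Irr}$, the preceding proposition says that $\varrho(u)$ acts identically by zero on the subalgebra $\mathbb{C}[M_{n,d}]\subset\mathbb{C}[M_{m|n,d}]$; and since $\mathbb{C}[M_{n,d}]$ is stable under the operators $e_{i,j}$, $i,j\in\mathcal{L}$, it is a $\mathbf{U}(gl(0|n))=\mathbf{U}(gl(n))$-submodule, on which $u$ therefore acts by zero. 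Thus everything reduces to the claim that the polarization action of $\mathbf{U}(gl(n))$ on $\mathbb{C}[M_{n,d}]$ is faithful for $d\geq n$; granting this, $u=0$ and the proposition follows.

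For the faithfulness I would argue as follows. By the classical $GL(n)\times GL(d)$ (Cauchy) decomposition, $\mathbb{C}[M_{n,d}]$ is, as a $GL(n)$-module, a direct sum of copies of the polynomial irreducibles $V_\lambda$ over all partitions $\lambda$ with at most $\min(n,d)=n$ rows, each occurring at least once; hence for $d\geq n$ \emph{every} polynomial $GL(n)$-irreducible occurs. It then remains to see that $\mathbf{U}(gl(n))$ acts faithfully on $\bigoplus_{\lambda}V_\lambda$, the sum over partitions. Here I would use $gl(n)=\mathfrak{sl}(n)\oplus\mathbb{C}\,I$ with $I=\sum_i e_{i,i}$ central, so $\mathbf{U}(gl(n))\cong\mathbf{U}(\mathfrak{sl}(n))\otimes\mathbb{C}[I]$: the element $I$ acts on $V_\lambda$ by the scalar $|\lambda|$, while the family $V_\lambda,V_{\lambda+(1^n)},V_{\lambda+(2^n)},\dots$ all restrict to one and the same $\mathfrak{sl}(n)$-irreducible and have $I$-eigenvalues $|\lambda|,|\lambda|+n,|\lambda|+2n,\dots$. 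Vanishing of $\varrho(u)$ on this whole family forces a polynomial identity in the progression parameter, hence the $\mathbf{U}(\mathfrak{sl}(n))$-components of $u$ act by zero on every finite-dimensional $\mathfrak{sl}(n)$-irreducible, hence vanish, hence $u=0$.

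I expect this faithfulness step to be the only genuine obstacle; the reduction to it is formal, using only the definition of a direct sum of subspaces and the immediately preceding proposition. If one prefers to keep the argument short, the faithfulness of the polarization representation of $\mathbf{U}(gl(n))$ on $\mathbb{C}[M_{n,d}]$ for $d\geq n$ is classical and part of the invariant-theoretic folklore surrounding the Capelli identities cited earlier in the paper, so it may simply be quoted, after which the proposition is a two-line deduction.
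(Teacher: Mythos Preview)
Your argument is correct. The reduction to $\mathbf{U}(gl(0|n))\cap\mathbf{Irr}=\{0\}$ is the right move, the use of the preceding proposition to kill the action of any such $u$ on $\mathbb{C}[M_{n,d}]$ is exactly what that proposition is there for, and your faithfulness argument for the polarization action of $\mathbf{U}(gl(n))$ on $\mathbb{C}[M_{n,d}]$ with $d\ge n$ is sound: the Cauchy decomposition produces every polynomial irreducible, the tensor splitting $\mathbf{U}(gl(n))\cong\mathbf{U}(\mathfrak{sl}(n))\otimes\mathbb{C}[I]$ together with the infinite arithmetic progression of $I$-eigenvalues on $V_{\mu+(m^n)}$ kills each $\mathbf{U}(\mathfrak{sl}(n))$-coefficient, and then the standard fact that $\mathbf{U}$ of a semisimple Lie algebra acts faithfully on the direct sum of its finite-dimensional irreducibles (e.g.\ Dixmier, cited in the paper) finishes.

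As for comparison with the paper: there is nothing to compare. This proposition sits in the Appendix, which explicitly ``summarizes the main features of the superalgebraic method of virtual variables'' and refers to earlier papers \cite{Brini5-BR}, \cite{BriniTeolisKosz-BR}; no proof is given here. Your approach---annihilation on $\mathbb{C}[M_{n,d}]$ plus faithfulness of the polarization representation---is precisely the mechanism underlying the whole virtual-variable method and is the intended argument. If you want to tighten the write-up, you can simply quote the faithfulness of the $\mathbf{U}(gl(n))$-action on $\mathbb{C}[M_{n,d}]$ for $d\ge n$ as classical (it is equivalent, via the Koszul isomorphism $\mathcal{K}$ mentioned in the introduction, to $\mathbf{U}(gl(n))\cong\mathbb{C}[M_{n,n}]$ as vector spaces) and skip the $\mathfrak{sl}(n)\oplus\mathbb{C}I$ detour.
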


\begin{proposition}
The direct sum vector subspace $\mathbf{U}(gl(0|n)) \oplus \mathbf{Irr}$ is a \emph{subalgebra} 
of $\mathbf{U}(gl(m|n)).$
\end{proposition}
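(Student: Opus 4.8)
The plan is to verify that $\mathbf{U}(gl(0|n))\oplus\mathbf{Irr}$ is closed under multiplication. Write $A=\mathbf{U}(gl(0|n))$ and $I=\mathbf{Irr}$. Since it has already been shown that the sum $A+I$ is direct, it is enough to prove $(A+I)(A+I)\subseteq A+I$, and by bilinearity this reduces to the four containments $A\cdot A\subseteq A$, $A\cdot I\subseteq I$, $I\cdot A\subseteq I$ and $I\cdot I\subseteq I$.

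Three of these are formal. We have $A\cdot A\subseteq A$ because $A=\mathbf{U}(gl(0|n))$ is by construction the associative subalgebra of $\mathbf{U}(gl(m|n))$ generated by $gl(0|n)$ (which is a Lie sub-superalgebra of $gl(m|n)$: $[e_{i,j},e_{k,l}]=\delta_{jk}e_{i,l}-(-1)^{(|i|+|j|)(|k|+|l|)}\delta_{il}e_{k,j}$ stays supported on proper indices when $i,j,k,l\in\mathcal{L}$, so by PBW the inclusion $\mathbf{U}(gl(0|n))\hookrightarrow\mathbf{U}(gl(m|n))$ is genuinely that of a subalgebra). The containments $A\cdot I\subseteq I$ and $I\cdot I\subseteq I$ are immediate from the defining property of $I=\mathbf{Irr}$ as a \emph{left} ideal of $\mathbf{U}(gl(m|n))$: a left ideal absorbs left multiplication by anything, in particular by elements of $A$ or of $I$.

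The substantive point is $I\cdot A\subseteq I$. I would argue as follows. The left ideal $I$ is generated by the irregular expressions, and $A$ is linearly spanned by the words $e_{p_1,q_1}\cdots e_{p_\ell,q_\ell}$ with all $p_t,q_t\in\mathcal{L}$ (together with $1$). Hence it suffices to prove that if $\omega=e_{a_m,b_m}\cdots e_{a_1,b_1}$ is an irregular expression and $p,q\in\mathcal{L}$, then $\omega\,e_{p,q}$ is again irregular: iterating this along the word of $A$, then reabsorbing the left factor $u\in\mathbf{U}(gl(m|n))$ via $\mathbf{U}(gl(m|n))\cdot I\subseteq I$, yields $I\cdot A\subseteq I$. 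The claim about $\omega\,e_{p,q}$ is a short index computation. Suppose $(i,\gamma)$ witnesses the irregularity of $\omega$, i.e.\ $\gamma\in\mathcal{A}_0$ and $\#\{j\le i:\,b_j=\gamma\}>\#\{j\le i-1:\,a_j=\gamma\}$. View $\omega\,e_{p,q}$ as a word of length $m+1$ with the appended letter in the rightmost position; each old position $k$ becomes position $k+1$. Because the new rightmost letter $e_{p,q}$ has both indices proper, hence distinct from the virtual symbol $\gamma$, the count of $\gamma$'s among the ``annihilation'' slots $b_1,\ldots,b_i$ and among the ``creation'' slots $a_1,\ldots,a_{i-1}$ is unchanged when passing to the window $\{1,\ldots,i+1\}$ of the longer word. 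Thus $(i+1,\gamma)$ witnesses the irregularity of $\omega\,e_{p,q}$.

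Combining the four containments gives $(A+I)(A+I)\subseteq A+I$, and since $A+I=A\oplus I$, the subspace $\mathbf{U}(gl(0|n))\oplus\mathbf{Irr}$ is a subalgebra of $\mathbf{U}(gl(m|n))$. I do not expect a real obstacle here: the only step that requires care is aligning the position indices and the annihilation/creation windows in the definition of ``irregular expression'' when a proper generator is appended on the right; the rest is left-ideal absorption together with the standard PBW fact for enveloping superalgebras.
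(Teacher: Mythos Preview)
Your argument is correct. The paper states this proposition in the Appendix without proof (the background material there is quoted from \cite{Brini5-BR} and \cite{BriniTeolisKosz-BR}), so there is no in-paper proof to compare against; your reduction to the single nontrivial containment $I\cdot A\subseteq I$, and your verification that appending a generator $e_{p,q}$ with $p,q\in\mathcal{L}$ on the right of an irregular monomial shifts the witnessing index by one without changing either count for the virtual symbol $\gamma$, is exactly the direct check one expects.
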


The subalgebra
$$
Virt(m,n) = \mathbf{U}(gl(0|n)) \oplus \mathbf{Irr} \subset {\mathbf{U}}(gl(m|n)).
$$
is called the {\textit{virtual algebra}}.

\begin{proposition}
The left ideal  $\mathbf{Irr}$ of ${\mathbf{U}}(gl(m|n))$
is a two sided ideal of $Virt(m,n).$
\end{proposition}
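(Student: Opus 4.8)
The last proposition in the excerpt asserts that the left ideal $\mathbf{Irr}$ of $\mathbf{U}(gl(m|n))$ is a two-sided ideal of the virtual algebra $Virt(m,n) = \mathbf{U}(gl(0|n)) \oplus \mathbf{Irr}$.

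\textbf{The plan.} Since $\mathbf{Irr}$ is already a left ideal of the ambient algebra $\mathbf{U}(gl(m|n))$, it is in particular a left ideal of the subalgebra $Virt(m,n)$; the only thing to prove is \emph{right} absorption, i.e. $\mathbf{Irr} \cdot Virt(m,n) \subseteq \mathbf{Irr}$. Because $Virt(m,n) = \mathbf{U}(gl(0|n)) \oplus \mathbf{Irr}$ as vector spaces and $\mathbf{Irr} \cdot \mathbf{Irr} \subseteq \mathbf{Irr}$ already (multiplication on the left by anything in $\mathbf{U}(gl(m|n)) \supseteq \mathbf{Irr}$ stays in $\mathbf{Irr}$, and $\mathbf{Irr}$ is a right submodule over itself trivially once we know it is closed under right multiplication by its own elements — but that is itself part of what must be checked), it suffices to prove the single containment
$$
\mathbf{Irr} \cdot \mathbf{U}(gl(0|n)) \subseteq \mathbf{Irr}.
$$
So the real content is: multiplying an irregular expression on the \emph{right} by a proper generator $e_{x_i,x_j}$ (with $x_i,x_j \in \mathcal{L}$, both $\mathbb{Z}_2$-odd, i.e. entirely within the proper block $gl(0|n)$) produces again an element of $\mathbf{Irr}$.

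\textbf{Key steps, in order.} First I would reduce to generators: $\mathbf{U}(gl(0|n))$ is generated as an algebra by the $e_{x_i,x_j}$, $x_i,x_j \in \mathcal{L}$, so by induction on word length it is enough to show $\mathbf{Irr} \cdot e_{x_i,x_j} \subseteq \mathbf{Irr}$. Second, it is enough to treat a single generating irregular expression $w = e_{a_m,b_m}\cdots e_{a_1,b_1}$ times $e_{x_i,x_j}$ on the right, and then close under left multiplication by $\mathbf{U}(gl(m|n))$ (which preserves $\mathbf{Irr}$ by hypothesis). Third — the crux — I would rewrite $w \cdot e_{x_i,x_j}$ by moving the factor $e_{x_i,x_j}$ to a position where it does not disturb the witnessing right subword. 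Concretely: $w$ is irregular because some right subword $e_{a_i,b_i}\cdots e_{a_1,b_1}$ over-annihilates some virtual symbol $\gamma \in \mathcal{A}_0$. Now $e_{x_i,x_j}$ sits to the \emph{right} of that subword (it is appended at the very end), so the subword $e_{a_i,b_i}\cdots e_{a_1,b_1}\, e_{x_i,x_j}$ still witnesses irregularity \emph{provided} $e_{x_i,x_j}$ does not alter the count $\#\{j; b_j=\gamma, j\le i\}$ versus $\#\{j; a_j=\gamma, j<i\}$ — and it cannot, because $x_i,x_j \in \mathcal{L}$ are proper symbols, never equal to any virtual $\gamma \in \mathcal{A}_0$. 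Hence $w\cdot e_{x_i,x_j}$ is itself literally an irregular expression (of length $m+1$), so it lies in $\mathbf{Irr}$.

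\textbf{Wrapping up and the main obstacle.} Having established $\mathbf{Irr}\cdot e_{x_i,x_j}\in\mathbf{Irr}$ for each proper generator and each generating irregular expression, linearity and the induction on word length give $\mathbf{Irr}\cdot\mathbf{U}(gl(0|n))\subseteq\mathbf{Irr}$; combined with $\mathbf{Irr}\cdot\mathbf{Irr}\subseteq\mathbf{U}(gl(m|n))\cdot\mathbf{Irr}=\mathbf{Irr}$ and the direct-sum decomposition of $Virt(m,n)$, this yields $\mathbf{Irr}\cdot Virt(m,n)\subseteq\mathbf{Irr}$, so $\mathbf{Irr}$ is two-sided in $Virt(m,n)$. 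I expect the main subtlety to be purely bookkeeping: making the definition of "irregular expression" bear the weight of the argument. The one point demanding genuine care is that appending a \emph{proper} generator on the right genuinely cannot create a new virtual symbol nor change any existing virtual count — this is exactly where the hypothesis $x_i,x_j\in\mathcal{L}$ (equivalently, that the extra factor lies in the $gl(0|n)$-block, not the virtual block) is used, and it is the reason the statement is about two-sidedness in $Virt(m,n)$ rather than in all of $\mathbf{U}(gl(m|n))$ (where it is false: multiplying on the right by $e_{\gamma,\gamma'}$ for virtual $\gamma,\gamma'$ can destroy irregularity).
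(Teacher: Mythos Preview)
Your argument is correct. The paper itself does not prove this proposition: it appears in the Appendix (Section~\ref{sec Appendix}), which explicitly ``summarize[s] the main features of the superalgebraic method of virtual variables'' with reference to \cite{Brini5-BR} and \cite{BriniTeolisKosz-BR}; all of the propositions there, including this one, are stated without proof. So there is no in-paper argument to compare against.

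That said, your proof is the natural one and goes through cleanly. The only place where your exposition wobbles is the parenthetical about $\mathbf{Irr}\cdot\mathbf{Irr}$ midway through, where you momentarily suggest this might itself need checking; in fact you resolve it correctly at the end via $\mathbf{Irr}\cdot\mathbf{Irr}\subseteq\mathbf{U}(gl(m|n))\cdot\mathbf{Irr}=\mathbf{Irr}$ (left-ideal property alone). The crux, as you identify, is that appending a factor $e_{x_i,x_j}$ with $x_i,x_j\in\mathcal{L}$ on the right of an irregular monomial $e_{a_m,b_m}\cdots e_{a_1,b_1}$ leaves the witnessing right subword intact: after reindexing, the virtual-symbol counts for $\gamma\in\mathcal{A}_0$ are unchanged because neither $x_i$ nor $x_j$ equals any $\gamma$. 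Your closing remark that two-sidedness fails in all of $\mathbf{U}(gl(m|n))$ is also correct; a concrete witness is $e_{x,\gamma}\in\mathbf{Irr}$ with $e_{x,\gamma}\cdot e_{\gamma,y}=e_{x,y}+(\text{element of }\mathbf{Irr})\notin\mathbf{Irr}$.
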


The {\textit{Capelli devirtualization epimorphism}} is the surjection
$$
\mathfrak{p} : Virt(m,n) = \mathbf{U}(gl(0|n)) \oplus \mathbf{Irr} \twoheadrightarrow 
\mathbf{U}(gl(0|n)) = \mathbf{U}(gl(n))
$$
with $Ker(\mathfrak{p}) = \mathbf{Irr}.$

Any element in $\textbf{M} \in Virt(m,n)$ defines an element in
$\textbf{m} \in \mathbf{U}(gl(n))$ - via the map $\mathfrak{p}$ -
 and $\textbf{M}$ is called a \textit{virtual
presentation} of $\textbf{m}$.

Furthermore,
\begin{proposition}
The subalgebra $\mathbb C[M_{n,d}] \subset \mathbb{C}[M_{m|n,d}]$ is  invariant with respect to the action
of the subalgebra
$
Virt(m, n).
$
\end{proposition}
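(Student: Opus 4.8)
The plan is to reduce the statement to two facts already in hand: that $\mathbb{C}[M_{n,d}]$ is a $\mathbf{U}(gl(n))$-submodule of $\mathbb{C}[M_{m|n,d}]$, and that every element of the left ideal $\mathbf{Irr}$ acts by zero on $\mathbb{C}[M_{n,d}]$ via the representation $\varrho$ (the proposition recalled just above). The structural input that glues these together is the direct-sum decomposition $Virt(m,n) = \mathbf{U}(gl(0|n)) \oplus \mathbf{Irr}$ of vector spaces.

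First I would fix an arbitrary $\mathbf{M} \in Virt(m,n)$ and, using that decomposition, write $\mathbf{M} = \mathbf{m} + \mathbf{J}$ uniquely with $\mathbf{m} \in \mathbf{U}(gl(0|n)) = \mathbf{U}(gl(n))$ and $\mathbf{J} \in \mathbf{Irr}$. Then, for any $\mathbf{p} \in \mathbb{C}[M_{n,d}]$, linearity of the action gives $\mathbf{M} \cdot \mathbf{p} = \varrho(\mathbf{m})(\mathbf{p}) + \varrho(\mathbf{J})(\mathbf{p})$. The second term vanishes by the cited proposition on the action of $\mathbf{Irr}$ on proper variables, and the first term lies in $\mathbb{C}[M_{n,d}]$ because $\mathbb{C}[M_{n,d}]$ is stable under $\mathbf{U}(gl(n))$. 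Hence $\mathbf{M} \cdot \mathbf{p} \in \mathbb{C}[M_{n,d}]$, which is precisely the asserted invariance.

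I do not expect a genuine obstacle here: all the substance has been packaged into the earlier propositions (the vanishing of the $\mathbf{Irr}$-action on $\mathbb{C}[M_{n,d}]$, and the directness of the sum defining $Virt(m,n)$), so the projection onto the $\mathbf{U}(gl(n))$-component is well defined and the argument is essentially bookkeeping. If one preferred to avoid invoking the decomposition, an equally short alternative would be to verify invariance on a generating set of the algebra $Virt(m,n)$ — namely the proper superpolarizations $e_{i,j}$ with $i,j\in\mathcal{L}$, which visibly preserve $\mathbb{C}[M_{n,d}]$ since $D^{\textit{l}}_{i,j}$ sends $(k|h)$ to $\delta_{jk}(i|h)$ with $i\in\mathcal{L}$, together with the irregular expressions, which act as zero — and then extend multiplicatively; but the decomposition argument is the most economical and is the one I would record.
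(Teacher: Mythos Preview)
The paper states this proposition in the Appendix without proof; it is part of a summary of the virtual-variable machinery, with the reader referred to \cite{Brini5-BR} and \cite{BriniTeolisKosz-BR} for details. Your argument via the decomposition $Virt(m,n)=\mathbf{U}(gl(0|n))\oplus\mathbf{Irr}$, together with the already-stated vanishing of the $\mathbf{Irr}$-action on $\mathbb{C}[M_{n,d}]$ and the $\mathbf{U}(gl(n))$-stability of $\mathbb{C}[M_{n,d}]$, is correct and is precisely the intended one-line deduction from the surrounding propositions.

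One small caution about your alternative route: saying that $Virt(m,n)$ is ``generated as an algebra'' by the proper $e_{i,j}$ together with the irregular expressions is not quite how the paper sets things up. What is given is the \emph{vector-space} decomposition and the fact that $\mathbf{Irr}$ is a left ideal of $\mathbf{U}(gl(m|n))$ (and a two-sided ideal of $Virt(m,n)$). So the clean argument really is the one you wrote first: split an arbitrary $\mathbf{M}$ linearly, not multiplicatively.
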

\begin{proposition}\label{virtual action}
For every element $\mathbf{m} \in {\mathbf{U}}(gl(n))$, the action of $\mathbf{m}$ on 
the subalgebra $\mathbb C[M_{n,d}]$
is the same of the action of any of its virtual presentation $\mathbf{M}  \in Virt(m,n).$
In symbols, 
$$
if  \quad
\mathfrak{p}(\mathbf{M}) = \mathbf{m}
\quad
then
\quad
\mathbf{m} \cdot \mathbf{P} = \mathbf{M} \cdot \mathbf{P},
\quad
for \ every \ \mathbf{P} \in \mathbb C[M_{n,d}].
$$
\end{proposition}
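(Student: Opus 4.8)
The plan is to reduce the statement to the two facts recalled just above: that $\mathbb{C}[M_{n,d}]$ is invariant under the action of $Virt(m,n)$, and that every element of $\mathbf{Irr}$ acts as zero on $\mathbb{C}[M_{n,d}]$. The bridge between these and the assertion is the direct sum decomposition $Virt(m,n) = \mathbf{U}(gl(0|n)) \oplus \mathbf{Irr}$, which identifies a virtual presentation of $\mathbf{m}$ with $\mathbf{m}$ itself plus an irregular correction term.

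First I would fix $\mathbf{M} \in Virt(m,n)$ with $\mathfrak{p}(\mathbf{M}) = \mathbf{m}$. By the direct sum decomposition, write $\mathbf{M} = \mathbf{m}' + \mathbf{I}$ with $\mathbf{m}' \in \mathbf{U}(gl(0|n)) = \mathbf{U}(gl(n))$ and $\mathbf{I} \in \mathbf{Irr}$; since $\mathfrak{p}$ is precisely the projection onto the first summand (its kernel being $\mathbf{Irr}$ and its restriction to $\mathbf{U}(gl(0|n))$ the identity), applying $\mathfrak{p}$ gives $\mathbf{m}' = \mathfrak{p}(\mathbf{M}) = \mathbf{m}$, so in fact $\mathbf{M} = \mathbf{m} + \mathbf{I}$. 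Now let $\mathbf{P} \in \mathbb{C}[M_{n,d}]$ be arbitrary. By linearity of the representation $\varrho$,
$$
\mathbf{M} \cdot \mathbf{P} = (\mathbf{m} + \mathbf{I}) \cdot \mathbf{P} = \mathbf{m} \cdot \mathbf{P} + \mathbf{I} \cdot \mathbf{P},
$$
and the proposition stating that elements of $\mathbf{Irr}$ act trivially on $\mathbb{C}[M_{n,d}]$ yields $\mathbf{I} \cdot \mathbf{P} = 0$. Hence $\mathbf{M} \cdot \mathbf{P} = \mathbf{m} \cdot \mathbf{P}$, as claimed; invariance of $\mathbb{C}[M_{n,d}]$ under $Virt(m,n)$ ensures that every term appearing here already lies in $\mathbb{C}[M_{n,d}]$.

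The one point deserving an explicit word --- and the closest thing to an obstacle, although it is really just bookkeeping --- is that on the right-hand side $\mathbf{m} \cdot \mathbf{P}$ should be read as the action of $\mathbf{m} \in \mathbf{U}(gl(n))$ on $\mathbb{C}[M_{n,d}]$ through the left polarizations $D^{\textit{l}}_{x_i,x_j}$, whereas the computation above is carried out inside the ambient module $\mathbb{C}[M_{m|n,d}]$ under $\varrho$. These coincide because $gl(n) = gl(0|n)$ embeds into $gl(m|n)$ compatibly with polarizations, so the restriction of $\varrho$ to $\mathbf{U}(gl(n))$ and then to the submodule $\mathbb{C}[M_{n,d}]$ is exactly the original $\mathbf{U}(gl(n))$-action. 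Once this identification is in place the argument is complete, and since all the substance is carried by the two previously stated propositions I do not expect any genuine difficulty.
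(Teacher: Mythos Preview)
Your proof is correct and is precisely the argument implicit in the paper's setup: the paper states this proposition in the appendix without an explicit proof, but the ordering of the surrounding propositions (the direct sum decomposition $Virt(m,n) = \mathbf{U}(gl(0|n)) \oplus \mathbf{Irr}$, the definition of $\mathfrak{p}$ as the projection with kernel $\mathbf{Irr}$, and the vanishing of $\mathbf{Irr}$ on $\mathbb{C}[M_{n,d}]$) makes clear that exactly your argument is intended. Your final paragraph checking compatibility of the two actions is a nice touch that the paper leaves tacit.
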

Since the map $\mathfrak{p}$  a surjection, any element
$\mathbf{m} \in \mathbf{U}(gl(n))$ admits several virtual
presentations. In the sequel, we even take virtual presentations
as the \emph{definition} of special elements in $\mathbf{U}(gl(n)),$ 
and this method will turn out to be quite effective.

The superalgebra ${\mathbf{U}}(gl(m|n))$ is a Lie module with respect 
to the adjoint representation $Ad_{gl(m|n)}$. Since $gl(n) = gl(0|n)$ 
is a Lie subalgebra of $gl(m|n)$, then ${\mathbf{U}}(gl(m|n))$ is a $gl(n)-$module 
with respect to the adjoint action $Ad_{gl(n)}$ of $gl(n)$.

\begin{proposition} The virtual algebra $Virt(m,n)$ is a submodule 
of ${\mathbf{U}}(gl(m|n))$ with respect to the adjoint action $Ad_{gl(n)}$ of $gl(n)$.
\end{proposition}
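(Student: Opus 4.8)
The plan is to decompose the verification along the direct sum $Virt(m,n) = \mathbf{U}(gl(0|n)) \oplus \mathbf{Irr}$ and to check that each summand is stable under the operators $T_{e_{pq}}$ with $p, q \in \mathcal{L}$. Since the elements $e_{pq}$, $p, q \in \mathcal{L}$, span $gl(n) = gl(0|n)$ and the map $a \mapsto T_a$ is linear in $a$, such closure is equivalent to stability under all of $Ad_{gl(n)}$, which is exactly the assertion. For the summand $\mathbf{U}(gl(0|n))$ the argument is the standard one: $gl(0|n) = gl(n)$ is a Lie subalgebra of $gl(m|n)$, so $T_{e_{pq}}(e_{k\ell}) = [e_{pq}, e_{k\ell}] \in gl(0|n)$ for all $k, \ell \in \mathcal{L}$; and since $T_{e_{pq}}$ is a (left) superderivation of $\mathbf{U}(gl(m|n))$ — an even one, as $|e_{pq}| = |p| + |q| = 0$ — the Leibniz rule immediately gives $T_{e_{pq}}(\mathbf{U}(gl(0|n))) \subseteq \mathbf{U}(gl(0|n))$.

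The substantive point is to show $T_{e_{pq}}(\mathbf{Irr}) \subseteq \mathbf{Irr}$. Because $\mathbf{Irr}$ is a \emph{left} ideal of $\mathbf{U}(gl(m|n))$ and $T_{e_{pq}}$ is a derivation, for any $u \in \mathbf{U}(gl(m|n))$ and any irregular expression $w$ one has $T_{e_{pq}}(u\,w) = T_{e_{pq}}(u)\,w + u\,T_{e_{pq}}(w)$, whose first summand already lies in $\mathbf{Irr}$ (as $\mathbf{Irr}$ is a left ideal and $T_{e_{pq}}(u) \in \mathbf{U}(gl(m|n))$); so, by linearity, it is enough to prove that $T_{e_{pq}}(w) \in \mathbf{Irr}$ for every irregular expression $w = e_{a_m,b_m} \cdots e_{a_1,b_1}$. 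Expanding by Leibniz, $T_{e_{pq}}(w)$ is a signed sum of length-$m$ words, each obtained from $w$ by replacing a single factor $e_{a_k,b_k}$ by one of the (at most two) elementary matrices occurring in $[e_{pq}, e_{a_k,b_k}] = \delta_{q a_k}\,e_{p b_k} - \delta_{p b_k}\,e_{a_k q}$.

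The key observation — and the one place where a little care is needed — is that, since $p$ and $q$ are \emph{proper} symbols, each such replacement only ever substitutes a proper symbol for another proper symbol in a fixed index slot: the left index $a_k = q$ is replaced by $p$, or the right index $b_k = p$ is replaced by $q$, and the other slot of that factor, and every other factor, is untouched. Consequently, for every \emph{virtual} symbol $\gamma \in \mathcal{A}_0$ and every position $j$, whether $a_j = \gamma$ and whether $b_j = \gamma$ are unaffected by the replacement. Therefore, if $i$ and $\gamma$ witness the irregularity of $w$, that is $\#\{j \le i : b_j = \gamma\} > \#\{j < i : a_j = \gamma\}$, then the very same $i$ and $\gamma$ witness the irregularity of each word appearing in $T_{e_{pq}}(w)$; hence $T_{e_{pq}}(w)$ is a linear combination of irregular expressions and lies in $\mathbf{Irr}$. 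Combining the two parts, $T_{e_{pq}}(Virt(m,n)) \subseteq \mathbf{U}(gl(0|n)) + \mathbf{Irr} = Virt(m,n)$, which is the claim. The only genuine obstacle is keeping the position-by-position bookkeeping of virtual symbols straight in this last step; no sign or magnitude estimates are required.
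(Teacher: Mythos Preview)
The paper does not supply a proof of this proposition: it appears in the Appendix as part of a summary of background material on the virtual-variable method, with a blanket reference to \cite{Brini5-BR} and \cite{BriniTeolisKosz-BR} for details. So there is no in-paper argument to compare against.

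Your proof is correct and is the natural direct argument. The reduction to generators $e_{pq}$ with $p,q\in\mathcal{L}$ is valid; the stability of $\mathbf{U}(gl(0|n))$ under $Ad_{gl(n)}$ is immediate; and your treatment of $\mathbf{Irr}$ is the right one. The crucial observation is exactly what you isolate: because $p,q$ are proper, the commutator $[e_{pq},e_{a_k,b_k}]=\delta_{q a_k}e_{p b_k}-\delta_{p b_k}e_{a_k q}$ can only be nonzero when the affected slot ($a_k$ or $b_k$) was already proper, and the replacement inserts another proper symbol. Hence the multisets $\{j: a_j=\gamma\}$ and $\{j: b_j=\gamma\}$ for any virtual $\gamma$ are literally unchanged, and the same witness $(i,\gamma)$ certifies irregularity of every word in the Leibniz expansion. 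The use of the left-ideal property to dispose of the $T_{e_{pq}}(u)\,w$ term is also clean. Nothing is missing.
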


\begin{proposition}\label{rappresentazione aggiunta-BR} The Capelli  epimorphism 
$$
\mathfrak{p} : Virt(m,n)  \twoheadrightarrow \mathbf{U}(gl(n))
$$ is an $Ad_{gl(n)}-$\emph{equivariant} map.
\end{proposition}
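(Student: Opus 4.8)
The plan is to read off the statement from the internal structure of $Virt(m,n)$. Recall that $Virt(m,n)=\mathbf{U}(gl(n))\oplus\mathbf{Irr}$ as vector spaces, that $\mathfrak{p}$ is the projection onto the first summand with $\ker(\mathfrak{p})=\mathbf{Irr}$, and that by the preceding proposition $Virt(m,n)$ is stable under the adjoint action $Ad_{gl(n)}$. Writing $x=x_0+x_1$ with $x_0\in\mathbf{U}(gl(n))$ and $x_1\in\mathbf{Irr}$, and $g\in gl(n)$, the identity $\mathfrak{p}(Ad_g(x))=Ad_g(\mathfrak{p}(x))$ follows at once once we establish the two facts (i) $\mathbf{U}(gl(n))$ is $Ad_{gl(n)}$-stable and the restriction of $Ad_{gl(n)}$ to it is the intrinsic adjoint action of $gl(n)$ on $\mathbf{U}(gl(n))$, and (ii) $\mathbf{Irr}$ is $Ad_{gl(n)}$-stable: indeed then $Ad_g(x)=Ad_g(x_0)+Ad_g(x_1)$ with $Ad_g(x_0)\in\mathbf{U}(gl(n))$ and $Ad_g(x_1)\in\mathbf{Irr}=\ker(\mathfrak{p})$, so $\mathfrak{p}(Ad_g(x))=Ad_g(x_0)=Ad_g(\mathfrak{p}(x))$.

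Fact (i) is routine. For $e_{ij}\in gl(n)=gl(0|n)$ one has $i,j\in\mathcal{L}$ and $|e_{ij}|=0$, so $T_{e_{ij}}$ is an \emph{even} derivation of $\mathbf{U}(gl(m|n))$; since $[e_{ij},e_{kl}]=\delta_{jk}e_{il}-\delta_{il}e_{kj}\in gl(n)$ for all $k,l\in\mathcal{L}$, the derivation $T_{e_{ij}}$ maps the generating set $\{e_{kl}:k,l\in\mathcal{L}\}$ of $\mathbf{U}(gl(n))$ into $\mathbf{U}(gl(n))$, hence preserves $\mathbf{U}(gl(n))$, and by uniqueness of the derivation extension its restriction is exactly the adjoint action of $gl(n)$ on $\mathbf{U}(gl(n))$. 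Because $\mathfrak{p}$ is the identity on the summand $\mathbf{U}(gl(n))$, this already gives $\mathfrak{p}(Ad_g(x_0))=Ad_g(x_0)=Ad_g(\mathfrak{p}(x_0))$.

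The heart of the matter — and the step I expect to need the most care — is fact (ii). Since $\mathbf{Irr}$ is the left ideal of $\mathbf{U}(gl(m|n))$ generated by the irregular expressions and $T_{e_{ij}}$ is a derivation, it suffices to show $T_{e_{ij}}(r)\in\mathbf{Irr}$ for a single irregular expression $r=e_{a_m,b_m}\cdots e_{a_1,b_1}$ (then for a general element $\sum_\ell y_\ell r_\ell\in\mathbf{Irr}$ one expands $T_{e_{ij}}(\sum_\ell y_\ell r_\ell)=\sum_\ell\big((T_{e_{ij}}y_\ell)r_\ell+y_\ell(T_{e_{ij}}r_\ell)\big)$ and both terms lie in $\mathbf{Irr}$). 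Expanding by Leibniz and using $[e_{ij},e_{a_k,b_k}]=\delta_{j a_k}\,e_{i,b_k}-\delta_{i b_k}\,e_{a_k,j}$ (no signs, since $|e_{ij}|=0$), one sees that $T_{e_{ij}}(r)$ is a signed sum of monomials, each obtained from $r$ by replacing, at a single slot $k$, either the first entry $a_k$ (when $a_k=j$) or the second entry $b_k$ (when $b_k=i$) by an element of $\mathcal{L}$. The crucial observation is that the defining inequality for irregularity counts only occurrences of \emph{virtual} symbols $\gamma\in\mathcal{A}_0$ in the slots of right subwords, whereas here one has merely replaced a \emph{proper} symbol by a \emph{proper} symbol; since $\mathcal{A}_0\cap\mathcal{L}=\emptyset$, none of those counts change, so a witness $(i,\gamma)$ for $r$ being irregular is still a witness for every monomial occurring in $T_{e_{ij}}(r)$. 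Hence $T_{e_{ij}}(r)$ is a sum of irregular expressions and lies in $\mathbf{Irr}$, which proves (ii); assembling (i) and (ii) as in the first paragraph completes the proof. (Alternatively, one could prove (ii) by identifying $\mathbf{Irr}$ with the set of elements of $Virt(m,n)$ annihilating every module $\mathbb{C}[M_{n,d}]$ and noting that $[e_{ij},M]\cdot\mathbf{P}=-M\cdot(e_{ij}\cdot\mathbf{P})=0$ for $M\in\mathbf{Irr}$, using that $\mathbb{C}[M_{n,d}]$ is $\mathbf{U}(gl(n))$-stable; that route, however, also needs the faithfulness of $\mathbf{U}(gl(n))$ on $\bigoplus_d\mathbb{C}[M_{n,d}]$, whereas the combinatorial argument above is self-contained.)
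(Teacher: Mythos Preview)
Your argument is correct. The paper itself does not supply a proof of this proposition: it appears in the Appendix, which explicitly ``summarize[s] the main features of the superalgebraic method of virtual variables'' following \cite{Brini5-BR} and \cite{BriniTeolisKosz-BR}, and the statement is recorded there as a known fact without proof. So there is nothing to compare against in this paper.

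That said, your route is exactly the natural one and matches how such results are established in the cited sources: reduce equivariance to the $Ad_{gl(n)}$-stability of each summand in $Virt(m,n)=\mathbf{U}(gl(n))\oplus\mathbf{Irr}$, and verify stability of $\mathbf{Irr}$ by the Leibniz expansion together with the observation that, since $i,j\in\mathcal{L}$, the bracket $[e_{ij},e_{a_k,b_k}]$ can only replace a \emph{proper} symbol by a \emph{proper} symbol, leaving all virtual-symbol counts (and hence any irregularity witness) intact. Your handling of the left-ideal structure --- writing a generic element as $\sum_\ell y_\ell r_\ell$ and using that both $(T_{e_{ij}}y_\ell)r_\ell$ and $y_\ell(T_{e_{ij}}r_\ell)$ lie in $\mathbf{Irr}$ --- is the clean way to finish. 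One minor notational quibble: you use $i$ both for the proper symbol in $e_{ij}$ and for the position index in the irregularity witness ``$(i,\gamma)$''; renaming the latter would avoid confusion.
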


\begin{corollary}\label{centrality gen} The isomorphism $\mathfrak{p}$ maps
any  $Ad_{gl(n)}-$invariant element $\mathbf{m} \in Virt(m,n)$  to a \emph{central} 
element of $\mathbf{U}(gl(n))$.
\end{corollary}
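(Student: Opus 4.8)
The plan is to read off the statement from the $Ad_{gl(n)}$-equivariance of the Capelli epimorphism $\mathfrak{p}$ (Proposition~\ref{rappresentazione aggiunta-BR}), combined with the elementary observation that an element of $\mathbf{U}(gl(n))$ commuting with every generator $e_{i,j}$, $i,j \in \mathcal{L}$, must be central.

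First I would spell out what $Ad_{gl(n)}$-invariance of $\mathbf{m} \in Virt(m,n)$ means: it says $T_{e_{i,j}}(\mathbf{m}) = 0$ for every $e_{i,j} \in gl(n) = gl(0|n)$, where $T_{e_{i,j}}$ is the adjoint operator on $\mathbf{U}(gl(m|n))$; since $i,j$ are proper (negative) symbols we have $|e_{i,j}| = 0 \in \mathbb{Z}_2$, so $T_{e_{i,j}}$ acts as the ordinary commutator $\mathbf{N} \mapsto e_{i,j}\mathbf{N} - \mathbf{N} e_{i,j}$. Because $Virt(m,n)$ is stable under $Ad_{gl(n)}$, each $T_{e_{i,j}}(\mathbf{m})$ again lies in $Virt(m,n)$, so $\mathfrak{p}$ may be applied to it. Now, by the equivariance of $\mathfrak{p}$, for every $e_{i,j} \in gl(n)$ one has
$$
e_{i,j}\,\mathfrak{p}(\mathbf{m}) - \mathfrak{p}(\mathbf{m})\,e_{i,j} \;=\; T_{e_{i,j}}\!\big(\mathfrak{p}(\mathbf{m})\big) \;=\; \mathfrak{p}\!\big(T_{e_{i,j}}(\mathbf{m})\big) \;=\; \mathfrak{p}(0) \;=\; 0 ,
$$
where on the left the adjoint action is that of $gl(n)$ on $\mathbf{U}(gl(n))$.

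Finally I would conclude that $\mathfrak{p}(\mathbf{m})$ is central: the centralizer of a fixed element in the associative algebra $\mathbf{U}(gl(n))$ is a subalgebra, and by the displayed identity it contains all the generators $e_{i,j}$ of $\mathbf{U}(gl(n))$; hence it is the whole of $\mathbf{U}(gl(n))$, i.e. $\mathfrak{p}(\mathbf{m}) \in \boldsymbol{\zeta}(n)$. There is no real obstacle here: the statement is a formal consequence of Proposition~\ref{rappresentazione aggiunta-BR}, and the only point worth making explicit is the standard passage from ``commutes with the generators $e_{i,j}$'' to ``commutes with all of $\mathbf{U}(gl(n))$'', which holds since the $e_{i,j}$ generate $\mathbf{U}(gl(n))$ and the centralizer of an element is closed under products.
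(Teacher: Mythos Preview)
Your proposal is correct and is exactly the intended argument: the paper states this as a corollary of Proposition~\ref{rappresentazione aggiunta-BR} without further proof, and you have simply written out the immediate deduction (equivariance of $\mathfrak{p}$ carries $Ad_{gl(n)}$-invariants to elements commuting with every $e_{i,j}$, hence with all of $\mathbf{U}(gl(n))$). There is nothing to add.
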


\textit{Balanced monomials} are  elements of the algebra ${\mathbf{U}}(gl(m|n))$
 of the form:
\begin{itemize}\label{defbalanced monomials-BR}
\item [--] $e_{{i_1},\gamma_{p_1}} \cdots e_{{i_k},\gamma_{p_k}} \cdot
e_{\gamma_{p_1},{j_1}} \cdots e_{\gamma_{p_k},{j_k}},$
\item [--]
$e_{{i_1},\theta_{q_1}} \cdots e_{{i_k},\theta_{q_k}} \cdot
e_{\theta_{q_1},\gamma_{p_1}} \cdots e_{\theta_{q_k},\gamma_{p_k}} \cdot
e_{\gamma_{p_1},{j_1}} \cdots e_{\gamma_{p_k},{j_k}},$
\item [--] and so on,
\end{itemize}
where
$i_1, \ldots, i_k, j_1, \ldots, j_k \in L,$
i.e., the $i_1, \ldots, i_k, j_1, \ldots, j_k$ are $k$
proper (negative) symbols, and the
$\gamma_{p_1}, \ldots, \gamma_{p_k}, \ldots, \theta_{q_1}, \ldots, \theta_{q_k}, \ldots$ are
virtual symbols.
In plain words, a balanced monomial is product of two or more factors  where the
rightmost one  \textit{annihilates} (by superpolarization)
the $k$ proper symbols $ j_1, \ldots, j_k$ and
\textit{creates} (by superpolarization) some virtual symbols;
 the leftmost one  \textit{annihilates} all the virtual symbols
and \textit{creates} the $k$ proper symbols $ i_1, \ldots, i_k$;
between these two factors, there might be further factors that annihilate
 and create  virtual symbols only.

\begin{proposition}
Every balanced monomial belongs to $Virt(m,n)$. Hence,
the Capelli epimorphism $\mathfrak{p}$ maps  balanced monomials to elements of $\mathbf{U}(gl(n)).$
\end{proposition}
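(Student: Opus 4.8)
The plan is to prove the first assertion, that every balanced monomial lies in $Virt(m,n) = \mathbf{U}(gl(0|n)) \oplus \mathbf{Irr}$; the ``Hence\ldots'' part is then immediate, since $\mathfrak{p}$ is the identity on $\mathbf{U}(gl(0|n))$ and kills $\mathbf{Irr}$, so $\mathfrak{p}(B) \in \mathbf{U}(gl(n))$ for every $B \in Virt(m,n)$. For a monomial $B = e_{a_N,b_N}\cdots e_{a_1,b_1}$ of $\mathbf{U}(gl(m|n))$, call its \emph{virtual length} $\ell(B)$ the number of factors $e_{a_i,b_i}$ in which $a_i$ or $b_i$ is a virtual symbol. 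I would induct on $\ell(B)$. The base case $\ell(B)=0$ is trivial: then $B$ is a product of generators $e_{i,j}$ with $i,j \in \mathcal{L}$, so $B \in \mathbf{U}(gl(0|n)) \subseteq Virt(m,n)$.

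For the inductive step, let $\ell(B) \geq 1$ and let $g = e_{i,\sigma}$ be the left-most factor of $B$; by the explicit shape of a balanced monomial it involves a virtual symbol $\sigma$ which it \emph{annihilates}, every occurrence of $\sigma$ as a \emph{created} symbol sitting strictly to the right of $g$. Using $xy = (-1)^{|x||y|}\,yx + [x,y]$ in $\mathbf{U}(gl(m|n))$, I would transport $g$ rightward one factor at a time. Each elementary step $e_{i,\sigma}\,e_{c,d} = (-1)^{|e_{i,\sigma}||e_{c,d}|}\,e_{c,d}\,e_{i,\sigma} + [e_{i,\sigma},e_{c,d}]$ contributes, beyond the transposed term, the bracket $[e_{i,\sigma},e_{c,d}] = \delta_{\sigma c}\,e_{i,d} - (-1)^{|c|+|d|}\,\delta_{i d}\,e_{c,\sigma}$: a \emph{contraction} summand $\delta_{\sigma c}\,e_{i,d}$, nonzero only when $e_{c,d}$ creates $\sigma$, and a summand $-(-1)^{|c|+|d|}\delta_{i d}\,e_{c,\sigma}$, nonzero only when $e_{c,d}$ creates the proper symbol $i$. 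In either case this replaces the two factors $g$ and $e_{c,d}$ of $B$ by a single factor, so the resulting monomial has virtual length at most $\ell(B)-1$; one checks that it is again a balanced monomial — one strand shortened, or two strands merged — except in the degenerate cases where a virtual symbol becomes created without a matching annihilation to its right, in which case the monomial is a left multiple of an irregular expression and so lies in $\mathbf{Irr}$. The induction hypothesis absorbs the balanced monomials of smaller virtual length, and $\mathbf{Irr}\subseteq Virt(m,n)$ the rest.

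Once $g$ has been transported past every factor, the surviving ``fully transposed'' term is $X \cdot e_{i,\sigma}$ for some product $X$; since $e_{i,\sigma}$, having a virtual symbol in its annihilating slot, is itself an irregular expression and $\mathbf{Irr}$ is a left ideal, this term lies in $\mathbf{Irr}$. Collecting contributions, $B$ is a finite sum of fully contracted monomials (built only from generators $e_{i,j}$ with $i,j$ proper, hence in $\mathbf{U}(gl(0|n))$), of elements of $\mathbf{Irr}$, and of balanced monomials of strictly smaller virtual length; by the inductive hypothesis $B \in Virt(m,n)$, which completes the argument.

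The step I expect to be the main obstacle is the bookkeeping hidden in the inductive step: verifying that each correction monomial really falls into one of the three advertised classes. This is exactly where the combinatorial meaning of ``balanced'' is used — that the factors within a single layer super-commute, so strands may be permuted freely (with signs governed by the $\mathbb{Z}_2$-grading), and that every virtual symbol is, in the layers to its right, created before being annihilated, which is what forces the dichotomy ``shorter balanced monomial versus irregular''. The remaining manipulations are routine sign chasing.
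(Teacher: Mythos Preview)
The paper does not actually prove this proposition: it appears in the Appendix as part of the background recalled from the authors' earlier papers \cite{Brini5-BR}, \cite{BriniTeolisKosz-BR}, stated without argument. So there is nothing to compare against; your sketch has to stand on its own.

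Your overall strategy is the right one, and the terminal step (the fully transposed term $X\cdot e_{i,\sigma}$ lies in $\mathbf{Irr}$ because $e_{i,\sigma}$ is already irregular) is correct. The gap is exactly where you yourself flag it: the assertion that each bracket correction is ``again a balanced monomial'' is false as stated. Take the simplest two--layer example $B=e_{i_1,\gamma_1}e_{i_2,\gamma_2}\,e_{\gamma_1,j_1}e_{\gamma_2,j_2}$. Moving $e_{i_1,\gamma_1}$ past $e_{\gamma_1,j_1}$ the contraction term contributes
\[
\pm\,e_{i_2,\gamma_2}\,e_{i_1,j_1}\,e_{\gamma_2,j_2},
\]
and this is \emph{not} a balanced monomial in the sense of the paper: the purely proper factor $e_{i_1,j_1}$ sits between the left and right layers and fits no layer type. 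One must commute it out, and that produces further corrections (e.g.\ a $\delta_{i_2,j_1}$ term). Similarly, in your Case~2 the new factor $e_{c,\sigma}$ is a (virtual,~virtual) factor embedded inside the rightmost layer; while it can sometimes be slid left with zero brackets to form a genuine middle layer, in general the rearrangement is not free.

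The clean fix is to strengthen the inductive statement. Prove by induction on virtual length that every monomial of the form $P\cdot B$, with $P\in\mathbf{U}(gl(0|n))$ and $B$ balanced, lies in $Virt(m,n)$; this is harmless because $\mathbf{U}(gl(0|n))$ and $\mathbf{Irr}$ are each closed under left multiplication by $\mathbf{U}(gl(0|n))$, so $Virt(m,n)$ is too. Then each correction term either (i) lies in $\mathbf{Irr}$ (your ``degenerate'' case --- note that the phrasing should be ``a virtual symbol is \emph{annihilated} without a matching creation to its right'', not the other way around), or (ii) after moving any purely proper factor leftward, becomes $P'\cdot B'$ with $\ell(B')<\ell(B)$, to which the hypothesis applies. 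With this adjustment your induction closes.
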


\subsection{Bitableaux monomials and Capelli bitableaux in ${\mathbf{U}}(gl(n))$}\label{citazione 5}

We will introduce two classes of remarkable elements of the enveloping algebra ${\mathbf{U}}(gl(n))$, that
we call {\textit{bitableaux monomials}}, {\textit{Capelli bitableaux}}, respectively.

 Let $\lambda \vdash h$ be a partition, and label the boxes of its Ferrers diagram
with the numbers $1, 2, \ldots , h$ in the following way:
$$
\begin{array}{lllll}
1 & 2 &  \cdots & \cdots & \lambda_1 \\
\lambda_1 + 1 &  \lambda_1 + 2 & \cdots & \lambda_1 + \lambda_2 &    \\
\cdots & \cdots & \cdots &  &  \\
\cdots & \cdots & h &  &  \\
\end{array}.
$$
A {\textit{Young tableau}} $T$ of shape $\lambda$ over the alphabet 
$\mathcal{A} =  \mathcal{A}_0 \cup \mathcal{L} $ 
is a map $T : \underline{h} = \{1, 2, \ldots , h \}  \rightarrow \mathcal{A}$; the element $T(i)$
is the symbol in the cell $i$ of the tableau $T$.

The sequences
$$
\begin{array}{l}
T(1)  T(2) \cdots  T(\lambda_1),
\\
T(\lambda_1 + 1)  T(\lambda_1 + 2)  \cdots  T(\lambda_1 + \lambda_2),
\\
 \ldots \ldots
\end{array}
$$
are called the {\textit{row words}} of the Young tableau $T$.

We will also denote a Young tableau
by its sequence of rows words, that is $T = (\omega_1, \omega_2, \ldots, \omega_p)$.
Furthermore, the {\textit{word of the tableau}} $T$ is the concatenation
\begin{equation*}\label{word}
w(T) = \omega_1\omega_2 \cdots \omega_p.
\end{equation*}

The {\textit{content}} of a tableau $T$ is the function $c_T : \mathcal{A} \rightarrow \mathbb{N}$,
$$
c_T(a) = \sharp \{i \in \underline{h}; \ T(i) = a \}.
$$

Given a shape/partition $\lambda$, 
we assume that $|\mathcal{A}_0| = m \geq \widetilde{\lambda}_1$, 
where  $\widetilde{\lambda}$ denotes the conjugate shape/partition of $\lambda$.
Let us denote by $\alpha_1, \ldots, \alpha_p \in \mathcal{A}_0$  an \emph{arbitrary} family 
of \emph{distinct positive symbols}.
Set
\begin{equation}\label{Deruyts and Coderuyts}
C_{\lambda}^* = \left(
\begin{array}{llllllllllllll}
\alpha_1 \ldots    \ldots     \ldots     \alpha_1                           \\
\alpha_2   \ldots  \ldots               \alpha_2 \\
 \ldots  \ldots   \\
\alpha_p \ldots \alpha_p
\end{array} \right).
\end{equation}

The tableaux of  kind (\ref{Deruyts and Coderuyts}) are called  {\it{virtual  Coderuyts   tableaux}}
of shape $\lambda,$.

Let $S$ and $T$ be two Young tableaux of same shape $\lambda \vdash h$ on
the  alphabet $ \mathcal{A}_0  \cup \mathcal{L} $:

\begin{equation*}\label{bitableaux}
S = \left(
\begin{array}{llllllllllllll}
z_{i_1}  \ldots    \ldots     \ldots     z_{i_{\lambda_1}}     \\
z_{j_1}   \ldots  \ldots               z_{j_{\lambda_2}} \\
 \ldots  \ldots   \\
z_{s_1} \ldots z_{s_{\lambda_p}}
\end{array}
\right), \qquad
T = \left(
\begin{array}{llllllllllllll}
z_{h_1}  \ldots    \ldots     \ldots     z_{h_{\lambda_1}}    \\
z_{k_1}   \ldots  \ldots               z_{k_{\lambda_2}} \\
 \ldots  \ldots   \\
z_{t_1} \ldots z_{t_{\lambda_p}}
\end{array}
\right).
\end{equation*}

To the pair $(S,T)$, we associate the {\it{bitableau monomial}}:
\begin{equation*}\label{BitMon}
e_{S,T} =
e_{z_{i_1}, z_{h_1}}\cdots e_{z_{i_{\lambda_1}}, z_{h_{\lambda_1}}}
e_{z_{j_1}, z_{k_1}}\cdots e_{z_{j_{\lambda_2}}, z_{k_{\lambda_2}}}
 \cdots  \cdots
e_{z_{s_1}, z_{t_1}}\cdots e_{z_{s_{\lambda_p}}, z_{t_{\lambda_p}}}
\end{equation*}
in ${\mathbf{U}}(gl(m|n)).$

Given a pair of Young tableaux  $S, T$ of the same shape $\lambda$ on the proper alphabet $L$, 
consider the elements
$$
e_{S,C_{\lambda}^*} \ e_{C_{\lambda}^*,T} \in {\mathbf{U}}(gl(m|n)).
$$

Since these elements  are \emph{balanced monomials} in
${\mathbf{U}}(gl(m|n))$, then they belong to the \emph{virtual subalgebra} $Virt(m,n)$.

Hence, we can consider their images in ${\mathbf{U}}(gl(n))$ with respect to the Capelli epimorphism $\mathfrak{p}$.

We set
\begin{equation}\label{determinantal}
 \mathfrak{p} \Big( e_{S,C_{\lambda}^*} \ e_{C_{\lambda}^*,T}    \Big) = [S|T]    \in {\mathbf{U}}(gl(n)),
\end{equation}
and call the element $[S|T]$ a {\textit{Capelli bitableau}}.

The elements defined in (\ref{determinantal}) do not
depend on the choice of the virtual  Coderuyts tableau $C_{\lambda}^*$.

\subsection{Biproducts and bitableaux in ${\mathbb C}[M_{m|n,d}]$}\label{citazione 6}

Embed the algebra
$$
{\mathbb C}[M_{m|n,d}] = {\mathbb C}[(\alpha_s|j), (i|j)]
$$
into the (supersymmetric) algebra ${\mathbb C}[(\alpha_s|j), (i|j), (\gamma|j)]$
generated by the ($\mathbb{Z}_2$-graded) variables $(\alpha_s|j), (i|j), (\gamma|j)$,
$j = 1, 2, \ldots, d$,
 where
 $$
 |(\gamma|j)| = 1 \in \mathbb{Z}_2 \ \  for \ every \ j = 1, 2, \ldots, d,
 $$
and denote by $D^l_{z_i,\gamma}$ the superpolarization of $\gamma$ to  $z_i.$

Let $\omega = z_1z_2 \cdots z_p$ be a word on     $ \mathcal{A}_0 \cup  \mathcal{L} $, 
and $\varpi = j_{t_1}j_{t_2} \cdots j_{t_q}$ a word
on the alphabet
$P = \{1, 2, \ldots, d \}$. The {\it{biproduct}}
$$
(\omega|\varpi) = (z_1z_2 \cdots z_p|j_{t_1}j_{t_2} \cdots j_{t_q})
$$
is the element
$$
D^l_{z_1,\gamma}D_{z_2,\gamma} \cdots D^l_{z_p,\gamma} \Big( (\gamma|j_{t_1})(\gamma|j_{t_2}) \cdots
(\gamma|j_{t_q}) \Big) \in {\mathbb C}[M_{m|n,d}]
$$
if $p = q$ and is set to be zero otherwise.

\begin{claim} 
The biproduct $(\omega|\varpi) = (z_1z_2 \cdots z_p|j_{t_1}j_{t_2} \cdots j_{t_q})$ is supersymmetric in the  
$z$'s and
skew-symmetric in the  $j$'s.
In symbols
\begin{enumerate}

\item
$
(z_1 z_2 \cdots z_i z_{i+1} \cdots z_p|j_{t_1} j_{t_2} \cdots j_{t_q}) =
\\ \null \hfill (-1)^{|z_i| |z_{i+1}|}
(z_1 z_2 \cdots z_{i+1} z_i \cdots z_p|j_{t_1} j_{t_2} \cdots j_{t_q})
$

\item
$
(z_1z_2 \cdots z_iz_{i+1} \cdots z_p|j_{t_1}j_{t_2} \cdots j_{t_i}j_{t_{i+1}} \cdots j_{t_q}) =
\\ \null \hfill
- (z_1z_2 \cdots z_iz_{i+1} \cdots z_p|j_{t_1} \cdots j_{t_{i+1}}j_{t_i} \cdots j_{t_q}).
$
\end{enumerate}

\end{claim}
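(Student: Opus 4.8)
The plan is to reduce both statements to elementary manipulations of the defining expression
$$(\omega|\varpi) = D^{\textit{l}}_{z_1,\gamma}D^{\textit{l}}_{z_2,\gamma}\cdots D^{\textit{l}}_{z_p,\gamma}\big((\gamma|j_{t_1})(\gamma|j_{t_2})\cdots(\gamma|j_{t_q})\big),$$
the case $p \ne q$ being vacuous since then both sides vanish by definition. Two preliminary remarks are in order. First, $\gamma$ is a $\mathbb{Z}_2$-\emph{even} symbol: from $|(\gamma|j)| = 1$ and $|j| = 1$ for $j \in \mathcal{P}$ one gets $|\gamma| = 0$. Second, $\gamma$ does not belong to $\mathcal{A}_0 \cup \mathcal{L}$, so it differs from every $z_i$.

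For the skew-symmetry in the $j$'s, assertion (2), note that each variable $(\gamma|j_{t_a})$ has $\mathbb{Z}_2$-degree $1$, so the supercommutation law $(a|h)(b|k) = (-1)^{|(a|h)||(b|k)|}\,(b|k)(a|h)$ in ${\mathbb C}[M_{m|n,d}]$ gives $(\gamma|j_{t_i})(\gamma|j_{t_{i+1}}) = -(\gamma|j_{t_{i+1}})(\gamma|j_{t_i})$. Interchanging $j_{t_i}$ and $j_{t_{i+1}}$ in the argument therefore multiplies it by $-1$; since $D^{\textit{l}}_{z_1,\gamma}\cdots D^{\textit{l}}_{z_p,\gamma}$ is a linear operator, this sign survives the application of the polarizations, which is assertion (2).

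For the supersymmetry in the $z$'s, assertion (1), I would invoke the commutation identity for left superpolarizations recalled in Subsection~\ref{citazione 3},
$$D^{\textit{l}}_{a,b}D^{\textit{l}}_{c,d} - (-1)^{(|a|+|b|)(|c|+|d|)}D^{\textit{l}}_{c,d}D^{\textit{l}}_{a,b} = \delta_{b,c}D^{\textit{l}}_{a,d} - (-1)^{(|a|+|b|)(|c|+|d|)}\delta_{a,d}D^{\textit{l}}_{c,b},$$
specialized to $a = z_i$, $b = \gamma$, $c = z_{i+1}$, $d = \gamma$. Both Kronecker deltas $\delta_{\gamma,z_{i+1}}$ and $\delta_{z_i,\gamma}$ vanish since $\gamma \notin \mathcal{A}_0 \cup \mathcal{L}$, and $(|z_i|+|\gamma|)(|z_{i+1}|+|\gamma|) = |z_i|\,|z_{i+1}|$ because $|\gamma| = 0$; hence $D^{\textit{l}}_{z_i,\gamma}D^{\textit{l}}_{z_{i+1},\gamma} = (-1)^{|z_i||z_{i+1}|}D^{\textit{l}}_{z_{i+1},\gamma}D^{\textit{l}}_{z_i,\gamma}$. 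As these two operators sit consecutively in the composition defining $(\omega|\varpi)$, exchanging the $i$-th and $(i+1)$-st factors multiplies the whole composition, applied to the unchanged argument, by $(-1)^{|z_i||z_{i+1}|}$, which is assertion (1). The only point requiring care is this reduction of the supercommutation sign via $|\gamma| = 0$ together with the vanishing of the correction terms because $\gamma$ is a fresh auxiliary symbol; no genuine obstacle arises.
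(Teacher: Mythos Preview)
Your argument is correct. The paper states this Claim without proof (it is presented as a basic fact in the appendix, immediately followed by the Laplace expansions), so there is nothing to compare against; your reduction of assertion~(2) to the anticommutation of the odd variables $(\gamma|j_{t_a})$ and of assertion~(1) to the supercommutation law for superpolarizations (with the correction terms vanishing because $\gamma$ is a fresh symbol distinct from every $z_i$, and the sign reducing to $(-1)^{|z_i||z_{i+1}|}$ because $|\gamma|=0$) is exactly the intended elementary verification.
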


\begin{proposition}{\textbf{(Laplace expansions)}}\label{Laplace expansions}
We have
\begin{enumerate}

\item
$
(\omega_1\omega_2|\varpi) = \Sigma_{(\varpi)} \ (-1)^{|\varpi_{(1)}| |\omega_2|} \ 
(\omega_1|\varpi_{(1)})(\omega_2|\varpi_{(2)}).
$

\item
$
(\omega|\varpi_1\varpi_2) = \Sigma_{(\omega)} \  (-1)^{|\varpi_1| |\omega_{(2)}|}  \ 
(\omega_{(1)}|\varpi_1)(\omega_{(2)}|\varpi_2.)
$

\end{enumerate}
where
$$
\bigtriangleup(\varpi) = \Sigma_{(\varpi)}  \ \varpi_{(1)} \otimes \varpi_{(2)}, \quad \bigtriangleup(\omega)
= \Sigma_{(\omega)} \ \omega_{(1)} \otimes \omega_{(2)}
$$
denote the coproducts in the \emph{Sweedler notation}  of the elements $\varpi$ and $\omega$ in the
supersymmetric Hopf algebra of $W$
\emph{(see, e.g. \cite{Bri-BR})}   and in
the free exterior Hopf algebra generated by
$j = 1, 2, \ldots, d$, respectively.

\end{proposition}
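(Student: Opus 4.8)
The plan is to argue directly from the definition of the biproduct as an iterated superpolarization $(\omega|\varpi) = D^{\textit{l}}_{z_1,\gamma}\cdots D^{\textit{l}}_{z_p,\gamma}\bigl((\gamma|j_{t_1})\cdots(\gamma|j_{t_q})\bigr)$, using the Leibniz rule for superderivations together with the fact that each $D^{\textit{l}}_{z,\gamma}$ replaces a single occurrence of the auxiliary symbol $\gamma$ by $z$ while annihilating any monomial containing no $\gamma$. In this picture the biproduct is a $\mathbb{Z}_2$-graded ``determinant'' in the entries $(z_i|j_{t_k})$, and the two identities are its row-Laplace and column-Laplace expansions; the substantive work is to recover the shuffle signatures $sgn(\varpi;\varpi_{(1)},\varpi_{(2)})$ and $sgn(\omega;\omega_{(1)},\omega_{(2)})$ — which, by the \emph{split} description of the coproduct recalled above, are exactly the Sweedler terms of $\Delta(\varpi)$ in the exterior Hopf algebra on the $j$'s and of $\Delta(\omega)$ in the supersymmetric Hopf algebra on the row symbols — out of the Koszul signs generated by the Leibniz rule. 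Throughout I write $(\gamma|\varrho)$ for the product of the variables $(\gamma|j)$, $j$ running over the columns of a subword $\varrho$, taken in order.

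For item (1) I would set $\omega_1 = z_1\cdots z_a$, $\omega_2 = z_{a+1}\cdots z_p$, and first evaluate the inner block $D^{\textit{l}}_{z_{a+1},\gamma}\cdots D^{\textit{l}}_{z_p,\gamma}$ applied to $(\gamma|j_{t_1})\cdots(\gamma|j_{t_q})$. Iterating Leibniz, this is a sum over all injections of $\{z_{a+1},\ldots,z_p\}$ into the column set; grouping the terms according to the ordered subword $\varpi_{(2)}$ of columns that receive a symbol of $\omega_2$ (necessarily of length $|\omega_2|$), with complementary subword $\varpi_{(1)}$ still carrying $\gamma$'s, and summing the Leibniz signs, one gets $\sum_{(\varpi)} sgn(\varpi;\varpi_{(1)},\varpi_{(2)})\,(\gamma|\varpi_{(1)})\,(\omega_2|\varpi_{(2)})$. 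Applying the outer block $D^{\textit{l}}_{z_1,\gamma}\cdots D^{\textit{l}}_{z_a,\gamma}$: since $(\omega_2|\varpi_{(2)})$ contains no $\gamma$, all $a$ superpolarizations must hit the first factor, producing $(\omega_1|\varpi_{(1)})$ (which vanishes unless $|\omega_1| = |\varpi_{(1)}|$, matching the surviving coproduct terms), and the accumulated Koszul sign from commuting $D^{\textit{l}}_{z_1,\gamma}\cdots D^{\textit{l}}_{z_a,\gamma}$ past $(\gamma|\varpi_{(1)})$ collapses to $(-1)^{|\varpi_{(1)}||\omega_2|}$; this is (1). For item (2) I would use $(\gamma|j_{t_1})\cdots(\gamma|j_{t_q}) = (\gamma|\varpi_1)\,(\gamma|\varpi_2)$ for $\varpi = \varpi_1\varpi_2$ and apply $D^{\textit{l}}_{z_1,\gamma}\cdots D^{\textit{l}}_{z_p,\gamma}$: each superpolarization must land on one of the two $\gamma$-containing blocks, so by Leibniz the expression equals the sum over ordered splittings $\omega = \omega_{(1)}\omega_{(2)}$ of the terms $sgn(\omega;\omega_{(1)},\omega_{(2)})\,(\omega_{(1)}|\varpi_1)\,(\omega_{(2)}|\varpi_2)$, the factor $(-1)^{|\varpi_1||\omega_{(2)}|}$ coming from commuting the superpolarizations indexed by $\omega_{(2)}$ past the block $(\gamma|\varpi_1)$.

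I expect the sign bookkeeping to be the only real obstacle. Two things must be checked: first, that the Leibniz signs attached to an injection (resp.\ a splitting), summed over all injections (resp.\ splittings) with a prescribed image subword, reassemble into the shuffle signature $sgn(\cdot;\cdot,\cdot)$ times the smaller biproduct $(\omega_2|\varpi_{(2)})$ (resp.\ $(\omega_{(1)}|\varpi_1)$); second, that the Koszul commutation signs of $\mathbb{Z}_2$-homogeneous superderivations past $\mathbb{Z}_2$-homogeneous monomials telescope to the compact exponents $|\varpi_{(1)}||\omega_2|$ and $|\varpi_1||\omega_{(2)}|$. Both follow by a routine induction on $|\omega|$ (respectively $|\varpi|$): the base case of a single row symbol (respectively a single column) is immediate from the Leibniz rule for one superderivation, and the inductive step is precisely the recursion for $\Delta_{k,m-k}$ on a word $x\,u$ already exploited in the \emph{split notation} discussion of the coproduct on $\Lambda(\mathcal{L})$.
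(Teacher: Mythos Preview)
The paper does not actually prove this proposition: it is stated with a parenthetical reference to \cite{Bri-BR} and used as a standing tool, with no argument supplied in the text. So there is no ``paper's own proof'' to compare against; your task is really to give a self-contained argument.

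Your strategy---iterating the Leibniz rule for the superpolarizations $D^{\textit{l}}_{z_i,\gamma}$ on the product $(\gamma|j_{t_1})\cdots(\gamma|j_{t_q})$ and then regrouping the resulting injections/splittings into Sweedler terms---is the standard and correct one, and your inductive scheme for recovering the shuffle signatures is exactly right. One point of imprecision: in item~(1) you write that the Koszul sign $(-1)^{|\varpi_{(1)}||\omega_2|}$ arises ``from commuting $D^{\textit{l}}_{z_1,\gamma}\cdots D^{\textit{l}}_{z_a,\gamma}$ past $(\gamma|\varpi_{(1)})$''. That cannot be the source, since the outer block does not commute past $(\gamma|\varpi_{(1)})$ but \emph{acts} on it (and by Leibniz, acting on the left factor of $A\cdot B$ when the right factor is $\gamma$-free contributes no sign). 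The sign $(-1)^{|\varpi_{(1)}||\omega_2|}$ is already present \emph{before} the outer block is applied: it is the cost of reordering the mixed monomial produced by the inner block---which has $\gamma$-factors and $z$-factors interleaved according to $\varpi$---into the segregated form $(\gamma|\varpi_{(1)})\,(\omega_2|\varpi_{(2)})$. Concretely, each odd factor $(\gamma|j)$ with $j\in\varpi_{(1)}$ must be moved left past every $(z_k|j')$ with $j'\in\varpi_{(2)}$ lying to its left, and the $\mathbb{Z}_2$-degree of $(z_k|j')$ is $|z_k|+1$; the resulting product of signs, combined with the exterior shuffle sign for $\varpi$, telescopes to $sgn(\varpi;\varpi_{(1)},\varpi_{(2)})\,(-1)^{|\varpi_{(1)}||\omega_2|}$. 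Once you correct the attribution of that sign, the rest of your outline goes through cleanly by the induction you describe.
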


Let $\omega = i_1i_2 \cdots i_p$, $\varpi = j_1j_1 \cdots j_p$ be  words on the negative alphabet 
$\mathcal{L} = \{1, 2, \ldots, n \}$ and on  the negative alphabet 
$\mathcal{P} = \{1, 2, \ldots, d \}$.

From Proposition \ref{Laplace expansions}, we infer

\begin{corollary}
The {\it{biproduct}} of the two words $\omega$ and $\varpi$
\begin{equation*}\label{biproduct}
(\omega|\varpi) = (i_1i_2 \cdots i_p|j_1j_2 \cdots j_p)
\end{equation*}
is the  \emph{signed minor}:
$$
(\omega|\varpi) = (-1)^{p \choose 2} \ 
det \Big( \ (i_r|j_s) \ \Big)_{r, s = 1, 2, \ldots, p} \in {\mathbb C}[M_{n,d}]. 
$$
\end{corollary}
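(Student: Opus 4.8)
The plan is to argue by induction on the common length $p$ of the two words $\omega = i_1 i_2\cdots i_p$ and $\varpi = j_1 j_2\cdots j_p$, peeling off one row and one column at a time by means of the Laplace expansion of Proposition \ref{Laplace expansions}(1). The base case $p=1$ is immediate from the definition of the biproduct: $(\,i_1\,|\,j_1\,) = D^l_{i_1,\gamma}\big((\gamma|j_1)\big) = (i_1|j_1)$, and $(-1)^{\binom{1}{2}}=1$, so the asserted identity holds. Throughout the argument one uses the harmless but reassuring observation that every biproduct that appears, being of $\mathbb{Z}_2$-degree $\sum_r|i_r|+\sum_t|j_t|\equiv 0$, is $\mathbb{Z}_2$-even, so that all the factors occurring may be freely commuted past one another.

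For the inductive step I would write $\omega = i_1\cdot(i_2\cdots i_p)$ and apply Proposition \ref{Laplace expansions}(1) with $\omega_1=i_1$ (a single letter, hence of $\mathbb{Z}_2$-degree $1$) and $\omega_2 = i_2\cdots i_p$. Since $(\omega_1|\varpi_{(1)})$ is the biproduct of a one-letter word, it vanishes unless $\varpi_{(1)}$ is itself a single letter; thus in $\sum_{(\varpi)}$ only the components of the coproduct $\Delta(\varpi)=\sum_{(\varpi)}\varpi_{(1)}\otimes\varpi_{(2)}$ of the free exterior Hopf algebra for which $\varpi_{(1)} = j_s$ survive, each entering with coefficient $(-1)^{s-1}$ and with $\varpi_{(2)} = j_1\cdots\widehat{j_s}\cdots j_p$. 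Taking into account the Koszul sign $(-1)^{|\varpi_{(1)}||\omega_2|} = (-1)^{1\cdot(p-1)}$ of Proposition \ref{Laplace expansions} and the fact that $(i_1|j_s)$ is even, this yields
\[
(\omega|\varpi)\;=\;(-1)^{p-1}\sum_{s=1}^{p}(-1)^{s-1}\,(i_1|j_s)\,\big(i_2\cdots i_p\,\big|\,j_1\cdots\widehat{j_s}\cdots j_p\big).
\]

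Next I would apply the inductive hypothesis to each biproduct $\big(i_2\cdots i_p\,\big|\,j_1\cdots\widehat{j_s}\cdots j_p\big)$, identifying it with $(-1)^{\binom{p-1}{2}}$ times the minor $M_{1s}$ of $A=\big[(i_r|j_t)\big]_{r,t=1,\ldots,p}$ obtained by deleting the first row and the $s$-th column. Substituting and rewriting $(-1)^{s-1}=(-1)^{1+s}$ turns the sum into the cofactor (Laplace) expansion of $\det A$ along its first row, so that
\[
(\omega|\varpi)\;=\;(-1)^{p-1}(-1)^{\binom{p-1}{2}}\sum_{s=1}^{p}(-1)^{1+s}(i_1|j_s)\,M_{1s}\;=\;(-1)^{\binom{p}{2}}\det A,
\]
where the last equality uses $\binom{p}{2}=\binom{p-1}{2}+(p-1)$. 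This is exactly the claimed formula, and since $\omega,\varpi$ lie over the proper alphabets the biproduct indeed lands in ${\mathbb C}[M_{n,d}]$.

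The calculation is essentially routine; the only point that needs genuine care is the $\mathbb{Z}_2$-graded sign bookkeeping, namely matching the shuffle signs of the coproduct in the exterior Hopf algebra against the Koszul signs in Proposition \ref{Laplace expansions} and checking that they reassemble into the cofactor-expansion signs of an ordinary determinant together with the overall factor $(-1)^{\binom{p}{2}}$. An alternative, more self-contained route would bypass Proposition \ref{Laplace expansions} and instead unwind the definition of $(\omega|\varpi)$ directly as the iterated superpolarization $D^l_{i_1,\gamma}\cdots D^l_{i_p,\gamma}\big((\gamma|j_1)\cdots(\gamma|j_p)\big)$, expand it as $\sum_{\sigma\in S_p}\varepsilon(\sigma)\prod_{k}(i_k|j_{\sigma(k)})$, and verify that $\varepsilon(\mathrm{id})=(-1)^{\binom{p}{2}}$ while $\varepsilon$ changes sign under adjacent transpositions; but routing the proof through the already-established Laplace expansion is shorter and is plainly the intended argument.
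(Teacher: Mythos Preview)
Your proof is correct and follows exactly the route the paper indicates: the corollary is stated immediately after Proposition~\ref{Laplace expansions} with the words ``From Proposition~\ref{Laplace expansions}, we infer'', and no further argument is given there. Your induction on $p$ via the Laplace expansion with $\omega_1=i_1$, together with the sign bookkeeping $\binom{p}{2}=\binom{p-1}{2}+(p-1)$, is precisely the computation that fills in this omitted detail.
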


Following the notation introduced in the previous sections, let
$$
Super[V_0 \oplus V_1] = Sym[V_0] \otimes \Lambda[V_1]
$$
denote the {\it{(super)symmetric}} algebra of the space
$$
V_0 \oplus V_1
$$ (see, e.g. \cite{Scheu-BR}).

By multilinearity, the algebra
$Super[V_0 \oplus V_1]$ is the same as the superalgebra  $Super[\mathcal{A}_0 \cup \mathcal{L} ]$ 
generated by the "variables"
$$
 \alpha_1, \ldots, \alpha_{m_0}  \in  \mathcal{A}_0, \quad 1, \ldots, n \in L,
$$
modulo the congruences
$$
z z' = (-1)^{|z| |z'|} z' z, \quad z, z' \in \mathcal{A}_0   \cup \mathcal{L} .
$$
Let $d^l_{z, z'}$ denote the (left)polarization operator of $z'$ to $z$ on
$$
Super[W] = Super[\mathcal{A}_0  \cup \mathcal{L} ],
$$
that is the unique superderivation of $\mathbb{Z}_2$-degree
$$
|z| + |z'| \in \mathbb{Z}_2
$$
such that
$$
d^l_{z, z'} (z'') = \delta_{z', z''} \cdot z,
$$
for every $z, z', z'' \in \mathcal{A}_0  \cup \mathcal{L} .$

Clearly, the map
$$
e_{z, z'}  \rightarrow   d^l_{z, z'}
$$
is a Lie superalgebra map and, therefore, induces a structure of
$$gl(m|n)-module$$
on
$Super[\mathcal{A}_0  \cup \mathcal{L} ] = Super[V_0 \oplus V_1 ].$

\begin{proposition}\label{polarization biproduct}

Let $\varpi = j_{t_1}j_{t_2} \cdots j_{t_q}$ be a word on $P = \{1, 2, \ldots, d \}$.
The map
$$
\Phi_\varpi : \omega \mapsto (\omega|\varpi),
$$
$\omega$ any word on $\mathcal{A}_0 \cup \mathcal{L} $, uniquely defines $gl(m|n)-$equivariant linear operator
$$
\Phi_\varpi : Super[\mathcal{A}_0  \cup \mathcal{L} ] \rightarrow {\mathbb C}[M_{m|n,d}],
$$
that is
\begin{equation*}\label{polarization action}
\Phi_\varpi \big( e_{z, z'} \cdot \omega \big) =\Phi_\varpi \big( d^l_{z, z'}(\omega) \big) =
 D^l_{z, z'} \big( (\omega|\varpi) \big) =
 e_{z, z'} \cdot (\omega|\varpi),
\end{equation*}
for every $z, z' \in \mathcal{A}_0  \cup \mathcal{L} .$
\end{proposition}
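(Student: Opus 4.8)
The plan is first to see that $\Phi_\varpi$ is well defined, then to check the intertwining identity on words and reduce it to a single super-commutation computation. For the first point: the assignment $\omega \mapsto (\omega|\varpi)$ is a priori defined only on words on $\mathcal{A}_0 \cup \mathcal{L}$, and extending it linearly gives a linear map out of the free associative algebra on $\mathcal{A}_0 \cup \mathcal{L}$. By the supersymmetry of the biproduct in its left entries (item (1) of the Claim preceding Proposition~\ref{Laplace expansions}), namely $(\cdots z_i z_{i+1}\cdots|\varpi) = (-1)^{|z_i||z_{i+1}|}(\cdots z_{i+1}z_i\cdots|\varpi)$, this map annihilates the defining relations $zz' - (-1)^{|z||z'|}z'z$ of $Super[V_0\oplus V_1] = Super[\mathcal{A}_0\cup\mathcal{L}]$, hence descends to a well-defined linear operator $\Phi_\varpi$; uniqueness is automatic since words span. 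When $\omega$ and $\varpi$ have different lengths the biproduct is $0$, consistently with these relations, so nothing further is needed here.

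For equivariance it suffices, by linearity, to prove $\Phi_\varpi(d^l_{z,z'}(\omega)) = D^l_{z,z'}((\omega|\varpi))$ for one word $\omega = z_1\cdots z_p$ and arbitrary $z,z'\in\mathcal{A}_0\cup\mathcal{L}$; the outer equalities of the displayed chain are then just the definitions of the $gl(m|n)$-module structures on $Super[\mathcal{A}_0\cup\mathcal{L}]$ (via $d^l$) and on $\mathbb{C}[M_{m|n,d}]$ (via $D^l$), and equivariance for all of $gl(m|n)$ follows since the $e_{z,z'}$ generate. If $p\neq q$ both sides vanish, because $d^l_{z,z'}$ preserves word length; so assume $p=q$ and use the definition $(\omega|\varpi) = D^l_{z_1,\gamma}\cdots D^l_{z_p,\gamma}(\Theta_\varpi)$, with $\Theta_\varpi = (\gamma|j_{t_1})\cdots(\gamma|j_{t_q})$, from Subsection~\ref{citazione 6}. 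I would push $D^l_{z,z'}$ rightward through $D^l_{z_1,\gamma}\cdots D^l_{z_p,\gamma}$ using
$$D^l_{z,z'}D^l_{z_i,\gamma} = (-1)^{(|z|+|z'|)|z_i|}\,D^l_{z_i,\gamma}D^l_{z,z'} + \delta_{z',z_i}\,D^l_{z,\gamma},$$
where the a priori present term $\delta_{z,\gamma}(\cdots)$ drops out because $z\in\mathcal{A}_0\cup\mathcal{L}$ while $\gamma$ is a new even symbol, and where, once $D^l_{z,z'}$ reaches $\Theta_\varpi$, it kills it since $D^l_{z,z'}((\gamma|j)) = \delta_{z',\gamma}(z|j) = 0$. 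Thus only the ``creation'' terms survive: the $i$-th one is $\delta_{z',z_i}$ times the string with $D^l_{z_i,\gamma}$ replaced by $D^l_{z,\gamma}$, that is $(z_1\cdots z_{i-1}\,z\,z_{i+1}\cdots z_p|\varpi)$, carrying the sign $(-1)^{(|z|+|z'|)(|z_1|+\cdots+|z_{i-1}|)}$ accumulated in crossing $D^l_{z_1,\gamma},\dots,D^l_{z_{i-1},\gamma}$; summing over $i$ reproduces exactly $(d^l_{z,z'}(\omega)|\varpi) = \Phi_\varpi(d^l_{z,z'}(\omega))$, since $d^l_{z,z'}$ is the superderivation with $d^l_{z,z'}(z'') = \delta_{z',z''}z$.

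I expect the only delicate point to be this sign matching, and it hinges on observing that $\gamma$ is an \emph{even} symbol, so $D^l_{z_i,\gamma}$ has $\mathbb{Z}_2$-degree $|z_i|$ (not $|z_i|+1$), which is precisely what makes the Koszul signs obtained by commuting $D^l_{z,z'}$ past the $D^l_{z_i,\gamma}$ coincide with those produced by the superderivation $d^l_{z,z'}$ acting on $z_1\cdots z_p$. As an alternative route avoiding the auxiliary symbol, the identity can instead be proved by induction on $p$: split $\omega = z_1\omega'$, expand $(\omega|\varpi)$ by the Laplace rule of Proposition~\ref{Laplace expansions}(1), apply the Leibniz rule for $D^l_{z,z'}$ together with the inductive hypothesis to each biproduct factor, and reassemble via Laplace again, the base case being $D^l_{z,z'}((z_1|j)) = \delta_{z',z_1}(z|j)$.
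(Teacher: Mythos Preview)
Your argument is correct. The paper states this proposition in the Appendix as background material and does not supply a proof, so there is nothing to compare against; your computation---pushing $D^l_{z,z'}$ through the defining string $D^l_{z_1,\gamma}\cdots D^l_{z_p,\gamma}$ via the super-commutation relation, using that $\gamma$ is even and foreign to $\mathcal{A}_0\cup\mathcal{L}$---is exactly the natural verification, and the sign bookkeeping checks out.
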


With a slight abuse of notation, we will write (\ref{polarization action}) in the form
\begin{equation}\label{abuse}
D^l_{z, z'} \big( (\omega|\varpi) \big) = ( d^l_{z, z'}(\omega)|\varpi).
\end{equation}

Let $S = (\omega_1, \omega_2, \ldots, \omega_p$ and 
$T = (\varpi_1, \varpi_2, \ldots, \varpi_p)$ be Young tableaux on
$\mathcal{A}_0   \cup \mathcal{L} $ and $P = \{1, 2, \ldots, d \}$ of shapes $\lambda$ and $\mu$, respectively.

If $\lambda = \mu$, the {\it{Young bitableau}} $(S|T)$ is the element of ${\mathbb C}[M_{m|n,d}]$ defined as follows:
$$
(S|T) =
\left(
\begin{array}{c}
\omega_1\\ \omega_2\\ \vdots\\ \omega_p
\end{array}
\right| \left.
\begin{array}{c}
\varpi_1\\ \varpi_2\\ \vdots\\  \varpi_p
\end{array}
\right)
= \pm \ (\omega_1)|\varpi_1)(\omega_2)|\varpi_2) \cdots (\omega_p)|\varpi_p),
$$
where
$$
\pm  = (-1)^{|\omega_2||\varpi_1|+|\omega_3|(|\varpi_1|+|\varpi_2|)+ \cdots +|\omega_p|(|\varpi_1|+|\varpi_2|+\cdots+|\varpi_{p-1}|)}.
$$

If $\lambda \neq \mu$, the {\it{Young bitableau}} $(S|T)$ is set to be zero.

By naturally extending the slight abuse of notation (\ref{abuse}), the action of any polarization on bitableaux
can be explicitly described:

\begin{proposition}\label{action on tableaux} 
Let $z, z' \in \mathcal{A}_0 \cup \mathcal{L} $,  and let
$S = (\omega_1, \ldots, \omega_p) $, $T =
(\varpi_1, \ldots, \varpi_p)$. We have the
following identity:
\begin{align*}
e_{z, z'} \cdot (S\,|\,T) \ & = 
\ D^l_{z, z'} \ \big( \left(
\begin{array}{c}
\omega_1\\ \omega_2\\ \vdots\\ \omega_p
\end{array}
\right| \left.
\begin{array}{c}
\varpi_1\\ \varpi_2\\ \vdots\\  \varpi_p
\end{array}
\right) \big) \\ &
= \ \sum_{s=1}^p  \
(-1)^{(|z| + |z'|)\epsilon_s}
\ \left(
\begin{array}{c}
\omega_1\\ \omega_2\\ \vdots\\  d^l_{z,
z'}(\omega_s)\\ \vdots \\ \omega_p
\end{array}
\right| \left.
\begin{array}{c}
\varpi_1\\ \varpi_2\\ \vdots\\
\vdots \\ \vdots\\ \varpi_p
\end{array}
\right),
\end{align*}
where
$$
\epsilon_1 = 1, \quad    \epsilon_s = |\omega_1| + \cdots + |\omega_{s-1}|, \quad s = 2,
\ldots, p.
$$
\end{proposition}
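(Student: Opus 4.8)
The plan is to reduce the statement to the one-row case, which is precisely Proposition~\ref{polarization biproduct} (equivalently, the abuse of notation~(\ref{abuse})), by using that $D^l_{z,z'}$ is a \emph{left superderivation} of $\mathbb{C}[M_{m|n,d}]$ of $\mathbb{Z}_2$-degree $\delta := |z|+|z'|$. We may assume the shapes of $S$ and $T$ coincide, say equal to $\lambda$, since otherwise $(S\,|\,T)=0$, every $d^l_{z,z'}(\omega_s)$ has the same length as $\omega_s$, and so both sides of the asserted identity vanish. Recalling from the definition of the Young bitableau that
$$
(S\,|\,T)\;=\;\eta(S,T)\,(\omega_1|\varpi_1)(\omega_2|\varpi_2)\cdots(\omega_p|\varpi_p),\qquad
\eta(S,T)=(-1)^{\sum_{s=2}^{p}|\omega_s|\left(|\varpi_1|+\cdots+|\varpi_{s-1}|\right)}\in\{\pm1\},
$$
and using that $D^l_{z,z'}$ is $\mathbb{C}$-linear, the first step is just $e_{z,z'}\cdot(S\,|\,T)=D^l_{z,z'}\big((S\,|\,T)\big)=\eta(S,T)\,D^l_{z,z'}\big((\omega_1|\varpi_1)\cdots(\omega_p|\varpi_p)\big)$.

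Next I would iterate the graded Leibniz rule $D^l_{z,z'}(\mathbf{p}\mathbf{q})=D^l_{z,z'}(\mathbf{p})\mathbf{q}+(-1)^{\delta|\mathbf{p}|}\mathbf{p}\,D^l_{z,z'}(\mathbf{q})$ over the $p$ biproduct factors, remembering that the $\mathbb{Z}_2$-degree of $(\omega_t|\varpi_t)$ is $|\omega_t|+|\varpi_t|$; this yields
$$
D^l_{z,z'}\big((\omega_1|\varpi_1)\cdots(\omega_p|\varpi_p)\big)=\sum_{s=1}^{p}(-1)^{\delta\left(|\omega_1|+|\varpi_1|+\cdots+|\omega_{s-1}|+|\varpi_{s-1}|\right)}\,(\omega_1|\varpi_1)\cdots D^l_{z,z'}\big((\omega_s|\varpi_s)\big)\cdots(\omega_p|\varpi_p).
$$
By Proposition~\ref{polarization biproduct} (identity~(\ref{abuse})), the inner factor equals $D^l_{z,z'}\big((\omega_s|\varpi_s)\big)=\big(d^l_{z,z'}(\omega_s)\,|\,\varpi_s\big)$, where $d^l_{z,z'}$ is the superpolarization on $Super[\mathcal{A}_0\cup\mathcal{L}]$; in particular it is $0$ unless $z'$ occurs in $\omega_s$, which matches the vanishing of the $s$-th summand in the assertion. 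Since $\Phi_{\varpi_s}$ is linear and every word occurring in $d^l_{z,z'}(\omega_s)$ has one and the same $\mathbb{Z}_2$-degree $|\omega_s|+\delta$, the biproduct $\big(d^l_{z,z'}(\omega_s)\,|\,\varpi_s\big)$ is well defined, and the displayed $s$-th product is $\eta(S_s,T)$ times the Young bitableau $(S_s\,|\,T)$ obtained from $(S\,|\,T)$ by replacing its $s$-th row word $\omega_s$ with $d^l_{z,z'}(\omega_s)$.

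The remaining, and essentially the only genuine, work is the bookkeeping of three signs in the $s$-th term: the Leibniz sign $(-1)^{\delta(|\omega_1|+|\varpi_1|+\cdots+|\omega_{s-1}|+|\varpi_{s-1}|)}$, the overall scalar $\eta(S,T)$ pulled out in the first step, and the normalization sign $\eta(S_s,T)$ of the target bitableau. Because $\eta(S_s,T)$ differs from $\eta(S,T)$ only through the single substitution $|\omega_s|\mapsto|\omega_s|+\delta$, one obtains $\eta(S,T)\,\eta(S_s,T)=(-1)^{\delta\left(|\varpi_1|+\cdots+|\varpi_{s-1}|\right)}$; multiplying the three contributions and using $(-1)^{2\delta(\cdots)}=1$ collapses everything to $(-1)^{\delta\left(|\omega_1|+\cdots+|\omega_{s-1}|\right)}=(-1)^{(|z|+|z'|)\epsilon_s}$, which is exactly the coefficient of the $s$-th summand in the statement. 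I expect this reconciliation of the Koszul-type superderivation sign against the two bitableau-normalization signs to be the only delicate point; the structural backbone of the proof is carried entirely by the superderivation property of $D^l_{z,z'}$ and by Proposition~\ref{polarization biproduct}.
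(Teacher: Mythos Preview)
Your argument is correct and is exactly the natural proof: expand $(S\,|\,T)$ as the signed product of one-row biproducts, apply the graded Leibniz rule factor by factor, invoke Proposition~\ref{polarization biproduct} on each factor, and then cancel the Leibniz sign against the ratio $\eta(S,T)\eta(S_s,T)$. The paper does not actually supply a proof of Proposition~\ref{action on tableaux}; it is stated in the Appendix as a recalled fact and illustrated by an example, so there is nothing to compare your route against.

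One remark worth recording: your computation yields, for $s=1$, the coefficient $(-1)^{\delta\cdot 0}=1$, i.e.\ the empty sum $\epsilon_1=0$, whereas the displayed statement has $\epsilon_1=1$. This is a typo in the statement, not a gap in your proof: the worked Example immediately following the Proposition (with $\delta=|\alpha_i|+|2|=1$) has the $s=1$ term entering with a $+$ sign, which forces $\epsilon_1=0$. Your derivation is consistent with the Example and with the general pattern $\epsilon_s=|\omega_1|+\cdots+|\omega_{s-1}|$ for all $s\ge 1$.
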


\begin{example} Let $\alpha_i \in \mathcal{A}_0$, $1,  2, 3, 4 \in L$, $|D_{\alpha_i, 2}| = 1$. Then
$$
e_{\alpha_i, 2} \cdot
\left(
\begin{array}{lll}
1 \ 3 \ 2\\
 2 \ 3 \\
4 \ 2
\end{array}
\right| \left.
\begin{array}{lll}
1 \ 2 \ 3 \\
2 \ 3 \\
3 \ 1
\end{array}
\right) =
D^l_{\alpha_i, 2} \ \big(
\left(
\begin{array}{lll}
1 \ 3 \ 2\\
 2 \ 3 \\
4 \ 2
\end{array}
\right| \left.
\begin{array}{lll}
1 \ 2 \ 3 \\
2 \ 3 \\
3 \ 1
\end{array}
\right) \big) =
$$
$$ =
\left(
\begin{array}{lll}
1 \ 3 \ \alpha_i\\
 2 \ 3\\
4 \ 2
\end{array}
\right| \left.
\begin{array}{lll}
1 \ 2 \ 3 \\
2 \ 3 \\
3 \ 1
\end{array}
\right) -
\left(
\begin{array}{lll}
1 \ 3 \ 2\\
 \alpha_i \ 3\\
4 \ 2
\end{array}
\right| \left.
\begin{array}{lll}
1 \ 2 \ 3 \\
2 \ 3 \\
3 \ 1
\end{array}
\right) +
\left(
\begin{array}{lll}
1 \ 3 \ 2\\
 2 \ 3\\
4 \ \alpha_i
\end{array}
\right| \left.
\begin{array}{lll}
1 \ 2 \ 3 \\
2 \ 3 \\
3 \ 1
\end{array}
\right).
$$

\end{example}

\end{document}